\title{Block Coordinate Descent Methods for Structured Nonconvex Optimization with Nonseparable Constraints: Optimality Conditions and Global Convergence}
\author{Zhijie Yuan\\
	Sun Yat-sen University, China\\
	\texttt{yuanzhj5@mail2.sysu.edu.cn}\\
	Ganzhao Yuan\\
	Peng Cheng Laboratory, China\\
	\texttt{yuangzh@pal.ac.cn}\\
	Lei Sun\thanks{Corresponding author.}\\
	Sun Yat-sen University, China\\
	\texttt{sunlei8@mail.sysu.edu.cn}
}
\newtheorem{theorem}{Theorem}[section]
\newtheorem{lemma}[theorem]{Lemma}
\newtheorem{definition}[theorem]{Definition}
\def\d{\mathbf{d}}\def\p{\mathbf{p}}\def\r{\mathbf{r}}\def\w{\mathbf{w}}\def\x{\mathbf{x}}\def\y{\mathbf{y}}\def\z{\mathbf{z}}\def\A{\mathbf{A}}\def\B{\mathbf{B}}\def\E{\mathbf{E}}\def\H{\mathbf{H}}\def\I{\mathbf{I}}\def\N{\mathbf{N}}\def\P{\mathbf{P}}\def\Q{\mathbf{Q}}\def\R{\mathbf{R}}\def\S{\mathbf{S}}\def\U{\mathbf{U}}\def\Z{\mathbf{Z}}\def\p{\mathbf{p}}
\def\trans{^\mathsf{T}}    
 \def\trans {^\mathsf{T}}
\DeclareMathOperator{\argmin}{arg\,min}
\newcommand{\beq}{\begin{eqnarray}}
	\newcommand{\eeq}{\end{eqnarray}}
\newcommand{\beqq}{\begin{equation}}
	\newcommand{\eeqq}{\end{equation}}
\newcommand{\bel}{\begin{align}}
	\newcommand{\eel}{\end{align}}
\newcommand{\la}{\langle}
\newcommand{\ra}{\rangle}
\newcommand{\noi}{\noindent}
\newcommand{\nn}{\nonumber}
\def\noi{\noindent}
\def\nn{\nonumber}
\def\la{\langle}
\def\ra{\rangle}
\newcommand{\step}[1]{\text{\ding{\numexpr#1+171\relax}}}
\def\trans{^\mathsf{T}}
\renewcommand{\frac}[2]{\tfrac{#1}{#2}}
\definecolor{backcolour}{rgb}{0.95,0.95,0.92}\definecolor{codegreen}{rgb}{0,0.6,0}\lstdefinestyle{myStyle}{backgroundcolor=\color{backcolour},commentstyle=\color{codegreen},basicstyle=\ttfamily\footnotesize,breakatwhitespace=false,breaklines=true,keepspaces=true,numbers=left,numbersep=5pt,showspaces=false,showstringspaces=false,showtabs=false,tabsize=2,frame = single,numbers=right,}\lstset{style=myStyle}\usepackage{caption} 
\renewcommand{\frac}[2]{\tfrac{#1}{#2}}
\def\p{{\mathbf{p}}}
\def\E{{\mathbb{E}}}
\def\A{{\mathbf{A}}}
\def\x{{\mathbf{x}}}
\def\y{{\mathbf{y}}}
\def\z{{\mathbf{z}}}
\def\d{{\mathbf{d}}}
\def\r{{\mathbf{r}}}
\def\I{{\mathbf{I}}}
\def\H{{\mathbf{Q}}}
\def\w{{\mathbbm{w}}}
\def\argmin{\operatorname*{arg\,min}}
\def\argmax{\operatorname*{arg\,max}}
\begin{document}

	\maketitle

	\begin{abstract}
		Coordinate descent algorithms are widely used in machine learning and large-scale data analysis due to their strong optimality guarantees and impressive empirical performance in solving nonconvex problems. In this work, we introduce Block Coordinate Descent (BCD) method for structured nonconvex optimization with nonseparable constraints. Unlike traditional large-scale Coordinate Descent (CD) approaches, we do not assume the constraints are separable. Instead, we account for the possibility of nonlinear coupling among them. By leveraging the inherent problem structure, we propose new CD methods to tackle this specific challenge. Under the relatively mild condition of locally bounded non-convexity, we demonstrate that achieving coordinate-wise stationary points offer a stronger optimality criterion compared to standard critical points. Furthermore, under the Luo-Tseng error bound conditions, our BCD methods exhibit Q-linear convergence to coordinate-wise stationary points or critical points. To demonstrate the practical utility of our methods, we apply them to various machine learning and signal processing models. We also provide the geometry analysis for the models. Experiments on real-world data consistently demonstrate the superior objective values of our approaches compared to existing methods.


	\end{abstract}

	\section{Introduction}
	
	This paper mainly focuses on a class of structured nonconvex minimization problems and establishes following nonconvex optimization framework ('$\triangleq$' means define):
	\beq
	\min_{\x \in \mathbb{R}^n} F(\x) \triangleq f(\x) + h(\x) + g(\x),\ s.t.\ u(\x) = 0. \label{problem original}
	\eeq
	\noi Throughout this paper, we make the following assumptions on Problem (\ref{problem original}). \textbf{(i)} $F(\x)$ only takes finite values for any feasible solution. \textbf{(ii)} For any $\x$ and $\x^+$ in feasible set $\Omega \triangleq \{\x | u(\x) =  0\}$, we assume $f:\R^n \rightarrow \R$ is convex and continuously differentiable with a Lipschitz constant matrix $\Q \in \R^{n \times n}$ that \citep{nesterov2012efficiency, beck2013convergence,patrascu2015efficient}: $f(\x^+) \leq f(\x) + \la \x^+ - \x, \nabla f(\x) \ra  + \tfrac{1}{2} \|\x^+ - \x \|_{\Q}^2$, where its gradient is coordinate-wise Lipschitz continuous with $\forall i,j,\Q_{ij} \triangleq L_{ij}$ and $\|\x\|_{\Q}^2 \triangleq \x^{\top}\Q\x$. $L_{ij}$ represents the Lipschitz constant corresponding to any indices $i$ and $j$. \textbf{(iii)} $h(\cdot)$ is convex and coordinate-wise separable with $h(\x) = \sum_{i=1}^nh_i(\x_i)$. \textbf{(iv)} $g(\x)$ is a concave function. \textbf{(v)} $u(\x) = 0$ is a nonseparable constraint.

	\noi $\bullet$ \textbf{BCD Method Literature.} Block coordinate descent is a powerful optimization technique in machine learning that has gained attention for its ability to efficiently solve large-scale problems. Due to its simplicity and efficiency, BCD has regained popularity and has been widely used for many years in structured high-dimensional machine learning and data mining applications, including Support Vector Machines (SVM) \citep{hsieh2008dual}, Principal Component Analysis (PCA)\citep{jenatton2010structured}, and sparse multiclass classification \citep{blondel2013block}. Furthermore, traditional methods for LASSO (Least Absolute Shrinkage and Selection Operator) often struggle with high-dimensional data, whereas BCD offers superior performance over these approaches \citep{tseng2009coordinate, escribe2021block, blondel2013block}.

	\noi $\bullet$ \textbf{Nonconvex BCD Method.} Nonconvex optimization problems play a crucial role in the development of machine learning when training the learning models. The nonconvexity enables the capture of complex learning and prediction problems more accurately than convex optimization problems \citep{jain2017non}. However, these problems are notoriously difficult to solve due to their NP-hard nature. Recently, the popularity of BCD has grown due to its strong optimality guarantees and superior empirical performance in addressing nonconvex problems \citep{pmlr-v235-yuan24a, yuan2019decomposition, nie2021coordinate, beck20142, liu2014asynchronous, hong2017iteration}. BCD improves the efficiency and accuracy of optimization algorithms by effectively leveraging the problem structure to solve optimization tasks. Sparse optimization is a crucial problem in both machine learning and computer vision. \citep{yuan2020block, patrascu2015random, liu2014blockwise} analyze the random block coordinate descent and cyclic order coordinate decent method for minimizing the sparse optimization problem, providing their convergence results respectively. \citep{massart2022coordinate, asadi2021block} propose novel block coordinate descent methods for solving the nonsmooth problem involving orthogonality constraints with improved accuracy and reducing computational complexity. 
	
	
	\noi $\bullet$ \textbf{Existing Challenges.} We focus on the challenges posed by the non-convexity and non-smoothness of $g(\x)$, as well as the nonseparable constraint $u(\x) = 0$ in Problem (\ref{problem original}), which is recognized as NP-hard. Existing approaches are limited to ensuring convergence to a critical point, without providing guarantees of stronger optimality. Additionally, the current literature lacks a comprehensive convergence analysis on the BCD method and offers no geometric insights into Problem (\ref{problem original}).
	
	\noi $\bullet$ \textbf{Our Contributions.} This paper makes the following contributions: \textbf{(i)} We propose the BCD algorithms for nonconvex composite optimization with nonseparable constraints in Problem (\ref{problem original}). Our goal is to provide a superior solution compared to existing methods, addressing the challenges of nonconvexity and nonseparable constraints. \textbf{(ii)} Theoretically, we establish the optimality of our approach, demonstrating that a coordinate-wise stationary point is also a critical point. Additionally, this is the first work to study the convergence rate of Problem (\ref{problem original}). We offer the convergence analysis of our methods, revealing their Q-linear convergence rate while requiring only the continuity of $g(\x)$ rather than smoothness. \textbf{(iii)} We introduce breakpoint searching strategies to solve four nonconvex optimization problems, resulting in more precise solutions. \textbf{(iv)} We propose two novel semi-greedy index strategies to accelerate the BCD methods and enhance their computational efficiency. \textbf{(v)} We provide the geometry analysis for three applications in Section \ref{sec:app}. \textbf{(vi)} Empirically, we conduct extensive experiments to demonstrate that our method outperforms existing full-gradient algorithms.

	\noi $\bullet$ \textbf{Organization.} The remainder of this article is organized as follows: Section \ref{sec:app} introduces the applications relevant for our framework, as outlined in Problem \ref{problem original}. Section \ref{sec:related} reviews an overview of existing methods designed to solve the framework \ref{problem original}. Section \ref{sec:propose} proposes a novel block coordinate descent method. Section \ref{sec:opt} and Section \ref{sec:conv} provide a detailed analysis of the optimality and convergence properties of the BCD algorithm. Section \ref{sec:block2} leverages the structure of the optimizaiton problems and propose four solid algorithms when $k=2$. Section \ref{sec:greedy} introduces two innovative semi-greedy index selection strategies to accelerate the BCD methods. Section \ref{sec:geo} provide the geometry analysis for three applications. Section \ref{sec:exp} presents experimental results, comparing the performance of our BCD methods with other methods. Finally, Section \ref{sec:conclusion} offers a conclusion.

	\noi $\bullet$ \textbf{Notations.} Vectors are denoted by boldface lowercase letters, and matrices by boldface uppercase letters. The Euclidean inner product between $\x$ and $\y$ is denoted by $\la \x,\y\ra$ or $\x^{\top}\y$. $\x_i$ is the $i$-th element of the vector $\x$. We denote $\|\x\|_{\Q}^2 \triangleq \sum_i \sum_j \x_i\Q_{ij}\x_j$. $\E[\cdot]$ represents the expectation of a random variable. $\odot$ denote the element-wise multiplication between two vectors. The expressions $\mathbf{0}_n$ and $\mathbf{1}_n$ represent the zero vector and the all-ones vector with $n$ elements. $e_i$ refers to the column vector with 1 at the $i$-th position and 0 elsewhere. $\I_k \in \R^{k \times k}$ is the $k \times k$ identity matrix.

	\section{Motivating Applications}\label{sec:app}
	Problem (\ref{problem original}) serves as a fundamental optimization framework across various fields of  machine learning, signal processing, and computational biology.  We now present four specific instances of this framework.

	\subsection{Sparse Index Tracking (SIT)}
	Sparse index tracking task aims to address the challenges of asset selection and capital allocation simultaneously \citep{benidis2017sparse}. Given a return matrix $\A \in \mathbb{R}^{m \times n}$ for $n$ stocks and the index return $\y \in \mathbb{R}^m$ during $m$ days, $\x \in \mathbb{R}^n$ is a decision vector, each component indicating the proportion of the investment budget. The task tries to find the optimal investment proportions $\x$ that minimize the difference between the portfolio return $\A\x$ and the index return $\y$. An important contribution to this field is the introduction of $\ell_0$ norm constraint-based sparse index optimization, which enables direct control over sparsity in portfolio construction: $\min_{\x}\,\tfrac{1}{2}\|\A\x-\y\|_2^2,\ s.t.\ \|\x\|_0 \leq s, \x\geq 0, \|\x\|_1 = 1$. The $\ell_0$ norm constraint $\|\x\|_0$ can be represented as a Difference of Convex (DC) function: $ \|\x\|_1 - \|\x\|_{[s]}$. While both constraints describe the same feasible set, it is important to note that $\|\x\|_1$ and $\|\x\|_{[s]}$ are continuous functions with efficiently computable subgradients \citep{gotoh2018dc}. The sparse index tracking problem above can be reformulated as follows: $\min_{\x}\,\tfrac{1}{2}\|\A\x-\y\|_2^2 + \lambda ( \|\x\|_1 - \|\x\|_{[s]}),\ s.t.\ \x\geq 0,\,\|\x\|_1 = 1$, where $\|\x\|_{[s]}$ returns the sum of top $s$ large elements in $\x$ and $\lambda > 0$ is a constant. Since $\|\x\|_1 = 1$ and $\x \geq 0$, we seek to address the subsequent problem:
	\beq
	\min_{\x}\,\tfrac{1}{2}\|\A\x-\y\|_2^2 + \mathcal{I}_{\geq 0}(\x) - \lambda\|\x\|_{[s]},\ s.t.\ \|\x\|_1=1, \label{plb:app:topk}
	\eeq
	where $\mathcal{I}_{\geq 0}(\x)$ is the indicator function on the non-negative constraint $\x \geq 0$. Under this setting, Problem (\ref{plb:app:topk}) coincides with framework (\ref{problem original}) with $f(\x) = \tfrac{1}{2}\|\A\x-\y\|_2^2$, $h(\x) = \mathcal{I}_{\geq 0}(\x)$, $g(\x) = - \lambda \|\x\|_{[s]}$ and $u(\x) = \|\x\|_1 - 1$.
	
	\subsection{Non-negative Sparse PCA (NNSPCA)}
	Non-negative sparse PCA (NNSPCA) extends the conventional PCA by imposing constraints of non-negativity and sparsity on the principal components. NNSPCA ensures that the principal components are sparse and do not include many non-zero loadings. This makes the principal components easier to interpret and understand \citep{asteris2014nonnegative, drikvandi2023sparse}. Given the matrix $\A \in \mathbb{R}^{m \times n}$, the naive NNSPCA model can be presented as: $\min_{\x \in \mathbb{R}^n}\,-\tfrac{1}{2}\|\A\x\|_2^2,\ s.t.\ \|\x\|_0 \leq s,\x \geq 0,\|\x\|^2_2=1$. Using the equivalent variational reformulation of $\|\x\|_0 \leq s \Leftrightarrow \|\x\|_1 = \|\x\|_{[s]}$, the original NNSPCA problem can be represented as: $\min_{\x \in \mathbb{R}^n}\,-\tfrac{1}{2}\|\A\x\|_2^2 + \lambda ( \|\x\|_1 - \|\x\|_{[s]}),\ s.t.\ \x \geq 0,\|\x\|^2_2=1$, where $\lambda > 0$ is a constant. 
	Since $\x \geq 0$ and $\|\x\|_2^2 = 1$, we focus on following NNSPCA problem:
	\beq
	\min_{\x \in \mathbb{R}^n}\,\tfrac{1}{2}\x^{\top}\hat{\Q}\x + \mathcal{I}_{\geq 0}(\x) + \lambda( \x^{\top}\mathbf{1}_n  - \|\x\|_{[s]}),\ s.t.\ \|\x\|^2_2=1, \label{plb:app:nnspca2}
	\eeq
	where $\hat{\Q} = -\A^{\top}\A + \gamma\I_n \succeq 0$ with $\gamma$ being sufficiently large. Therefore, Problem (\ref{plb:app:nnspca2}) fits to framework (\ref{problem original}) with $f(\x) = \tfrac{1}{2}\x^{\top}\hat{\Q}\x$, $h(\x) = \mathcal{I}_{\geq 0}(\x) + \lambda \x^{\top}\mathbf{1}_n$, $g(\x) = - \lambda \|\x\|_{[s]}$ and $u(\x) = \|\x\|_2^2 - 1$.

	\subsection{DC Penalized Binary (DCPB) Optimization }
	Given a matrix $\A \in \mathbb{R}^{m \times n}$ and a vector $\y \in \mathbb{R}^{m}$, the binary optimization problem can be formulated as follows when $\x$ has a binary structure: $\min_{\x \in \mathbb{R}^{n}} \tfrac{1}{2} \|\A\x - \y\|_2^2,\ s.t.\ \x \in \{-1,+1\}^n,\ \x^{\top}\textbf{1} = c$. Using the equivalent variational reformulation of the binary constraint, $\x \in \{-1,+1\}^n$ can be relaxed to $\{\x|-1\leq \x \leq 1,\|\x\|_2^2 = n\}$. This relaxation simplifies the problem by allowing it to be solved over a continuous domain rather than a discrete one. Binary optimization problems appear in various fields, including K-means, spectral clustering, and fuzzy clustering \citep{nie2021coordinate, yan2024binary, ruspini2019fuzzy}. The following approximate binary optimization problem: 
	\beq
	&&\min_{\x \in \mathbb{R}^{n}} \tfrac{1}{2} \|\A\x - \y\|_2^2 + \lambda(n - \|\x\|^2_2),\ s.t.\ -1\leq \x \leq 1,\ \x^{\top}\textbf{1} = c\nn\\
	\Rightarrow && \min_{\x \in \mathbb{R}^{n}}  \p^{\top}\x + \mathcal{I}_{[-1,1]}(\x) - \tfrac{1}{2}\x^{\top}\hat{\Q}\x,\ s.t.\ \x^{\top}\textbf{1} = c,\label{plb:app:bin2}
	\eeq
	where $\p = -\A^{\top}\y$, $\lambda > 0$ is a given constant, and $\hat{\Q} = 2\lambda \I_n - \A^{\top}\A \succeq 0$ with $\lambda$ being sufficiently large. With $f(\x) = \p^{\top}\x$, $h(\x) = \mathcal{I}_{[-1,1]}(\x)$, $g(\x) = - \tfrac{1}{2}\x^{\top}\hat{\Q}\x$ and $u(\x) = \x^{\top}\textbf{1} - c$, Problem (\ref{plb:app:bin2}) is also falls into framework (\ref{problem original}). 
	
	Using an other variational reformulation of the binary constraint $\{-1, +1\}^n \Leftrightarrow \{\x \mid -1 \leq \x \leq 1, |\x|_2 = \sqrt{n}\}$, the naive binary optimization problem can be also transformed into the following approximate binary optimization problem:
	\beq
	&& \min_{\x \in \mathbb{R}^{n}} \tfrac{1}{2} \|\A\x - \y\|_2^2 + \lambda(\sqrt{n} - \|\x\|_2),\,s.t \,-1\leq \x \leq 1,\ \x^{\top}\textbf{1} = c\nn\\
	\Rightarrow && \min_{\x \in \mathbb{R}^{n}}  \tfrac{1}{2} \|\A\x - \y\|_2^2 +  \mathcal{I}_{[-1,1]}(\x) -  \lambda \|\x\|_2,\ s.t.\ \x^{\top}\textbf{1} = c,\label{plb:app:bin3}
	\eeq
	where $\lambda > 0$. Problem (\ref{plb:app:bin3}) coincides with framework (\ref{problem original}) with  $f(\x) = \tfrac{1}{2} \|\A\x - \y\|_2^2$, $h(\x) = \mathcal{I}_{[-1,1]}(\x)$, $g(\x) =  - \lambda \|\x\|_2$ and $u(\x) = \x^{\top}\textbf{1} - c$.

	\section{Related Work}\label{sec:related}
	Nonconvex optimization has emerged as a critical tool of study in various fields. It proposes the challenge due to its NP-hard nature. One common strategy is convex relaxation, which approximates the original problem and provides a bound on the optimal value to assess the suboptimality of candidate solutions \citep{eltved2021convex}. Direct approaches is another strategy to solve the nonconvex optimization, often yielding better performance than relaxation-based techniques. These direct methods are particularly important in machine learning and communication systems, including methods such as projected gradient descent, proximal gradient method, and coordinate descent \citep{hauswirth2016projected, jain2014iterative, yuan2020block}.  In this work, we introduce several related algorithms to solve Problem (\ref{problem original}).

	\subsection{Projected Subgradient Method (PSG)}
	PSG is one of the extension on subgradient method. It first lets $\x^{t}$ take a step opposite the subgradient direction then provides a solution  $\x^{t+1} $ in feasible set $\Omega$ with a projection function \citep{alber1998projected, mainge2008strong, beck2003mirror}. It generates a sequence $\{\x^t\}$ as: 
	$
	\x^{t+1} = P_{\Omega}(\x^{t} - \alpha_{t}\textbf{g}^{t}),
	$
	while $\textbf{g}^{t}$ is the subgradient of $F(\cdot)$ at $\x^t$ and $\alpha_{t}$ is the step size at $t$ iteration.
	
	\subsection{ Multi-Stage Convex Relaxation (MSCR)} 
	MSCR derives a general multi-stage convex relaxation for solving learning formulations with nonconvex regularization \citep{zhang2010analysis, zhang2013multi}. With $K$ representing the number of outer iterations, MSCR minimizes an upper bound the original function $K$ times. As a result, the computational cost of MSCR could be expensive for large-scale problems. $\x^{t+1}$ is updated by $K$ times as follows: $\x^{t+1} =\text{arg}\min_{\x} f(\x) + h(\x) + \la\x-\x^{t},\partial g(\x^t)\ra$.

	\subsection{Proximal DC Algorithm (PDCA)}  
	PDCA is a powerful tool for addressing sparse optimization problems \citep{gotoh2018dc,gong2013general}. To avoid the computational issue of MSCR, PDCA exploits the structure of $f(\cdot)$ and the sequence of $\{\x^t\}$ is generated as follows: $\x^{t+1} =\text{arg}\min_{\x} \mathcal{S}(\x,\x^{t}) + h(\x) + \la\x-\x^{t},\partial g(\x^t)\ra$, where $\mathcal{S}(\x,\x^{t}) \triangleq f(\x^{t})+\la\nabla f(\x),\x-\x^{t}\ra+\tfrac{L}{2}\|\x-\x^{t}\|^2_2$ is a Lipschitz gradient surrogate and $L$ is the Lipschitz constant of $\nabla f(\cdot)$.

	\section{The Proposed Block Coordinate Descent Methods}\label{sec:propose}
	In this section, we introduce two novel BCD methods specifically designed to address structured nonconvex optimization problems under nonseparable constraints $u(\x) = 0$. We first propose the BCD-g method for solving the subproblem (\ref{plb:subproblem}) globally. The second BCD method offers a local solution for the subproblem (\ref{plb:subproblem}) denoted as BCD-l-$k$, where $k$ represents the number of the block. 
	
	The BCD methods concentrate on only updating a single block exclusively in each iteration with the remaining blocks being fixed. Therefore, the entire problem is decomposed into numerous subproblems which is converted into a lower-dimensional minimization problem. To ensure feasibility with the constraint $u(\x) = 0$, it is necessary to update at least two variables in each iteration. In each iteration, the variables in $\x$ are divided into two sets: $\B \in \mathbb{N}^k$ and $\B^c \in \mathbb{N}^{n-k}$, with $\B$ designated as the working set. The objective function $F(\x)$ is minimized with respect to the working set $\B$, while the variables in $\B^c$ remain unchanged. We first define $\U_\B \in \mathbb{N}^{n \times k}$ and $\U_{\B^c} \in \mathbb{N}^{n \times (n-k)}$ as:
	$
	[\U_\B]_{ji} = \left\{
	\begin{aligned}
		&1, && \B_i = j; \\
		&0, && \text{else}.
	\end{aligned}
	\right.
	,\ 
	[\U_{\B^c}]_{ji} = \left\{
	\begin{aligned}
		&1, && \B^c_i = j; \\
		&0, && \text{else}.
	\end{aligned}
	\right.
	$. 
	Hence, for any $\x \in \mathbb{R}^n$ and $\Q \in \mathbb{R}^{n\times n}$, we have: $\x_\B = \U_\B^{\top}\x$, $\x = (\U_{\B^c}\U_{\B^c}^{\top} + \U_{\B}\U_{\B}^{\top})\x = \U_\B\x_\B + \U_{\B^c}\x_{\B^c}$ and $\Q_{\B\B} = \U_\B^{\top}\Q\U_\B$. Then we define $\r^t \triangleq \U_{\B^t}\d_{\B^t}$. Given the working set $\B^t$, Problem (\ref{problem original}) is equivalent to perform the following step size searching problem: $\bar{\d}_{\B^t} \in \arg\min_{\d_{\B^t}} \, f(\x^t + \r^t) + h(\x^t + \r^t) + g(\x^t + \r^t),\ s.t.\ \x^t + \r^t \in \Omega$. This problem aims at searching the optimal step size $\bar{\d}_{\B^t}$ in $t$ iteration along the coordinates in working set $\B^t$. Finally $\x^{t+1}$ is updated by $\x^{t} + \U_{\B^t}\bar{\d}_{\B^t}$.

	We now discuss how to determine the optimal step size $\bar{\d}_{\B^t}$. With the benefits of Lipschitz continuity of its gradient $\nabla f(\x)$: $\|\nabla f(\x) - \nabla f(\y)\| \leq L \|\x-\y\|$, we have: $f(\x + \U_\B\d_\B) \leq \mathcal{S}_{\B}(\x,\d_\B)$, where $\mathcal{S}_{\B}(\x,\d_\B) \triangleq f(\x) + \la\U_\B\d_\B, \nabla f(\x) \ra  + \tfrac{1}{2} \|\d_\B\|_{\Q_{\B\B}}^2$. With the proximal parameter $\theta \geq 0$, Problem (\ref{problem original}) can be addressed using the BCD-g method and reformulated as follows:
	\beq
	\min_{\d_{\B^t}} \,  \mathcal{M}_{\B^t}(\x,\d_{\B^t}) \triangleq \mathcal{S}_{\B^t}(\x^t,\d_{\B^t}) + h(\x^t + \r^t) + g(\x^t + \r^t) + \tfrac{\theta}{2} \|\d_{\B^t}\|_2^2,\ s.t.\ \x^t + \r^t \in \Omega.\label{plb:subproblem}
	\eeq
	
	\noi Now we define  $g(\x + \U_\B\d_\B) \leq \mathcal{G}_{\B}(\x,\d_{\B}) \triangleq g(\x) + \la\U_\B\d_\B, \partial g(\x) \ra$. The function $\mathcal{M}_{\B}(\x,\d_{\B})$ can be relaxed as a convex function $\mathcal{N}_{\B}(\x,\d_{\B})$. Then we can solve the subproblem (\ref{plb:subproblem}) locally using the BCD-l-$k$ method by solving following problem:
	
	\beq
	\min_{\d_{\B^t}} \,  \mathcal{N}_{\B^t}(\x,\d_{\B^t}) \triangleq \mathcal{S}_{\B^t}(\x^t,\d_{\B^t}) + h(\x^t + \r^t) + \mathcal{G}_{\B^t}(\x^t,\d_{\B^t}) + \tfrac{\theta}{2} \|\d_{\B^t}\|_2^2,\ s.t.\ \x^t + \r^t \in \Omega.\label{plb:subproblemlocal}
	\eeq
	
	\noi$\bullet$ \textbf{Breakpoint searching method.} The breakpoint searching method systematically explores the solution space to identify all breakpoints by exhaustively searching over a predefined range of candidate points. The subproblems (\ref{plb:subproblem}) and (\ref{plb:subproblemlocal}) involve $k$ unknown decision variables ($\d_{\B_1},\d_{\B_2},...,\d_{\B_k}$) that need to be solved. Generally, we divide each variable into two possible states in (\ref{plb:subproblem}) and (\ref{plb:subproblemlocal}). In other words, for a given state of $\d_{\B}$, the subproblem reduces to solving $2^k$ simple system. We then systematically traverse the entire binary tree, enumerating all potential candidate solutions, and select the one that minimizes the objective function the most as optimal solution. By thoroughly examining a wide range of candidate points, the algorithm can effectively capture the complex and nonconvex nature of the optimization problem. Although the exhaustive breakpoint searching method can be computationally demanding, it ensures a thorough exploration of the solution space and leads to robust and reliable results. Therefore, combining the breakpoint searching strategy, BCD-g and BCD-l-$k$ are enable to provide better solutions in subproblems (\ref{plb:subproblem}) and (\ref{plb:subproblemlocal}) even when $u(\x)$ is nonseparable.

	%


	\begin{algorithm} [!h]
		\DontPrintSemicolon
		\caption{Block Coordinate Descent Methods for Structured Nonconvex Optimization with Nonseparable Constraints}\label{alg1}
		Give an initial feasible solution $\x^0$, $\theta \geq 0$. Set $t = 0$. \\
		\While {not converge}{
			\textbf{(S1)} Use some strategies to find a working set $\B^t$ at $t$-iteration.\\
			\textbf{(S2)} Solve the nonconvex subproblem (\ref{plb:subproblem}) with the breakpoint searching strategy by BCD-g or BCD-l-$k$ as following:
					\beq
					\text{BCD-g}: & \bar{\d}_{\B^t} \in \bar{\mathcal{M}}_{\B^t}(\x^t) \triangleq \arg\min_{\d_{\B^t}} \mathcal{M}_{\B^t}(\x^t,\d_{\B^t}),\ s.t.\ \x^t + \r^t \in \Omega. \\ \label{plb:subproblemalg} 		
					\text{BCD-l-$k$}: & \bar{\d}_{\B^t} \in \bar{\mathcal{N}}_{\B^t}(\x^t) \triangleq \arg\min_{\d_{\B^t}} \mathcal{N}_{\B^t}(\x^t,\d_{\B^t}),\ s.t.\ \x^t + \r^t \in \Omega .\label{plb:subproblemalglocal} 	
					\eeq\\
			\textbf{(S3)} $\x^{t+1} = \x^t + \U_{\B^t}\bar{\d}_{\B^t} $  ($ \Leftrightarrow \x^{t+1}_{\B^t} = \x^t_{\B^t} + \bar{\d}_{\B^t}$). \\
			\textbf{(S4)} Increment $t$ by 1.
		}
	\end{algorithm}
	
	\noi$\bullet$ \textbf{Working set selection strategy.} There are mainly three strategies to determine which coordinate to update during coordinate descent in literature. \textbf{(i) Cyclic order strategy.}  It runs all coordinates in cyclic order $\{1,2,...,k\} \rightarrow \{n-k+1,...,n\} \rightarrow \{n-k+2,...,1\}$. This approach ensures that each coordinate is updated during the proposed method. \textbf{(ii) Random sampling strategy.} In this strategy, a set of $k$ indices are randomly and uniformly selected to form a working set $\B$ for updating. This method introduces an element of randomness into the selection process, which can help in avoiding local minima. \textbf{(iii) Greedy strategy.}  This approach involves selecting the coordinates that lead to the least objective value in the current iteration. The greedy strategy aims to make the most significant progress towards the optimal solution in each iteration. Two semi-greedy strategies for working set selection are discussed in Section \ref{sec:greedy}.


	\def\B{\mathrm{B}}
	\def\N{\mathrm{B^c}}
	\def\P{\mathbb{P}}
	
	\def\B{\texttt{\textup{B}}}
	\def\Bc{\B^c}
	\def\UB{\mathrm{U}_{\B}}
	
	\def\UBc{\mathrm{U}_{\Bc}}
	\def\UBt{\mathrm{U}_{\B^t}}
	\def\UBtt{\mathrm{U}_{{\B}^{t+1}}}
	
	\def\E{\mathbb{E}}
	
	\def\Vup{\overline{\textup{V}}}
	\def\Vlow{\underline{\textup{V}}}
	\def\Aup{\overline{\textup{A}}}
	\def\Alow{\underline{\textup{A}}}
	\def\Sup{\overline{\textup{H}}}
	\def\Slow{\underline{\textup{H}}}
	\def\E{\mathbb{E}}
	\def\Hup{\overline{\textup{H}}}
	\def\Hlow{\underline{\textup{H}}}
	\def\Qup{\overline{\textup{Q}}}
	\def \R{\mathbb{R}}

	\section{Optimality Analysis}\label{sec:opt}
	In this section, we focus on two key optimality conditions: coordinate-wise stationary optimality condition and critical point condition.  We discuss the relationship between these two conditions and also provide the theoretical analysis. It is a fundamental aspect of the optimality hierarchy for solving Problem (\ref{problem original}). Optimal point, coordinate-wise point and critical point are denoted as $\bar{\x}$, $\ddot{\x}$ and $\breve{\x}$. We primarily focus on solving Problem (\ref{problem original}), which is formulated as: $\min_{\x \in \Omega}\ F(\x) = f(\x) + h(\x) + g(\x)$.

	Critical point condition is the most general form of optimality, where a solution is considered optimal if the gradient of the function is zero within the feasible set. This criterion frequently serves as a standard for evaluating the effectiveness of optimization algorithms. We define $\mathcal{I}_{\Omega}(\cdot)$ is the indicator function of the feasible set $\Omega$. The critical point is also defined as follows \citep{Toland1979}. 
	\begin{definition}
		\textbf{(Critical Point)}
		\noi  $\breve{\x}$ is a critical point if the following holds: $	\mathbf{0}_n \in  \nabla f (\breve{\x}) + \partial h(\breve{\x}) + \partial g(\breve{\x}) + \partial \mathcal{I}_{\Omega}(\breve{\x})$.
	\end{definition}

	Coordinate-wise stationary point condition is described as a solution where the minimum value of a specific function $\mathcal{M}_{\B}(\ddot{\x},\d_\B)$ is zero for any working set $\B$. In other word, we can not improve the objective function value for $\mathcal{M}_{\B}(\ddot{\x},\d_\B)$ for any working set $\B \in \mathbb{N}^k$. In \citep{yuan2023coordinate}, the coordinate-wise stationary point is defined for the coordinate descent method when $k=1$. For the block-$k$ ($k>1$) coordinate method, the definition of a coordinate-wise stationary point is as follows.
	\begin{definition}\label{defition:cws}
		\textbf{(Coordinate-Wise Stationary Point, \textbf{CWS}-point)}
		\noi  For any working set $\B \in \mathbb{N}^k$, a solution $\ddot{\x}$ is called a coordinate-wise stationary point if the following condition is met: $\mathcal{M}_{\B}(\ddot{\x},\mathbf{0}_k)=\underset{\d_\B}{\min}\ \mathcal{M}_{\B}(\ddot{\x},\d_\B)$.
	\end{definition}

	\noi When $z(\x) + \tfrac{\rho}{2}\|\x - \y\|_2^2$ is convex, we refer to $z(\x)$ as having $\rho$-bounded nonconvexity. We define the locally $\rho$-bounded nonconvexity and give following lemma shows that $z(\ddot{\x}) = -\|\ddot{\x}\|_{[s]}$ is locally $\rho$-bounded nonconvex if $\x$ is the \textbf{CWS}-point $\ddot{\x}$.
	
	\begin{definition}\label{defition:rho}
		\noi  \textbf{(locally $\rho$-bounded nonconvexity)} A function $z(\x)$ is called to be locally $\rho$-bounded nonconvex if: $\forall \x,\ \y,\ z(\x) \leq z(\y) + \la \x - \y ,\ \partial z(\x)\ra + \tfrac{\rho}{2}\|\x - \y\|_2^2 $ with $\rho \leq +\infty$.
	\end{definition}

	\begin{lemma} \label{lemma:rhobounded}
		\textbf{(Proof in Appendix \ref{app:lemma:rhobounded})}
		\noi For any \textbf{CWS}-point $\ddot{\x}$, the function $\\z(\ddot{\x}) = -\|\ddot{\x}\|_{[s]} \triangleq - \sum_{j=1}^{k}|\ddot{\x}_{[j]}|$ is locally $\rho$-bounded nonconvex with $\rho < + \infty$. 
	\end{lemma}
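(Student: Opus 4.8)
The plan is to exploit the combinatorial structure of $\|\cdot\|_{[s]}$, namely that it is a maximum of finitely many linear functions, and then to show that the \textbf{CWS}-point condition forces us into a ``stable'' region where that maximum is attained by a unique linear piece with a strict margin. Concretely, for any index set $S \subseteq \{1,\dots,n\}$ with $|S| = s$, let $\ell_S(\x) \triangleq \sum_{i \in S} \x_i$ (or $\sum_{i\in S}|\x_i|$ according to the paper's convention for $\|\x\|_{[s]}$ on the relevant feasible set, where sign patterns are fixed by $h$); then $\|\x\|_{[s]} = \max_S \ell_S(\x)$, so $z(\x) = -\|\x\|_{[s]} = \min_S (-\ell_S(\x))$ is a pointwise minimum of linear functions, hence concave and piecewise linear. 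The subdifferential $\partial z(\ddot\x)$ (in the Clarke / concave sense used in the paper) is the convex hull of $\{-\nabla \ell_S : S \text{ active at } \ddot\x\}$.

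First I would establish the key structural claim: at a \textbf{CWS}-point $\ddot\x$, there is no ``tie'' among the top-$s$ coordinates and the $(s+1)$-st coordinate — i.e. the $s$-th largest and $(s+1)$-st largest entries of $\ddot\x$ are strictly separated, say by a gap $\delta > 0$. This is exactly where the coordinate-wise stationarity is used: if two coordinates $i,j$ straddled the top-$s$ boundary with equal value, one could pick the working set $\B = \{i,j\}$ (plus whatever extra coordinates are needed to stay feasible under $u(\x)=0$, which is why $k\ge 2$) and perform an exchange move that strictly decreases $\mathcal{M}_{\B}(\ddot\x,\d_\B)$, contradicting Definition \ref{defition:cws}. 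I would spell out this exchange argument carefully, since it is the crux.

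Given the strict gap $\delta$, the active set $S^\star$ in $\|\x\|_{[s]} = \max_S \ell_S(\x)$ is \emph{locally constant} around $\ddot\x$: for all $\x$ with $\|\x - \ddot\x\|_2 < \delta/2$ (or a similar explicit radius), $\|\x\|_{[s]} = \ell_{S^\star}(\x)$ is \emph{linear}, hence $z(\x) = -\ell_{S^\star}(\x)$ is linear on that ball. A linear function trivially satisfies the inequality in Definition \ref{defition:rho} with $\rho = 0$ locally; to convert this local statement into the global-looking inequality $z(\x) \le z(\y) + \langle \x - \y, \partial z(\x)\rangle + \tfrac{\rho}{2}\|\x-\y\|_2^2$ for \emph{all} $\x,\y$ with a finite $\rho$, I would use concavity of $z$: concavity already gives $z(\x) \le z(\y) + \langle \x - \y, \partial z(\x)\rangle$ for the pieces, and the only failure of the global linear bound comes from $\x,\y$ lying on different linear pieces; since the feasible set is bounded in the applications (e.g. $\|\x\|_1 = 1$, $\|\x\|_2 = 1$, or box constraints) and there are finitely many pieces, the ``correction'' needed is bounded, so one can take $\rho$ to be a finite constant depending on the diameter of $\Omega$ and the finitely many slopes $\nabla \ell_S$. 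I would make the finiteness explicit by a compactness argument: $\rho \triangleq \sup_{\x,\y \in \Omega,\ \x\neq\y} \tfrac{2\bigl(z(\x) - z(\y) - \langle \x-\y,\partial z(\x)\rangle\bigr)}{\|\x-\y\|_2^2}$ is finite because the numerator is bounded on the compact set $\Omega\times\Omega$ and, near the diagonal, the local linearity just established kills the singularity.

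The main obstacle I anticipate is the exchange-move step: showing that a tie at the top-$s$ boundary is incompatible with \textbf{CWS}-stationarity requires checking that the decrease one gets from swapping the two straddling coordinates is not cancelled by the smooth part $\mathcal{S}_{\B}$, the separable part $h$, or the proximal term $\tfrac{\theta}{2}\|\d_\B\|_2^2$ — one must choose the swap direction and magnitude so that the first-order gain from $g = -\lambda\|\cdot\|_{[s]}$ dominates, and verify feasibility $u(\ddot\x + \r) = 0$ is maintained (or can be restored within the block). A secondary subtlety is handling the convention of $\|\x\|_{[s]}$ when entries can be negative versus the $\x\ge 0$ applications; I would either restrict to the feasible region where signs are pinned down by $h$, or work with $|\x_{[j]}|$ throughout and note the same piecewise-linear/concave structure persists.
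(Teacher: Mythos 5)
Your route is genuinely different from the paper's, and the difference matters. The paper's own proof never uses the \textbf{CWS}-point hypothesis: it bounds the defect directly via the reverse triangle inequality for the norm $\|\cdot\|_{[s]}$ and the subgradient bound $\|\partial\|\ddot{\x}\|_{[s]}\|_2\leq\sqrt{n}$, obtaining $z(\ddot{\x})-z(\y)-\la \ddot{\x}-\y,\partial z(\ddot{\x})\ra \leq 2\sqrt{n}\,\|\ddot{\x}-\y\|_2$, and then converts this \emph{linear} bound into the required quadratic one by assuming $\|\ddot{\x}-\y\|_2\geq\epsilon$ and taking $\rho=\tfrac{4\sqrt{n}}{\epsilon}$ --- the same mechanism as Lemma \ref{lemma:globallybounded}. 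So the paper's $\rho$ blows up as $\y\to\ddot{\x}$, and ``locally'' is effectively read as ``for $\y$ separated from $\ddot{\x}$''. You instead aim for a single finite $\rho$ valid for all $\y$, which genuinely does require stationarity: at a kink of $-\|\cdot\|_{[s]}$ the defect is $\Theta(\|\x-\y\|)$ in some direction (compare $-|t|$ at $t=0$), so no finite $\rho$ can work near the diagonal unless the kink is ruled out. Your reading is the stronger and arguably the intended one, but it is not what the paper proves.

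The gap is that the entire weight of your argument rests on the exchange step (no tie between the $s$-th and $(s{+}1)$-st largest entries at a \textbf{CWS}-point), and that step is only announced, not executed. It is plausible for the simplex constraint of SIT: at a positive-valued boundary tie the swap $\d_i=\eta$, $\d_j=-\eta$ increases $\|\cdot\|_{[s]}$ by $\eta$ to first order in \emph{both} swap directions while the smooth surrogate contributes $\pm\beta\eta$ and the proximal term is second order, so one direction is first-order descent for $\mathcal{M}_{\B}$ regardless of $\lambda$. But the lemma is stated for \textbf{CWS}-points of the general framework, so you must also handle ties at value $0$ (where $\mathcal{I}_{\geq 0}$ blocks half the swap directions), the sphere constraint of NNSPCA (where the feasible block move is a rotation, not a translation, and the first-order bookkeeping changes), and general $u(\x)=0$; as it stands the claim is unproved precisely where it is needed. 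Two smaller repairs: your closing supremum ranges $\x$ over all of $\Omega$, where kinks exist and the supremum is $+\infty$ --- it must be taken with $\x=\ddot{\x}$ fixed, which is all the lemma requires and which, combined with local linearity on a $\delta/2$-ball and the Lipschitz bound $2\sqrt{n}\|\ddot{\x}-\y\|_2$ outside it, does yield a finite $\rho$; and boundedness of $\Omega$ is used but is an assumption of the applications, not of the general framework.
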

	
	\noi \textbf{Remarks.} This concept is alternatively known as semi-nonconvexity, approximate nonconvexity, or weakly-nonconvexity in the literature. Besides, many functions have the locally $\rho$-bounded nonconvexity, such as $z(\x) = -\|\A\x\|_p$ and $z(\x) = -\|\x\|$.

	\noi Using the definition \ref{defition:rho}, the subsequent theorem introduces a quadratic growth condition for all \textbf{CWS}-point.
	\begin{theorem} \label{theo:descent}
		\textbf{(Proof in Appendix \ref{app:theo:descent})}
		\noi Assuming $g(\x)$ is locally $\rho$-bounded nonconvex, for any $\ddot{\x}$ and $\ddot{\x}+\textbf{d}$ are in feasible set $\Omega$, it holds that: $F(\ddot{\x})\leq F(\ddot{\x}+\textbf{d})  + \tfrac{\Qup + \theta + \rho}{2}\|\d\|_2^2$, where $\Qup = \|\Q\|_2$.
	\end{theorem}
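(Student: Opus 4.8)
The plan is to turn the coordinate-wise stationarity of $\ddot\x$ into a quadratic inequality attached to each working set, and then to strip off the Lipschitz and linearization surrogates using convexity of $f$ and the locally $\rho$-bounded nonconvexity of $g$ (Definition~\ref{defition:rho}, available in the applications by Lemma~\ref{lemma:rhobounded}). Fix a working set $\B$ and write $\delta:=\U_{\B}\d_{\B}$; since $\|\d_{\B}\|_{\Q_{\B\B}}^{2}=\delta^{\top}\Q\delta$, $\|\d_{\B}\|_2^2=\|\delta\|_2^2$, and $\mathcal{M}_{\B}(\ddot\x,\mathbf{0}_{k})=F(\ddot\x)$, Definition~\ref{defition:cws} yields, for every $\delta$ supported on $\B$ with $\ddot\x+\delta\in\Omega$,
\[
F(\ddot\x)\ \le\ f(\ddot\x)+\langle\delta,\nabla f(\ddot\x)\rangle+\tfrac12\delta^{\top}\Q\delta+h(\ddot\x+\delta)+g(\ddot\x+\delta)+\tfrac{\theta}{2}\|\delta\|_{2}^{2}.
\]
Bounding $g(\ddot\x+\delta)\le g(\ddot\x)+\langle\delta,\partial g(\ddot\x)\rangle$ by concavity of $g$, cancelling $f(\ddot\x)$ and $g(\ddot\x)$, and using $\delta^{\top}\Q\delta\le\|\Q\|_{2}\|\delta\|_{2}^{2}=\Qup\|\delta\|_{2}^{2}$, this collapses to the per-block estimate
\[
h(\ddot\x)-h(\ddot\x+\delta)-\langle\delta,\ \nabla f(\ddot\x)+\partial g(\ddot\x)\rangle\ \le\ \tfrac{\Qup+\theta}{2}\|\delta\|_{2}^{2}.
\]

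Next I would upgrade this from a block-supported $\delta$ to an arbitrary feasible $\d$. If $\d$ is itself supported on one working set, the previous display already gives what we want. In general I would write $\d=\sum_{m=1}^{M}\d^{(m)}$ where each $\d^{(m)}$ is supported on a working set, each $\ddot\x+\d^{(m)}$ stays in $\Omega\cap\dom h$, every coordinate's successive increments carry a single sign (so that $\sum_{m}\|\d^{(m)}\|_{2}^{2}\le\|\d\|_{2}^{2}$), and $\sum_{m}\bigl(h(\ddot\x+\d^{(m)})-h(\ddot\x)\bigr)=h(\ddot\x+\d)-h(\ddot\x)$ (valid because $h$ is separable, and in the applications an indicator of a box or of $\{\x\ge\mathbf{0}\}$). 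Such a transfer-type decomposition is furnished by the structure of $u(\x)=0$ — for instance, for $u(\x)=\langle\mathbf{1}_{n},\x\rangle-c$ one moves along coordinate pairs $\mathbf{e}_{i}-\mathbf{e}_{j}$. Summing the per-block estimate over $m$ and invoking these three properties gives $h(\ddot\x)-h(\ddot\x+\d)-\langle\d,\ \nabla f(\ddot\x)+\partial g(\ddot\x)\rangle\le\tfrac{\Qup+\theta}{2}\|\d\|_{2}^{2}$. Finally, starting from $F(\ddot\x)=f(\ddot\x)+h(\ddot\x)+g(\ddot\x)$, I substitute this bound for $h(\ddot\x)$, use convexity of $f$ in the form $f(\ddot\x)+\langle\d,\nabla f(\ddot\x)\rangle\le f(\ddot\x+\d)$, and use Definition~\ref{defition:rho} in the form $g(\ddot\x)+\langle\d,\partial g(\ddot\x)\rangle\le g(\ddot\x+\d)+\tfrac{\rho}{2}\|\d\|_{2}^{2}$; collecting terms produces $F(\ddot\x)\le F(\ddot\x+\d)+\tfrac{\Qup+\theta+\rho}{2}\|\d\|_{2}^{2}$.

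The main obstacle is the globalization step: representing a general feasible $\d$ as a finite sum of working-set-supported feasible increments whose squared norms add up to at most $\|\d\|_{2}^{2}$ and whose $h$-increments telescope. This is immediate when $\d$ is already a block update and is clean for linear equality constraints, but in general it genuinely uses the geometry of $\Omega=\{\x:u(\x)=0\}$ together with the coordinate-wise separability of $h$. It is also exactly where the extra $\tfrac{\rho}{2}\|\d\|_{2}^{2}$ enters: breaking $\d$ across several blocks forces us to replace the exact term $g(\ddot\x+\cdot)$ carried by $\mathcal{M}_{\B}$ with the linearization $g(\ddot\x)+\langle\cdot,\partial g(\ddot\x)\rangle$, so we must pay $\tfrac{\rho}{2}\|\cdot\|_{2}^{2}$ via the locally $\rho$-bounded nonconvexity of $g$ to get back.
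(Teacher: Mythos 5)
Your per-block starting point and your final assembly (convexity of $f$ to absorb $\la \d,\nabla f(\ddot{\x})\ra$, Definition~\ref{defition:rho} to absorb $\la \d,\partial g(\ddot{\x})\ra$ at the cost of $\tfrac{\rho}{2}\|\d\|_2^2$) match the paper's proof exactly. The divergence, and the gap, is in the globalization step. You propose to write a general feasible $\d$ as a finite sum $\d=\sum_m \d^{(m)}$ of block-supported increments with three simultaneous properties: each $\ddot{\x}+\d^{(m)}\in\Omega$, $\sum_m\|\d^{(m)}\|_2^2\le\|\d\|_2^2$, and telescoping of the $h$-increments. You never construct this decomposition, and you concede it is ``the main obstacle.'' It is more than an obstacle: for a linear constraint such as $\x^\top\mathbf{1}=c$ your pairwise-transfer construction can indeed be made to satisfy all three conditions, but for the nonseparable quadratic constraint $\|\x\|_2^2=1$ (the NNSPCA instance, which Theorem~\ref{theo:descent} is meant to cover) a $2$-sparse increment that keeps $\ddot{\x}$ on the sphere must preserve the sum of squares of the two touched coordinates, and a general feasible $\d$ admits no such decomposition with the squared norms summing to at most $\|\d\|_2^2$. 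The telescoping of $h$ also fails for non-indicator separable $h$ unless the supports are disjoint, which conflicts with feasibility of each increment. So as written the argument does not establish the theorem in the stated generality.

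The paper closes this step by a different mechanism that requires no decomposition of $\d$ at all: it takes the per-block inequality with $\d_{\B}$ equal to the restriction of the \emph{fixed} global $\d$ to the block $\B$, and sums it over \emph{all} $\B\in\Pi_n^k$. The combinatorial identities of Lemma~\ref{lemma:bound:4} (each coordinate belongs to a fraction $\tfrac{k}{n}$ of the $C_n^k$ blocks) turn the averaged linear terms into $\tfrac{k}{n}\la\d,\cdot\ra$, the averaged separable term into $\tfrac{k}{n}h(\ddot{\x}+\d)+(1-\tfrac{k}{n})h(\ddot{\x})$, and the averaged quadratic terms into $\tfrac{k}{n}\cdot\tfrac{\Qup+\theta}{2}\|\d\|_2^2$; the common factor $\tfrac{k}{n}$ then cancels and the $\rho$-bounded nonconvexity is invoked once, in inequality (\ref{eq:gx}), to relate the averaged $g$-values to $g(\ddot{\x}+\d)$. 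If you want to salvage your route, you would have to either restrict the theorem to constraints admitting your transfer decomposition, or replace the sequential decomposition with this uniform averaging over $\Pi_n^k$.
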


	\noi \textbf{Remarks.} The definition of a local minimum point can be stated as follows: for any $\d$ such that $\|\d\| \leq \epsilon$ and $\ddot{\x} + \d \in \Omega$, we have $F(\ddot{\x}) \leq F(\ddot{\x} + \d) + \sigma$, where $\sigma > 0$ and $\d$ represents the radius of the vicinity around $\ddot{\x}$. When comparing the conditions of the \textbf{CWS}-point and the local minimum point, neither condition is strictly stronger nor weaker than the other.
	
	The following theorem builds the relations between three types of points, including the optimal point $\bar{\x}$, critical point $\breve{\x}$, and \textbf{CWS}-point $\ddot{\x}$. The following theorem establishes their relations. 
	
	\begin{theorem} \label{theo:optimality}
		\textbf{(Optimality Hierarchy between the Optimality Conditions, Proof in Appendix \ref{app:theo:optimality})}
		\noi For any $\bar{\x}$, $\ddot{\x}$ and $\breve{\x}$ are in feasible set $\Omega$, the optimality hierarchy of three points holds: $\{\text{Optimal point}\ \bar{\x}\}\overset{(\textbf{i})}{\subseteq} \{\text{\textbf{CWS}-point}\ \ddot{\x}\}\overset{(\textbf{ii})}{\subseteq}  \{\text{Critical point}\ \breve{\x}\}\nonumber\label{t:relation}$.
	\end{theorem}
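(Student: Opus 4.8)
Proof proposal for Theorem~\ref{theo:optimality}.

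\textbf{Inclusion (i): every optimal point is a CWS-point.}
The plan is to argue by contradiction. Suppose $\bar{\x} \in \Omega$ is a global minimizer of $F$ over $\Omega$ but fails to be a CWS-point. Then by Definition~\ref{defition:cws} there exists a working set $\B \in \mathbb{N}^k$ and a direction $\d_\B$ with $\mathcal{M}_{\B}(\bar{\x}, \d_\B) < \mathcal{M}_{\B}(\bar{\x}, \mathbf{0}_k) = F(\bar{\x})$, and with $\bar{\x} + \U_\B \d_\B \in \Omega$. The key step is to exploit that $\mathcal{S}_{\B}$ is a genuine upper bound on $f$ (from coordinate-wise Lipschitz continuity, $f(\x + \U_\B \d_\B) \le \mathcal{S}_{\B}(\x, \d_\B)$) and that $\mathcal{G}_\B$ is irrelevant here because $\mathcal{M}_\B$ uses $g$ exactly; together with $\tfrac{\theta}{2}\|\d_\B\|_2^2 \ge 0$ this gives
\eqs{
F(\bar{\x} + \U_\B \d_\B) &= f(\bar{\x} + \U_\B\d_\B) + h(\bar{\x} + \U_\B\d_\B) + g(\bar{\x} + \U_\B\d_\B) \\
&\le \mathcal{S}_{\B}(\bar{\x}, \d_\B) + h(\bar{\x} + \U_\B\d_\B) + g(\bar{\x} + \U_\B\d_\B) + \tfrac{\theta}{2}\|\d_\B\|_2^2 \\
&= \mathcal{M}_{\B}(\bar{\x}, \d_\B) < F(\bar{\x}),
}
which contradicts global optimality since $\bar{\x} + \U_\B \d_\B$ is feasible. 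Hence $\bar{\x}$ is a CWS-point. One subtlety to check is that the representation $F(\x^t + \r^t) = f + h + g$ evaluated along block directions is exactly what $\mathcal{M}_\B$ majorizes at $\d_\B = \mathbf{0}$, i.e. $\mathcal{M}_\B(\bar\x,\mathbf{0}_k) = F(\bar\x)$, which follows because $\mathcal{S}_\B(\x,\mathbf{0}_k)=f(\x)$.

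\textbf{Inclusion (ii): every CWS-point is a critical point.}
Let $\ddot{\x} \in \Omega$ be a CWS-point. The plan is to pass from the global block-optimality of $\mathcal{M}_\B$ to a first-order (KKT-type) condition on each block, and then assemble these into the full subdifferential inclusion $\mathbf{0}_n \in \nabla f(\ddot\x) + \partial h(\ddot\x) + \partial g(\ddot\x) + \partial \mathcal{I}_\Omega(\ddot\x)$. For a fixed working set $\B$, the CWS property says $\d_\B = \mathbf{0}_k$ minimizes $\d_\B \mapsto \mathcal{M}_\B(\ddot\x,\d_\B)$ over $\{\d_\B : \ddot\x + \U_\B\d_\B \in \Omega\}$. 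Writing out $\mathcal{M}_\B$, the quadratic term $\tfrac12\|\d_\B\|_{\Q_{\B\B}}^2 + \tfrac\theta2\|\d_\B\|_2^2$ vanishes to first order at $\d_\B = \mathbf{0}_k$, so the optimality condition reduces to: $\mathbf{0}_k$ minimizes $\la \d_\B, \U_\B^\top \nabla f(\ddot\x)\ra + h(\ddot\x + \U_\B\d_\B) + g(\ddot\x + \U_\B\d_\B) + \mathcal{I}_\Omega(\ddot\x + \U_\B\d_\B)$ over $\d_\B$. Applying the subdifferential sum rule (using that $h$ is convex separable and $g$ is concave, so $\partial g$ is interpreted as a generalized/Toland subdifferential consistent with the critical-point definition) yields
\eqs{
\mathbf{0}_k \in \U_\B^\top\bigl(\nabla f(\ddot\x) + \partial h(\ddot\x) + \partial g(\ddot\x) + \partial \mathcal{I}_\Omega(\ddot\x)\bigr).
}
The main obstacle is the final assembly step: showing that because this block inclusion holds for \emph{every} $\B \in \mathbb{N}^k$, there is a single vector $\mathbf{v} \in \nabla f(\ddot\x) + \partial h(\ddot\x) + \partial g(\ddot\x) + \partial\mathcal{I}_\Omega(\ddot\x)$ whose projection $\U_\B^\top \mathbf{v} = \mathbf{0}_k$ for all $\B$, hence $\mathbf{v} = \mathbf{0}_n$. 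The delicate point is that $\partial h$, $\partial g$, and especially $\partial \mathcal{I}_\Omega(\ddot\x)$ (the normal cone to the nonseparable set $\Omega$ at $\ddot\x$) do not split as products over coordinate blocks, so one cannot naively choose block-wise subgradients independently; I would handle this by fixing a representation of the normal cone $\partial\mathcal{I}_\Omega(\ddot\x) = \{\mu \nabla u(\ddot\x) : \mu \in \mathbb{R}\}$ coming from the single scalar constraint $u$ (invoking a constraint qualification implied by assumption (i) on finiteness / feasibility), so that the multiplier $\mu$ is a global scalar shared across all blocks; the block conditions then become $\U_\B^\top(\nabla f(\ddot\x) + \mathbf{s}_h + \mathbf{s}_g + \mu\nabla u(\ddot\x)) = \mathbf{0}_k$ for compatible $\mathbf{s}_h \in \partial h(\ddot\x)$, $\mathbf{s}_g \in \partial g(\ddot\x)$, and a counting argument over overlapping blocks of size $k \ge 2$ forces the bracketed vector to be $\mathbf{0}_n$.

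\textbf{Strictness of the inclusions.}
Finally I would note that both inclusions are generally strict: a simple low-dimensional example (e.g. one of the DC models in Section~\ref{sec:app} at a non-optimal CWS-point, or a critical point of a concave-plus-convex objective that is not block-optimal) shows the containments cannot be reversed, which is consistent with the paper's claim that CWS-optimality is a strictly stronger criterion than criticality. I expect the cleanest such witnesses to come from the top-$s$ penalty $g(\x) = -\lambda\|\x\|_{[s]}$, where criticality only pins down a subgradient selection while CWS-optimality additionally rules out profitable two-coordinate swaps.
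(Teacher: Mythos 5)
Your proposal follows essentially the same route as the paper's own proof. For inclusion (i) you run the paper's argument in contrapositive form: the paper directly chains global optimality $F(\bar{\x})\le F(\bar{\x}+\U_{\B}\d_{\B})$ with the coordinate-wise Lipschitz majorization $f(\bar{\x}+\U_{\B}\d_{\B})\le \mathcal{S}_{\B}(\bar{\x},\d_{\B})$ to conclude $\mathcal{M}_{\B}(\bar{\x},\mathbf{0}_k)\le\min_{\d_{\B}}\mathcal{M}_{\B}(\bar{\x},\d_{\B})$, which is exactly your displayed inequality read in reverse, so the two arguments are interchangeable. For inclusion (ii) both you and the paper invoke the first-order optimality condition of the block subproblem at $\d_{\B}=\mathbf{0}_k$ to get $\mathbf{0}_k\in[\nabla f(\ddot{\x})]_{\B}+[\partial h(\ddot{\x})]_{\B}+[\partial g(\ddot{\x})]_{\B}+[\partial\mathcal{I}_{\Omega}(\ddot{\x})]_{\B}$ for every working set $\B$; the one place you go beyond the paper is in flagging that these block-wise inclusions do not automatically assemble into the full inclusion $\mathbf{0}_n\in\nabla f(\ddot{\x})+\partial h(\ddot{\x})+\partial g(\ddot{\x})+\partial\mathcal{I}_{\Omega}(\ddot{\x})$ when the subgradient selections for the nonsmooth $g$ and for the normal cone of the nonseparable $\Omega$ may vary with $\B$ --- the paper's appendix simply asserts the conclusion at this point, whereas your fix (a single scalar multiplier for the one constraint $u(\x)=0$, coordinate separability of $h$, and a consistency argument over overlapping blocks with $k\ge 2$) is the kind of reasoning actually needed to close that step. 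So there is no gap in your proposal relative to the paper; if anything you have correctly identified and partially repaired a step the paper leaves implicit.
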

	\noi \textbf{Remarks.} We claim that the coordinate-wise optimality condition is stronger than the critical point condition. The coordinate-wise optimality condition requires that for each coordinate direction, the function value cannot be decreased by moving in that direction. For a comprehensive demonstration of these relationships, we provide detailed proof in the Appendix \ref{app:theo:optimality}.

	\section{Convergence Analysis}\label{sec:conv}
	In this section, we conduct the convergence analysis of our BCD methods for solving Problem (\ref{problem original}). Our convergence analysis consists of three parts. We first demonstrate that each iteration of the BCD-g and BCD-l-$k$ methods result in a sufficient decrease $F(\x^t)$ from to $F(\x^{t+1})$, which ensures progress towards the solution. Next, we analyze the iteration complexity of our BCD methods to achieve an $\hat{\epsilon}$-stationary point. This complexity guarantees that the BCD methods reach near-optimality within a finite number of iterations. Finally, we prove the convergence rate that our BCD methods exhibit Q-linear convergence to \textbf{CWS}-point or critical point. 
	
	We assume that the working set $\B^t$ in each iteration is selected randomly and uniformly. Our algorithms generate a random output $\x^t$, which depends on the observed realization of the random variable: $\xi^{t-1} \triangleq \{\B^0,\B^1,...,\B^{t-1}\}$. We denote $\Pi_n^k \triangleq \{\B_{(i)}\}^{C_n^k}_{i=1}$ as the set of all possible combinations of index vectors, selecting k items from n. The following theorem reveals the global convergence on our BCD methods.
	\begin{theorem} \label{theo:global}
		(\textbf{Global Convergence for BCD-g and BCD-l-k method, Proof in Appendix \ref{app:theo:global}})

		\noi $\textbf{(i)}$ \textbf{(Sufficient Decrease Condition)} For both BCD-g and BCD-l-$k$ methods, we have the following results: $F(\x^{t+1}) - F(\x^t) \leq -\tfrac{\theta}{2}\|\x^{t+1}-\x^t\|_2^2$.
		
		\noi $\textbf{(ii)}$When the BCD-g and BCD-l-$k$ methods converge to a \textbf{CWS}-point or a critical point of Problem (\ref{problem original}), the following holds: $\mathbb{E}_{\xi^T}[\|\x^{\bar{t}+1} - \x^{\bar{t}}\|_2^2] \leq \tfrac{2(F(\x^0) - F(\bar{\x}))}{ \theta T}$.
		
		\noi $\textbf{(iii)}$ \textbf{(Iteration Complexity)} The BCD-g and BCD-l-$k$ methods find an $\hat{\epsilon}$-approximate \textbf{CWS}-point or an $\hat{\epsilon}$-approximate critical point in at most $T$ iterations in the sense of expectation, where: $ T \leq \lceil \tfrac{2(F(\x^0) - F(\bar{\x}))}{ \theta \hat{\epsilon}} \rceil = \mathcal{O}(\hat{\epsilon}^{-1})$.
	\end{theorem}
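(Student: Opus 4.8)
The plan is to establish the three claims sequentially, since (ii) follows by telescoping (i), and (iii) is an immediate corollary of (ii). The core work is therefore the sufficient decrease inequality in part (i), which I would prove uniformly for both BCD-g and BCD-l-$k$ by exploiting the surrogate structure of $\mathcal{M}_{\B}$ and $\mathcal{N}_{\B}$.

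For part (i), I would argue as follows. Fix iteration $t$ with working set $\B^t$ and write $\r^t = \U_{\B^t}\bar{\d}_{\B^t}$ so that $\x^{t+1} = \x^t + \r^t$. The key observations are: (a) the majorization $f(\x^t + \r^t) \le \mathcal{S}_{\B^t}(\x^t, \bar{\d}_{\B^t})$ from the coordinate-wise Lipschitz property of $\nabla f$ (assumption (ii), restricted to the block via $\Q_{\B\B} = \U_\B^\top \Q \U_\B$); (b) for BCD-g, $g(\x^t+\r^t)$ appears verbatim in $\mathcal{M}_{\B^t}$, while for BCD-l-$k$ the concavity of $g$ gives $g(\x^t+\r^t) \le \mathcal{G}_{\B^t}(\x^t, \bar{\d}_{\B^t})$ via the supporting-hyperplane inequality at $\x^t$; and (c) $\bar{\d}_{\B^t}$ is a global minimizer of the respective subproblem, so its objective value is at most that attained at $\d_{\B^t} = \mathbf{0}_k$, which equals $F(\x^t)$ exactly (since $\mathcal{S}_{\B^t}(\x^t,\mathbf{0}) = f(\x^t)$, $\mathcal{G}_{\B^t}(\x^t,\mathbf{0}) = g(\x^t)$, $h(\x^t+\mathbf{0}) = h(\x^t)$, and the proximal term vanishes). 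Chaining these:
\beq
F(\x^{t+1}) &=& f(\x^{t+1}) + h(\x^{t+1}) + g(\x^{t+1}) \nn\\
&\le& \mathcal{S}_{\B^t}(\x^t,\bar{\d}_{\B^t}) + h(\x^t+\r^t) + g(\x^t+\r^t) \nn\\
&\le& \mathcal{M}_{\B^t}(\x^t,\bar{\d}_{\B^t}) - \tfrac{\theta}{2}\|\bar{\d}_{\B^t}\|_2^2 \nn\\
&\le& \mathcal{M}_{\B^t}(\x^t,\mathbf{0}_k) - \tfrac{\theta}{2}\|\bar{\d}_{\B^t}\|_2^2 = F(\x^t) - \tfrac{\theta}{2}\|\x^{t+1}-\x^t\|_2^2, \nn
\eeq
and the analogous chain with $\mathcal{N}_{\B^t}$ for BCD-l-$k$; note $\|\bar{\d}_{\B^t}\|_2 = \|\U_{\B^t}\bar{\d}_{\B^t}\|_2 = \|\x^{t+1}-\x^t\|_2$ since $\U_{\B^t}$ has orthonormal columns.

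For part (ii), I would take conditional expectation over $\B^t$ given $\xi^{t-1}$, sum part (i) from $t=0$ to $T-1$, and telescope: $\sum_{t=0}^{T-1}\mathbb{E}[\|\x^{t+1}-\x^t\|_2^2] \le \tfrac{2}{\theta}(F(\x^0) - \mathbb{E}[F(\x^T)]) \le \tfrac{2}{\theta}(F(\x^0) - F(\bar{\x}))$, using that $F$ is bounded below by $F(\bar{\x})$ on $\Omega$ (monotonicity from (i) plus boundedness by the optimal value). Choosing $\bar{t} \in \arg\min_{0\le t \le T-1}\mathbb{E}[\|\x^{t+1}-\x^t\|_2^2]$ gives the stated bound, since the minimum is at most the average. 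Part (iii) then follows by setting the right-hand side of (ii) equal to $\hat{\epsilon}$ and solving for $T$, interpreting $\|\x^{\bar t+1}-\x^{\bar t}\|_2^2 \le \hat{\epsilon}$ as the definition of an $\hat{\epsilon}$-approximate stationary point (a point from which no block update moves far, which by the optimality conditions of the subproblem corresponds to an approximate CWS/critical point).

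The main obstacle I anticipate is not the telescoping but pinning down precisely what "$\hat{\epsilon}$-approximate CWS-point" means and verifying that the small-step condition $\|\bar{\d}_{\B^t}\|_2^2 \le \hat{\epsilon}$ genuinely certifies approximate stationarity in the sense of Definition \ref{defition:cws} — this requires relating the subproblem optimality (via the surrogate $\mathcal{M}_{\B}$, which upper-bounds the true objective change) back to $\mathcal{M}_{\B}(\ddot\x,\mathbf{0}) - \min_{\d_\B}\mathcal{M}_{\B}(\ddot\x,\d_\B)$, and controlling the gap introduced by the proximal and quadratic terms. A secondary subtlety is ensuring the argument is uniform in the working-set selection rule (cyclic, random, or greedy), since part (i) is deterministic and holds for every $\B^t$, so only parts (ii)–(iii) invoke the random-uniform assumption.
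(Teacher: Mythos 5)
Your proposal is correct and follows essentially the same route as the paper's proof: part (i) is obtained by comparing the subproblem objective at $\bar{\d}_{\B^t}$ against $\mathbf{0}_k$, majorizing $f$ via the block Lipschitz bound (and $g$ via concavity for BCD-l-$k$) so that the $\tfrac{\theta}{2}\|\bar{\d}_{\B^t}\|_2^2$ proximal term survives as the decrease, and parts (ii)–(iii) follow by the identical telescoping and averaging argument. Your closing caveat about what exactly certifies an $\hat{\epsilon}$-approximate CWS-point is a fair one, but the paper's own proof does not resolve it any more rigorously than you do.
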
 
	\noi \textbf{Remarks.} The convergence rate of $\mathcal{O}(\hat{\epsilon}^{-1})$ for gradient descent is considered the best possible worst-case dimension-free rate of convergence for any optimization algorithm \citep{carmon2020lower}. 
	
	In the remainder of this section, we will discuss the convergence rate of the BCD methods. To facilitate this analysis, we first introduce a definition and a lemma about globally $\rho$-$\epsilon$ bounded nonconvexity. It highlights the generality of the function $g(\cdot)$ in our discussion. 
	
	\begin{definition}
		\noi  \textbf{(globally $\rho$-$\epsilon$ bounded nonconvexity)} A function $z(\x)$ is called to be globally $\rho$-$\epsilon$ bounded nonconvex if: $\forall \x,\ \y,\ z(\x) \leq z(\y) + \la \x - \y ,\ \partial z(\x)\ra + \tfrac{\rho}{2}\|\x - \y\|_2^2 + \epsilon$ with $\rho \leq +\infty$ and $\epsilon \geq 0$.
	\end{definition}
	\begin{lemma}\label{lemma:globallybounded}
		\textbf{(Proof in Appendix \ref{app:lemma:globallybounded})}
		Let $z(\x)$ be any $C$-Lipschitz continuous function. The function $z(\x)$ is also globally $(\tfrac{2C^2}{\epsilon},\epsilon)$-bounded nonconvex.
	\end{lemma}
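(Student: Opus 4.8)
The plan is to establish the claimed inequality
\[
z(\x) \le z(\y) + \la \x - \y, \partial z(\x)\ra + \tfrac{C^2}{\epsilon}\|\x-\y\|_2^2 + \epsilon
\]
directly from $C$-Lipschitz continuity, splitting into two regimes according to the size of $\|\x-\y\|_2$. The governing identity is the elementary one-sided estimate coming from Lipschitz continuity of $z$ together with the bound $\|\partial z(\x)\| \le C$ for any subgradient $\partial z(\x)$ (which holds for $C$-Lipschitz $z$). First I would observe that $z(\x) - z(\y) \le C\|\x-\y\|_2$ and also $\la \x - \y, \partial z(\x)\ra \ge -C\|\x-\y\|_2$, so that
\[
z(\x) - z(\y) - \la \x - \y, \partial z(\x)\ra \le 2C\|\x-\y\|_2 .
\]
The task then reduces to showing $2C\,t \le \tfrac{2C^2}{\epsilon}t^2 + \epsilon$ for all $t = \|\x-\y\|_2 \ge 0$, which is just the AM--GM inequality $2ab \le a^2 + b^2$ applied with $a = \sqrt{2/\epsilon}\,C t$ and $b = \sqrt{\epsilon/2}$, giving $2Ct = 2ab \le a^2+b^2 = \tfrac{2C^2}{\epsilon}t^2 + \tfrac{\epsilon}{2} \le \tfrac{2C^2}{\epsilon}t^2 + \epsilon$. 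This yields exactly the globally $(\tfrac{2C^2}{\epsilon},\epsilon)$-bounded nonconvexity condition.

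The key steps, in order, are: (1) recall that Lipschitz continuity with constant $C$ forces every subgradient to satisfy $\|\partial z(\x)\|_2 \le C$, and that $|z(\x)-z(\y)| \le C\|\x-\y\|_2$; (2) combine these via Cauchy--Schwarz to bound the ``nonconvexity gap'' $z(\x) - z(\y) - \la \x-\y, \partial z(\x)\ra$ by $2C\|\x-\y\|_2$; (3) apply the quadratic-over-linear bound $2C t \le \tfrac{2C^2}{\epsilon}t^2 + \epsilon$ (valid for all $t\ge 0$, all $\epsilon>0$) to absorb the linear term into a quadratic term plus the additive slack $\epsilon$; (4) conclude by matching with the definition. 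Since $z$ may be nonsmooth, I would be careful that $\partial z$ denotes a (Clarke or limiting) subgradient and that the Lipschitz-implies-bounded-subgradient fact is the appropriate one for whichever subdifferential the paper uses; this is standard but worth stating explicitly.

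The only mild subtlety — hardly an obstacle — is the constant: the naive AM--GM split produces $\tfrac{\epsilon}{2}$ rather than $\epsilon$ as the additive term, so one has extra room; alternatively one can sharpen the quadratic coefficient. I would simply note the slack and keep the statement as given. There is no real difficulty here: the lemma is a one-line consequence of Lipschitz continuity and a scalar Young-type inequality, and the proof is entirely elementary and dimension-free.
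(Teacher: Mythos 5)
Your proof is correct in approach and is essentially the paper's own argument: both bound the nonconvexity gap by $2C\|\x-\y\|_2$ via Lipschitz continuity (and the induced subgradient bound), then close with the scalar inequality $2Ct \leq \tfrac{C^2}{\epsilon}t^2 + \epsilon$ — the paper obtains this by maximizing the concave function $h(\alpha)=4C\alpha-2\epsilon\alpha^2$ at $\alpha=1/t$, which is the same Young-type estimate you invoke. One small correction: your particular AM--GM split $a=\sqrt{2/\epsilon}\,Ct$, $b=\sqrt{\epsilon/2}$ yields $2Ct\le \tfrac{2C^2}{\epsilon}t^2+\tfrac{\epsilon}{2}$, whose quadratic coefficient corresponds to $\rho=\tfrac{4C^2}{\epsilon}$ rather than the claimed $\tfrac{2C^2}{\epsilon}$; taking instead $a=Ct/\sqrt{\epsilon}$, $b=\sqrt{\epsilon}$ gives exactly $2Ct\le\tfrac{C^2}{\epsilon}t^2+\epsilon$ and matches the lemma as stated.
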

	
	\noi \textbf{Remarks.} \textbf{(i)} Under this definition, $g(\cdot)$ is not required to be smooth. It only needs to satisfy the condition of being Lipschitz continuous. This relaxation allows for a broader class of functions that are possibly non-smooth while still maintaining controlled convergence rates. \textbf{(ii)} We provide the relationship between $\rho$ and $\epsilon$, which is fundamental to the theoretical analysis presented subsequently.

	Then we establish the following two technical lemmas, which will serve as a foundation for analyzing the convergence rate for our BCD methods.

	\begin{lemma}\label{lemma:iequality}
		
		Let $\H_+ \triangleq \UB \UB\trans \Q  \UB \UB \trans + \theta \I_n \in \mathbb{R}^{n\times n}$. Assume $\B$ is selected randomly and uniformly. We have: $c_1\|\x\|_2^2 \leq \|\x\|^2_{\H_+} \leq c_2\|\x\|_2^2$, where $c_1 = \tfrac{k}{n} \Hlow + \theta$, $c_2=\tfrac{k}{n}\Hup + \theta$, $\Hlow = \min_{\B} \|\UB\trans\Q\UB\| $, and $\Hup=\max_{\B} \|\UB\trans\Q\UB\|$.
	\end{lemma}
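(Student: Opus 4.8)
The plan is to bound the eigenvalues of $\H_+ = \UB\UB\trans \Q \UB\UB\trans + \theta\I_n$ by working on the two complementary subspaces determined by the selection matrix $\UB$. First I would observe that $\P_\B \triangleq \UB\UB\trans \in \mathbb{R}^{n\times n}$ is the orthogonal projector onto the coordinate subspace indexed by $\B$ (so $\P_\B^2 = \P_\B$, $\P_\B\trans = \P_\B$), and $\I_n - \P_\B$ is the projector onto its orthogonal complement. For any $\x$, split $\x = \P_\B\x + (\I_n-\P_\B)\x$ and compute $\|\x\|_{\H_+}^2 = \x\trans \P_\B\Q\P_\B\x + \theta\|\x\|_2^2 = (\P_\B\x)\trans \Q (\P_\B\x) + \theta\|\x\|_2^2$. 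Since $\Q \succeq 0$ (it is a Lipschitz constant matrix satisfying the descent inequality, hence PSD on the relevant directions — I would invoke assumption (ii)), the quadratic term is nonnegative, giving immediately $\|\x\|_{\H_+}^2 \geq \theta\|\x\|_2^2$. For the sharper lower bound with the $\tfrac{k}{n}\Hlow$ term, I would take the expectation over the uniformly random choice of $\B$: using $\Exp_\B[\P_\B] = \tfrac{k}{n}\I_n$ (each coordinate lies in a randomly chosen size-$k$ subset with probability $k/n$, and the off-diagonal entries of $\P_\B$ vanish since $\P_\B$ is always diagonal), one gets $\Exp_\B[(\P_\B\x)\trans\Q(\P_\B\x)] \geq \Hlow \,\Exp_\B\|\P_\B\x\|_2^2 = \tfrac{k}{n}\Hlow\|\x\|_2^2$ after identifying the inner quadratic form on the range of $\P_\B$ with $\x_\B\trans \Q_{\B\B}\x_\B$ and bounding by the smallest $\|\UB\trans\Q\UB\|$ — wait, that direction needs care, see below.

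The cleaner route for both sides is to work coordinate-block by coordinate-block rather than in expectation on a fixed $\x$. For a \emph{fixed} working set $\B$, write $\x_\B = \UB\trans\x$ and $\x_{\B^c} = \UBc\trans\x$, so that $\|\x\|_2^2 = \|\x_\B\|_2^2 + \|\x_{\B^c}\|_2^2$ and $(\P_\B\x)\trans\Q(\P_\B\x) = \x_\B\trans\Q_{\B\B}\x_\B = \|\x_\B\|_{\Q_{\B\B}}^2$. Hence $\|\x\|_{\H_+}^2 = \|\x_\B\|_{\Q_{\B\B}}^2 + \theta\|\x_\B\|_2^2 + \theta\|\x_{\B^c}\|_2^2$. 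Bounding $\|\x_\B\|_{\Q_{\B\B}}^2 \in [0,\ \|\Q_{\B\B}\|\,\|\x_\B\|_2^2]$ gives, for this fixed $\B$, $\theta\|\x\|_2^2 \le \|\x\|_{\H_+}^2 \le (\|\Q_{\B\B}\| + \theta)\|\x\|_2^2 \le (\Hup+\theta)\|\x\|_2^2$, which already yields the stated upper bound $c_2$ and a valid (though weaker) lower bound. To recover the advertised lower constant $c_1 = \tfrac{k}{n}\Hlow + \theta$, I would then average over $\B \sim \mathrm{Unif}(\Pi_n^k)$: $\Exp_\B\|\x\|_{\H_+}^2 = \Exp_\B[\x\trans\P_\B\Q\P_\B\x] + \theta\|\x\|_2^2$, and lower-bound $\x\trans\P_\B\Q\P_\B\x = \|\x_\B\|_{\Q_{\B\B}}^2 \ge \lambda_{\min}(\Q_{\B\B})\|\x_\B\|_2^2 \ge \Hlow'\,\|\x_\B\|_2^2$ (taking $\Hlow$ to be a uniform lower bound on $\lambda_{\min}$, consistent with the paper's bracket-norm notation); then $\Exp_\B\|\x_\B\|_2^2 = \Exp_\B\|\P_\B\x\|_2^2 = \tfrac{k}{n}\|\x\|_2^2$, giving the $\tfrac{k}{n}\Hlow\|\x\|_2^2$ contribution.

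I would present it as: (1) establish $\P_\B = \UB\UB\trans$ is the diagonal coordinate-projector and record the identities $\x = \UB\x_\B + \UBc\x_{\B^c}$, $\Q_{\B\B} = \UB\trans\Q\UB$ from Section~\ref{sec:propose}; (2) derive the exact formula $\|\x\|_{\H_+}^2 = \|\x_\B\|_{\Q_{\B\B}}^2 + \theta\|\x\|_2^2$; (3) sandwich $\|\x_\B\|_{\Q_{\B\B}}^2$ between $\Hlow\|\x_\B\|_2^2$ and $\Hup\|\x_\B\|_2^2$; (4) take $\Exp_\B$ and use $\Exp_\B\|\x_\B\|_2^2 = (k/n)\|\x\|_2^2$ to conclude. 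The main obstacle — really the only subtle point — is the precise meaning of $\Hlow$ and $\Hup$: the lemma writes them as $\min_\B\|\UB\trans\Q\UB\|$ and $\max_\B\|\UB\trans\Q\UB\|$ (operator norms, i.e. largest eigenvalues), but for the lower bound one genuinely needs $\|\x_\B\|_{\Q_{\B\B}}^2 \ge \Hlow\|\x_\B\|_2^2$, which requires $\Hlow$ to be a \emph{lower} spectral bound on each $\Q_{\B\B}$, not the minimum of the largest eigenvalues. I would resolve this either by reading $\|\cdot\|$ here as the relevant smallest-eigenvalue functional for the lower bound (as is implicitly consistent with $\Q \succeq 0$ and the role these constants play later), or by noting $\Q_{\B\B} \succeq 0$ so $\|\x_\B\|_{\Q_{\B\B}}^2 \ge 0$ suffices to get the weaker but still-correct bound $c_1 = \theta$ — and flag that the $(k/n)\Hlow$ refinement uses a uniform positive-definiteness hypothesis on the blocks. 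Everything else is routine linear algebra with projectors plus one elementary expectation computation.
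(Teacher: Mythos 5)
The paper states this lemma without proof (it is the only result in Section~\ref{sec:conv} with no ``Proof in Appendix'' pointer and no argument in the appendix), so there is nothing to compare your route against; your write-up effectively supplies the missing proof. Your skeleton is the right one and is essentially forced by the structure of $\H_+$: the identity $\|\x\|_{\H_+}^2=\x_{\B}\trans\Q_{\B\B}\x_{\B}+\theta\|\x\|_2^2$, the spectral sandwich on $\x_{\B}\trans\Q_{\B\B}\x_{\B}$, and the expectation identity $\E_{\B}\|\x_{\B}\|_2^2=\tfrac{k}{n}\|\x\|_2^2$ (which is the paper's own Lemma~\ref{lemma:bound:4}(i) with $\d=\x$). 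You are also right that the inequality cannot hold pointwise in $\B$ for the stated constants --- take $\x$ supported on $\B^c$ to kill the lower bound, or $\x$ supported on $\B$ to see the upper constant must be $\Hup+\theta$ rather than $\tfrac{k}{n}\Hup+\theta$ --- so the lemma must be read with $\E_{\B}$ applied to $\|\x\|_{\H_+}^2$, exactly as you do and as it is used in Theorems~\ref{theo:dec:x:f} and~\ref{theo:dec:x:f2}.

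The one genuine issue you found is real and is a defect of the statement, not of your argument: with $\Hlow=\min_{\B}\|\UB\trans\Q\UB\|$ an operator norm (largest eigenvalue), the step $\x_{\B}\trans\Q_{\B\B}\x_{\B}\geq\Hlow\|\x_{\B}\|_2^2$ is false whenever some block $\Q_{\B\B}$ has $\lambda_{\min}(\Q_{\B\B})<\Hlow$, which is the generic situation for $k\geq 2$ even with $\Q\succeq 0$. Your two repairs are both acceptable --- either redefine $\Hlow\triangleq\min_{\B}\lambda_{\min}(\Q_{\B\B})$ (which coincides with the paper's definition only when $k=1$), or retreat to $c_1=\theta$ using $\Q_{\B\B}\succeq 0$ --- and you should state explicitly which one you adopt, since the downstream condition $\beta_1>0$ in Theorem~\ref{theo:dec:x:f}(ii) is phrased as $\tfrac{k}{n}\Hlow+\theta>\rho$ and silently relies on the stronger reading. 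With that caveat recorded, the proof is complete and correct.
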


	\begin{lemma} \label{lemma:bound:4}
		\textbf{(Proof in Appendix \ref{app:lemma:bound:4})} For any $\x \in \R^n, \d \in \mathbb{R}^n$, we have:
		
		\begin{enumerate}[label=\textbf{(\roman*)}, itemsep=1pt, topsep=1pt, parsep=0pt, partopsep=0pt]

			\item $\mathbb{E}_{\B}[\la \x_{\B}, \d_{\B} \ra] = \tfrac{1}{C_n^k} \sum_{\B \in \Pi_n^k} \la \x_{\B}, \d_{\B} \ra = \tfrac{k}{n}\la \x, \d \ra$.
			
			\item $\tfrac{1}{C_n^k} \sum_{\B \in \Pi_n^k} \la \U_{\B}\x_{\B} , \d \ra = \tfrac{k}{n}\la \x , \d \ra$.
			
			\item $\tfrac{1}{C_n^k}\sum_{\B\in \Omega_n^k} \| \x + \U_\B\d_\B \|_2^2 = (1 - \tfrac{k}{n})\|\x\|_2^2 + \tfrac{k}{n}\|\x + \d\|_2^2$.
			
			\item $ \tfrac{1}{C_n^k} \sum_{\B\in \Omega_n^k} f(\x + \U_\B\d_\B) \leq f(\x) + \la  \tfrac{k}{n} \d,\nabla f(\x)\ra + \E_{\B}[ \tfrac{1}{2} \|\d_\B\|_{\H_+}^2] - \tfrac{\theta}{2}  \E_{\B}[ \tfrac{1}{2} \|\d_\B\|_{2}^2]$.
			
			\item $\tfrac{1}{C_n^k} \sum_{\B\in \Omega_n^k} h(\x + \U_\B\d_\B) = \tfrac{k}{n}h(\x+\d) + (1 - \tfrac{k}{n}) (\x)$.
			
			\item $\tfrac{1}{C_n^k} \sum_{\B\in \Pi_n^k} g(\x + \U_\B\d_\B) \leq g(\x) + \tfrac{k}{n}\la \d, \partial g(\x)\ra$.
			
			\item $\tfrac{1}{C_n^k} \sum_{\B\in \Omega_n^k}  g(\x + \U_\B\d_\B) \leq  g(\x) + \tfrac{k}{n} (g(\x + \d) - g(\x) + \tfrac{\rho}{2}\|\d\|_2^2 + \epsilon )$.
		\end{enumerate}

	\end{lemma}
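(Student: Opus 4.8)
The plan is to prove the seven items by reducing each to a single combinatorial averaging identity: among the $C_n^k=\binom{n}{k}$ index vectors of $\Pi_n^k$, any fixed coordinate $i$ lies in exactly $\binom{n-1}{k-1}$ of them, so that $\tfrac{1}{C_n^k}\sum_{\B\in\Pi_n^k}\one[i\in\B]=\binom{n-1}{k-1}/\binom{n}{k}=\tfrac{k}{n}$, and symmetrically $i$ lies in $\Bc$ with frequency $1-\tfrac kn$. Everything else is a matter of choosing the right per-coordinate (or per-summand) inequality and averaging it; I would present (i) first and then derive the rest.

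For \textbf{(i)}, expand $\la\x_\B,\d_\B\ra=\sum_{i\in\B}\x_i\d_i$, swap the two sums, and apply the counting fact to get $\tfrac{1}{C_n^k}\sum_\B\la\x_\B,\d_\B\ra=\tfrac kn\sum_i\x_i\d_i=\tfrac kn\la\x,\d\ra$. Item \textbf{(ii)} is immediate from $\la\UB\x_\B,\d\ra=\la\x_\B,\UB\trans\d\ra=\la\x_\B,\d_\B\ra$. For \textbf{(iii)}, use $\UB\trans\UB=\I_k$ to expand $\|\x+\UB\d_\B\|_2^2=\|\x\|_2^2+2\la\x_\B,\d_\B\ra+\|\d_\B\|_2^2$; averaging and applying (i) once to $(\x,\d)$ and once to $(\d,\d)$ (the latter giving $\mathbb{E}_\B\|\d_\B\|_2^2=\tfrac kn\|\d\|_2^2$) yields $\|\x\|_2^2+\tfrac kn(2\la\x,\d\ra+\|\d\|_2^2)$, which rearranges to $(1-\tfrac kn)\|\x\|_2^2+\tfrac kn\|\x+\d\|_2^2$. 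Item \textbf{(v)} uses only that $h=\sum_i h_i$ is separable, so $h(\x+\UB\d_\B)=\sum_{i\notin\B}h_i(\x_i)+\sum_{i\in\B}h_i(\x_i+\d_i)$; averaging with inclusion frequencies $\tfrac kn$ and $1-\tfrac kn$ gives $(1-\tfrac kn)h(\x)+\tfrac kn h(\x+\d)$.

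Item \textbf{(iv)} needs some bookkeeping. From Assumption (ii) applied block-wise, $f(\x+\UB\d_\B)\le f(\x)+\la\UB\d_\B,\nabla f(\x)\ra+\tfrac12\|\UB\d_\B\|_\Q^2$ with $\|\UB\d_\B\|_\Q^2=\d_\B\trans(\UB\trans\Q\UB)\d_\B=\|\d_\B\|_{\Q_{\B\B}}^2$. The key identity, again using $\UB\trans\UB=\I_k$, is $\|\UB\d_\B\|_{\H_+}^2=\|\d_\B\|_{\Q_{\B\B}}^2+\theta\|\d_\B\|_2^2$, hence $\tfrac12\|\d_\B\|_{\Q_{\B\B}}^2=\tfrac12\|\UB\d_\B\|_{\H_+}^2-\tfrac\theta2\|\d_\B\|_2^2$; substituting, taking $\mathbb{E}_\B[\cdot]$, and using (ii) for the linear term gives the stated bound. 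Item \textbf{(vi)} uses concavity of $g$: the supergradient inequality $g(\x+\UB\d_\B)\le g(\x)+\la\UB\d_\B,\partial g(\x)\ra$, averaged with (ii), produces $g(\x)+\tfrac kn\la\d,\partial g(\x)\ra$. Finally \textbf{(vii)} follows by feeding into (vi) the globally $\rho$-$\epsilon$ bounded nonconvexity of $g$ at the pair $(\x,\x+\d)$, which reads $g(\x)\le g(\x+\d)-\la\d,\partial g(\x)\ra+\tfrac\rho2\|\d\|_2^2+\epsilon$, i.e. $\la\d,\partial g(\x)\ra\le g(\x+\d)-g(\x)+\tfrac\rho2\|\d\|_2^2+\epsilon$; multiplying by $\tfrac kn>0$ and inserting into the bound from (vi) gives exactly the claimed inequality.

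The only genuinely non-mechanical step is the matrix algebra in (iv): one must verify that the projector $\UB\UB\trans$ and the reduced matrix $\Q_{\B\B}$ compose correctly and that $\H_+$ absorbs precisely the proximal term $\theta\|\d_\B\|_2^2$. Once that identity is secured, all seven items collapse to the single frequency computation $\tfrac{k}{n}$ applied to a linear, quadratic, separable, concave, or weakly-concave summand in turn, with (ii), (iii) and (v) appearing as quick corollaries of (i) and (vii) as a corollary of (vi).
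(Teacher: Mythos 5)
Your proposal is correct and follows essentially the same route as the paper's proof: the single counting identity that each coordinate appears in a fraction $\tfrac{k}{n}$ of the blocks, applied summand-by-summand to the linear, quadratic, separable, concave, and weakly-nonconvex terms, with (ii), (iii), (v) as corollaries of (i) and (vii) as a corollary of (vi). Your explicit verification of the identity $\|\UB\d_\B\|_{\H_+}^2=\|\d_\B\|_{\Q_{\B\B}}^2+\theta\|\d_\B\|_2^2$ in item (iv) is a welcome clarification of a step the paper's proof passes over silently (and it also explains the intended reading of the otherwise dimensionally mismatched $\|\d_\B\|_{\H_+}^2$ in the lemma statement).
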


	To establish a stronger convergence result for Algorithm \ref{alg1}, we utilize the Luo-Tseng error bound assumption, a well-recognized framework in mathematical optimization. The following theorems detail the convergence rates of our BCD methods. Specifically, under the Luo-Tseng error bound assumption stated in Lemma \ref{lemma:luo} \citep{tseng2009coordinate,luo1993error}, both BCD methods are proven to achieve Q-linear convergence to either a \textbf{CWS}-point or a critical point.
	
	\begin{lemma} \label{lemma:luo}
		\textbf{( Luo-Tseng error bound assumption)} We define a residual function as: $\mathcal{R}(\x) = \frac{1}{n} \sum_{i=1}^{n} |\text{dist}(0, \mathcal{M}_i(\x))|$. For any $\varsigma \geq \min_{\x} F(\x)$, there exist constants $\delta > 0$ and $\varpi > 0$ such that: $\forall \x, \ \text{dist}(\x, \mathcal{X}) \leq \delta \mathcal{R}(\x)$, whenever $F(\x) \leq \varsigma, \ \mathcal{R}(\x) \leq \varpi$. $\mathcal{X}$ is the set of stationary points satisfying $\mathcal{R}(\x) = 0$.
	\end{lemma}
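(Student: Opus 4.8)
The statement is formulated as an \emph{assumption} in its most general form, so the plan is to \emph{verify} it for Problem~(\ref{problem original}) under assumptions (i)--(v) together with boundedness of $\Omega$ (which holds for every application in Section~\ref{sec:app}), by reducing it piece by piece to the classical polyhedral error bound of Luo--Tseng \citep{luo1993error,tseng2009coordinate}. First I would use the sufficient decrease property (Theorem~\ref{theo:global}(i)) and boundedness of $\Omega$ to confine all iterates to the compact sublevel set $\mathcal{L}_\varsigma \triangleq \{\x \in \Omega : F(\x)\le\varsigma\}$; on $\mathcal{L}_\varsigma$ every modulus entering the argument --- $\|\Q\|_2$, the Lipschitz constant of $\nabla f$, and, on the instances where $g$ is differentiable, the Lipschitz constant of $\nabla g$ --- is uniformly bounded. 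This compactness is exactly what will later allow passing from finitely many local error bounds to a single global pair $(\delta,\varpi)$.

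Next I would partition the stationary set $\mathcal{X}$ into finitely many pieces on which the combinatorial data of $F$ are frozen: the face of the box/orthant described by $h$ that is active, the top-$s$ index set (and the signs) realizing $\|\x\|_{[s]}$, and --- when $u$ is affine --- whether the linear constraint is binding. On each such piece $F$ coincides with a convex composite function of the form $q(\A\x) + \la\b,\x\ra + \mathcal{I}_{P}(\x)$, where $q=\tfrac12\|\cdot-\y\|_2^2$ (or a PSD quadratic), $P$ is a polyhedron, and the concave term $g$ has been \emph{linearized} into $\la\b,\x\ra$ because it is affine there (for the smooth instances, such as $g=-\lambda\|\x\|_2$ on a sphere, $g$ is $C^1$ with Lipschitz gradient and is absorbed into $q\circ\A$ after a smooth change of coordinates). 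For this surrogate the Luo--Tseng error bound is classical once one checks that $q$ is strongly convex on $\operatorname{range}(\A\trans)$; moreover, the residual $\mathcal{R}(\cdot)$ in the statement --- a coordinate-wise optimality residual built from the single-coordinate models $\mathcal{M}_i(\x)$ --- is, up to dimension-dependent constants, equivalent to the natural proximal-gradient residual used in those references, so the bound transfers to $\mathcal{R}$.

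The two places where the argument genuinely leaves the classical convex--polyhedral setting are the DC term $g$ and the nonlinear constraint $u(\x)=0$; this is the main obstacle. For $g$ I would invoke the metric-subregularity (``calmness'') characterization of the error bound: since $g$ is, on $\mathcal{L}_\varsigma$, either piecewise affine or $C^1$ with Lipschitz gradient, the set-valued map $\x\mapsto\nabla f(\x)+\partial h(\x)+\partial g(\x)+\partial\mathcal{I}_\Omega(\x)$ is a Lipschitz perturbation of a piecewise polyhedral map, hence calm at every stationary point, which is precisely the desired bound. For the quadratic constraints (the sphere-type constraints underlying (\ref{plb:app:topk})--(\ref{plb:app:nnspca2})) I would use that $\nabla u(\x)=2\x\neq\zero$ on $\Omega$, so LICQ holds uniformly and $\partial\mathcal{I}_\Omega(\x)=\{\mu\nabla u(\x):\mu\in\mathbb{R}\}$; eliminating the multiplier through a local smooth parametrization of the sphere reduces each piece to an unconstrained composite problem handled by the previous step. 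Finally I would set $\delta$ to be the maximum, and $\varpi$ the minimum, of the finitely many local constants so obtained; the qualification ``$F(\x)\le\varsigma,\ \mathcal{R}(\x)\le\varpi$'' is exactly what keeps $\x$ inside the union of these local neighborhoods, completing the verification.
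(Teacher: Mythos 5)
The paper does not prove this statement at all: despite being labelled a ``lemma,'' it is introduced in Section~\ref{sec:conv} explicitly as the \emph{Luo--Tseng error bound assumption}, imported from \citep{luo1993error,tseng2009coordinate} and simply assumed to hold for Problem~(\ref{problem original}); there is no corresponding proof in the appendix. So your decision to \emph{verify} the assumption for the concrete applications is a genuinely different (and more ambitious) undertaking than anything the paper attempts, and in principle it would strengthen the paper, since Theorems~\ref{theo:dec:x:f} and~\ref{theo:dec:x:f2} are currently conditional on an unverified hypothesis.

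That said, your verification plan has a concrete gap at the step where you declare the residual $\mathcal{R}$ ``equivalent, up to dimension-dependent constants, to the natural proximal-gradient residual.'' The residual in the statement is built from the \emph{single-coordinate} models $\mathcal{M}_i(\x)$, whose feasibility condition is $\x + \d_i e_i \in \Omega$ with $\Omega=\{\x : u(\x)=0\}$ nonseparable. For the applications you invoke ($\|\x\|_1=1$ with $\x\ge 0$, $\|\x\|_2^2=1$, $\x^{\top}\mathbf{1}=c$), moving a single coordinate while keeping $u(\x)=0$ forces $\d_i=0$, so $\mathcal{M}_i(\x)=\{0\}$ and $\mathcal{R}(\x)\equiv 0$ on $\Omega$; the claimed inequality $\dist(\x,\mathcal{X})\le\delta\mathcal{R}(\x)$ then fails at every nonstationary feasible point, and no equivalence with the proximal residual can rescue it. (This is precisely why the paper's algorithm must update at least two coordinates per iteration, and why its convergence proofs actually apply the bound to block residuals $\bar{\mathcal{M}}_{\B}(\x)$ with $|\B|=k\ge 2$ rather than to the $\mathcal{M}_i$ appearing in the statement --- an inconsistency in the paper that your argument inherits rather than resolves.) A second, smaller gap: your appeal to calmness of ``a Lipschitz perturbation of a piecewise polyhedral map'' is not a theorem --- metric subregularity/calmness is notoriously \emph{not} stable under Lipschitz (or even smooth) perturbations, unlike metric regularity; you would need either the structured results for convex composite piecewise-quadratic functions on each frozen piece, or an explicit verification that the perturbation preserves the bound, before the finite-max construction of $(\delta,\varpi)$ goes through.
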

	
	\begin{theorem}\label{theo:dec:x:f}  
		\textbf{(Convergence Rate for the BCD-g method, Proof in Appendix \ref{app:theo:dec:x:f})}
		Let $\ddot{\x}$ be any \textbf{CWS}-point. Assume feasible set $\Omega$ is convex and $g(\x)$ is globally $\rho$-$\epsilon$ bounded nonconvex. We first define $\ddot{q}^t \triangleq F(\x^t) - F(\ddot{\x})$, $\ddot{q}^0 \triangleq F(\x^0) - F(\ddot{\x})$, $\ddot{r}^{t} \triangleq \tfrac{1}{2} \|\x^{t} - \ddot{\x}\|_{\H_+^t}^2$, where $\H_+^t$, $c_1$ and $c_2$ are constants defined in Lemma \ref{lemma:iequality}.  With any constant $\epsilon$ in Lemma \ref{lemma:globallybounded}, then we providing following results:
		
		\noi \textbf{(i)} We first define $\beta_1 \triangleq 1 - \tfrac{\rho}{c_1}$, $\beta_2 \triangleq 1 - \tfrac{\rho}{c_2}(1-\tfrac{k}{n})$, $\gamma \triangleq (1 + \tfrac{k\rho}{\theta})$. Then we have: $\beta_1 \E_{\B^t} [\ddot{r}^{t+1}] + \gamma \E_{\B^t}[\ddot{q}^{t+1}] \leq \beta_2 \ddot{r}^{t} + (\gamma - \tfrac{k}{n})\ddot{q}^t + \tfrac{2k}{n}\epsilon$.

		\noi \textbf{(ii)} We define three constants $\kappa_0 \triangleq \tfrac{c_2\delta^2}{\theta}$, $\kappa_1 \triangleq \gamma + \beta_2\kappa_0$ and $\ddot{\kappa} \triangleq 1 -  \tfrac{k}{n\kappa_1}$. If $\theta$ is sufficiently large such that $\tfrac{k}{n} \Hlow + \theta > \rho \Leftrightarrow \beta_1 > 0$, where $\Hlow$ is defined in Lemma \ref{lemma:iequality}, then the following holds: $\E_{\xi^t}[\ddot{q}^{t+1}] \leq \ddot{\kappa}^{t+1} \ddot{q}^0 + 2\epsilon$.
		
		\noi \textbf{(iii)} \textbf{(Iteration Complexity)}  With any $\sigma \leq 2\ddot{q}^0$, when $t+1 \geq \log_{\ddot{\kappa}'} \tfrac{\sigma}{2\ddot{q}^0}$, it holds that $\E_{\xi^t}[\ddot{q}^{t+1}] \leq \sigma$, where $\beta_3 \triangleq 1 - \tfrac{8C^2}{c_2\sigma}(1-\tfrac{k}{n})$ and $\ddot{\kappa}' \triangleq 1 - \tfrac{k}{n(\gamma + \beta_3\kappa_0)}$. 
	\end{theorem}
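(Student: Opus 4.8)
\textbf{Proof Proposal for Theorem \ref{theo:dec:x:f}.}

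The plan is to establish parts \textbf{(i)}, \textbf{(ii)}, and \textbf{(iii)} in sequence, each building on the last. For part \textbf{(i)}, I would start from the definition of $\bar{\d}_{\B^t}$ as the minimizer of $\mathcal{M}_{\B^t}(\x^t, \cdot)$ over the feasible step set, and exploit optimality: since $\ddot{\x}$ is a \textbf{CWS}-point and $\Omega$ is convex, the step $\d_{\B^t}$ taking $\x^t$ toward $\ddot{\x}$ (restricted to coordinates in $\B^t$) is feasible, so $\mathcal{M}_{\B^t}(\x^t, \bar{\d}_{\B^t}) \leq \mathcal{M}_{\B^t}(\x^t, \d_{\B^t})$ for that comparison step. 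I would then take expectations over the random working set $\B^t$ and invoke Lemma \ref{lemma:bound:4}: parts \textbf{(iv)}--\textbf{(vii)} expand $\E_{\B}[f(\x+\U_\B\d_\B)]$, $\E_{\B}[h(\cdot)]$, and $\E_{\B}[g(\cdot)]$ into expressions involving $\tfrac{k}{n}$-weighted averages of the full-dimensional objective plus the quadratic terms $\tfrac12\|\d_\B\|_{\H_+}^2$ and the nonconvexity slack $\tfrac{\rho}{2}\|\d\|_2^2 + \epsilon$. Using the surrogate $\mathcal{S}_{\B^t}$ and $\mathcal{G}_{\B^t}$ as upper bounds on $f$ and $g$ along the block step, plus Lemma \ref{lemma:iequality} to sandwich $\|\cdot\|_{\H_+}$ between $c_1\|\cdot\|_2^2$ and $c_2\|\cdot\|_2^2$, the inequality should collapse into a recursion of the stated form, with the coefficients $\beta_1 = 1 - \rho/c_1$, $\beta_2 = 1 - \tfrac{\rho}{c_2}(1-\tfrac{k}{n})$, and $\gamma = 1 + \tfrac{k\rho}{\theta}$ emerging from matching the $\ddot{r}^{t+1}$, $\ddot{r}^{t}$, and $\ddot{q}$ terms respectively. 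Care is needed to track which $\|\d\|^2$ terms attach to $\H_+^{t+1}$ versus $\H_+^t$, and the sign of the $\rho$-contributions.

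For part \textbf{(ii)}, the key is to eliminate the $\ddot{r}$ terms and convert the two-quantity recursion into a pure recursion in $\ddot{q}^t$. The standard trick is to bound $\ddot{r}^{t+1}$ in terms of $\ddot{q}^{t+1}$ via the Luo-Tseng error bound (Lemma \ref{lemma:luo}): the residual $\mathcal{R}(\x^{t+1})$ controls $\dist(\x^{t+1}, \mathcal{X})$, and combined with the sufficient decrease condition from Theorem \ref{theo:global}\textbf{(i)} ($\|\x^{t+1}-\x^t\|_2^2 \leq \tfrac{2}{\theta}(F(\x^t) - F(\x^{t+1}))$) and the relation between $\mathcal{R}$ and $\|\x^{t+1}-\x^t\|$, one gets $\ddot{r}^{t+1} = \tfrac12\|\x^{t+1}-\ddot{\x}\|_{\H_+^{t+1}}^2 \leq \tfrac{c_2}{2}\dist(\x^{t+1},\mathcal{X})^2 \leq \kappa_0 (\ddot{q}^t - \ddot{q}^{t+1})$ or similar, with $\kappa_0 = c_2\delta^2/\theta$. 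Substituting this and the assumption $\beta_1 > 0$ (so the $\ddot{r}^{t+1}$ coefficient is positive and can be dropped or used favorably), the recursion becomes $\kappa_1 \E[\ddot{q}^{t+1}] \leq (\kappa_1 - \tfrac{k}{n}) \ddot{q}^t + \tfrac{2k}{n}\epsilon$ with $\kappa_1 = \gamma + \beta_2 \kappa_0$, giving the contraction factor $\ddot{\kappa} = 1 - \tfrac{k}{n\kappa_1}$. Unrolling this linear recursion with the geometric-series bound on the $\epsilon$ terms yields $\E_{\xi^t}[\ddot{q}^{t+1}] \leq \ddot{\kappa}^{t+1}\ddot{q}^0 + 2\epsilon$, where the additive $2\epsilon$ comes from $\tfrac{2k}{n}\epsilon \cdot \tfrac{1}{1-\ddot{\kappa}} = \tfrac{2k}{n}\epsilon \cdot \tfrac{n\kappa_1}{k} = 2\kappa_1 \epsilon$, so I should double-check whether the clean bound $2\epsilon$ requires absorbing $\kappa_1$ or a slightly different accounting.

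Part \textbf{(iii)} is then a straightforward corollary: invoke Lemma \ref{lemma:globallybounded} to write $\epsilon$ in terms of the Lipschitz constant $C$ (so $\rho = 2C^2/\epsilon$), and to drive $\E[\ddot{q}^{t+1}]$ below a target $\sigma$ it suffices to make $\ddot{\kappa}^{t+1}\ddot{q}^0$ small relative to $\sigma$ after choosing $\epsilon \approx \sigma/4$ (so the residual $2\epsilon$ term is $\leq \sigma/2$); solving $\ddot{\kappa}'^{\,t+1} \leq \tfrac{\sigma}{2\ddot{q}^0}$ gives $t+1 \geq \log_{\ddot{\kappa}'}\tfrac{\sigma}{2\ddot{q}^0}$, where $\ddot{\kappa}'$ uses the $\epsilon$-calibrated nonconvexity modulus, producing the stated $\beta_3 = 1 - \tfrac{8C^2}{c_2\sigma}(1-\tfrac{k}{n})$. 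The main obstacle I anticipate is part \textbf{(i)}: correctly orchestrating the CWS-optimality comparison step with all seven sub-estimates of Lemma \ref{lemma:bound:4} simultaneously, and in particular keeping the bookkeeping straight on the concave term $g$ — since we use both the subgradient linearization (Lemma \ref{lemma:bound:4}\textbf{(vi)}) to lower-bound progress from the surrogate and the $\rho$-$\epsilon$ bound (Lemma \ref{lemma:bound:4}\textbf{(vii)}) to relate it back to the true $g(\x+\d)$ — without losing or double-counting a $\tfrac{k}{n}$ factor. The secondary subtlety is ensuring the error bound in part \textbf{(ii)} is applied at a point where $F(\x^{t+1}) \leq \varsigma$ and $\mathcal{R}(\x^{t+1}) \leq \varpi$ hold, which follows from monotone decrease of $F(\x^t)$ (Theorem \ref{theo:global}\textbf{(i)}) once $t$ is large enough, so a short argument pinning down the "eventually in the error-bound regime" phase is needed.
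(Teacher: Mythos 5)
Your plans for parts \textbf{(ii)} and \textbf{(iii)} track the paper's argument closely (bound the $\ddot{r}$ term by $\kappa_0(\ddot{q}^t-\E[\ddot{q}^{t+1}])$ via Luo--Tseng, drop the nonnegative $\beta_1\E[\ddot{r}^{t+1}]$ term, unroll, then calibrate $\epsilon\approx\sigma/4$), and your worry about the geometric series resolves: after dividing by $\kappa_1$ the per-step additive term is $\tfrac{2k}{n\kappa_1}\epsilon$, and since $1-\ddot{\kappa}=\tfrac{k}{n\kappa_1}$ the sum is exactly $2\epsilon$. The genuine gap is in part \textbf{(i)}. You claim that because $\ddot{\x}$ is a \textbf{CWS}-point and $\Omega$ is convex, the block-restricted step from $\x^t$ toward $\ddot{\x}$ is feasible, and you build the recursion on the comparison $\mathcal{M}_{\B^t}(\x^t,\bar{\d}_{\B^t})\le\mathcal{M}_{\B^t}(\x^t,\d_{\B^t})$ at that step. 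This fails precisely because $u(\x)=0$ is nonseparable: for $\Omega=\{\x:\x^{\top}\mathbf{1}=c\}$ the point $\x^t+\U_{\B^t}(\ddot{\x}_{\B^t}-\x^t_{\B^t})$ is feasible only if $\sum_{i\in\B^t}\ddot{\x}_i=\sum_{i\in\B^t}\x^t_i$, which has no reason to hold; convexity of $\Omega$ does not make block-restricted interpolation toward a feasible point feasible. This is exactly the obstruction separating this setting from the separable-constraint randomized-CD analyses where the interpolated-comparison-point trick works, so your argument breaks at its first step.

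The paper avoids any comparison step. It writes the first-order optimality (subgradient inclusion) condition satisfied by $\x^{t+1}$ for subproblem (\ref{plb:subproblem}), expands $\E_{\B^t}[\ddot{r}^{t+1}]-\ddot{r}^{t}$ via the Pythagoras identity in the $\H_+^t$-norm, substitutes the inclusion to replace $\H_+^t(\x^{t+1}-\x^t)$ by the negative of the subgradient sum, and kills the constraint term by convexity of the indicator: $\la\ddot{\x}-\x^{t+1},\partial\mathcal{I}_{\Omega}(\x^{t+1})\ra\le\mathcal{I}_{\Omega}(\ddot{\x})-\mathcal{I}_{\Omega}(\x^{t+1})=0$ (this is where convexity of $\Omega$ is actually used; the \textbf{CWS}-property of $\ddot{\x}$ plays no role in deriving the recursion itself). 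The remaining inner products with $\nabla f(\x^t)+\partial h(\x^{t+1})$ and with $\partial g(\x^{t+1})$ are then bounded using convexity of $f$ and $h$, the globally $\rho$-$\epsilon$ bounded nonconvexity of $g$, and the expectation identities of Lemma \ref{lemma:bound:4} together with the norm equivalence of Lemma \ref{lemma:iequality}, which is where $c_1$, $c_2$ and hence $\beta_1$, $\beta_2$, $\gamma$ emerge. If you want to salvage your outline, you must replace the feasible-comparison-step argument with this variational-inequality-plus-three-point-identity mechanism; the rest of your plan then goes through.
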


	\begin{theorem}\label{theo:dec:x:f2}  
		\textbf{(Convergence Rate for the BCD-l-$k$ method, Proof in Appendix \ref{app:theo:dec:x:f2})}
		Let $\breve{\x}$ be any critical point. Assume feasible set $\Omega$ is convex and $g(\x)$ is globally $\rho$-$\epsilon$ bounded nonconvex. We first define $\breve{q}^t \triangleq F(\x^t) - F(\breve{\x})$, $\breve{q}^0 \triangleq F(\x^0) - F(\breve{\x})$, $\breve{r}^{t} \triangleq \tfrac{1}{2} \|\x^{t} - \breve{\x}\|_{\H_+^t}^2$, where $\H_+^t$ and $c_1$ are constants defined in Lemma \ref{lemma:iequality}.  Given any constant $\epsilon$, then we have:
		
		\noi \textbf{(i)} $	\E_{\B^t} [\breve{r}^{t+1}] + \E_{\B^t}[\breve{q}^{t+1}] \leq  (1 + \tfrac{k\rho}{nc_1}) \breve{r}^{t}  + (1 - \tfrac{k}{n}) \breve{q}^t + \tfrac{k}{n}\epsilon$.

		\noi \textbf{(ii)} We define $\kappa_2 \triangleq \kappa_0(1 + \tfrac{k\rho}{nc_1}) + 1$ and $\breve{\kappa} \triangleq 1 -  \tfrac{k}{n\kappa_2}$, where $\kappa_0 = \tfrac{c_2\delta^2}{\theta}$. Then it holds: $\E_{\xi^t}[\breve{q}^{t+1}] \leq \breve{\kappa}^{t+1} \breve{q}^0 + \epsilon$.

		\noi \textbf{(iii)} \textbf{(Iteration Complexity)}  Given any $\sigma \leq 2\breve{q}^0$, when $t+1 \geq \log_{\breve{\kappa}'} \tfrac{\sigma}{2\breve{q}^0}$, it holds that $\E_{\xi^t}[\breve{q}^{t+1}] \leq \sigma$, where  $\kappa_3' \triangleq \kappa_0(1 + \tfrac{4Ck}{nc_1\sigma}) + 1$ and $\breve{\kappa}' \triangleq 1 - \tfrac{k}{n\kappa_3'}$.
	\end{theorem}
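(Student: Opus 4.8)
The plan is to mirror the three-step argument of Theorem~\ref{theo:dec:x:f} for BCD-g, taking advantage of the fact that BCD-l-$k$ minimizes the fully convex surrogate $\mathcal{N}_{\B^t}$, so that the relevant comparison object can be taken to be an arbitrary critical point $\breve{\x}$ rather than a \textbf{CWS}-point. The building blocks I would invoke are: the per-step sufficient decrease $F(\x^{t+1})-F(\x^t)\leq-\tfrac{\theta}{2}\|\x^{t+1}-\x^t\|_2^2$ from Theorem~\ref{theo:global}(i); the two majorization inequalities $f(\x+\U_\B\d_\B)\leq\mathcal{S}_{\B}(\x,\d_\B)$ and $g(\x+\U_\B\d_\B)\leq\mathcal{G}_{\B}(\x,\d_\B)$ that are built into the method; the norm equivalence $c_1\|\cdot\|_2^2\leq\|\cdot\|_{\H_+}^2\leq c_2\|\cdot\|_2^2$ of Lemma~\ref{lemma:iequality}; the combinatorial averaging identities of Lemma~\ref{lemma:bound:4}; the Luo--Tseng error bound of Lemma~\ref{lemma:luo}; and the fact from Lemma~\ref{lemma:globallybounded} that the $C$-Lipschitz $g$ is globally $(\rho,\epsilon)$-bounded nonconvex with $\rho=2C^2/\epsilon$ for every $\epsilon>0$.

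\emph{Part (i) (the crux).} First I would record that $F(\x^{t+1})\leq\mathcal{N}_{\B^t}(\x^t,\bar{\d}_{\B^t})-\tfrac{\theta}{2}\|\bar{\d}_{\B^t}\|_2^2$, by applying the two majorization inequalities at $\x^{t+1}=\x^t+\U_{\B^t}\bar{\d}_{\B^t}$. Since $\mathcal{N}_{\B^t}(\x^t,\cdot)$ is $\theta$-strongly convex (convex surrogate plus proximal term) and $\bar{\d}_{\B^t}$ is its minimizer over $\{\d_{\B^t}:\x^t+\U_{\B^t}\d_{\B^t}\in\Omega\}$, one gets $\mathcal{N}_{\B^t}(\x^t,\bar{\d}_{\B^t})+\tfrac{\theta}{2}\|\bar{\d}_{\B^t}-\d_{\B^t}\|_2^2\leq\mathcal{N}_{\B^t}(\x^t,\d_{\B^t})$ for every feasible $\d_{\B^t}$. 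I would then take as comparison the displacement of $\x^t$ toward $\breve{\x}$ along the block $\B^t$, bound $\mathcal{S}_{\B^t}$ and $h$ from above by convexity of $f$ and $h$, and bound the linearization $\mathcal{G}_{\B^t}$ by $g$ evaluated at that comparison point plus a $\tfrac{\rho}{2}\|\cdot\|_2^2+\epsilon$ slack coming from Lemma~\ref{lemma:globallybounded}; this turns the surrogate value into $F(\breve{\x})$ plus terms quadratic in $\x^t-\breve{\x}$ and an additive $\epsilon$. Finally I would take $\E_{\B^t}[\cdot]$ and apply Lemma~\ref{lemma:bound:4}(i)--(vii) together with Lemma~\ref{lemma:iequality} to convert all block-wise quadratics into the $\H_+^t$-weighted distances $\breve{r}^t,\breve{r}^{t+1}$ and the gaps $\breve{q}^t,\breve{q}^{t+1}$; the right-hand sides of Lemma~\ref{lemma:bound:4}(iii)--(vii) are precisely calibrated so that the coefficients collapse to $\E_{\B^t}[\breve{r}^{t+1}]+\E_{\B^t}[\breve{q}^{t+1}]\leq(1+\tfrac{k\rho}{nc_1})\breve{r}^t+(1-\tfrac{k}{n})\breve{q}^t+\tfrac{k}{n}\epsilon$.

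\emph{Parts (ii) and (iii).} For (ii) I would choose $\breve{\x}$ to be the Euclidean projection of $\x^t$ onto the stationary set $\mathcal{X}$, so that Lemma~\ref{lemma:iequality} and Lemma~\ref{lemma:luo} give $\breve{r}^t\leq\tfrac{c_2}{2}\dist(\x^t,\mathcal{X})^2\leq\tfrac{c_2\delta^2}{2}\mathcal{R}(\x^t)^2$; bounding $\mathcal{R}(\x^t)^2$ by the expected one-step decrease through Theorem~\ref{theo:global}(i) then yields $\breve{r}^t\leq\kappa_0(\breve{q}^t-\E_{\B^t}[\breve{q}^{t+1}])$ with $\kappa_0=c_2\delta^2/\theta$. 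Substituting this into (i), discarding $\E_{\B^t}[\breve{r}^{t+1}]\geq0$ on the left, and solving the resulting scalar inequality for $\E_{\B^t}[\breve{q}^{t+1}]$ produces $\E_{\B^t}[\breve{q}^{t+1}]\leq\breve{\kappa}\,\breve{q}^t+(1-\breve{\kappa})\epsilon$, where the identity $1+\kappa_0(1+\tfrac{k\rho}{nc_1})=\kappa_2$ makes the contraction factor exactly $\breve{\kappa}=1-\tfrac{k}{n\kappa_2}$; taking total expectation over $\xi^t$ and unrolling the geometric recursion (using $(1-\breve{\kappa})\sum_{j\geq0}\breve{\kappa}^j=1$) gives $\E_{\xi^t}[\breve{q}^{t+1}]\leq\breve{\kappa}^{t+1}\breve{q}^0+\epsilon$. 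For (iii), since $\epsilon$ is a free parameter in Lemma~\ref{lemma:globallybounded}, I would set $\epsilon=\sigma/2$, which forces $\rho=2C^2/\sigma$ and hence replaces $\breve{\kappa}$ by $\breve{\kappa}'$ with $\kappa_3'$ as stated; the estimate from (ii) becomes $\E_{\xi^t}[\breve{q}^{t+1}]\leq(\breve{\kappa}')^{t+1}\breve{q}^0+\sigma/2$, and demanding $(\breve{\kappa}')^{t+1}\breve{q}^0\leq\sigma/2$, i.e. $t+1\geq\log_{\breve{\kappa}'}\tfrac{\sigma}{2\breve{q}^0}$ (legitimate because $\breve{\kappa}'<1$ and $\sigma\leq2\breve{q}^0$), completes the complexity bound.

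\emph{Main obstacle.} The delicate point is entirely inside Part (i): because $u$ is nonseparable, the raw restriction $(\breve{\x}-\x^t)_{\B^t}$ of the displacement to the block $\B^t$ generally fails to keep $\x^t+\U_{\B^t}(\breve{\x}-\x^t)_{\B^t}$ in $\Omega$, so the comparison direction fed into the strong-convexity inequality must be a feasibility-preserving surrogate of it (equivalently, one must route the argument through the normal-cone characterization of criticality of $\breve{\x}$ together with convexity of $\Omega$), and one has to verify that, after averaging over the uniformly random $\B^t$, the resulting correction terms are dominated by the quadratic terms already present and do not disturb the exact coefficients $1+\tfrac{k\rho}{nc_1}$, $1-\tfrac{k}{n}$, $\tfrac{k}{n}\epsilon$. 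A secondary but fiddly task is keeping the random-block constants $c_1,c_2$, the metric $\H_+^t$, and the $\epsilon$-slack consistent across the averaging so that the algebra collapses precisely to $\kappa_0$, $\kappa_2$, $\breve{\kappa}$ rather than to constants of merely the same order.
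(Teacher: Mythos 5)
Your parts (ii) and (iii) follow the paper's argument essentially verbatim: the Luo--Tseng bound gives $\breve{r}^t\leq\kappa_0(\breve{q}^t-\E_{\B^t}[\breve{q}^{t+1}])$, substituting into (i) and discarding $\E_{\B^t}[\breve{r}^{t+1}]\geq 0$ yields the contraction with factor $\breve{\kappa}$, and the choice $\epsilon=\sigma/2$ converts the asymptotic bias into the iteration-complexity statement (modulo a factor-of-two bookkeeping: Lemma \ref{lemma:globallybounded} gives $\rho=2C^2/\epsilon=4C^2/\sigma$, not the $2C^2/\sigma$ you wrote). The problem is Part (i), which you yourself call the crux and then leave unresolved. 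Your route --- strong convexity of $\mathcal{N}_{\B^t}(\x^t,\cdot)$ plus a function-value comparison against a displacement of $\x^t$ toward $\breve{\x}$ restricted to the block $\B^t$ --- founders exactly where you say it does: for a nonseparable constraint there is in general no feasibility-preserving surrogate of $(\breve{\x}-\x^t)_{\B^t}$ whose correction terms provably average out to the exact coefficients $1+\tfrac{k\rho}{nc_1}$, $1-\tfrac{k}{n}$, $\tfrac{k}{n}\epsilon$, and you offer no construction. As written, the proof of (i) is a plan with a hole at its center.

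The paper's proof never needs such a comparison point. It starts from the first-order optimality condition of the BCD-l-$k$ subproblem, $\mathbf{0}_k\in[\nabla f(\x^t)]_{\B}+(\Q_{\B\B}+\theta\I_k)(\x^{t+1}-\x^t)_{\B}+[\partial h(\x^{t+1})]_{\B}+[\partial g(\x^{t})]_{\B}+[\partial\mathcal{I}_{\Omega}(\x^{t+1})]_{\B}$, expands $\E_{\B^t}[\tfrac{1}{2}\|\x^{t+1}-\breve{\x}\|_{\H_+^t}^2]-\tfrac{1}{2}\|\x^{t}-\breve{\x}\|_{\H_+^t}^2$ with the three-point (Pythagoras) identity so that the term $(\Q_{\B\B}+\theta\I_k)(\x^{t+1}-\x^t)_{\B}$ is exactly the negative of the remaining subgradient sum, and then eliminates the indicator term via $\la\breve{\x}-\x^{t+1},\partial\mathcal{I}_{\Omega}(\x^{t+1})\ra\leq\mathcal{I}_{\Omega}(\breve{\x})-\mathcal{I}_{\Omega}(\x^{t+1})=0$, which uses only convexity of $\Omega$ and the memberships $\breve{\x}\in\Omega$, $\x^{t+1}\in\Omega$; no block-restricted feasible direction ever appears, and in fact the criticality of $\breve{\x}$ plays no role in (i) beyond its feasibility. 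The remaining inner products $\la\breve{\x}-\x^{t+1},\nabla f(\x^t)+\partial h(\x^{t+1})\ra$ and $\la\breve{\x}-\x^{t+1},\partial g(\x^t)\ra$ are bounded by convexity of $f,h$ (Lemma \ref{lemma:boundedterm} with $\w=\breve{\x}$) and by the globally $\rho$-$\epsilon$ bounded nonconvexity of $g$ together with Lemma \ref{lemma:bound:4}(vi), after which $c_1\|\cdot\|_2^2\leq\|\cdot\|_{\H_+^t}^2$ converts $\tfrac{k\rho}{2n}\|\breve{\x}-\x^t\|_2^2$ into $\tfrac{k\rho}{nc_1}\breve{r}^t$. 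Replace your strong-convexity comparison with this optimality-condition/Pythagoras argument and the rest of your outline goes through.
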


	\noi \textbf{Remarks.} \textbf{(i)} This work represents the first investigation into the convergence rate of the BCD methods for the specific class of problems described in Problem (\ref{problem original}). Our analysis provides novel insights into the methods' efficiency in this particular setting. \textbf{(ii)} Unlike prior research on coordinate descent method \citep{yuan2023coordinate}, which typically require the smoothness of the function $g(\x)$, our proposed methods only require the continuity of $g(\x)$. This relaxation of assumptions expands the applicability of the method to a broader range of non-smooth problems. 
	
%

	\section{Global Solutions or Local Solutions for Four Applications} \label{sec:block2}
	In this section, we first outline the detailed breakpoint searching methods used in the BCD-g method for obtaining global solutions when $k=2$ across the four applications discussed in Section \ref{sec:app}. With the intricate design, the subproblems (\ref{plb:subproblem}) in four applications can be solve globally. Then we introduce the details on BCD-l-$k$ method when $k \geq 2$ for solving the subproblem (\ref{plb:subproblemlocal}) locally in sparse index tracking task. Considering the feasible set $\Omega = \{\x | u(\x) = 0\}$, our methods concentrate on generating an optimized step size $\bar{\d}_{\B^t} \in \R^{k}$, which leads to the updated feasible solution $\x^{t+1} = \x^t + \U_{\B^t}\bar{\d}^t_{\B^t}$. Since we focus on the $t$-th iteration, we use $i$, $j$, $r$ and $\B$ instead of $i^t$, $j^t$, $r^t$ and $\B^t$ to simplify notation. With $\r = \UB\d_{\B}$ and $\bar{\Q} = \Q + \theta \I_n$, Problem (\ref{plb:subproblem}) can be reformulated as following optimization problem: $\min_{\d_{\B}} \, \mathcal{J}(\d_{\B}) \triangleq \tfrac{1}{2} \|\r\|_{\bar{\Q}}^2 + \la \r , \nabla f(\x^t) \ra + h(\x^t + \r) + g(\x^t +\r),\ s.t.\ \x^t + \r  \in \Omega$.

	\subsection{Global Solutions for Sparse Index Tracking When $k = 2$}
	The sparse index tracking problem (\ref{plb:app:topk}) is presented as follows: $\min_{\x \in \mathbb{R}^n} \tfrac{1}{2}\|\A\x - \y\|_2^2 +  \mathcal{I}_{\geq 0}(\x)- \lambda \|\x\|_{[s]},\ s.t.\ \|\x\|_1 = 1$. With given two indices $i$ and $j$, we have: $\r = \d_ie_i + \d_je_j$. Letting $f(\x) = \tfrac{1}{2}\|\A\x - \y\|_2^2$, , Problem (\ref{plb:app:topk}) is equivalent to the following problem: 
	\beq
	\min_{\d_{i},\d_{j}} \mathcal{J}(\d_{i},\d_{j}) \triangleq \tfrac{1}{2} \| \r\|_{\bar{\Q}}^2 + \la \nabla f(\x^t), \r\ra - \lambda \| \x^t + \r\|_{[s]},\label{plb:Jfunction:index}\\
	\ s.t.\ \d_{i} \geq -\x_{i}^t,~\d_{j} \geq -\x_{j}^t,~\d_{i} + \d_{j} = 0,\nn
	\eeq
	where $\bar{\Q} = \A^{\top}\A + \theta \I_n$. Using the relation $\d_{i} + \d_{j} = 0$, we transform the Problem (\ref{plb:Jfunction:index}) into a one-variable problem by minimizing only over $\eta$:
	\beq
	\min_{\eta} \mathcal{P} (\eta) \triangleq \mathcal{J} (\eta,-\eta) = \tfrac{1}{2} \alpha \eta^2 + \beta \eta - \lambda \|\x^t + \eta e_{i} - \eta e_{j}\|_{[s]},\ s.t.\ -\x^t_{i} \leq \eta \leq \x^t_{j}, \label{eq:subproblem2}
	\eeq
	\noi where $\alpha\triangleq \bar{\Q}_{ii} + \bar{\Q}_{jj} - 2\bar{\Q}_{ij}$ and $\beta\triangleq \nabla_{i} f(\x^t) + \nabla_{j} f(\x^t)$. By converting the block-2 coordinate descent problem into a block-1 coordinate descent problem, it becomes more straightforward to minimize $ \mathcal{P}(\eta)$ and design the breakpoint searching strategy for Problem (\ref{eq:subproblem2}).
	
	\noi$\bullet$ \textbf{Breakingpoint Searching Strategy.}  
	This part presents a novel breakpoint searching strategy designed to efficiently and directly solve the subproblem (\ref{eq:subproblem2}). This strategy begins by identifying four distinct situations and then locates five potential breakpoints. Finally, it chooses the breakpoint that minimizes $\mathcal{P}(\eta)$ as the optimal solution $\bar{\eta}^t$. The five breakpoints, corresponding to the four situations, are described in detail as follows:

	\begin{enumerate}[label=\textbf{(\roman*)}, itemsep=1pt, topsep=1pt, parsep=0pt, partopsep=0pt]
		\item While $-\x^t_{i} \leq \eta \leq \x^t_{j}$, the first breakpoint and the second breakpoint are $\eta_1$ = $-\x^t_{i}$ and $\eta_2$ = $\x^t_{j}$.
		\item When $\x^t_i + \eta e_{i}$ and $\x^t_j - \eta e_{j}$ are both in top-$s$ set or both not in top-$s$ set, $ \mathcal{P}(\eta)=\tfrac{1}{2} \alpha\eta ^2 + \beta \eta $, the third breakpoint is $\eta_3$ = $-\beta\ /\ \alpha$.
		\item When $\x^t_i + \eta e_{i}$ and is in top-$s$ set but $\x^t_j - \eta e_{j}$ is not in top-$s$ set, $ \mathcal{P}(\eta)=\tfrac{1}{2} \alpha \eta ^2 + \beta \eta - \lambda(\x^t_i + \eta e_{i})$, the fourth breakpoint is $\eta_4$ = $(\lambda - \beta) / \alpha$.
		\item When $\x^t_i + \eta e_{i}$ and is not in top-$s$ set but $\x^t_j -  \eta e_{j}$ is in top-$s$ set, $ \mathcal{P}(\eta)=\tfrac{1}{2} \alpha\eta^2 + \beta\eta - \lambda (\x^t_j - \eta e_{j})$, the fifth breakpoint is $\eta_5$ = $-(\lambda + \beta) / \alpha$.
	\end{enumerate}

	\noi The optimal step size $\bar{\eta}^t$ in $t$ iteration is generated as follows: $\bar{\eta}^t \in \arg\min_{r} \mathcal{P}(r)$, where $r\in\{\eta_1,\eta_2,\mathbb{P}_1(\eta_3),\mathbb{P}_1(\eta_4),\mathbb{P}_1(\eta_5)\}$ and $\mathbb{P}_1(t) \triangleq \max(-\x^t_i,\min(t,\x^t_j))$. Ultimately, $\x^{t+1}$ is updated by $\x^{t+1} = \x^t + \d_{i}e_i + \d_{j}e_j$, with $\d_{i} = \bar{\eta}^t$ and $\d_{j} = -\bar{\eta}^t$.

	\subsection{Global Solutions for Non-negative Sparse PCA When $k = 2$}
	We recap the non-negative sparse PCA problem (\ref{plb:app:nnspca2}): $\min_{\x \in \mathbb{R}^n}\,\tfrac{1}{2}\x^{\top}\hat{\Q}\x + \mathcal{I}_{\geq 0}(\x) + \lambda(\x^{\top}\mathbf{1}_n  - \|\x\|_{[s]},\ s.t.\ \|\x\|^2_2=1$. When $f(\x) = \tfrac{1}{2}\x^{\top}\hat{\Q}\x$, we have following problem with two given indices $i$ and $j$: 
	\beq
	\min_{\d_{i},\d_{j}} \mathcal{J}(\d_{i},\d_{j}) \triangleq \tfrac{1}{2}\|\r\|^2_{\bar{\Q}} + \la (\nabla f(\x^t) + \lambda\mathbf{1}_n), \r\ra - \lambda\|\x^t + \r\|_{[s]},\label{plb:Jfunction:nnspca}\\
	\ s.t.\ (\d_{i}+\x^t_{i})^2 + (\d_{j}+\x^t_{j})^2 = v^2,\nn
	\eeq
	where $\bar{\Q} = \hat{\Q} + \theta \I_n$ and $v^2 = 1 - (\x^t_{i})^2 + (\x^t_{j})^2$. Letting $v\sin(\alpha) = \d_{i}+\x^t_{i}$, and $v\cos(\alpha) = \d_{j}+\x^t_{j}$, we convert the two-variable optimization problem (\ref{plb:Jfunction:nnspca}) into a one-variable optimization problem by minimizing over $\alpha$:
	\beq
	\min_{\alpha}\mathcal{Q}(\alpha) \triangleq \tfrac{1}{2}\|\r\|^2_{\bar{\Q}} + \la (\nabla f(\x^t) + \lambda\mathbf{1}_n), \r\ra - \lambda\|\x^t + \r\|_{[s]},\ s.t.\ \alpha \in [0,\tfrac{\pi}{2}],\label{plb:alpha}
	\eeq
	\noi where $\r =  (v\sin(\alpha) - \x^t_{i})e_{i} + (v\cos(\alpha) - \x^t_{j})e_{j}$. Furthermore, Problem (\ref{plb:alpha}) can be represented as follows:
	\beq
	\min_{\alpha \in [0,\tfrac{\pi}{2}]}\mathcal{Q}(\alpha) =  a \cos^2(\alpha) + b \sin(\alpha) + c \cos(\alpha) + d \sin(\alpha)\cos(\alpha) - \lambda\|\x^t + \r\|_{[s]},  \label{plb:nnspca:break}
	\eeq where $a = \tfrac{1}{2}(\bar{\Q}_{jj}-\bar{\Q}_{ii})v^2$, $b = ([\nabla f(\x^t) + \lambda\mathbf{1}_n]_{i} - \bar{\Q}_{ii}\x^t_{i} - \bar{\Q}_{ij}\x^t_{j})v$, $c = ([\nabla f(\x^t) + \lambda\mathbf{1}_n]_{j} - \bar{\Q}_{ij}\x^t_{i} - \bar{\Q}_{jj}\x^t_{j})v$, $d = \bar{\Q}_{ij}v^2 $.

	\noi$\bullet$ \textbf{Breakpoint Searching Strategy.}  In this part, we introduce the breakpoint searching strategy for solving  Problem (\ref{plb:nnspca:break}). This strategy is designed to identify all breakpoints across five situations. During this strategy, term $- \lambda\|\x^t + \r\|_{[s]}$ in $\mathcal{Q}(\alpha)$ is discussed in the first four situations.
	
	\begin{enumerate}[label=\textbf{(\roman*)}, itemsep=1pt, topsep=1pt, parsep=0pt, partopsep=0pt]
		\item When $\x^t_{i} + \r_{i}$ and $\x^t_{j} + \r_{j}$ are both not in top-$s$ set, then $- \lambda\|\x^t + \r\|_{[s]} = - \lambda\|\x^t\|_{[s]}$ and $\mathcal{Q}(\alpha) = a \cos^2(\alpha) + b \sin(\alpha) + c \cos(\alpha) + d \sin(\alpha)\cos(\alpha) - \lambda\|\x^t\|_{[s]}$. The first breakpoint is $\alpha_1 \in \arg\min_{\alpha \in [0,\tfrac{\pi}{2}] } \mathcal{Q}(\alpha)$.
		
		\item When $\x^t_{i} + \r_{i}$ and $\x^t_{j} + \r_{j}$ are both in top-$s$ set, then $- \lambda\|\x^t + \r\|_{[s]} = - \lambda v \sin(\alpha) - \lambda v \cos(\alpha) - \lambda\|\x^t\|_{[s]} $ and $\mathcal{Q}(\alpha) = a \cos^2(\alpha) + (b - \lambda v) \sin(\alpha) + (c - \lambda v) \cos(\alpha) + d \sin(\alpha)\cos(\alpha) - \lambda\|\x^t\|_{[s]}$. The second breakpoint is $\alpha_2 \in \arg\min_{\alpha \in [0,\tfrac{\pi}{2}] }  \mathcal{Q}(\alpha)$.
		
		\item When $\x^t_{i} + \r_{i}$ and is in top-$s$ set but $\x^t_{j} + \r_{j}$ is not in top-$s$ set, then $- \lambda\|\x^t + \r\|_{[s]} = - \lambda v \sin(\alpha) - \lambda\|\x^t\|_{[s]}$ and $\mathcal{Q}(\alpha) = a \cos^2(\alpha) + (b - \lambda v) \sin(\alpha) + c \cos(\alpha) + d \sin(\alpha)\cos(\alpha) - \lambda\|\x^t\|_{[s]}$. The third breakpoint is $\alpha_3 \in \arg\min_{\alpha \in [0,\tfrac{\pi}{2}] }  \mathcal{Q}(\alpha)$.
		
		\item When $\x^t_{i} + \r_{i}$ and is not in top-$s$ set but $\x^t_{j} + \r_{j}$ is in top-$s$ set, then $- \lambda\|\x^t + \r\|_{[s]} =  - \lambda v \cos(\alpha) - \lambda\|\x^t\|_{[s]}$ and $\mathcal{Q}(\alpha) = a \cos^2(\alpha) + b \sin(\alpha) + (c - \lambda v) \cos(\alpha) + d \sin(\alpha)\cos(\alpha) - \lambda\|\x^t\|_{[s]}$. The fourth breakpoint is $\alpha_4 \in \arg\min_{\alpha \in [0,\tfrac{\pi}{2}] }  \mathcal{Q}(\alpha)$.
		
		\item While $\d_{i}+\x^t_{i} \geq 0$ and $\d_{j}+\x^t_{j} \geq 0$, the fifth breakpoint and the sixth breakpoint are $\alpha_5$ = $\arcsin(0) = 0$ and $\alpha_6$ = $\arccos(0) =\pi / 2$.
	\end{enumerate}

	To solve the subproblem $\min_{\alpha} \mathcal{Q}(\alpha)$ in each first four situations (situation (\textbf{i}) to (\textbf{iv})), we reformulate the Problem (\ref{plb:nnspca:break}) into an equivalent problem $\min_{\alpha \in [0,\tfrac{\pi}{2}] } \mathcal{P}(\alpha) \triangleq a\cos^2(\alpha) + \hat{b} \sin(\alpha) + \hat{c} \cos(\alpha) + d \sin(\alpha)\cos(\alpha)$, where $\hat{b} \triangleq b $ or $b - \lambda v$ and $\hat{c} \triangleq c $ or $c - \lambda v$ according to the situation discussed above. Using the substitution $\beta \triangleq \tan(\alpha)$ with $\beta \geq 0$, we convert the problem $\min_{\alpha \in [0,\tfrac{\pi}{2}] } \mathcal{P}(\alpha)$ into $\min_{\beta \geq 0} \mathcal{P}(\beta) \triangleq \tfrac{a + d\beta}{1+\beta^2} + \tfrac{\hat{c} + \hat{b} \beta}{\sqrt{1+\beta^2}}$. This substitution is based on the trigonometric identities that $\cos(\alpha) = \tfrac{1}{\sqrt{1+\tan^2(\alpha)}}$ and $\sin(\alpha) = \tfrac{ \tan(\alpha)}{\sqrt{1+\tan^2(\alpha)}}$. Setting the gradient $\nabla \mathcal{P}(\beta)$ to zero yields: $\tfrac{[d\beta^o - 2\beta(a+d\beta)] + \sqrt{\beta^o}[\hat{b}\beta^o - \beta(\hat{c}+\hat{b}\beta)]}{\beta^o} = 0 \Rightarrow d\beta^o - 2\beta(a+d\beta)  \overset{\step{1}}{=} (\hat{c}\beta - \hat{b}) \sqrt{\beta^o} \Rightarrow [d\beta^o - 2\beta(a+d\beta)]^2 \overset{\step{2}}{=}  (\hat{c}\beta - \hat{b})^2\beta^o \Rightarrow c_4\beta^4 + c_3\beta^3 + c_2\beta^2 + c_1\beta +c_0 \overset{\step{3}}{=} 0$, where step $\step{1}$ uses the definition $\beta^o \triangleq 1 + \beta^2$; step \step{2} uses the fact that $\beta^o \geq 0$; step $\step{3}$ uses $c_4 = d^2-\hat{c}^2$, $c_3 = 4ad+2\hat{b}\hat{c}$, $c_2 = 4a^2-2d^2-\hat{b}^2-\hat{c}^2$, $c_1 = 2\hat{b}\hat{c} - 4ad$, $c_0 = d^2 - \hat{b}^2$.
	By applying Lodovico Ferrari’s method to solve the equation analytically, we obtain all its real non-negative roots $\{\mathbb{P}_2(\beta_1), \mathbb{P}_2(\beta_2),..., \mathbb{P}_2(\beta_j)\}$, where $1\leq j \leq 4$ and $\mathbb{P}_2(t) \triangleq \max(0,t)$. Then we select the root that leads the smallest objective value of $\mathcal{P}(\alpha)$. Consequently, the optimal solution $\alpha_i$ with $1\leq i \leq 4$ in each first four situation is determined as: $\alpha_i \in \argmin_{\alpha}  \mathcal{P}(\alpha),~\text{where}\ \alpha\in \{\arctan(\mathbb{P}_2(\beta_1)),\arctan(\mathbb{P}_2(\beta_2)),...,\arctan(\mathbb{P}_2(\beta_j))\}$. 
	
	Finally, the optimal solution $\bar{\alpha}$ to Problem (\ref{plb:alpha}) is generated as: $\bar{\alpha} \in \argmin_{\alpha} \mathcal{Q}(\alpha)$, where $\alpha \in \{\alpha_1,\alpha_2,\alpha_3,\alpha_4,\alpha_5,\alpha_6\}$. The step sizes $\d_{i}$ and $\d_{j}$ are determined by $\d_{i} = v\sin(\bar{\alpha}) - \x^t_{i},\ \d_{j} = v\cos(\bar{\alpha}) - \x^t_{j}$. We update $\x^{t+1}$ with $\x^{t+1} = \x^{t} + \d_{i}e_{i} + \d_{j}e_{j}$.

	\subsection{Global Solutions for DC Penalized Binary Optimization When $k = 2$}
	
	We review the DC penalized binary optimization problem (\ref{plb:app:bin2}) as follows: $\min_{\x \in \R^n}  \p^{\top}\x + \mathcal{I}_{[-1,1]}(\x) - \tfrac{1}{2}\x^{\top}\hat{\Q}\x,\ s.t.\ \x^{\top}\textbf{1} = c$. Denoting $\hat{f}(\x) = \p^{\top}\x - \tfrac{1}{2}\x^{\top}\hat{\Q}\x$, we have: 
	\beq
	\min_{\d_{i},\d_{j}} \mathcal{J}(\d_{i},\d_{j}) \triangleq \tfrac{1}{2}\|\r\|^2_{\bar{\Q}} + \la \nabla \hat{f}(\x^t) , \r \ra,\ s.t.\ \d_{i}+\d_{j} = 0,\ l_1 \leq \d_{i} \leq u_1,\ l_2 \leq \d_{j} \leq u_2, \label{plb:Jfunction:bin1}
	\eeq 
	where $\bar{\Q} = -\hat{\Q} + \theta \I_n$, $l_1 = -1 - \x^t_{i}$, $l_2 = -1 - \x^t_{j}$, $u_1 = 1- \x^t_{i}$ and $u_2 = 1- \x^t_{j}$. Using the relation $\d_{i} + \d_{j} = 0$, we transform Problem (\ref{plb:Jfunction:bin1}) into only minimizing over $\eta$:
	\beq
	\min_{\eta} \mathcal{P} (\eta) \triangleq \mathcal{J} (\eta,-\eta) = \tfrac{1}{2} \alpha \eta^2 + \beta \eta,\ s.t.\ l \leq \eta \leq u, \label{plb:bin:sub3}
	\eeq
	\noi where $\alpha =  \bar{\Q}_{ii} + \bar{\Q}_{jj} - 2\bar{\Q}_{ij}$, $\beta = \nabla_{i} \hat{f}(\x^t) - \nabla_{j} \hat{f}(\x^t)$, $l = \max(l_1,-u_2)$ and $u = \min(u_2,-l_2)$. Therefore, Problem (\ref{plb:bin:sub3}) contains 3 breakpoints $\{l,u,\mathbb{P}_3 (-\tfrac{\beta}{\alpha})\}$, where $\mathbb{P}_3(t) \triangleq \max(l,\min(t,u))$. The complexity of solving Problem (\ref{plb:bin:sub3}) is $\mathcal{O}(1)$. Then we pick one of the breakpoints leading $\mathcal{P} (\eta)$ the least as $\bar{\eta}^t$. Finally, $\x^{t+1}$ is generated as $\x^{t+1} = \x^t + \d_{i}e_i + \d_{j}e_j$, with $\d_{i} = \bar{\eta}^t$ and $\d_{j} = -\bar{\eta}^t$.

	We now discuss another variational reformulation of DC penalized binary optimization problem (\ref{plb:app:bin3}): $\min_{\x \in \R^n} \tfrac{1}{2} \|\A\x - \y\|_2^2 + \mathcal{I}_{[-1,1]}(\x) - \lambda\|\x\|_2,\ s.t.\  \x^{\top}\textbf{1} = c$. With $\bar{\Q} = \A^{\top}\A + \theta \I_n$, we let $f(\x) = \tfrac{1}{2} \|\A\x - \y\|_2^2$ and define $\mathcal{H}(\d_{i},\d_{j})  \triangleq   \tfrac{1}{2}\|\r\|^2_{\bar{\Q}} + \nabla_{i} f(\x^t)\d_{i} + \nabla_{j} f(\x^t) \d_{j}$ and $\mathcal{G}(\d_{i},\d_{j}) \triangleq \sqrt{\|\x^t\|_2^2 + \d_{i}^2 + 2\d_{i} \x^t_{i} + \d_{j}^2 + 2\d_{j}\x^t_{j}}$. Then we have following problem: 
	\beq
	\min_{\d_{i},\d_{j}} \mathcal{J}(\d_{i},\d_{j}) \triangleq \mathcal{H}(\d_{i},\d_{j}) - \lambda \mathcal{G}(\d_{i},\d_{j}) ,\ s.t.\ \d_{i}+\d_{j} = 0,\ l_1 \leq \d_{i} \leq u_1,\ l_2 \leq \d_{j} \leq u_2,\label{plb:Jfunction:bin2}
	\eeq 
	where $l_1 = -1 - \x^t_{i}$, $l_2 = -1 - \x^t_{j}$, $u_1 = 1- \x^t_{i}$ and $u_2 = 1- \x^t_{j}$. Using the relation $\d_{i} + \d_{j} = 0$, we convert Problem (\ref{plb:Jfunction:bin2}) into:
	\beq
	\min_{\eta} \mathcal{P} (\eta) \triangleq \mathcal{J} (\eta,-\eta) = \tfrac{1}{2} \alpha \eta^2 + \beta \eta  - \lambda \sqrt{\|\x^t\|_2^2 + 2\eta^2 + 2\x^t_{i}\eta - 2\x^t_{j}\eta},\ s.t.\ l \leq \eta \leq u, \label{plb:bin:sub4}
	\eeq
	where $\alpha = \bar{\Q}_{ii} + \bar{\Q}_{jj} - 2\bar{\Q}_{ij}$, $\beta = \nabla_{i} f(\x^t) - \nabla_{j} f(\x^t)$, $l = \max(l_1,-u_2)$ and $u = \min(u_1,-l_2)$.

	\noi$\bullet$ \textbf{Breakpoint Searching Strategy.} We outline all breakpoints for Problem (\ref{plb:bin:sub4}) during 3 situations presented as following. 
	
	\begin{enumerate}[label=\textbf{(\roman*)}, itemsep=1pt, topsep=1pt, parsep=0pt, partopsep=0pt]
		\item Since $l \leq \eta \leq u$, the first and second breakpoints are $\eta_1 = l$ and $\eta_2 = u$.
		
		\item When $\|\x^t\|_2^2 + 2\eta^2 + 2\x^t_{i}\eta - 2\x^t_{j}\eta = 0$, the third breakpoint is $\eta_3 = -\tfrac{\beta}{\alpha}$.
		
		\item When $\|\x^t\|_2^2 + 2\eta^2 + 2\x^t_{i}\eta - 2\x^t_{j}\eta \neq 0$, we set the gradient $\nabla \mathcal{P} (\eta) = 0$. This leads to the equation: $\alpha \eta + \beta = \lambda(4\eta + 2\x^t_{i} - 2\x^t_{j}) \cdot (\|\x^t\|_2^2 + 2\eta^2 + 2\x^t_{i}\eta - 2\x^t_{j}\eta)^{-\tfrac{1}{2}}$. Squaring both sides, we obtain a quartic polynomial: $(\alpha \eta + \beta)^2 = \lambda^2(4\eta + 2\x^t_{i} - 2\x^t_{j})^2 \cdot (\|\x^t\|_2^2 + 2\eta^2 + 2\x^t_{i}\eta - 2\x^t_{j}\eta)^{-1}$. By analytically solving this quartic equation using Lodovico Ferrari’s method, we find all its real roots $\{\mathbb{P}_3(\eta_4),...,\mathbb{P}_3(\eta_j)\}$ with $4 \leq j \leq 7$.
	\end{enumerate}
	Therefore, there are at most (2 + 1 + 4) breakpoints. Then $\bar{\eta}^t$ is determined as follows: $\bar{\eta}^t \in \arg\min_{r}  \mathcal{P}(r)~\text{where}\ r \in \{\eta_1,\eta_2,\eta_3,\mathbb{P}_3(\eta_4),\mathbb{P}_3(\eta_5),\mathbb{P}_3(\eta_6),\mathbb{P}_3(\eta_7)\}$. Finally, we update $\x^{t+1}$ using the formula: $\x^{t+1} = \x^t + \d_{i}e_i + \d_{j}e_j$, with $\d_{i} = \bar{\eta}^t$ and $\d_{j} = -\bar{\eta}^t$.

	\subsection{Local Solutions for Sparse Index Tracking when $k \geq 2$}
	With given a working set $\B \in \mathbb{N}^k$, the sparse index tracking problem (\ref{plb:app:topk}) is reformulated as following problem: 
	\beq
	\min_{\d_{\B} \in \mathbb{R}^k} \mathcal{J}(\d_{\B}) \triangleq \tfrac{1}{2} \| \r\|_{\bar{\Q}}^2 + \la \nabla f(\x^t), \r\ra - \lambda \| \x^t + \r\|_{[s]},\ s.t.\ \d_{\B} \geq -\x_{\B}^t,\ \d_{\B}^{\top}\mathbf{1} = 0, \label{plb:blockkJfunction:index}
	\eeq
	where $f(\x) = \tfrac{1}{2}\|\A\x - \y\|$ and $\bar{\Q} = \A^{\top}\A + \theta \I_n$. In this subproblem, for any coordinate $i$ in $\B$, every variable $[\x^t_{\B} + \d_{\B}]_{i}$ in working set $\B$ has two possible states: either $[\x^t_{\B} + \d_{\B}]_{i}$ belongs to the top-$s$ set or it does not. We define the variables both in top-$s$ set and working set $\B$ as $\S$, while the remaining variables of $\B$ form the set $\Z$. Consequently, Problem (\ref{plb:blockkJfunction:index}) can be converted into:
	\beq
	\min_{\d_{\B} \in \mathbb{R}^k} \mathcal{J}_{\S}(\d_{\B}) \triangleq \tfrac{1}{2} \| \d_{\B}\|_{\bar{\Q}_{\B\B}}^2 + \la [\nabla f(\x^t)]_{\B}, \d_{\B} \ra - \lambda \| \x^t_{\S} + \d_{\S}\|_{[s]},\ s.t.\ \d_{\B} \geq -\x_{\B}^t,\ \d_{\B}^{\top}\mathbf{1} = 0. \label{plb:blockkJfunction:index2}
	\eeq
	The subgradient of the above objective function (\ref{plb:blockkJfunction:index2}) is given by: $\partial \mathcal{J}_{\S}(\d_{\B}) = \bar{\Q}_{\B\B} \d_{\B} + [\nabla f(\x^t)]_{\B} - \lambda \mathbf{1}_{\S}$, where $\mathbf{1}_{\S} \in \mathbb{N}^k$ is defied as: $
	[\mathbf{1}_{\S}]_{i} = \left\{
	\begin{aligned}
		&1, && i \in \S; \\
		&0, && i \in \Z.
	\end{aligned}
	\right.
	$. Using the subgradient, we can solve the subproblem (\ref{plb:blockkJfunction:index2}) to obtain a local minimum as one of the breakpoints by other methods such as projected subgradient method or proximal gradient method. Since there are $2^k$ states for step size $\d_{\B}$, we have $2^k$ candidate breakpoints for $\mathcal{J}(\d_{\B})$. Finally, we select the solution that results in the lowest objective value as the optimal step size $\bar{\d}_{\B} \in \argmin_{\d_{\B}} \mathcal{J}(\d_{\B})$ and update $\x^{t+1}$ by $\x^{t+1} = \x^{t} + \UB\bar{\d}_{\B}$.

	\section{Semi-greedy Strategies for Working Set Selection}\label{sec:greedy}
	In this section, we introduce two novel semi-greedy strategies designed to improve the computational efficiency for BCD-g method in sparse index tracking task (\ref{plb:app:topk}) and non-negative sparse PCA problem (\ref{plb:app:nnspca2}) when $k = 2$. Given that there are $n(n-1)$ possible coordinate pair combinations, traditional cyclic or random sampling approaches become infeasible as $n$ increases. The semi-greedy strategies we propose combine random and greedy selection methods. These strategies are heuristic methods for selecting the coordinate pairs that are expected to result in the largest decrease in the objective function.  While they also ensuring that every index has the potential to be selected through random sampling.
	
	\subsection{Semi-greedy Strategies for Sparse Index Tracking Task} 
	The sparse index tracking problem (\ref{plb:app:topk}) can be formulated as: $\min_{\x \in \Omega_1} F(\x) = f(\x) +h(\x) + g(\x)$, where $f(\x) = \tfrac{1}{2}\|\A\x - \y\|_2^2$, $h(\x) = \mathcal{I}_{\geq 0}(\x)$, $g(\x) = - \lambda\|\x\|_{[s]}$, and $\Omega_1 = \{\x | \|\x\|_1 = 1\}$. We define the gradient $\nabla f (\x) + \partial h(\x) + \partial g(\x)$ at $\x$, $\x^t$ and $\breve{\x}$ as $\textbf{g}$, $\textbf{g}^t$ and $\breve{\textbf{g}}$. We consider the following optimality measure: $S_1(i) \triangleq \max_{i \neq \tilde{j}}\{\sqrt{L_{i,\tilde{j}}} \min\{\tfrac{1}{L_{i,\tilde{j}}} (\textbf{g}^t_i - \textbf{g}^t_{\tilde{j}}),\x^t_i\}$, where $S_1(i) \in \R$ and the index $\tilde{j}$ is defined as $\tilde{j} \in \argmin_{j = 1,2,...,n}\textbf{g}_j^t$ \citep{beck20142}. The measure $S_1(i)$ is non-negative and equal to $0$ only at critical points. Based on the measure $S_1(i)$, we propose the following greedy index selection strategy. The index $j^t$ corresponds to the variable with the smallest gradient value in $\textbf{g}^t$: $j^t = \argmin_{j} \textbf{g}^t_j$, while the index $i^t$ is chosen to maximize $S_1(i)$: $i^t = \argmax_{i \neq j^t} S_1(i)$. These two chosen indices are considered as the coordinate pair that leads to the largest violation of optimality concerning the optimality measure $S_1(i)$. Therefore, we propose a semi-greedy index selection strategy, which combines greedy selection with random selection, as outlined in Algorithm \ref{alg:semigreedyindex}.
	
	
	\begin{algorithm} [!h]
	\DontPrintSemicolon
	\caption{Semi-greedy Index Selection Strategy for Sparse Index Tracking.}\label{alg:semigreedyindex}
	Give an current feasible solution $\x^t$ and the gradient $\textbf{g}^t$. \\
	\If{$t$ is odd}{
		$i^t$ and $j^t$ are selected randomly.\\
	}
	\Else{
		$j^t = \argmin_{j} \textbf{g}^t_j$\\
		$i^t = \argmax_{i}S_1(i) = \max_{i \neq j^t}\{\sqrt{L_{i,j^t}} \min\{\tfrac{1}{L_{i,j^t}} (\textbf{g}^t_i - \textbf{g}^t_{j^t}),\x^t_i\}$\\
	}
	
	\end{algorithm}

	\subsection{Semi-greedy Strategies for Non-negative Sparse PCA Problem}
	For the non-negative sparse PCA problem, we also propose a semi-greedy strategy to accelerate the BCD-g method. The non-negative sparse PCA problem (\ref{plb:app:nnspca2}) can be rewritten as: 
	\beq
	\min_{\x \in \Omega_2} F(\x) = f(\x) + h(\x) + g(\x), \label{pbl:greedy:nnpca}
	\eeq
	\noi where $f(\x) = -\tfrac{1}{2}\|\A\x\|_2^2$, $h(\x) = \lambda ( \mathcal{I}_{\geq 0}(\x) + \x^{\top}\mathbf{1}^n)$, $g(\x) = - \lambda\|\x\|_{[s]}$, and $\Omega_2 = \{\x | \x^{\top}\x = 1\}$.
	\begin{theorem} \label{theo: greedy2}
		\noi When $\breve{\x}$ is a critical point in Problem (\ref{pbl:greedy:nnpca}), it holds: $\breve{\textbf{g}}\odot\breve{\x} = (\breve{\x}^{\top} \breve{\textbf{g}}\breve{\x})\odot\breve{\x}$, where $\breve{\textbf{g}}$ is denoted as the gradient $ \nabla f(\breve{\x}) + \partial h(\breve{\x}) + \partial g(\breve{\x})$ at $\breve{\x}$.
	\end{theorem}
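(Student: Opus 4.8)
The plan is to derive the stated identity directly from the critical point condition for Problem (\ref{pbl:greedy:nnpca}), exploiting the fact that the feasible set $\Omega_2 = \{\x \mid \x^\top\x = 1\}$ is the unit sphere, whose normal cone at $\breve{\x}$ is the line $\{2\mu\breve{\x} \mid \mu \in \R\}$. First I would write out what it means for $\breve{\x}$ to be a critical point: $\zero_n \in \nabla f(\breve{\x}) + \partial h(\breve{\x}) + \partial g(\breve{\x}) + \partial \mathcal{I}_{\Omega_2}(\breve{\x})$. Since $\partial\mathcal{I}_{\Omega_2}(\breve{\x})$ is exactly the normal cone to the sphere at $\breve{\x}$, which equals $\{\mu \breve{\x} \mid \mu \in \R\}$ (absorbing the factor of $2$ into $\mu$), there exist a multiplier $\mu \in \R$ and a subgradient $\breve{\textbf{g}} \in \nabla f(\breve{\x}) + \partial h(\breve{\x}) + \partial g(\breve{\x})$ such that $\breve{\textbf{g}} + \mu\breve{\x} = \zero_n$, i.e.\ $\breve{\textbf{g}} = -\mu\breve{\x}$.

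Next I would take the inner product of this stationarity equation with $\breve{\x}$ itself. Using $\breve{\x}^\top\breve{\x} = 1$ (feasibility), we get $\breve{\x}^\top\breve{\textbf{g}} = -\mu\|\breve{\x}\|_2^2 = -\mu$, so that $\mu = -\breve{\x}^\top\breve{\textbf{g}}$. Substituting back yields $\breve{\textbf{g}} = (\breve{\x}^\top\breve{\textbf{g}})\,\breve{\x}$. Then I would multiply both sides elementwise by $\breve{\x}$, obtaining $\breve{\textbf{g}}\odot\breve{\x} = (\breve{\x}^\top\breve{\textbf{g}})\,\breve{\x}\odot\breve{\x}$, which is precisely the claimed identity $\breve{\textbf{g}}\odot\breve{\x} = (\breve{\x}^\top\breve{\textbf{g}}\breve{\x})\odot\breve{\x}$ once one reads the scalar $\breve{\x}^\top\breve{\textbf{g}}\breve{\x}$ as $\breve{\x}^\top(\breve{\textbf{g}}\odot\breve{\x})$ or equivalently $\breve{\x}^\top\breve{\textbf{g}}$ scaled appropriately; I would make this bookkeeping explicit so the two sides match verbatim.

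The one technical point that needs care — and the part I expect to be the main obstacle — is justifying the form of the normal cone $\partial\mathcal{I}_{\Omega_2}(\breve{\x})$ for the nonconvex sphere constraint. Because $\Omega_2$ is not convex, $\partial\mathcal{I}_{\Omega_2}$ must be understood as the (limiting/Fréchet) normal cone rather than the convex-analysis subdifferential; the sphere is a smooth manifold defined by the regular constraint $\x^\top\x - 1 = 0$ (the gradient $2\x$ never vanishes on $\Omega_2$), so the normal cone at $\breve{\x}$ is indeed the one-dimensional span of $\breve{\x}$. I would state this as a standard fact about normal cones to smooth constraint manifolds with linearly independent constraint gradients, cite it, and then proceed. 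The remaining steps are elementary linear algebra, so aside from this structural observation the proof is short.
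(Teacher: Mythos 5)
Your proof is correct and follows essentially the same route as the paper's: both write the first-order/KKT stationarity condition on the sphere, take the inner product with $\breve{\x}$ and use $\breve{\x}^{\top}\breve{\x}=1$ to identify the sphere multiplier as $-\breve{\x}^{\top}\breve{\textbf{g}}$, and then multiply elementwise by $\breve{\x}$. The only cosmetic difference is that the paper introduces a separate KKT multiplier $\mu$ for the constraint $\x \geq 0$ and eliminates it at the end via complementary slackness $\mu\odot\x=\mathbf{0}_n$, whereas you absorb that constraint into $\partial h$, which is why you arrive at the (formally stronger) identity $\breve{\textbf{g}}=(\breve{\x}^{\top}\breve{\textbf{g}})\breve{\x}$ before the final elementwise multiplication.
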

	\begin{proof}
		We first define the Lagrangian function as: $L(\x, \nu, \mu) \triangleq F(\x) + \nu(\x^{\top}\x - 1) + \langle \mu, -\x \rangle$, where $\nu \in \R$ and $\mu \in \R^n$ are the Lagrange multipliers. The corresponding Karush-Kuhn-Tucker (KKT) conditions for this problem are:
		\beq
		\left\{
		\begin{aligned}
			& \nabla L(\x,\nu,\mu) = \textbf{g} + \nu\x - \mu = 0, \, &(a)\\
			& \x^{\top}\x - 1 = 0, \, &(b)\\
			& -\x \leq 0, \, &(c)\\
			& \nu \geq 0, \mu \geq 0, \, &(d)\\
			& \mu \odot \x = \mathbf{0}_n. \, &(e)
		\end{aligned}
		\right.
		\eeq

		\noi Combining (a), (b), and (e), we obtain $\nu = -\x^{\top}\textbf{g}$. Substituting $\nu$ into (a), we get $\textbf{g} - \x^{\top}\textbf{g}\x - \mu = 0$. Finally, using (e), we have $\textbf{g} \odot \x = (\x^{\top} \textbf{g} \x) \odot \x$.
	\end{proof}
	\noi When $\breve{\x}$ is a critical point, the following condition holds: $\breve{\textbf{g}}\odot\breve{\x} = (\breve{\x}^{\top} \breve{\textbf{g}}\breve{\x})\odot\breve{\x}$. Therefore, we introduce the following optimality measure: $S_2(\x,\textbf{g}) \triangleq |\textbf{g}\odot\x - (\x^{\top} \textbf{g}\x)\odot\x|$. For any critical point $\breve{\x}$, $S_2(\breve{\x},\breve{\textbf{g}})$ is equal to a zero vector $\mathbf{0}_n$. We identify the coordinate pair that most and least violates the optimality measure as the candidate coordinate pair. Letting $\z = S_2(\x^t,\textbf{g}^t)$, the greedy index selection strategy is introduced as following. The index $i^t$ is selected as the coordinate corresponding to the largest value in $\z$: $i^t = \argmax_{i} \z_i$ representing the coordinate that violates the optimality measure the most. In contrast, the index $j^t$ is chosen as the coordinate corresponding to the smallest value in $\z$: $j^t = \argmin_{j} \z_j$, indicating the coordinate that violates the optimality measure the least. Finally, we integrate the semi-greedy and random index selection strategy for the non-negative sparse PCA problem, as shown in Algorithm \ref{alg:semigreedypca}.
	
	\begin{algorithm} [!h]
		\DontPrintSemicolon
		\caption{Semi-greedy Index Selection Strategy for Non-negative Sparse PCA.}\label{alg:semigreedypca}
		Give an current feasible solution $\x^t$ and the gradient $\textbf{g}^t$. \\
		\If{$t$ is odd}{
			$i^t$ and $j^t$ are selected randomly.\\
		}
		\Else{
			$\z = S_2(\x^t,\textbf{g}^t) = |\textbf{g}^t\odot\x^t - ({\x^t}^{\top} {\textbf{g}^t}\x^t)\odot\x^t|$\\
			$i^t = \argmax_{i} \z_i$\\
			$j^t = \argmin_{j} \z_j$
		}
	\end{algorithm}

	\section{Geometric Analyses of CWS-Point for Different Applications}\label{sec:geo}
	This section presents a geometric analysis of \textbf{CWS}-points for the three applications outlined in Section \ref{sec:app}. The analysis primarily focuses on utilizing the necessary optimality conditions of \textbf{CWS}-points to derive insights into the solution properties. Specifically, we investigate two key characteristics: the exactness property and the extreme point property, which provide deeper understanding of the solution behavior under the considered framework.
	\subsection{Sparse Index Tracking Task: An Exactness Property}
	
	We provide an exactness property of our algorithm for the following sparse index tracking problem: 
	\beq
	\min_{\x}\, f(\x) +  \mathcal{I}_{\geq 0}(\x) - \lambda\|\x\|_{[s]},\ s.t.\ \|\x\|_1 = 1,\label{plb:geo:topk}
	\eeq
	where $f(\x) = \tfrac{1}{2}\|\A\x-\y\|_2^2$. We introduce the theorem concerning the exactness property of the sparse index tracking problem.
	\begin{theorem}\label{theo:excat:index}
		\textbf{(Proof in Appendix \ref{app:theo:excat:index})}
		We let $\ddot{\x}$ be any \textbf{CWS}-point for Problem (\ref{plb:geo:topk}). When $\lambda$ is sufficient large such that $\lambda > 2\|\nabla f(\ddot{\x})\|_{\infty}$, we have:
		
		\begin{enumerate}[label=\textbf{(\roman*)}, itemsep=1pt, topsep=1pt, parsep=0pt, partopsep=0pt]
			\item $\ddot{\x}_i = 0, \,\forall i\notin \mathcal{T}(\x)$, i.e., $\|\ddot{\x}\|_1 = \sum_{i=1}^s |\ddot{\x}_{[i]}|$.
			
			\item $\nabla_i f(\ddot{\x}) = \nabla_j f(\ddot{\x}), \, \forall i,\ j\in \mathcal{T}(\ddot{\x})$.
		\end{enumerate}
		
		\noi Here, $\mathcal{T}(\ddot{\x})$ is the indices of non-zero components of $\ddot{\x}$ in magnitude. 
	\end{theorem}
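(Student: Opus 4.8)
The plan is to exploit the coordinate-wise stationarity of $\ddot{\x}$ by testing it against block-$2$ working sets $\B=\{i,j\}$ and using the structure of the subproblem $\mathcal{M}_{\B}(\ddot{\x},\d_\B)$. Since $\ddot{\x}$ is a \textbf{CWS}-point, for every pair $\{i,j\}$ we have $\mathcal{M}_{\{i,j\}}(\ddot{\x},\mathbf{0}_2)=\min_{\d}\mathcal{M}_{\{i,j\}}(\ddot{\x},\d)$, meaning no feasible perturbation of two coordinates (respecting $\d_i+\d_j=0$ because of $\|\x\|_1=1$ together with $\x\ge 0$ locally, and respecting $\d_i\ge-\ddot{\x}_i$, $\d_j\ge-\ddot{\x}_j$) can decrease the surrogate. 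The surrogate along such a direction is the quadratic-plus-concave function $\mathcal{P}(\eta)$ studied in the block-$2$ sparse index tracking subsection, so the optimality condition at $\eta=0$ translates into sign/inequality conditions on $\beta=\nabla_i f(\ddot{\x})+\nabla_j f(\ddot{\x})$ (for the case with $\d_i+\d_j=0$, the relevant combination is the difference $\nabla_i f-\nabla_j f$) and on the linear coefficient $-\lambda$ coming from the top-$s$ term.

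For part (ii), I would take two indices $i,j\in\mathcal{T}(\ddot{\x})$, i.e. both among the $s$ largest-magnitude (here, largest, since $\x\ge0$) components, so that in a neighborhood of $\ddot{\x}$ both $\ddot{\x}_i+\eta$ and $\ddot{\x}_j-\eta$ remain in the top-$s$ set. Then the concave term $-\lambda\|\ddot{\x}+\eta e_i-\eta e_j\|_{[s]}$ is locally \emph{linear}: $-\lambda(\ddot{\x}_i+\ddot{\x}_j)$ plus a constant, so it contributes nothing to the first-order condition. Stationarity of $\mathcal{P}(\eta)=\tfrac12\alpha\eta^2+(\nabla_i f(\ddot{\x})-\nabla_j f(\ddot{\x}))\eta+\text{const}$ at $\eta=0$, with $\eta=0$ in the interior of the feasible interval (because both $\ddot{\x}_i,\ddot{\x}_j>0$), forces $\nabla_i f(\ddot{\x})-\nabla_j f(\ddot{\x})=0$. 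Since $i,j\in\mathcal{T}(\ddot{\x})$ were arbitrary, $\nabla_i f(\ddot{\x})$ is constant on $\mathcal{T}(\ddot{\x})$.

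For part (i), I would argue by contradiction: suppose $\ddot{\x}_i>0$ for some $i\notin\mathcal{T}(\ddot{\x})$. Pick $j\in\mathcal{T}(\ddot{\x})$ (nonempty as long as $s\ge1$ and $\ddot{\x}\neq 0$; the feasibility $\|\x\|_1=1$ guarantees $\ddot{\x}\neq0$). Consider moving mass from coordinate $i$ to coordinate $j$: $\eta<0$ with $\d_i=\eta$, $\d_j=-\eta>0$. Since $\ddot{\x}_j$ is already in the top-$s$ set and $\ddot{\x}_i$ is not, for small $|\eta|$ the moved configuration has $\ddot{\x}_j-\eta$ in the top-$s$ set and $\ddot{\x}_i+\eta$ not in it, which is situation (iv) of the breakpoint analysis, where $\mathcal{P}(\eta)=\tfrac12\alpha\eta^2+\beta\eta-\lambda(\ddot{\x}_j-\eta)$ with $\beta=\nabla_i f(\ddot{\x})-\nabla_j f(\ddot{\x})$; the derivative at $\eta=0$ is $\beta+\lambda$. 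Because $\eta=0$ must be a minimizer over $\eta\in[-\ddot{\x}_i,\ddot{\x}_j]$ and $-\ddot{\x}_i<0$ is strictly feasible (as $\ddot{\x}_i>0$), we need the one-sided derivative from the left to be $\le 0$ as we move toward $0$, i.e. the derivative at $\eta=0^-$ must be $\ge 0$: $\beta+\lambda\ge0$, hence $\lambda\ge -\beta=\nabla_j f(\ddot{\x})-\nabla_i f(\ddot{\x})$. This alone is not a contradiction; the key extra ingredient is to \emph{also} use the move in the opposite region or a second pair to get an upper bound of the form $\lambda\le 2\|\nabla f(\ddot{\x})\|_\infty$, contradicting the hypothesis $\lambda>2\|\nabla f(\ddot{\x})\|_\infty$. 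Concretely, I expect to combine the left-derivative condition above (giving $\lambda\ge \nabla_j f(\ddot{\x})-\nabla_i f(\ddot{\x})$, whose right side is $\le 2\|\nabla f(\ddot{\x})\|_\infty$ — wrong direction) with a matching condition obtained by perturbing $\eta>0$ in a pair of in-top-$s$ / not-in-top-$s$ coordinates where $\eta=0$ is also interior, forcing the full first-order stationarity $\beta-\lambda=0$ or a bound $|\beta|\ge\lambda$; pairing with $|\beta|\le 2\|\nabla f(\ddot{\x})\|_\infty$ yields the contradiction $\lambda\le 2\|\nabla f(\ddot{\x})\|_\infty$.

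The main obstacle, and the step requiring the most care, is bookkeeping the one-sided optimality conditions of $\mathcal{P}(\eta)$ at the boundary-vs-interior point $\eta=0$ across the different top-$s$ membership regimes (situations (ii)–(iv) of the breakpoint analysis), and in particular correctly identifying which perturbation directions are feasible given the nonnegativity constraints $\d_i\ge-\ddot{\x}_i$. One must be careful that when $\ddot{\x}_i=0$ the direction $\eta<0$ is infeasible, so the contradiction argument genuinely needs $\ddot{\x}_i>0$; and one must check that the small-$|\eta|$ assumption keeping the top-$s$ membership fixed is legitimate, which holds precisely when $i\notin\mathcal{T}(\ddot{\x})$ is \emph{strictly} below the $s$-th largest value — ties would need a short separate argument (e.g. perturb toward a strictly-smaller coordinate, or note that ties can be broken without changing $\|\cdot\|_{[s]}$ to first order). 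Once these sign conditions are assembled, combining the resulting inequalities on $\nabla_i f(\ddot{\x})-\nabla_j f(\ddot{\x})$ to contradict $\lambda>2\|\nabla f(\ddot{\x})\|_\infty$ is routine, and part (i) then immediately gives $\|\ddot{\x}\|_1=\sum_{i=1}^s|\ddot{\x}_{[i]}|$.
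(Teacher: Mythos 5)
Your overall strategy is the same as the paper's: test the \textbf{CWS} condition on block-$2$ working sets, reduce to the one-dimensional function $\mathcal{P}(\eta)=\tfrac{\alpha}{2}\eta^2+\beta\eta-\lambda\|\ddot{\x}+\eta e_i-\eta e_j\|_{[s]}$ with $\beta=\nabla_i f(\ddot{\x})-\nabla_j f(\ddot{\x})$, and read off conditions from the top-$s$ membership regimes. (The paper phrases this as: $0$ must coincide with one of the five breakpoints, and $\lambda>2\|\nabla f(\ddot{\x})\|_{\infty}$ forces $\eta_4\neq 0$ and $\eta_5\neq 0$, leaving only $\eta_1,\eta_2,\eta_3$.) Your part (ii) is correct and is exactly the paper's Case (ii): with $i,j$ both in the top-$s$ set and $\ddot{\x}_i,\ddot{\x}_j>0$, the concave term is locally linear, $\eta=0$ is interior, and stationarity forces $\beta=0$.

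Part (i), however, contains a concrete sign error that derails the argument. Restricting $\mathcal{P}$ to $\eta\le 0$ in your situation (iv) gives $q(\eta)=\tfrac{\alpha}{2}\eta^2+(\beta+\lambda)\eta+\mathrm{const}$, and optimality of $\eta=0$ over $[-\ddot{\x}_i,0]$ with $-\ddot{\x}_i<0$ requires the \emph{left} derivative to satisfy $q'(0)=\beta+\lambda\le 0$ (so that $q$ increases as $\eta$ decreases from $0$), not $\beta+\lambda\ge 0$ as you wrote. The correct inequality $\lambda\le-\beta=\nabla_j f(\ddot{\x})-\nabla_i f(\ddot{\x})\le 2\|\nabla f(\ddot{\x})\|_{\infty}$ already contradicts the hypothesis $\lambda>2\|\nabla f(\ddot{\x})\|_{\infty}$, so the single left-sided perturbation closes the proof; no second condition is needed. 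Because of the flipped sign you concluded "this alone is not a contradiction" and then only sketched, without executing, a plan to combine it with a matching right-sided condition ("I expect to combine\ldots"), which leaves part (i) unproven as written. Fix the sign and the argument is complete and, if anything, cleaner than the paper's enumeration; the only residual care needed is the tie case at the $s$-th largest entry, which you flag but do not resolve (the paper's proof silently assumes it away as well).
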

	
	\noi \textbf{Remarks.} When $\lambda$ exceeds a specific threshold $2\|\nabla f(\ddot{\x})\|_{\infty}$ in Problem (\ref{plb:geo:topk}), any \textbf{CWS}-point of the nonconvex problem satisfies to the sparse constraint $\|\ddot{\x}\|_1 - \|\ddot{\x}\|_{[s]} = 0$. Nevertheless, existing methods such as PSG, MSCR, and PDCA only identify the critical points of Problem (\ref{plb:geo:topk}), without guaranteeing satisfaction of the sparse constraint for all critical points. Thus, the results we establish in Theorem \ref{theo:excat:index} demonstrate superior performance compared to these methods. Moreover, we observe that when $\lambda$ surpasses the threshold, all elements in $\nabla_{\mathcal{T}(\ddot{\x})} f(\x)$ are equal, which  $\mathcal{T}(\ddot{\x})$ represents the set of non-zero elements.

	\subsection{DC Penalized Binary Optimization: An Extreme Point Property}
	
	This subsection provides an extreme point property of \textbf{CWS}-point for the DC penalized binary problem:
	\beq
	\min_{\x}\,  \hat{f}(\x) + \mathcal{I}_{[-1,1]}(\x),\ s.t.\ \x^{\top}\textbf{1} = c, \label{plb:geo:bin}
	\eeq
	\noi where $\hat{f}(\x) = \p^{\top}\x - \tfrac{1}{2}\x^{\top}\hat{\Q}\x$ and $\hat{\Q}\succeq 0$. Our main results are outlined in the following theorem.

	\begin{theorem}\label{theo:extreme:bin}
		\textbf{(Proof in Appendix \ref{app:theo:extreme:bin})} We let $\bar{\Q} = -\hat{\Q} + \theta \I_n$ and assume $\bar{\Q}_{ii} + \bar{\Q}_{jj} - 2\bar{\Q}_{ij} \leq 0 , \forall i, j \in [n]$. We let $\ddot{\x}$ be any \textbf{CWS}-point for Problem (\ref{plb:geo:bin}). It holds that $|\ddot{\x}_r| = 1$ for all $r \in [n]$.
	\end{theorem}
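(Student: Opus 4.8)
The plan is to push the coordinate‑wise stationarity condition through the one‑dimensional block‑$2$ reduction already carried out in Section~\ref{sec:block2} (equation~(\ref{plb:bin:sub3})) and then cash in the sign hypothesis on $\bar{\Q}$. Fix a \textbf{CWS}-point $\ddot{\x}$ and any pair of distinct indices $i,j$, taken as the working set $\B=\{i,j\}$. For Problem~(\ref{plb:geo:bin}) the smooth part $f(\x)=\p^{\top}\x$ is affine (cf.\ the decomposition stated below~(\ref{plb:app:bin2})), so its coordinate Lipschitz matrix is $\Q=\mathbf{0}$ and the surrogate collapses to $\mathcal{M}_{\B}(\ddot{\x},\d_{\B})=F(\ddot{\x}+\U_{\B}\d_{\B})+\tfrac{\theta}{2}\|\d_{\B}\|_2^2$. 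Feasibility of $\ddot{\x}+\U_{\B}\d_{\B}$ for the constraint $\x^{\top}\mathbf{1}=c$ forces $\d_j=-\d_i=:-\eta$, and then, exactly as in Section~\ref{sec:block2}, Definition~\ref{defition:cws} for this block amounts to: $\eta=0$ is a global minimizer of $\mathcal{P}_{ij}(\eta)\triangleq\tfrac{1}{2}\alpha_{ij}\eta^2+\beta_{ij}\eta$ over the feasible interval $\eta\in[l_{ij},u_{ij}]$, where $\alpha_{ij}=\bar{\Q}_{ii}+\bar{\Q}_{jj}-2\bar{\Q}_{ij}$, $\beta_{ij}=\nabla_i\hat{f}(\ddot{\x})-\nabla_j\hat{f}(\ddot{\x})$, $l_{ij}=\max(-1-\ddot{\x}_i,\ \ddot{\x}_j-1)$, $u_{ij}=\min(1-\ddot{\x}_i,\ 1+\ddot{\x}_j)$, and $l_{ij}\le 0\le u_{ij}$ since $\ddot{\x}$ itself is feasible.

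Now I would exploit the hypothesis $\alpha_{ij}\le 0$: it makes $\mathcal{P}_{ij}$ concave, so its minimum over the compact interval $[l_{ij},u_{ij}]$ is attained at an endpoint, and (when the concavity is strict) it can be attained at an interior point only if $\mathcal{P}_{ij}$ is constant there, which is impossible for a nonconstant quadratic. Excluding the degenerate case $\alpha_{ij}=\beta_{ij}=0$ (which does not arise, e.g., when the inequality in the hypothesis is strict), the minimizer $\eta=0$ must coincide with an endpoint, i.e.\ $l_{ij}=0$ or $u_{ij}=0$. Since both $-1-\ddot{\x}_i$ and $\ddot{\x}_j-1$ are $\le 0$, while both $1-\ddot{\x}_i$ and $1+\ddot{\x}_j$ are $\ge 0$, one reads off $l_{ij}=0\Leftrightarrow(\ddot{\x}_i=-1\text{ or }\ddot{\x}_j=1)$ and $u_{ij}=0\Leftrightarrow(\ddot{\x}_i=1\text{ or }\ddot{\x}_j=-1)$; in either case $|\ddot{\x}_i|=1$ or $|\ddot{\x}_j|=1$. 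Because $i,j$ were arbitrary, in every pair of coordinates at least one sits at the boundary of $[-1,1]$, so at most one index $r$ can have $|\ddot{\x}_r|<1$; equivalently $\ddot{\x}$ is a vertex (extreme point) of the polytope $\{\x:\x^{\top}\mathbf{1}=c\}\cap[-1,1]^n$.

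The remaining step — ruling out the borderline configuration in which exactly one coordinate $\ddot{\x}_r$ is fractional while all others equal $\pm 1$ — is the one I expect to be the main obstacle, because the pairwise argument above is already tight for such a point (each interval $[l_{rs},u_{rs}]$ then has $0$ precisely at one end). My plan to close it is to bring in the endpoint inequality $\mathcal{P}_{rs}(0)\le\mathcal{P}_{rs}(\text{nonzero endpoint})$ not used so far: for $s$ with $\ddot{\x}_s=+1$ it yields $\beta_{rs}\ge\tfrac{1}{2}|\alpha_{rs}|(1-\ddot{\x}_r)$, for $s$ with $\ddot{\x}_s=-1$ it yields $\beta_{rs}\le-\tfrac{1}{2}|\alpha_{rs}|(1+\ddot{\x}_r)$, and for a pair $\{s,s'\}$ consisting of a $+1$-coordinate and a $-1$-coordinate it yields $\beta_{ss'}\le\alpha_{ss'}$; combining these with the cocycle identity $\beta_{ss'}=\beta_{rs'}-\beta_{rs}$ and the scalar constraint $\ddot{\x}^{\top}\mathbf{1}=c$ is meant to produce a violation of one of the stationarity inequalities. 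If this combinatorial step does not close unconditionally, the cleanly provable content is the extreme‑point property of the previous paragraph, and $|\ddot{\x}_r|=1$ for all $r$ then follows whenever $c$ is such that every vertex of the feasible polytope lies in $\{-1,1\}^n$, or under a mild genericity assumption on $\p$ that rules out the knife‑edge equalities in the endpoint conditions.
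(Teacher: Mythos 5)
Your reduction to the one--dimensional problem $\min_{\eta\in[l,u]}\mathcal{P}(\eta)=\tfrac{\alpha}{2}\eta^2+\beta\eta$ with $\alpha=\bar{\Q}_{ii}+\bar{\Q}_{jj}-2\bar{\Q}_{ij}$, followed by the observation that concavity forces the minimizer $\eta=0$ to coincide with an endpoint, so that $|\ddot{\x}_i|=1$ or $|\ddot{\x}_j|=1$ for every pair $i\neq j$, is exactly the argument of Appendix~\ref{app:theo:extreme:bin}; the paper packages the endpoint-versus-interior comparison as Lemma~\ref{lemma:pmbounded} and, like you, really needs $\alpha<0$ rather than $\alpha\le 0$ to rule out the degenerate constant case.

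The difference lies in the last step, and your suspicion there is well founded: the pairwise conclusion only yields that at most one coordinate of $\ddot{\x}$ is fractional, and the paper's proof simply asserts ``we conclude that $|\ddot{\x}_r|=1$ for all $r$'' without addressing the configuration you isolate. That configuration cannot be excluded in general. Take $n=3$, $\p=\mathbf{0}$, $\hat{\Q}=\I_3$, $c=0$, and $\theta\in[0,1)$, so that $\alpha_{ij}=2(\theta-1)<0$ for all pairs. Then $\ddot{\x}=(0,1,-1)$ is a \textbf{CWS}-point: $\nabla\hat{f}(\ddot{\x})=-\ddot{\x}$, and for the three pairs the reduced objective is $(\theta-1)\eta^2+\beta\eta$ with $(\beta,[l,u])$ equal to $(1,[0,1])$, $(-1,[-1,0])$, $(-2,[-2,0])$ respectively, whose nonzero endpoints take the values $\theta$, $\theta$, $4\theta\ge 0=\mathcal{P}(0)$; yet $\ddot{\x}_1=0$. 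Here the feasible set contains no point of $\{-1,+1\}^3$ at all, so no amount of extra work with your cocycle inequalities can close the gap --- the theorem needs an additional hypothesis, e.g.\ that $c$ is an integer with $c\equiv n\ (\mathrm{mod}\ 2)$. Under that hypothesis a lone fractional coordinate would satisfy $\ddot{\x}_r=c-\sum_{s\ne r}\ddot{\x}_s$, an odd integer, which cannot lie in $(-1,1)$, so your ``extreme point'' paragraph already finishes the proof and the combinatorial step you were planning is unnecessary. In short: your argument matches the paper's where the paper is correct, and the obstacle you flagged is a genuine gap in the paper's own proof, not in yours.
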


	\noi \textbf{Remarks.} The extreme point property indicates that the solutions $\x$ of problem (\ref{plb:geo:bin}) lie on the boundary of the feasible region $\x \in \{-1, +1\}^n$. The analysis demonstrates that the \textbf{CWS}-points $\ddot{\x}$ consistently satisfy these boundary conditions $\ddot{\x} \in \{-1, +1\}^n$, a feature that is not guaranteed for all critical points.
	
	\subsection{DC Penalized Binary Optimization: An Exactness Property}
	This subsection establishes the exactness property of \textbf{CWS}-points for the DC-penalized binary optimization problem defined as follows:
	\beq
	\min_{\x}\, f(\x) + \mathcal{I}_{[-1,1]}(\x) - \lambda\|\x\|_2,\ s.t.\ \x^{\top}\textbf{1} = c,\label{plb:geo:bin2}
	\eeq
	where $f(\x) = \tfrac{1}{2}\|\A\x-\y\|_2^2$.  We now present the theorem of the exactness property.
	\begin{theorem}\label{theo:exact:bin}
		\textbf{(Proof in Appendix \ref{app:theo:exact:bin})}
		We let $\bar{\Q} = \A^{\top}\A + \theta \I_n$ and define $\phi \triangleq \max(\bar{\Q}) - \min(\bar{\Q})$. When $\ddot{\x}$ is the \textbf{CWS}-point of Problem (\ref{plb:geo:bin2}), we have:
		\begin{enumerate}[label=\textbf{(\roman*)}, itemsep=1pt, topsep=1pt, parsep=0pt, partopsep=0pt]
			\item When $\lambda > \phi\sqrt{n+2}$, it holds that $\ddot{\x} \in \{-1, +1\}^n$ and $\sqrt{n} = \|\ddot{\x}\|$.
			
			\item When $\lambda > \phi\sqrt{n}$, it holds that $\ddot{\x}_i \neq 0$ for all $i \in [n]$.
		\end{enumerate}
	\end{theorem}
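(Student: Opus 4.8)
\noindent\emph{Proof plan.}
The plan is to unpack the \textbf{CWS} condition for blocks of size $k=2$, which reduces to the one-variable subproblem of Section~\ref{sec:block2}. For Problem~(\ref{plb:geo:bin2}) with $f(\x)=\tfrac12\|\A\x-\y\|_2^2$, being a \textbf{CWS}-point $\ddot\x$ means that for every pair $i\neq j$ the point $\eta=0$ minimizes $\mathcal{P}_{ij}(\eta)=\tfrac12\alpha\eta^{2}+\beta\eta-\lambda\|\ddot\x+\eta(e_i-e_j)\|_2$ over $\eta\in[l,u]$, where $\alpha=(e_i-e_j)^{\top}\bar\Q(e_i-e_j)$, $\beta=\nabla_i f(\ddot\x)-\nabla_j f(\ddot\x)$, $l=\max(-1-\ddot\x_i,-1+\ddot\x_j)$ and $u=\min(1-\ddot\x_i,1+\ddot\x_j)$. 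Two facts are used throughout: $0\le\alpha\le2\phi$, since $\bar\Q=\A^{\top}\A+\theta\I_n\succeq0$ forces $\alpha\ge0$ while $\alpha\le\max(\bar\Q)+\max(\bar\Q)-2\min(\bar\Q)=2\phi$; and $0$ lies in the interior of $[l,u]$ exactly when both $|\ddot\x_i|<1$ and $|\ddot\x_j|<1$.

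\noindent\emph{Reducing to an interior pair.}
For (i), assume $\ddot\x\notin\{-1,+1\}^n$. A parity count shows there are at least \emph{two} non-extreme coordinates: if $\ddot\x_p$ were the only one in $(-1,1)$, then $\ddot\x_p=c-\sum_{k\neq p}\ddot\x_k$ is $c$ minus a sum of $n-1$ values $\pm1$, hence --- using that feasibility of the binary constraint forces $c\equiv n\pmod2$ --- an odd integer, contradicting $|\ddot\x_p|<1$; for (ii), if $\ddot\x_p=0$ with all others extreme, the same count gives $c\equiv n-1\pmod2$, again impossible. The all-zero point is excluded at once, since there $\mathcal{P}_{pq}(\eta)=\tfrac12\alpha\eta^2+\beta\eta-\lambda\sqrt2\,|\eta|$ has a downward kink at $0$ forcing $\beta\ge\lambda\sqrt2$ and $\beta\le-\lambda\sqrt2$ simultaneously. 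So fix $p\neq q$ with $|\ddot\x_p|,|\ddot\x_q|<1$ and $\ddot\x\neq\zero$; then $\eta\mapsto\|\ddot\x+\eta(e_p-e_q)\|_2$ is differentiable at $0$, and interior minimality of $\eta=0$ yields the first-order identity $\beta=\lambda(\ddot\x_p-\ddot\x_q)/\|\ddot\x\|_2$. This is the crux: it eliminates the unknown gradient term $\beta$, which is why the final threshold depends only on $\phi$ and $n$, not on $\nabla f(\ddot\x)$.

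\noindent\emph{Forcing the contradiction.}
Substituting this $\beta$, for any feasible $\eta$ the inequality $\mathcal{P}_{pq}(\eta)\ge\mathcal{P}_{pq}(0)$ becomes $\lambda D(\eta)\le\tfrac12\alpha\eta^2\le\phi\eta^2$, where $D(\eta)=\|\ddot\x+\eta(e_p-e_q)\|_2-\|\ddot\x\|_2-\tfrac{\ddot\x_p-\ddot\x_q}{\|\ddot\x\|_2}\eta\ge0$ is the Bregman residual of the convex square-root term; hence $\lambda\le\phi\,\eta^2/D(\eta)$ whenever $D(\eta)>0$. Choosing $\eta^{\ast}$ that drives one of $\ddot\x_p,\ddot\x_q$ to $\pm1$ (e.g.\ $\eta^{\ast}=\min(1-\ddot\x_p,\ddot\x_q+1)$ when $\ddot\x_p\ge\ddot\x_q$), one bounds $(\eta^{\ast})^2/D(\eta^{\ast})\le\sqrt{n+2}$ using $|\eta^{\ast}|\le2$, the identity $\|\ddot\x+\eta(e_p-e_q)\|_2^2=\|\ddot\x\|_2^2+2(\ddot\x_p-\ddot\x_q)\eta+2\eta^2$, the bound $\|\ddot\x\|_2^2\le n$, and the elementary fact that moving two coordinates with fixed sum inside $[-1,1]^2$ raises the squared norm by at most $2$; this gives $\lambda\le\phi\sqrt{n+2}$, a contradiction, so $\ddot\x\in\{-1,+1\}^n$ and then $\|\ddot\x\|_2^2=\sum_r\ddot\x_r^2=n$. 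Part (ii) runs through the same template toward the weaker goal of merely making a coordinate nonzero, which permits a smaller displacement $\eta^{\ast}$ and hence the sharper bound $\lambda\le\phi\sqrt n$.

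\noindent\emph{Main obstacle.}
I expect the real work to be the estimate of $(\eta^{\ast})^2/D(\eta^{\ast})$: $D$ is a Bregman-type quantity of $\eta\mapsto\|\ddot\x+\eta(e_p-e_q)\|_2$ that degenerates to $0$ precisely when $\ddot\x$ is supported on $\{p,q\}$ with $\ddot\x_p=-\ddot\x_q$. That near-degenerate regime must be handled separately --- by switching to a different, necessarily-zero pair of coordinates when $n\ge4$ (giving $w=0$ and a clean estimate), and, for $n=2$, by invoking $\mathcal{P}_{pq}(\text{boundary})\ge\mathcal{P}_{pq}(0)$ directly, where the concave term $-\lambda\sqrt2\,|\ddot\x_p+\eta|$ is piecewise linear --- and the bookkeeping needed to land exactly on the constants $\sqrt{n+2}$ and $\sqrt n$ is the delicate part of the argument.
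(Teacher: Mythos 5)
Your proposal takes a genuinely different route from the paper, and as written it does not close. The paper's proof in Appendix \ref{app:theo:exact:bin} never uses a first-order condition at $\eta=0$: for part (i) it evaluates $\mathcal{P}$ at the two endpoint displacements $\eta=\pm 1-\ddot{\x}_i$ and uses Lemma \ref{lemma:xi}, whose $\min(\mp\alpha\xi\mp c)=-|\alpha\xi+c|$ cancellation eliminates the unknown gradient term $\beta$ from the quadratic part for an \emph{arbitrary} partner index $j$, giving $\min(\mathcal{H}(\pm 1-\ddot{\x}_i))\le\tfrac{\alpha}{2}(1-\ddot{\x}_i^2)$; the concave term is then controlled by Lemma \ref{lemma:xu}. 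This needs only one non-extreme coordinate, no interiority, no differentiability of the norm term, and no degenerate-case surgery. Your plan instead manufactures an interior pair so as to invoke stationarity $\beta=\lambda(\ddot{\x}_p-\ddot{\x}_q)/\|\ddot{\x}\|_2$, and the step that secures this pair is a parity count requiring $c$ to be an integer with $c\equiv n\pmod 2$ --- an assumption the theorem does not make (and for part (ii), under the weaker threshold $\lambda>\phi\sqrt{n}$ you are not entitled to assume the remaining coordinates are already extreme).

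The more serious gap is the central estimate $(\eta^\ast)^2/D(\eta^\ast)\le\sqrt{n+2}$, which is asserted rather than proved and fails for your proposed choice of $\eta^\ast$. Writing $w=\ddot{\x}_p-\ddot{\x}_q$ and $N=\|\ddot{\x}\|_2$, rationalizing gives
\[
D(\eta)\;=\;\frac{\eta^2\,\bigl(2-w^2/N^2\bigr)}{\sqrt{N^2+2w\eta+2\eta^2}\,+\,N\,+\,w\eta/N},
\qquad\text{so}\qquad
\frac{\eta^2}{D(\eta)}\;=\;\frac{\sqrt{N^2+2w\eta+2\eta^2}+N+w\eta/N}{2-w^2/N^2}.
\]
The numerator is bounded by $2\sqrt{n+2}$, so the target bound holds only if the numerator is additionally at most $(2-w^2/N^2)\sqrt{n+2}$; this is automatic when $w=0$ but not otherwise. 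Concretely, for $n=3$, $\ddot{\x}=(0.5,-0.5,0.8)$, $p=1$, $q=2$, your prescription $\eta^\ast=\min(1-\ddot{\x}_p,\ddot{\x}_q+1)=0.5$ gives $(\eta^\ast)^2/D(\eta^\ast)\approx 2.81>\sqrt{5}$, so no contradiction with $\lambda>\phi\sqrt{5}$ is obtained. The factor $(2-w^2/N^2)^{-1}$ also diverges as $w^2/N^2\to 2$, so the degeneracy you flag is not an isolated configuration but a continuum that degrades the bound throughout; a convincing argument would have to choose the sign and magnitude of $\eta^\ast$ adaptively and verify the constant case by case, which is exactly the work the proposal defers. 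I would recommend abandoning the stationarity-plus-Bregman route in favor of the paper's endpoint evaluation, which lands on $\sqrt{n+2}$ and $\sqrt{n}$ with two elementary lemmas.
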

	\noi \textbf{Remarks.} When $\lambda$ surpasses a certain threshold, any \textbf{CWS}-point $\ddot{\x}$ of Problem (\ref{plb:geo:bin2}) achieves exactness by satisfying the binary constraint $\ddot{\x} \in \{-1, +1\}^n$ and condition $\ddot{\x}_i \neq 0,\ \forall i$. In contrast, existing methods such as PSG, MSCR, and PDCA lack the ability to guarantee the binary constraints. This results highlight the robustness and precision of the proposed methods.
	

	\section{Experiments}\label{sec:exp}
	In this section, we first evaluate the performance of our BCD method for obtaining global solutions under different penalty parameters $\lambda$, sparsity $s$, and sum $c$ across the four applications in Section \ref{sec:app}. We denote the BCD method that globally solves the subproblem (\ref{plb:subproblem}) as BCD-g. By varying these parameters, we analyze their influence on solution quality and convergence speed, and assess the sensitivity and robustness of the BCD-g method. Additionally, we perform an experiment on our BCD method aimed at achieving local solutions for the subproblem (\ref{plb:subproblemlocal}) in the SIT problem, which we denote as BCD-l-$k$, with $k$ representing the number of elements optimized in each iteration. We compare the objective values generated by BCD-l-$k$ and other methods with varying $k$.

	\subsection{Experimental Settings}
	
	\textbf{Datasets}. For the sparse index tracking task, we collect daily returns for the NASDAQ 100 and the Standard \& Poor's 500 indices from 2016 to 2020. We consider two types of datasets for the matrix $\A \in \R^{m\times n}$: 'NAS-year' and 'SP-year' (e.g., 'NAS-16'). The detailed information, includes the number of components $n$ and the number of trading days $m$, is shown in Table \ref{indexdatasets}. To construct the data matrix $\A \in \R^{m\times n}$ in non-negative sparse PCA problem and the DC penalized binary optimizations, we consider five public datasets, which consist of random and real-world datasets: 'randn', 'TDT2', '20News', 'Cifar', and 'MNIST'. For random datasets, we generate a matrix $\A$ using the function randn($m$, $n$), which produces a standard Gaussian random matrix of size $m \times n$. For real-world datasets, we randomly and uniformly select a subset of $m \times n$ elements for analysis. We denote the different types of datasets for varying size as 'TDT2-a', 'TDT2-b' for example. The type-a datasets are sampled from size $(m, n) = (256, 256)$, while the type-b datasets are chosen from sizes $(m, n) = (512, 512)$.


	\begin{table}[h]
		\centering
		\resizebox{\textwidth}{!}{%
			\begin{tabular}{|c|c|c|c|c|c|c|c|c|c|c|} 
				\hline
				& NAS-16 & NAS-17 & NAS-18 & NAS-19 & NAS-20 & SP-16 & SP-17 & SP-18 & SP-19 & SP-20 \\ \hline
				$m$ & 252 & 251 & 251 & 252 & 253 & 252 & 251 & 251  & 252  & 253 \\ \hline
				$n$ & 84 & 78 & 77 & 80 & 88 & 419  & 431  & 447  & 458  & 472 \\ \hline
			\end{tabular}%
		}
		\caption{Sizes of sparse index tracking datasets.}
		\label{indexdatasets}
	\end{table}

	\noi\textbf{Implementations}.
	All experiments are implemented in MATLAB on an Intel 3.40 GHz CPU with 32 GB RAM. Due to the large element-wise loops requirement in the breakpoint searching strategy, we develop the procedure in C++ for higher efficiency. 
	
	\subsection{Experiment Results on BCD-g Method} \label{sec:exp:subsec:indextracking}
	
	We compare our BCD-g method with the following methods to solve the sparse index tracking (SIT) problem (\ref{plb:app:topk}), non-negative spare PCA (NNSPCA) problem (\ref{plb:app:nnspca2}) and two DC penalized binary (DCPB1, DCPB2) optimization problems when $k = 2$. \textbf{(i)} Projected Sub-gradient Descent (PSG). It considers the following iteration: $\x^{t+1} = P_{\Omega}(\x^{t} - \alpha_{t}G^t$) where $P_{\Omega}(\cdot)$ is the projection function. \textbf{(ii)} Multi-Stage Convex Relaxation (MSCR). It solves the following subproblem: $\z^{t+1} =\text{arg}\ \underset{\z}{\text{min}}\ f(\z) + \la \z-\z^{t},\partial g(\z^t)\ra,\ \x^{t+1}=P_{\Omega}(\z^{t+1}),\ f(\z)=\tfrac{1}{2}\|\A\z-\textbf{y}\|^2_2,\ g(\z)=\lambda\|\z\|_{[s]}$. \textbf{(iii)} Proximal DC Algorithm (PDCA). $\x^{t+1}$ is updated by: $\z^{t+1} =\text{arg}\ \underset{\z}{\text{min}}\ \tfrac{L}{2}\|\z-\z^{t}\|^2_2+\la\nabla f(\z), \z-\z^{t} \ra + \la \z-\z^{t},\partial g(\z^t)\ra,\ \x^{t+1}=P_{\Omega}(\z^{t+1})$.
	In our comparative analysis, we evaluate the performance of the BCD-g method against PSG, MSCR, and PDCA by varying the penalty $\lambda$ over a range from $1$ to $10^{4}$ to assess the effect of different penalty terms.

	Since Problem (\ref{plb:app:bin2}) requires a sufficiently large $\lambda$ to ensure $\hat{\Q} \succ 0$, we do not compare performance across varying $\lambda$ in DCPB1. The step size $\alpha_{t}$ for PSG is set to $0.01 \cdot \sqrt{t^{-1}}$ at $t$-th iteration. For MSCR, each subproblem is solved 10 times per iteration. The sparsity $s$ of $\x$ is fixed at 30 across all datasets. All the results are averaged over ten executions with different initializations of $\x$. The best results are highlighted in red in Table \ref{tab:lambdaindex}. A smaller objective value indicates a better local minimum. Therefore, our observations from the comparative analysis are as follows: \textbf{(i)} Our BCD-g method consistently achieves the lowest objective function value when converging to a coordinate-wise stationary point in most cases. \textbf{(ii)} The objective values increase with larger $\lambda$, indicating that the stricter constraint leads to higher objective values. Additionally, The BCD-g method performs better under stricter constraint. \textbf{(iii)} MSCR and PDCA consistently yield the same values, suggesting that they converge to the same critical point.

	In our performance comparison, we also investigate the effect of varying sparsity $s$ and sum $c$ across 20 datasets. We set $\lambda = 1000$ in SIT while $\lambda = 10000$ in NNSPCA, DCPB1 and DCPB2 to ensure that the solutions $\x$ do not violate the sparsity constraint due to the heavy penalties. We evaluate performance over a range of the sparsity $s = \{5, 10, 15, \dots, 55, 60\}$. For the DC penalized binary optimizations, we vary $c = \{10, 20, 30, \dots, 90, 100\}$. The objective values across datasets with varying sparsity $s$ or sum $c$ are visualized in Figure \ref{fig:diff1} and Figure \ref{fig:diff2}. Our BCD-g method consistently achieves the lowest objective value across different values of $s$ or $c$. 
	
	\subsection{Experiment Results on BCD-l-$k$ Method}
	
	In the SIT task, we vary $k = \{2, 3, 5, 10, 15, 20, 50\}$ to compare the objective values with BCD-g, BCD-l-$k$, PSG, MSCR, and PDCA. For a certain state of $\d_{\B}$, we employ projected subgradient method in BCD-l-$k$ method to solve Problem (\ref{plb:blockkJfunction:index2}). We set $\lambda = 1000$ and $s = 10$. As shown in Table \ref{tbl:blockk}, the objective values in bold indicate that two BCD methods outperform other three methods. We observe that the BCD-g method always yields the best objective value, since it solves the subproblem (\ref{plb:subproblem}) globally. The results also demonstrate that BCD-l-$k$ method performs better and consistently finds better local minima compared to PSG, MSCR and PDCA when $k$ is 10 or 20. However, when $k=50$, the BCD-l-$50$ sometimes fails to provide better objective values, as the convergence time increases significantly.
	
	\newcommand{\textbr}[1]{\textbf{\textcolor{red}{#1}}}
	
	\begin{table}[htbp]
		\centering
		\resizebox{\textwidth}{!}{ 
			\begin{tabular}{|c|c|c|c|c|c|c|c|c|c|c|} 
				\hline
				& PSG & MSCR & PDCA & BCD-g & BCD-l-2 & BCD-l-3 & BCD-l-5 & BCD-l-10 & BCD-l-20 & BCD-l-50  \\ \hline
				NAS-16 & 1.22e+01 & 1.84e+01 & 1.84e+01  & \textbr{7.17e+00} & 3.08e+01 & 1.30e+01 & \textbr{1.10e+01} & \textbr{8.85e+00} & \textbr{9.01e+00} & 1.27e+01 \\ \hline
				NAS-17 & 1.01e+01 & 1.64e+01 & 1.64e+01 & \textbr{5.14e+00} & 2.61e+01 & \textbr{9.13e+00} & \textbr{7.46e+00} & \textbr{5.90e+00} & \textbr{5.98e+00} & 1.09e+01   \\ \hline
				NAS-18 & 1.73e+01 & 2.52e+01 & 2.52e+01 & \textbr{7.06e+00} & 3.74e+01 & 1.94e+01 & \textbr{1.20e+01} & \textbr{1.03e+01} & \textbr{1.08e+01} & \textbr{1.56e+01}    \\ \hline
				NAS-19 & 1.29e+01 & 2.19e+01 & 2.19e+01 & \textbr{6.16e+00} & 3.62e+01 & \textbr{1.17e+01} & \textbr{9.25e+00} & \textbr{7.57e+00} & \textbr{8.30e+00} & \textbr{1.11e+01}    \\ \hline
				NAS-20 & 2.43e+01 & 4.87e+01 & 4.87e+01 & \textbr{1.24e+01} & 8.64e+01 & 3.09e+01 & \textbr{2.42e+01} & \textbr{1.93e+01} & \textbr{2.11e+01} & 2.54e+01   \\ \hline

				SP-16 & 1.49e+01 & 1.80e+01 & 1.80e+01 & \textbr{6.50e+00} & 1.66e+02 & 2.93e+01 & 1.20e+01 & \textbr{9.99e+00} & \textbr{8.83e+00} & \textbr{1.04e+01}  \\ \hline
				SP-17 & 1.12e+01 & 1.38e+01 & 1.38e+01 & \textbr{5.05e+00} & 1.24e+02 & 1.01e+01 & \textbr{7.24e+00} & \textbr{6.22e+00} & \textbr{5.74e+00} & \textbr{9.65e+00}    \\ \hline
				SP-18 & 1.37e+01 & 2.22e+01 & 2.22e+01 & \textbr{7.81e+00} & 1.61e+02 &  4.19e+01 & 1.59e+01 & \textbr{1.32e+01} & \textbr{9.91e+00} & \textbr{1.22e+01}     \\ \hline
				SP-19 & 1.37e+01 & 1.29e+01 & 2.19e+01  & \textbr{7.33e+00} & 1.78e+02 &  3.88e+01 & \textbr{1.19e+01} & \textbr{1.03e+01} & \textbr{9.74e+00} & 1.43e+01   \\ \hline
				SP-20 & 3.47e+01 & 3.55e+01 & 3.60e+01 & \textbr{1.64e+01} & 4.42e+02 & 1.21e+02 & 5.88e+01 & \textbr{2.57e+01} & \textbr{2.11e+01} & \textbr{2.74e+01}    \\ \hline
			\end{tabular}
		}
		\caption{Comparisons of objective values of five methods with varying $k$.}	\label{tbl:blockk}
	\end{table}

	\begin{table}[htbp]
		\tiny
		\renewcommand{\arraystretch}{0.4}
		\centering
		\begin{tabular}{|c|cccc|cccc|}
			\hline
			&  PSG  & MSCR & PDCA & BCD-g &  PSG  & MSCR & PDCA & BCD-g\\ 
			\hline
			
			$\lambda$&\multicolumn{4}{c|}{results of SIT in NAS-16} & \multicolumn{4}{c|}{results of SIT in SP-16}\\
			\hline
			$10^0 $ & \textbr{1.31e+00} & 1.32e+00 & 1.32e+00 & 1.32e+00 & 3.97e-01 & 4.00e-01 & 4.15e-01 & \textbr{3.82e-01} \\ 
			$10^1 $ & 2.40e+00 & 2.36e+00 & 2.35e+00 & \textbr{1.83e+00} & 1.62e+00 & 2.03e+00 & 2.00e+00 & \textbr{1.25e+00} \\ 
			$10^2 $ & 4.39e+00 & 5.19e+00 & 5.19e+00 & \textbr{1.79e+00} & 4.16e+00 & 4.33e+00 & 4.43e+00 & \textbr{1.34e+00} \\ 
			$10^3 $ & 4.38e+00 & 5.19e+00 & 5.19e+00 & \textbr{1.91e+00} & 4.21e+00 & 4.37e+00 & 4.37e+00 & \textbr{1.36e+00} \\ 
			$10^4 $ & 4.38e+00 & 5.19e+00 & 5.19e+00 & \textbr{1.75e+00} & 4.21e+00 & 4.37e+00 & 4.37e+00 & \textbr{1.36e+00} \\ 
			\hline
			\hline
			
			$\lambda$&\multicolumn{4}{c|}{results of SIT in NAS-17}&\multicolumn{4}{c|}{results of SIT in SP-17}\\
			\hline
			$10^0 $ & 1.24e+00 & 1.24e+00 & 1.24e+00 & \textbr{1.23e+00} & 3.95e-01 & 3.95e-01 & 3.99e-01 & \textbr{3.48e-01} \\ 
			$10^1 $ & 1.78e+00 & 2.24e+00 & 2.14e+00 & \textbr{1.47e+00} & 1.57e+00 & 1.89e+00 & 1.84e+00 & \textbr{1.00e+00} \\ 
			$10^2 $ & 3.04e+00 & 3.60e+00 & 3.67e+00 & \textbr{1.42e+00} & 3.55e+00 & 3.61e+00 & 3.53e+00 & \textbr{1.01e+00} \\ 
			$10^3 $ & 3.04e+00 & 3.67e+00 & 3.67e+00 & \textbr{1.44e+00} & 3.43e+00 & 3.61e+00 & 3.61e+00 & \textbr{1.13e+00} \\ 
			$10^4 $ & 3.04e+00 & 3.67e+00 & 3.67e+00 & \textbr{1.44e+00} & 3.47e+00 & 3.61e+00 & 3.61e+00 & \textbr{1.02e+00} \\ 
			\hline
			\hline
			
			$\lambda$&\multicolumn{4}{c|}{results of SIT in NAS-18}&\multicolumn{4}{c|}{results of SIT in SP-18}\\
			\hline
			$10^0 $ & \textbr{1.33e+00} & \textbr{1.33e+00} & \textbr{1.33e+00} & \textbr{1.33e+00} & \textbr{3.71e-01} & 4.19e-01 & 4.25e-01 & 3.81e-01 \\ 
			$10^1 $ & 2.11e+00 & 2.40e+00 & 2.44e+00 & \textbr{1.78e+00} & 2.07e+00 & 2.23e+00 & 2.18e+00 & \textbr{1.34e+00} \\ 
			$10^2 $ & 3.91e+00 & 5.92e+00 & 5.92e+00 & \textbr{1.81e+00} & 5.67e+00 & 4.60e+00 & 4.60e+00 & \textbr{1.64e+00} \\ 
			$10^3 $ & 3.91e+00 & 5.91e+00 & 5.92e+00 & \textbr{1.78e+00} & 4.99e+00 & 4.60e+00 & 4.60e+00 & \textbr{1.62e+00} \\ 
			$10^4 $ & 3.91e+00 & 5.92e+00 & 5.92e+00 & \textbr{1.88e+00} & 4.99e+00 & 4.51e+00 & 4.60e+00 & \textbr{1.53e+00} \\ 
			\hline
			\hline
			
			$\lambda$&\multicolumn{4}{c|}{results of SIT in NAS-19}&\multicolumn{4}{c|}{results of SIT in SP-19}\\
			\hline
			$10^0 $ & 1.06e+00 & 1.06e+00 & 1.06e+00 & \textbr{1.05e+00} & \textbr{3.48e-01} & 3.90e-01 & 3.94e-01 & 3.65e-01 \\ 
			$10^1 $ & 2.06e+00 & 2.14e+00 & 2.11e+00 & \textbr{1.46e+00} & 1.75e+00 & 2.05e+00 & 2.02e+00 & \textbr{1.32e+00} \\ 
			$10^2 $ & 3.04e+00 & 4.92e+00 & 4.92e+00 & \textbr{1.39e+00} & 4.85e+00 & 4.79e+00 & 4.79e+00 & \textbr{1.34e+00} \\ 
			$10^3 $ & 3.04e+00 & 4.92e+00 & 4.92e+00 & \textbr{1.49e+00} & 4.87e+00 & 4.87e+00 & 4.79e+00 & \textbr{1.33e+00} \\ 
			$10^4 $ & 3.04e+00 & 4.92e+00 & 4.92e+00 & \textbr{1.48e+00} & 4.87e+00 & 4.79e+00 & 4.79e+00 & \textbr{1.35e+00} \\ 
			\hline
			\hline
			
			$\lambda$&\multicolumn{4}{c|}{results of SIT in NAS-20}&\multicolumn{4}{c|}{results of SIT in SP-20}\\
			\hline
			$10^0 $ & 2.29e+00 & 2.30e+00 & 2.30e+00 & \textbr{2.28e+00} & 2.53e+01 & 5.28e-01 & 5.47e-01 & \textbr{5.16e-01} \\ 
			$10^1 $ & 3.38e+00 & 3.58e+00 & 3.47e+00 & \textbr{3.23e+00} & 3.36e+01 & 2.45e+00 & 2.56e+00 & \textbr{2.24e+00} \\ 
			$10^2 $ & 6.13e+00 & 7.02e+00 & 7.14e+00 & \textbr{3.46e+00} & 1.02e+02 & 7.59e+00 & 7.75e+00 & \textbr{3.37e+00} \\ 
			$10^3 $ & 8.17e+00 & 7.67e+00 & 7.67e+00 & \textbr{3.35e+00} & 7.46e+00 & 7.52e+00 & 7.53e+00 & \textbr{2.84e+00} \\ 
			$10^4 $ & 8.17e+00 & 7.67e+00 & 7.67e+00 & \textbr{3.41e+00} & 7.46e+00 & 7.50e+00 & 7.53e+00 & \textbr{3.21e+00} \\ 
			\hline
			\hline

			$\lambda$&\multicolumn{4}{c|}{results of NNSPCA randn-a}&\multicolumn{4}{|c|}{results of NNSPCA in randn-b}\\
			\hline
			$10^0 $ & \textbr{-3.59e+02} & \textbr{-3.59e+02} & \textbr{-3.59e+02} & -3.55e+02 & \textbr{-7.27e+02} & \textbr{-7.27e+02} & \textbr{-7.27e+02} & -7.26e+02 \\ 
			$10^1 $ & -3.23e+02 & -3.23e+02 & -3.23e+02 & \textbr{-3.24e+02} & -6.56e+02 & -6.56e+02 & -6.56e+02 & \textbr{-6.58e+02} \\ 
			$10^2 $ & -2.06e+02 & -2.06e+02 & -2.06e+02 & \textbr{-2.95e+02} & -3.75e+02 & -3.75e+02 & -3.75e+02 & \textbr{-4.94e+02} \\ 
			$10^3 $ & -2.06e+02 & -2.06e+02 & -2.06e+02 & \textbr{-2.92e+02} & -3.72e+02 & -3.72e+02 & -3.72e+02 & \textbr{-4.90e+02} \\ 
			$10^4 $ & -2.06e+02 & -2.06e+02 & -2.06e+02 & \textbr{-2.91e+02} & -3.72e+02 & -3.72e+02 & -3.72e+02 & \textbr{-4.93e+02} \\ 
			\hline
			\hline
			
			$\lambda$&\multicolumn{4}{c|}{results of NNSPCA in TDT2-a}&\multicolumn{4}{|c|}{results of NNSPCA in TDT2-b}\\
			\hline
			$10^0 $ & -3.90e-01 & -3.94e-01 & -3.94e-01 & \textbr{-1.11e+00} & -9.30e-02 & -1.04e-01 & -1.04e-01 & \textbr{-1.90e+00} \\ 
			$10^1 $ & -3.90e-01 & -3.94e-01 & -3.94e-01 & \textbr{-1.11e+00} & -9.18e-02 & -1.04e-01 & -1.04e-01 & \textbr{-1.90e+00} \\ 
			$10^2 $ & -3.91e-01 & -3.94e-01 & -3.94e-01 & \textbr{-1.11e+00} & -9.30e-02 & -1.04e-01 & -1.04e-01 & \textbr{-1.90e+00} \\ 
			$10^3 $ & -3.90e-01 & -3.94e-01 & -3.94e-01 & \textbr{-1.11e+00} & -9.30e-02 & -1.04e-01 & -1.04e-01 & \textbr{-1.90e+00} \\ 
			$10^4 $ & -3.89e-01 & -3.94e-01 & -3.94e-01 & \textbr{-1.11e+00} & -9.30e-02 & -1.04e-01 & -1.04e-01 & \textbr{-1.90e+00} \\ 
			\hline
			\hline
			
			$\lambda$&\multicolumn{4}{c|}{results of NNSPCA in 20News-a}&\multicolumn{4}{|c|}{results in of NNSPCA 20News-b}\\
			\hline
			$10^0 $ & -6.60e-02 & -8.36e-02 & -8.36e-02 & \textbr{-6.00e-01} & -1.05e-04 & -1.47e-03 & -1.47e-03 & \textbr{-5.00e-01} \\ 
			$10^1 $ & -6.57e-02 & -8.36e-02 & -8.36e-02 & \textbr{-6.00e-01} & -1.05e-04 & -1.47e-03 & -1.47e-03 & \textbr{-5.00e-01} \\ 
			$10^2 $ & -6.59e-02 & -8.36e-02 & -8.36e-02 & \textbr{-6.00e-01} & -1.05e-04 & -1.47e-03 & -1.47e-03 & \textbr{-4.51e-01} \\ 
			$10^3 $ & -6.58e-02 & -8.36e-02 & -8.36e-02 & \textbr{-6.00e-01} & -1.05e-04 & -1.47e-03 & -1.47e-03 & \textbr{-5.00e-01} \\ 
			$10^4 $ & -6.58e-02 & -8.36e-02 & -8.36e-02 & \textbr{-6.00e-01} & -1.05e-04 & -1.47e-03 & -1.47e-03 & \textbr{-5.00e-01} \\ 
			\hline
			\hline
			
			$\lambda$&\multicolumn{4}{c|}{results of NNSPCA in Cifar-a}&\multicolumn{4}{|c|}{results of NNSPCA in Cifar-b}\\
			\hline
			$10^0 $ & \textbr{-8.88e+03} & \textbr{-8.88e+03} & \textbr{-8.88e+03} & \textbr{-8.88e+03} & \textbr{-3.53e+04} & \textbr{-3.53e+04} & \textbr{-3.53e+04} & \textbr{-3.53e+04} \\ 
			$10^1 $ & \textbr{-8.76e+03} & \textbr{-8.76e+03} & \textbr{-8.76e+03} & \textbr{-8.76e+03} & \textbr{-3.51e+04} & \textbr{-3.51e+04} & \textbr{-3.51e+04} & \textbr{-3.51e+04} \\ 
			$10^2 $ & \textbr{-7.53e+03} & \textbr{-7.53e+03} & \textbr{-7.53e+03} & \textbr{-7.53e+03} & \textbr{-3.32e+04} & \textbr{-3.32e+04} & \textbr{-3.32e+04} & \textbr{-3.32e+04} \\ 
			$10^3 $ & -1.03e+03 & -1.03e+03 & -1.03e+03 & \textbr{-1.41e+03} & -1.45e+04 & -1.45e+04 & -1.45e+04 & \textbr{-1.47e+04} \\ 
			$10^4 $ & -1.03e+03 & -1.03e+03 & -1.03e+03 & \textbr{-1.42e+03} & -2.08e+03 & -2.08e+03 & -2.08e+03 & \textbr{-2.81e+03} \\ 
			\hline
			\hline
			
			$\lambda$&\multicolumn{4}{c|}{results of NNSPCA in MNIST-a}&\multicolumn{4}{|c|}{results of NNSPCA in MNIST-b}\\
			\hline
			$10^0 $ & \textbr{-1.83e+03} & \textbr{-1.83e+03} & \textbr{-1.83e+03} & \textbr{-1.83e+03} & \textbr{-7.06e+03} & \textbr{-7.06e+03} & \textbr{-7.06e+03} & \textbr{-7.06e+03} \\ 
			$10^1 $ & \textbr{-1.78e+03} & \textbr{-1.78e+03} & \textbr{-1.78e+03} & \textbr{-1.78e+03} & \textbr{-6.96e+03} & \textbr{-6.96e+03} & \textbr{-6.96e+03} & \textbr{-6.96e+03} \\ 
			$10^2 $ & \textbr{-1.35e+03} & \textbr{-1.35e+03} & \textbr{-1.35e+03} & \textbr{-1.35e+03} & \textbr{-6.01e+03} & \textbr{-6.01e+03} & -6.00e+03 & \textbr{-6.01e+03} \\ 
			$10^3 $ & -2.43e+02 & -2.43e+02 & -2.43e+02 & \textbr{-9.87e+02} & -4.71e+02 & -4.71e+02 & -4.71e+02 & \textbr{-2.17e+03} \\ 
			$10^4 $ & -2.43e+02 & -2.43e+02 & -2.43e+02 & \textbr{-9.93e+02} & -4.71e+02 & -4.71e+02 & -4.71e+02 & \textbr{-2.14e+03} \\ 
			\hline
			\hline

			$\lambda$&\multicolumn{4}{c|}{results of DCPB2 in randn-a}&\multicolumn{4}{|c|}{results of DCPB2 in randn-b}\\
			\hline
			$10^0 $ & 6.05e+01 & 4.96e+02 & 5.49e+01 & \textbr{3.97e+01} & 3.95e+01 & 5.21e+02 & \textbr{3.58e+01} & 3.65e+01 \\ 
			$10^1 $ & 6.24e+02 & 6.24e+02 & 2.01e+02 & \textbr{1.13e+02} & 7.12e+02 & 7.12e+02 & 2.36e+02 & \textbr{1.85e+02} \\ 
			$10^2 $ & 1.89e+03 & 1.89e+03 & 1.51e+03 & \textbr{6.72e+02} & 2.63e+03 & 2.63e+03 & 2.13e+03 & \textbr{1.07e+03} \\ 
			$10^3 $ & 1.45e+04 & 1.45e+04 & 1.37e+04 & \textbr{5.07e+03} & 2.18e+04 & 2.18e+04 & 2.07e+04 & \textbr{1.80e+04} \\ 
			$10^4 $ & 3.23e+04 & 3.23e+04 & 3.21e+04 & \textbr{7.02e+03} & 1.26e+05 & 1.27e+05 & 1.25e+05 & \textbr{4.60e+04} \\ 
			\hline
			\hline
			
			$\lambda$&\multicolumn{4}{c|}{results of DCPB2 in TDT2-a}&\multicolumn{4}{|c|}{results of DCPB2 in TDT2-b}\\
			\hline
			$10^0 $ & 8.28e+00 & 9.40e+00 & 8.56e+00 & \textbr{9.05e-01} & 3.88e+00 & 3.46e+00 & 3.91e+00 & \textbr{5.63e-01} \\ 
			$10^1 $ & 5.90e+00 & 6.02e+00 & 5.99e+00 & \textbr{2.46e+00} & 3.51e+00 & 3.34e+00 & 3.34e+00 & \textbr{1.25e+00} \\ 
			$10^2 $ & 5.18e+00 & 5.18e+00 & 5.18e+00 & \textbr{2.62e+00} & 3.51e+00 & 3.51e+00 & 3.51e+00 & \textbr{1.77e+00} \\ 
			$10^3 $ & 5.32e+00 & 5.32e+00 & 5.32e+00 & \textbr{2.62e+00} & 3.53e+00 & 3.53e+00 & 3.53e+00 & \textbr{1.77e+00} \\ 
			$10^4 $ & 5.32e+00 & 5.32e+00 & 5.32e+00 & \textbr{2.62e+00} & 3.54e+00 & 3.54e+00 & 3.54e+00 & \textbr{1.77e+00} \\ 
			\hline
			\hline
			
			$\lambda$&\multicolumn{4}{c|}{results of DCPB2 in 20News-a}&\multicolumn{4}{|c|}{results of DCPB2 in 20News-b}\\
			\hline 
			$10^0 $ & 1.59e+00 & 2.30e+00 & 1.88e+00 & \textbr{3.03e-01} & 5.64e-01 & 5.66e-01 & 5.64e-01 & \textbr{6.73e-02} \\ 
			$10^1 $ & 1.29e+00 & 1.02e+00 & 9.91e-01 & \textbr{2.92e-01} & 7.73e-01 & 5.60e-01 & 5.60e-01 & \textbr{4.70e-01} \\ 
			$10^2 $ & 1.05e+00 & 1.05e+00 & 1.05e+00 & \textbr{2.92e-01} & 5.62e-01 & 5.62e-01 & 5.62e-01 & \textbr{4.93e-01} \\ 
			$10^3 $ & 1.04e+00 & 1.04e+00 & 1.04e+00 & \textbr{2.92e-01} & 5.62e-01 & 5.62e-01 & 5.62e-01 & \textbr{4.93e-01} \\ 
			$10^4 $ & 1.04e+00 & 1.04e+00 & 1.04e+00 & \textbr{2.92e-01} & 5.62e-01 & 5.62e-01 & 5.62e-01 & \textbr{4.93e-01} \\ 
			\hline
			\hline
			
			$\lambda$&\multicolumn{4}{c|}{results of DCPB2 in Cifar-a}&\multicolumn{4}{|c|}{results of DCPB2 in Cifar-b}\\
			\hline
			$10^0 $ & \textbr{1.48e+04} & 1.53e+04 & 1.53e+04 & 1.66e+04 & \textbr{2.49e+04} & 2.55e+04 & 2.55e+04 & 2.73e+04 \\ 
			$10^1 $ & 1.56e+04 & \textbr{1.45e+04} & 1.46e+04 & 1.66e+04 & 2.85e+04 & \textbr{2.62e+04} & \textbr{2.62e+04} & 2.75e+04 \\ 
			$10^2 $ & 3.15e+04 & 2.00e+04 & 1.96e+04 & \textbr{1.72e+04} & 6.36e+04 & 3.70e+04 & 3.65e+04 & \textbr{2.90e+04} \\ 
			$10^3 $ & 3.23e+04 & 2.98e+04 & 2.76e+04 & \textbr{1.54e+04} & 8.27e+04 & 5.67e+04 & 5.53e+04 & \textbr{2.59e+04} \\ 
			$10^4 $ & 2.31e+04 & 2.29e+04 & 2.22e+04 & \textbr{1.54e+04} & 2.05e+05 & 1.83e+05 & 1.67e+05 & \textbr{2.59e+04} \\ 
			\hline
			\hline
			
			$\lambda$&\multicolumn{4}{c|}{results of DCPB2 in MNIST-a}&\multicolumn{4}{|c|}{results of DCPB2 in MNIST-b}\\
			\hline
			$10^0 $ & 3.74e+01 & 2.45e+03 & 1.36e+01 & \textbr{6.66e+00} & 2.28e+02 & 5.00e+03 & 2.06e+01 & \textbr{1.25e+01} \\ 
			$10^1 $ & 5.56e+02 & 2.57e+03 & 1.37e+02 & \textbr{4.35e+01} & 1.70e+03 & 5.20e+03 & 2.06e+02 & \textbr{7.26e+01} \\ 
			$10^2 $ & 3.85e+03 & 3.85e+03 & 1.39e+03 & \textbr{2.13e+02} & 7.11e+03 & 7.11e+03 & 2.04e+03 & \textbr{3.43e+02} \\ 
			$10^3 $ & 1.66e+04 & 1.66e+04 & 1.36e+04 & \textbr{3.50e+02} & 2.63e+04 & 2.63e+04 & 2.08e+04 & \textbr{1.50e+03} \\ 
			$10^4 $ & 9.30e+04 & 8.88e+04 & 7.89e+04 & \textbr{3.50e+02} & 2.18e+05 & 2.18e+05 & 2.05e+05 & \textbr{9.73e+02} \\ 
			\hline
		\end{tabular}
		\caption{Comparisons of objective values of four the methods with varying $\lambda$ on NASDAQ 100 and S\&P 500 datasets.}	\label{tab:lambdaindex}
		
	\end{table}

	\captionsetup[subfigure]{font=tiny}
	
	\newpage
	\begin{figure}[htbp]
		\centering
		\begin{subfigure}{0.2\textwidth}
			\includegraphics[width=\textwidth]{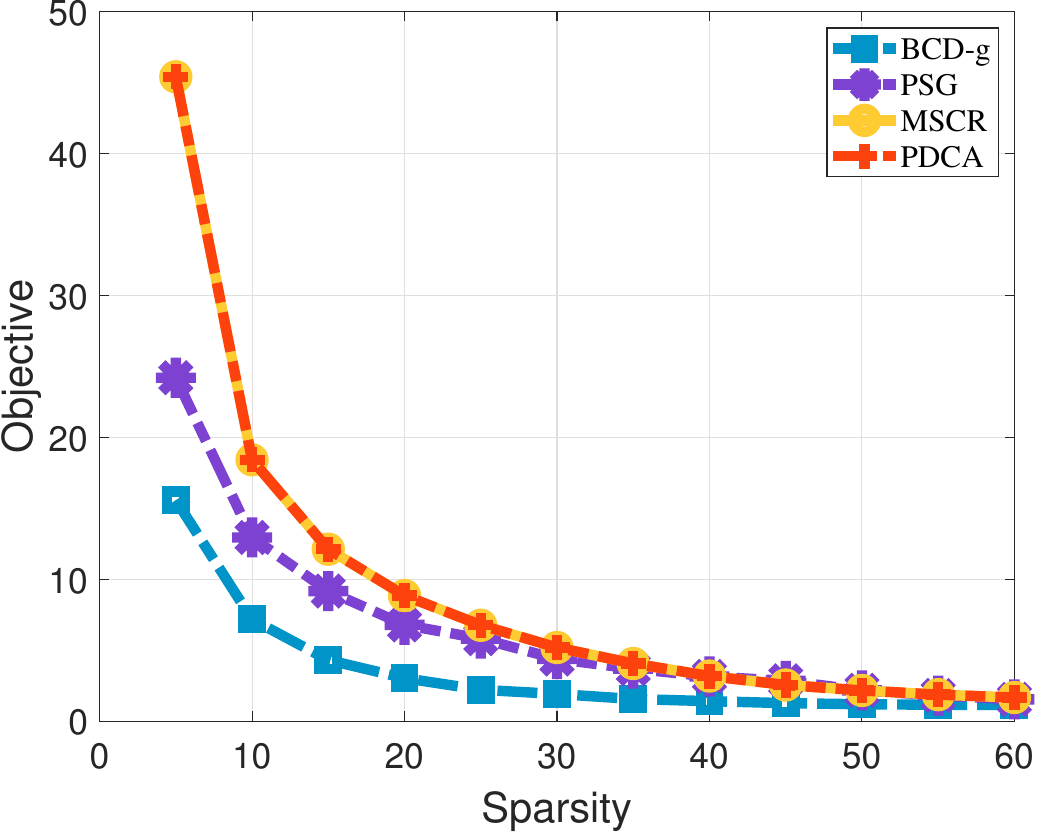}
			\caption{SIT in NAS-16}
		\end{subfigure}
		\begin{subfigure}{0.2\textwidth}
			\includegraphics[width=\textwidth]{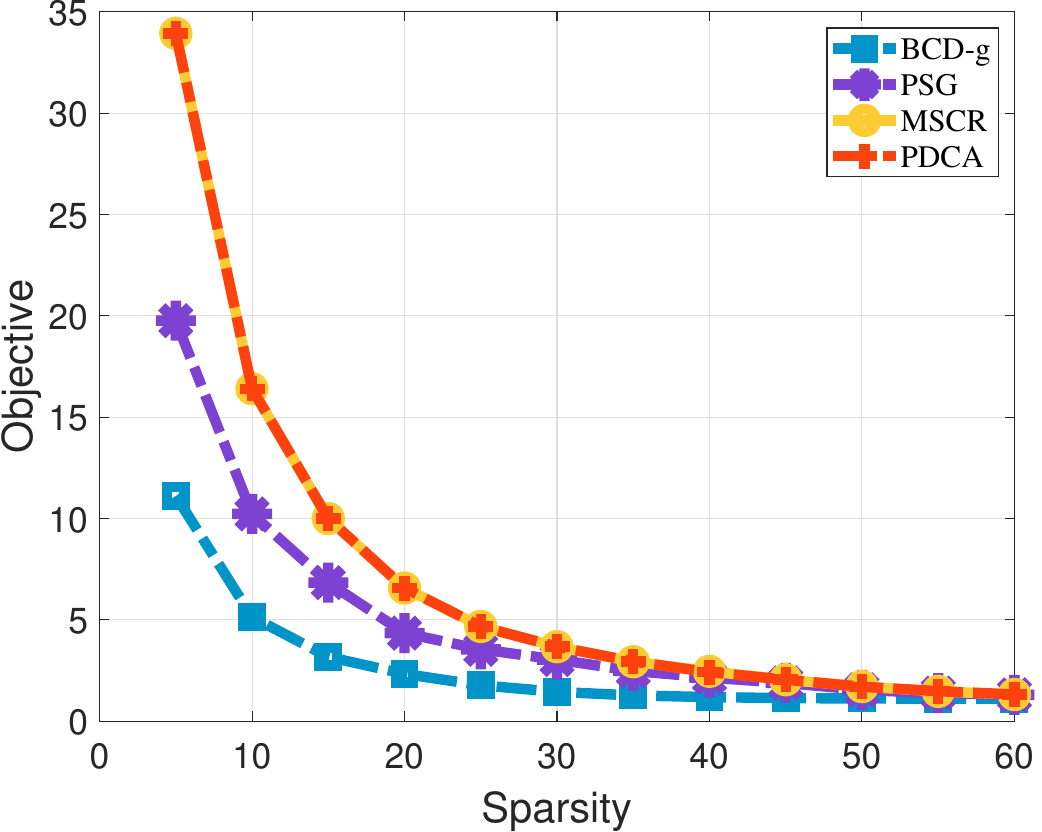}
			\caption{SIT in NAS-17}
		\end{subfigure}
		\begin{subfigure}{0.2\textwidth}
			\includegraphics[width=\textwidth]{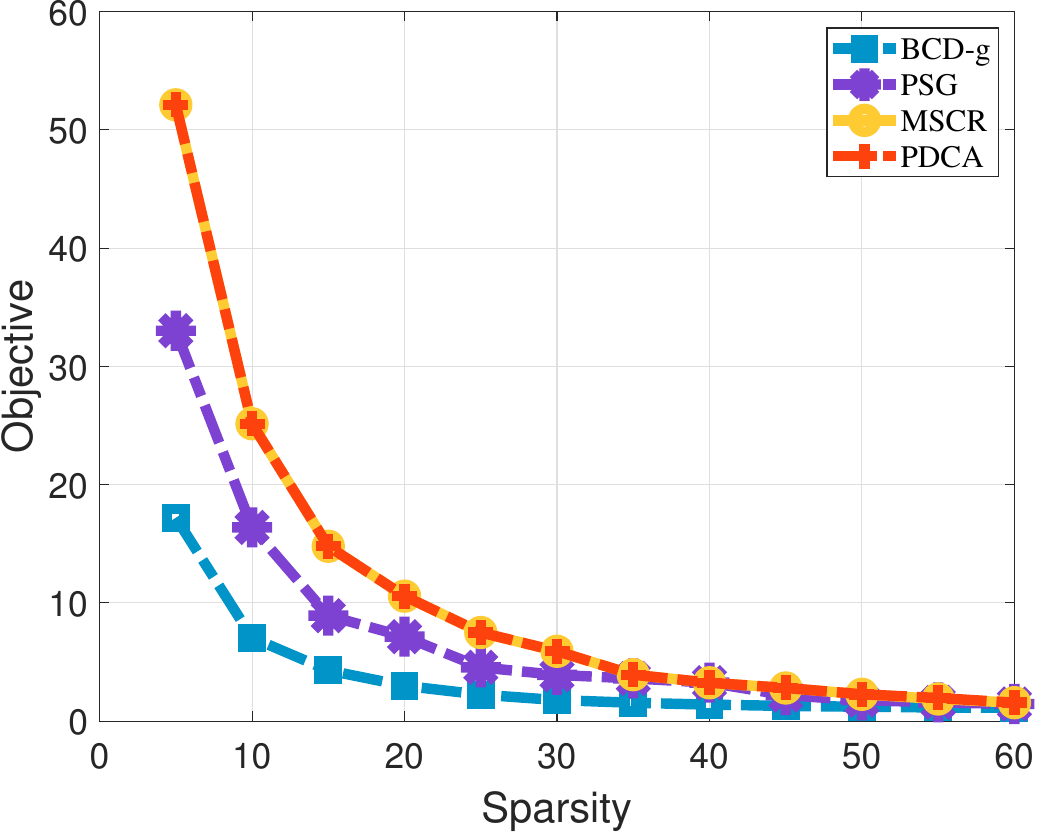}
			\caption{SIT in NAS-18}
		\end{subfigure}
		\begin{subfigure}{0.2\textwidth}
			\includegraphics[width=\textwidth]{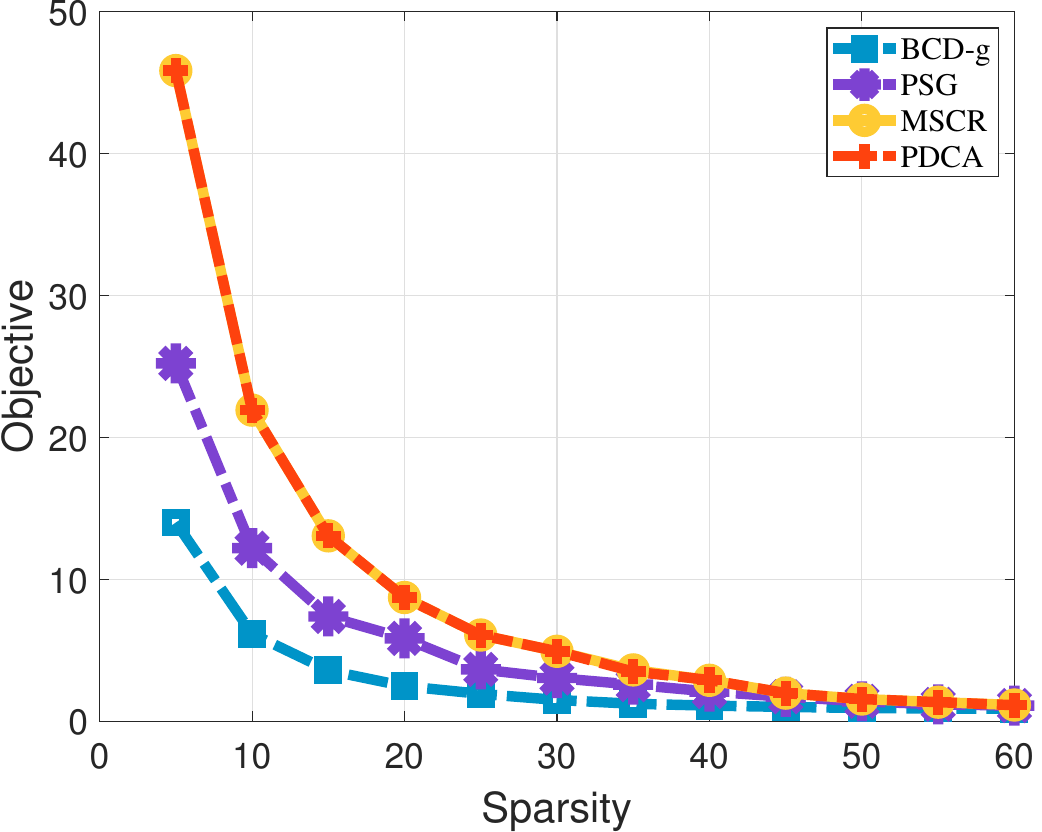}
			\caption{SIT in NAS-19}
		\end{subfigure}
		\begin{subfigure}{0.2\textwidth}
			\includegraphics[width=\textwidth]{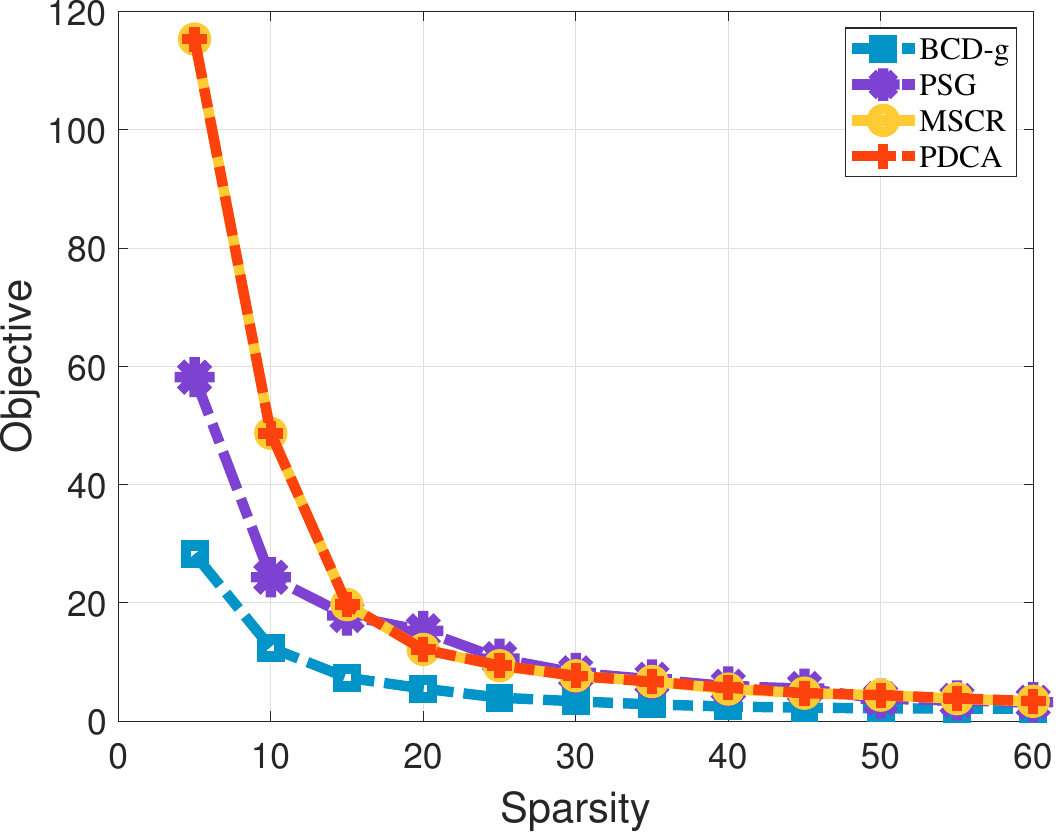}
			\caption{SIT in NAS-20}
		\end{subfigure}
		\begin{subfigure}{0.2\textwidth}
			\includegraphics[width=\textwidth]{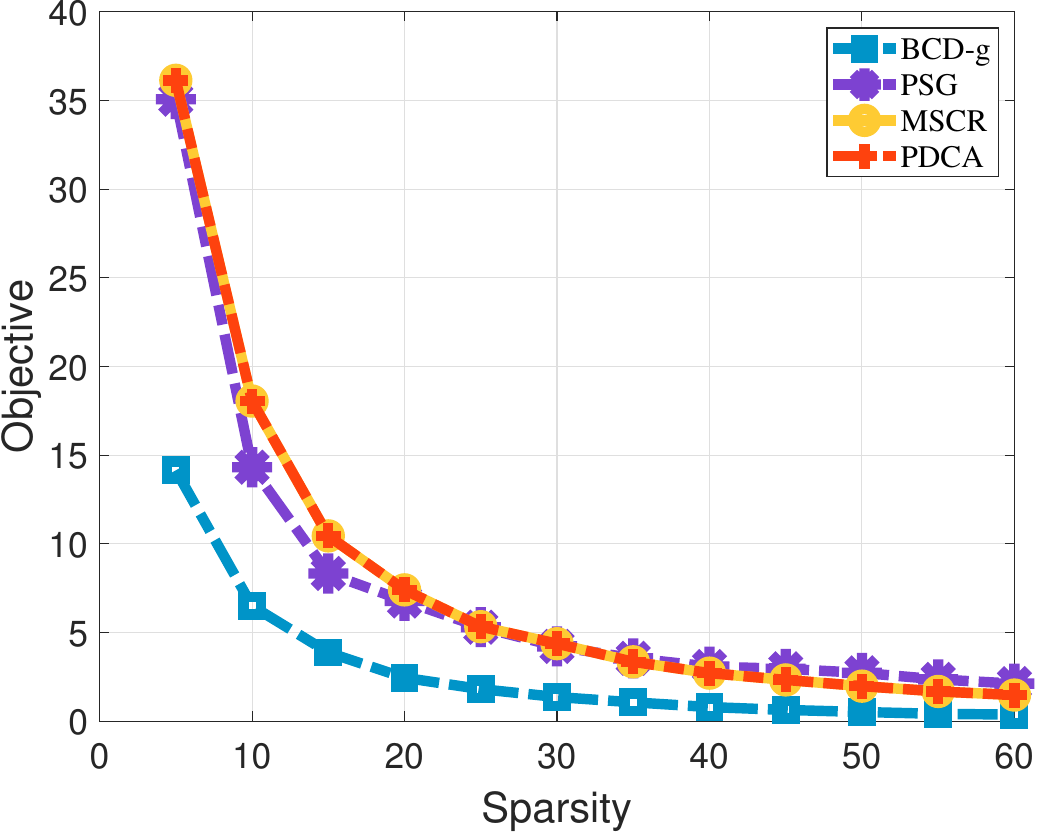}
			\caption{SIT in SP-16}
		\end{subfigure}
		\begin{subfigure}{0.2\textwidth}
			\includegraphics[width=\textwidth]{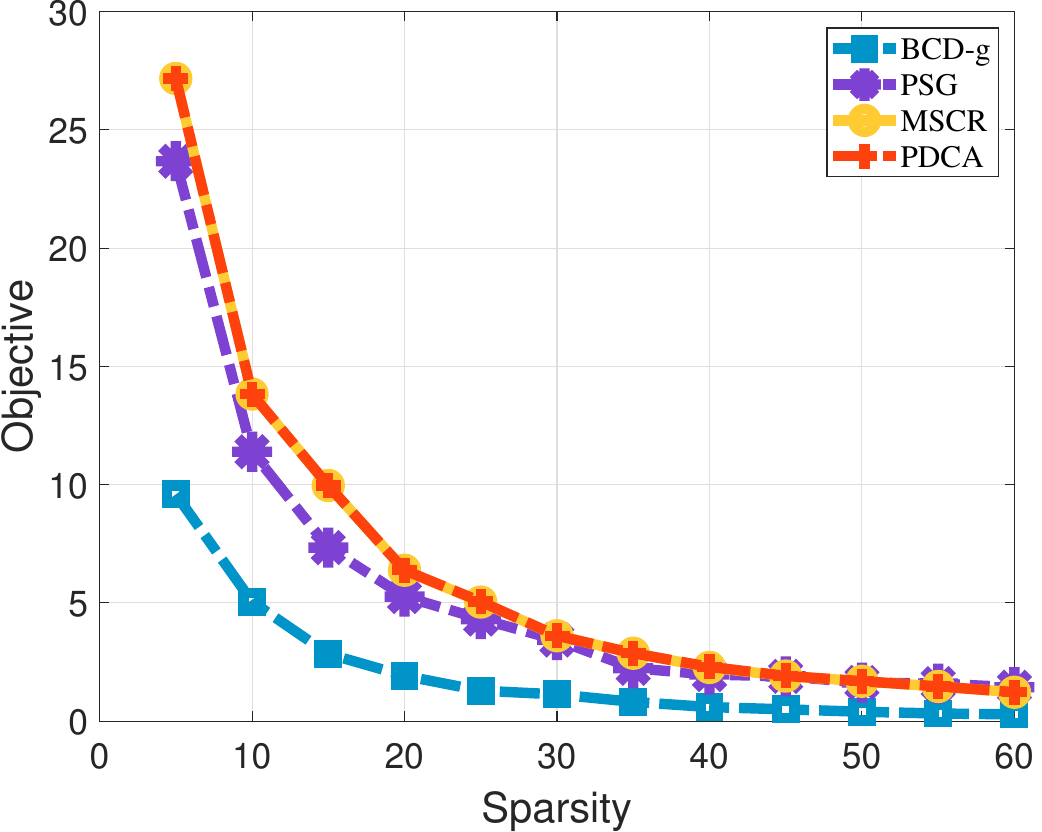}
			\caption{SIT in SP-17}
		\end{subfigure}
		\begin{subfigure}{0.2\textwidth}
			\includegraphics[width=\textwidth]{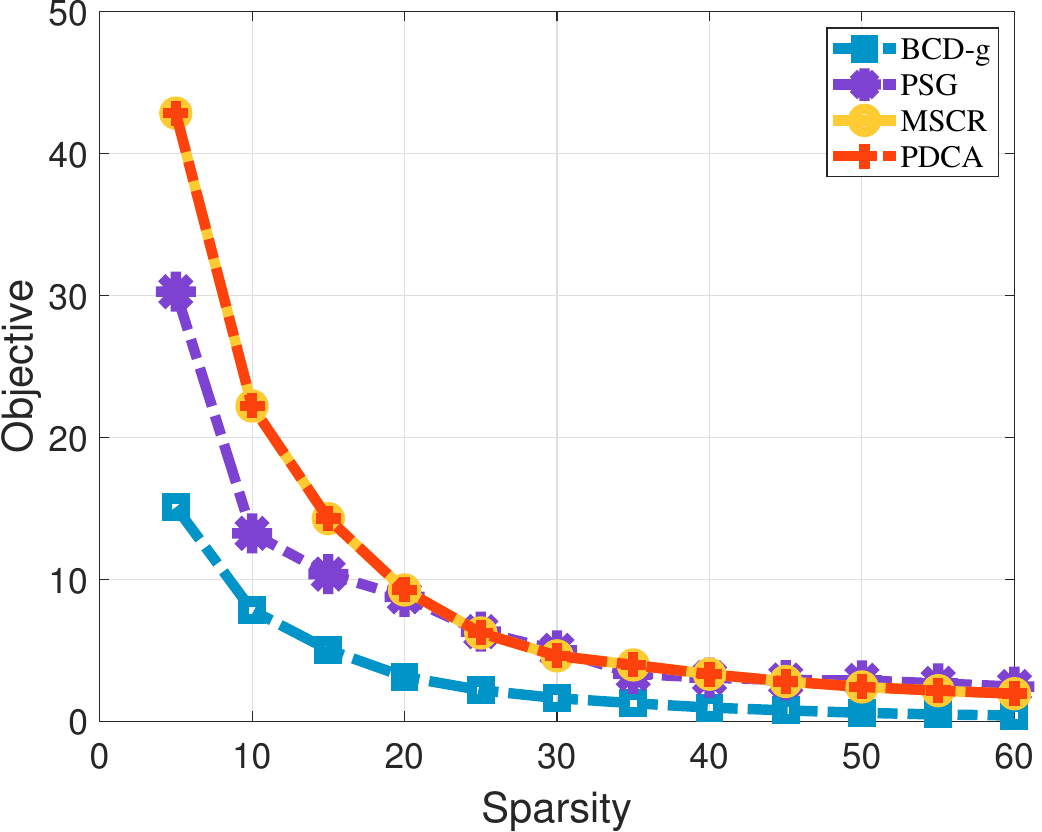}
			\caption{SIT in SP-18}
		\end{subfigure}
		\begin{subfigure}{0.2\textwidth}
			\includegraphics[width=\textwidth]{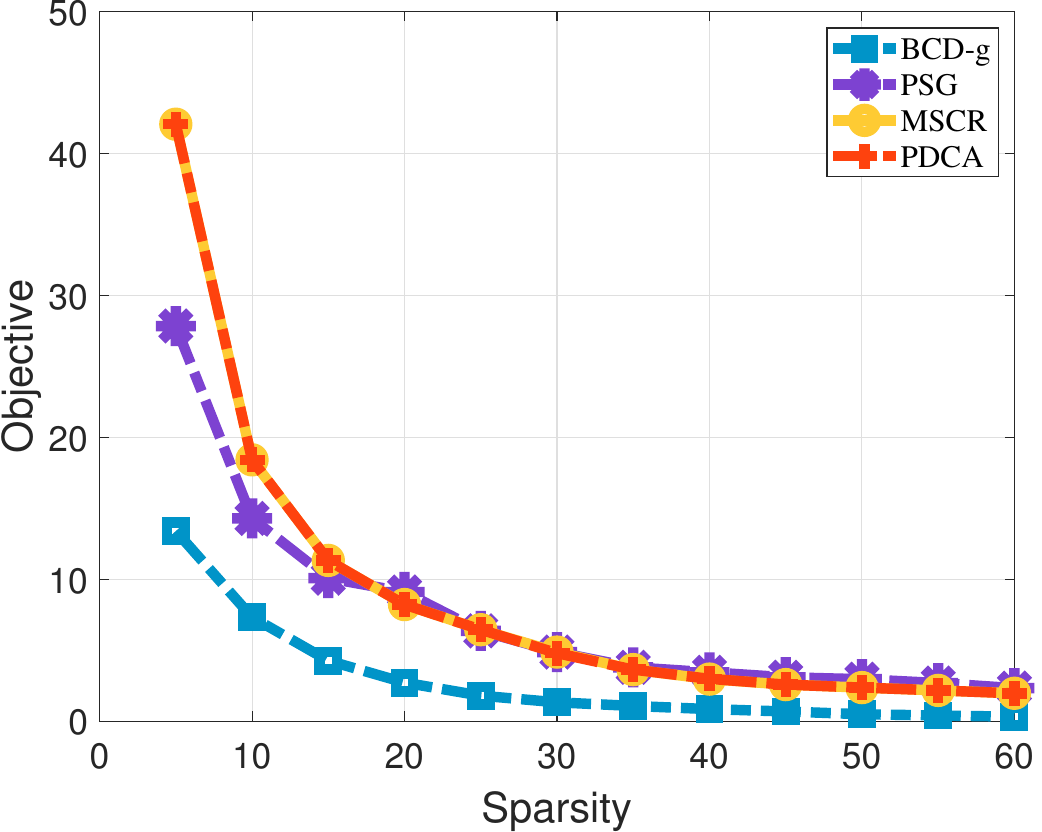}
			\caption{SIT in SP-19}
		\end{subfigure}
		\begin{subfigure}{0.2\textwidth}
			\includegraphics[width=\textwidth]{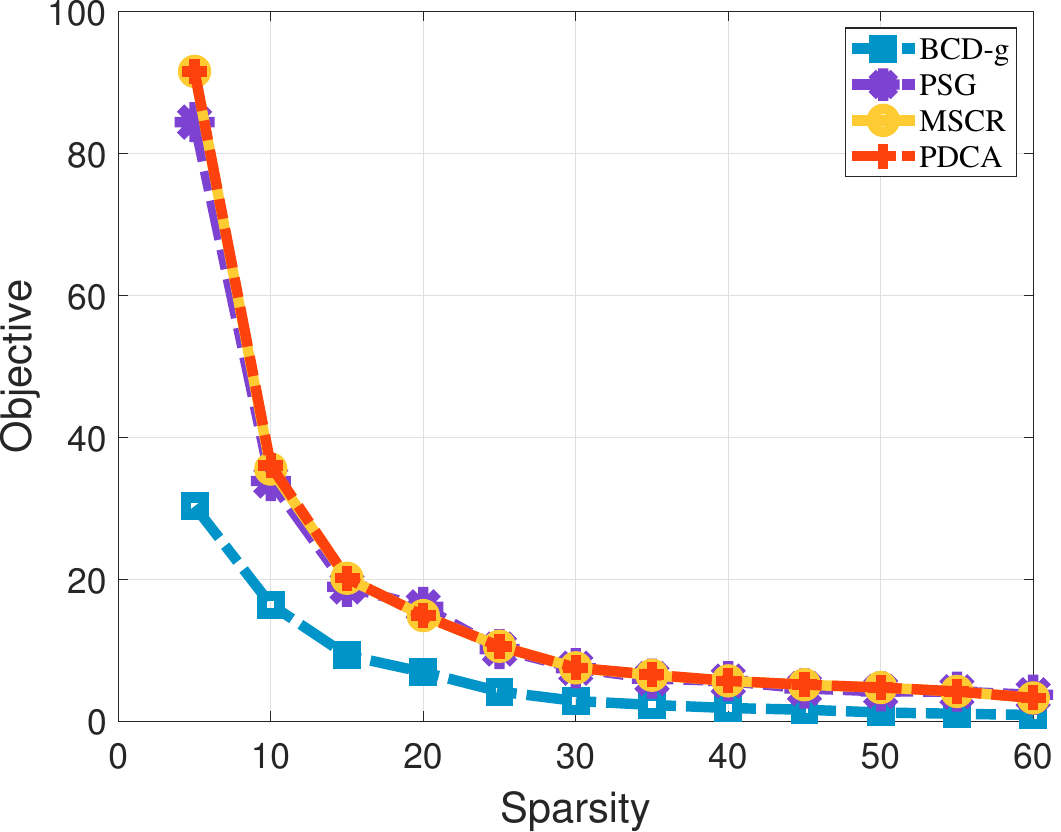}
			\caption{SIT in SP-20}
		\end{subfigure}
		\begin{subfigure}{0.2\textwidth}
			\includegraphics[width=\textwidth]{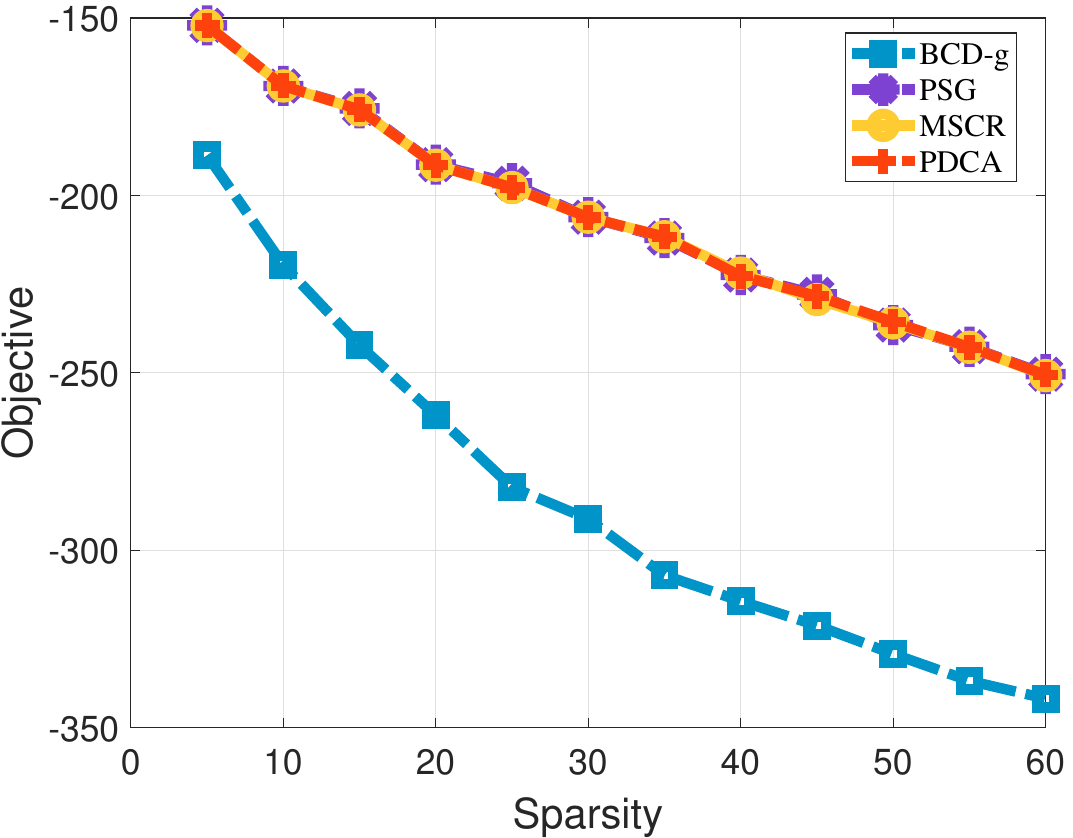}
			\caption{NNSPCA in randn-a}
		\end{subfigure}
		\begin{subfigure}{0.2\textwidth}
			\includegraphics[width=\textwidth]{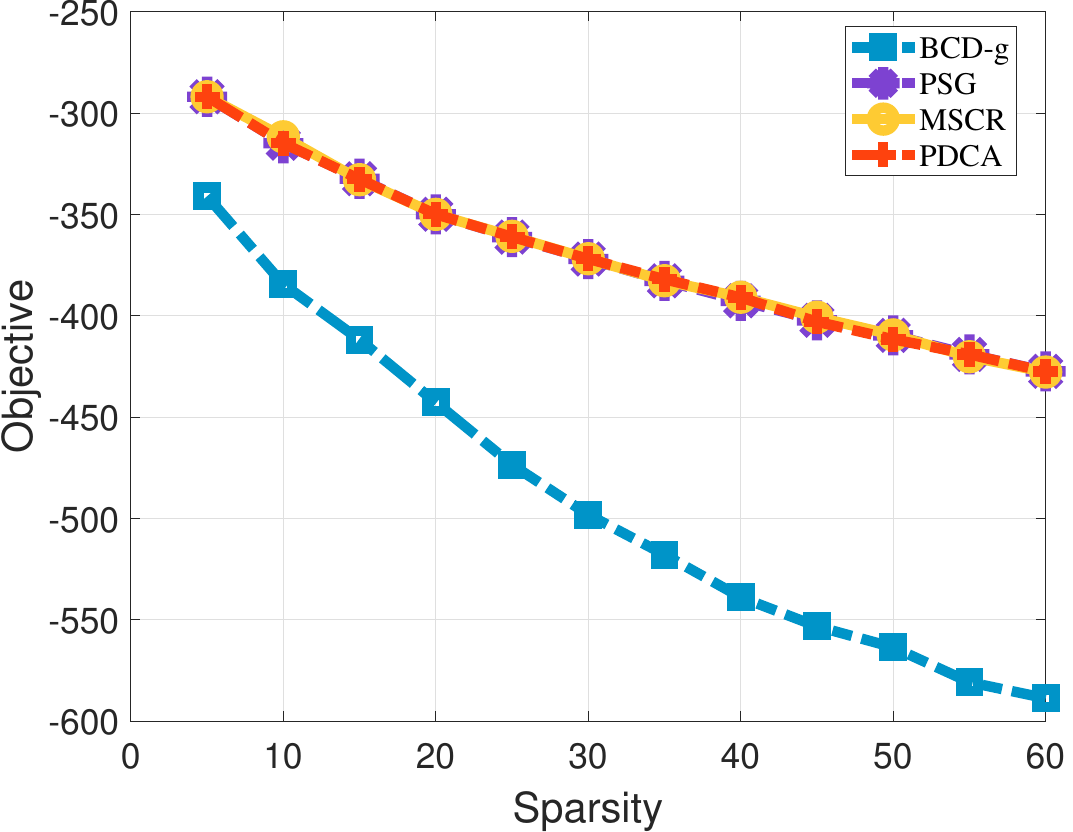}
			\caption{NNSPCA in randn-b} 
		\end{subfigure}
		\begin{subfigure}{0.2\textwidth}
			\includegraphics[width=\textwidth]{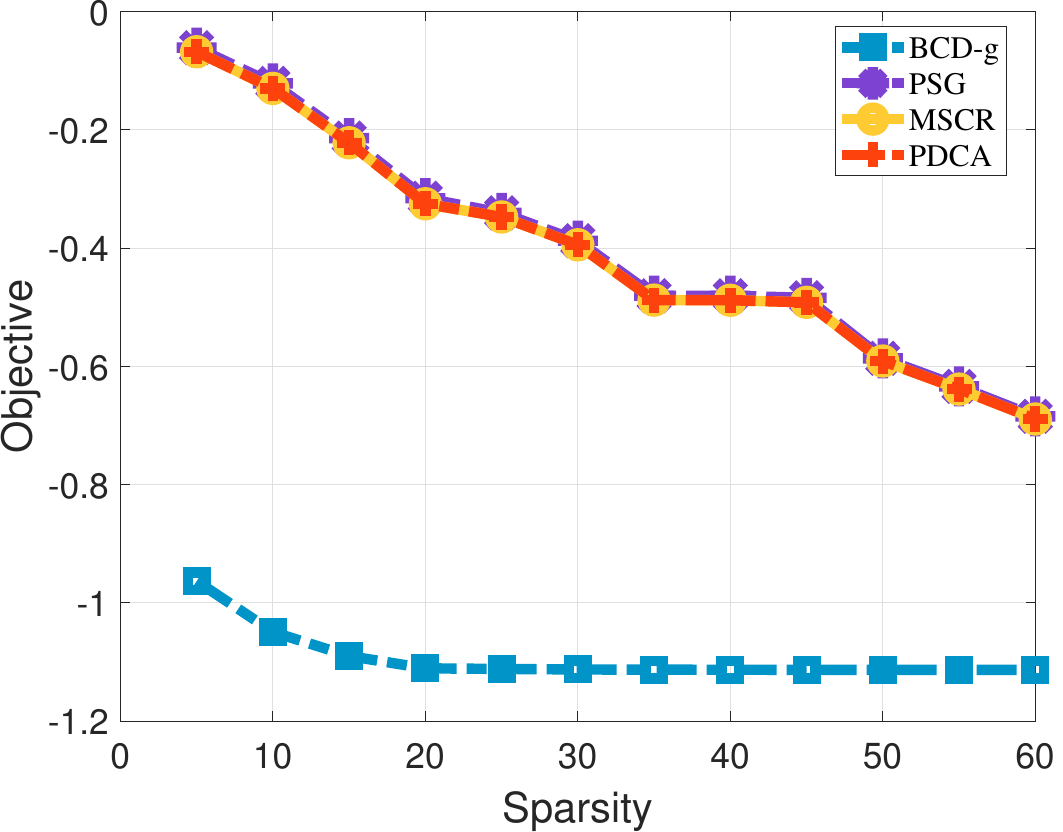}
			\caption{NNSPCA in TDT2-a}
		\end{subfigure}
		\begin{subfigure}{0.2\textwidth}
			\includegraphics[width=\textwidth]{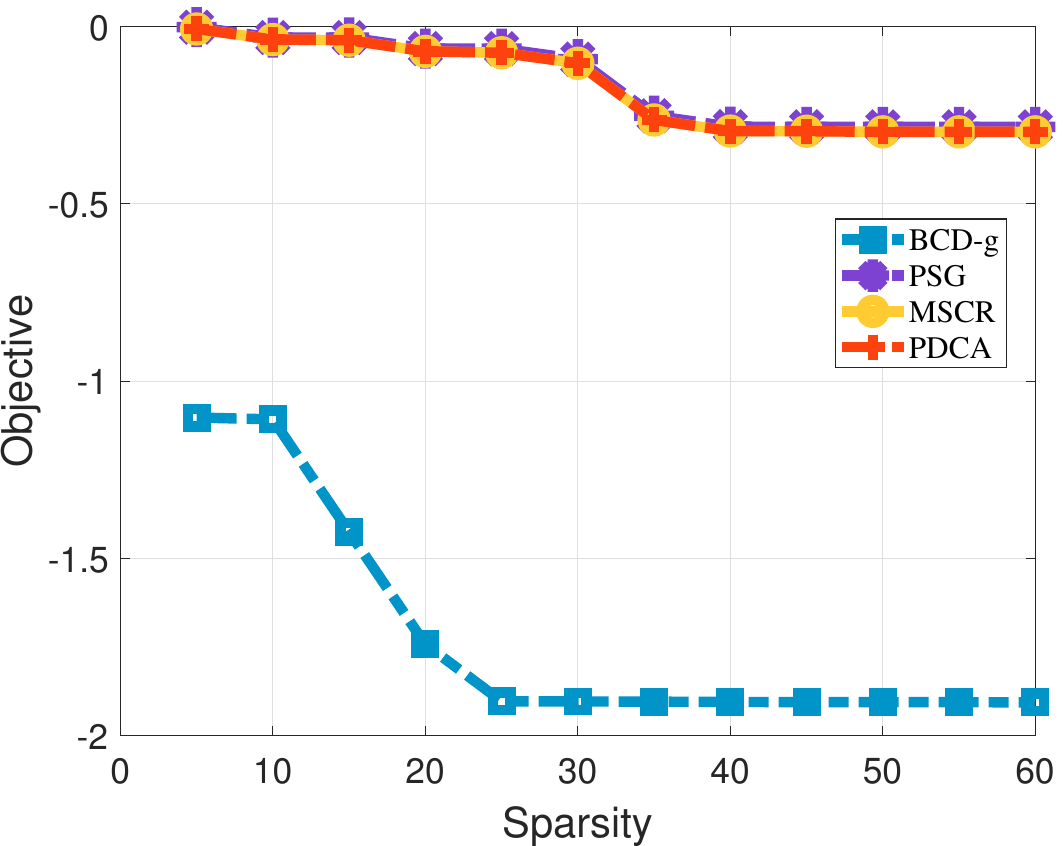}
			\caption{NNSPCA in TDT2-b}
		\end{subfigure}
		\begin{subfigure}{0.2\textwidth}
			\includegraphics[width=\textwidth]{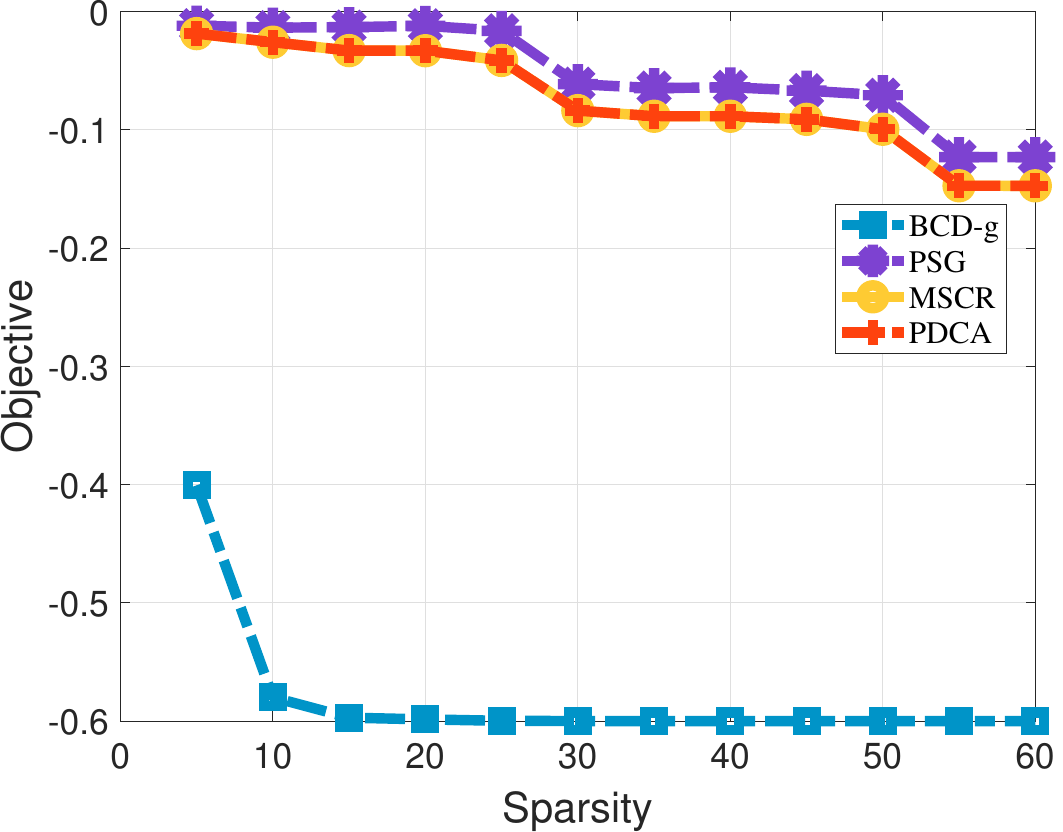}
			\caption{NNSPCA in 20News-a}
		\end{subfigure}
		\begin{subfigure}{0.2\textwidth}
			\includegraphics[width=\textwidth]{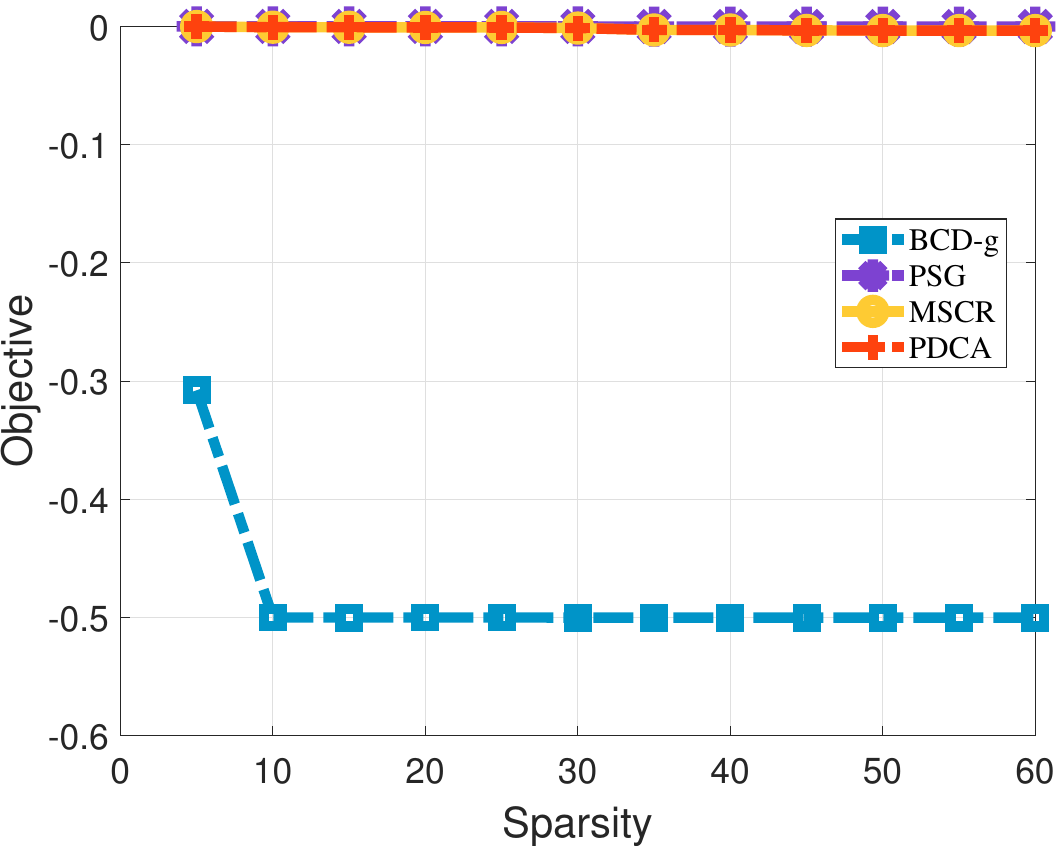}
			\caption{NNSPCA in 20News-b}
		\end{subfigure}
		\begin{subfigure}{0.2\textwidth}
			\includegraphics[width=\textwidth]{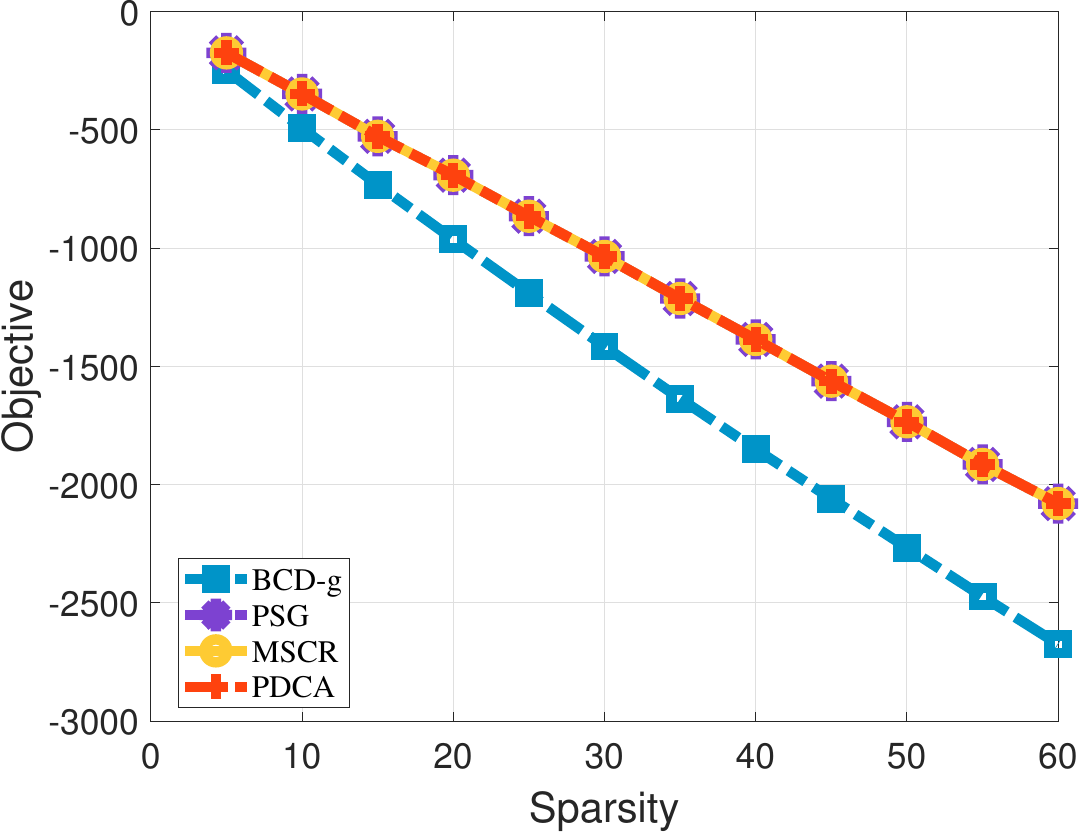}
			\caption{NNSPCA in Cifar-a}
		\end{subfigure}
		\begin{subfigure}{0.2\textwidth}
			\includegraphics[width=\textwidth]{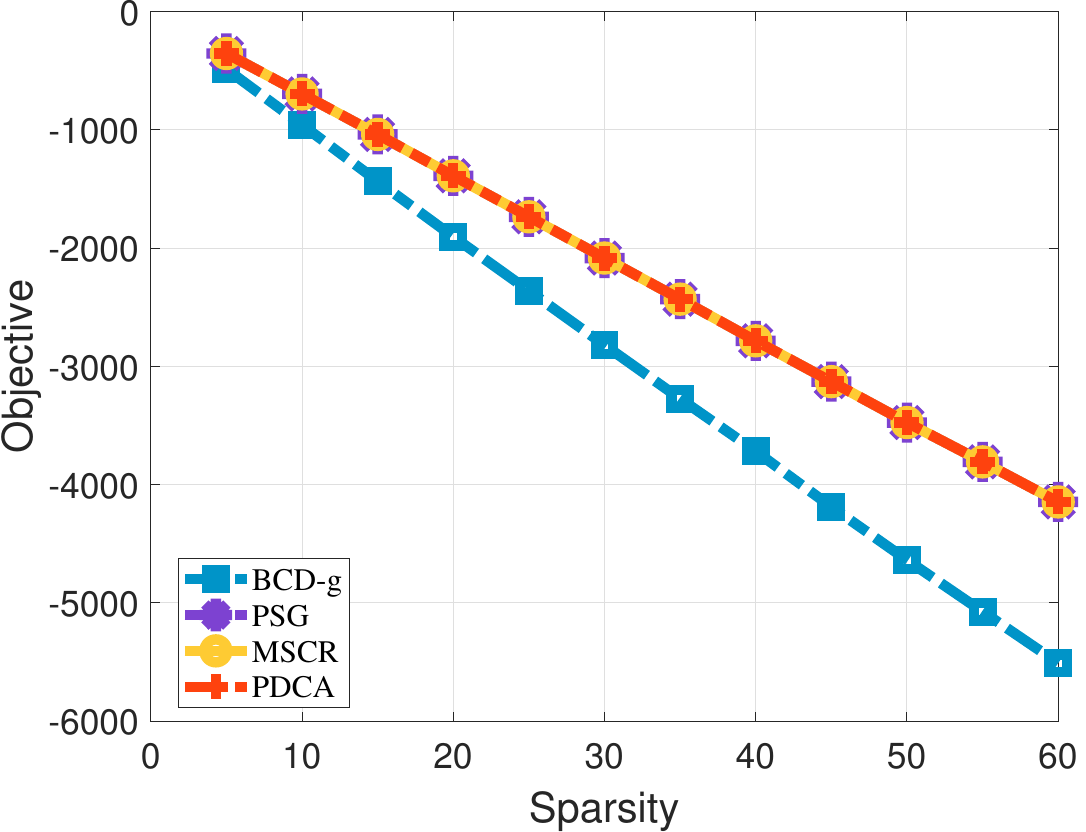}
			\caption{NNSPCA in Cifar-b}
		\end{subfigure}
		\begin{subfigure}{0.2\textwidth}
			\includegraphics[width=\textwidth]{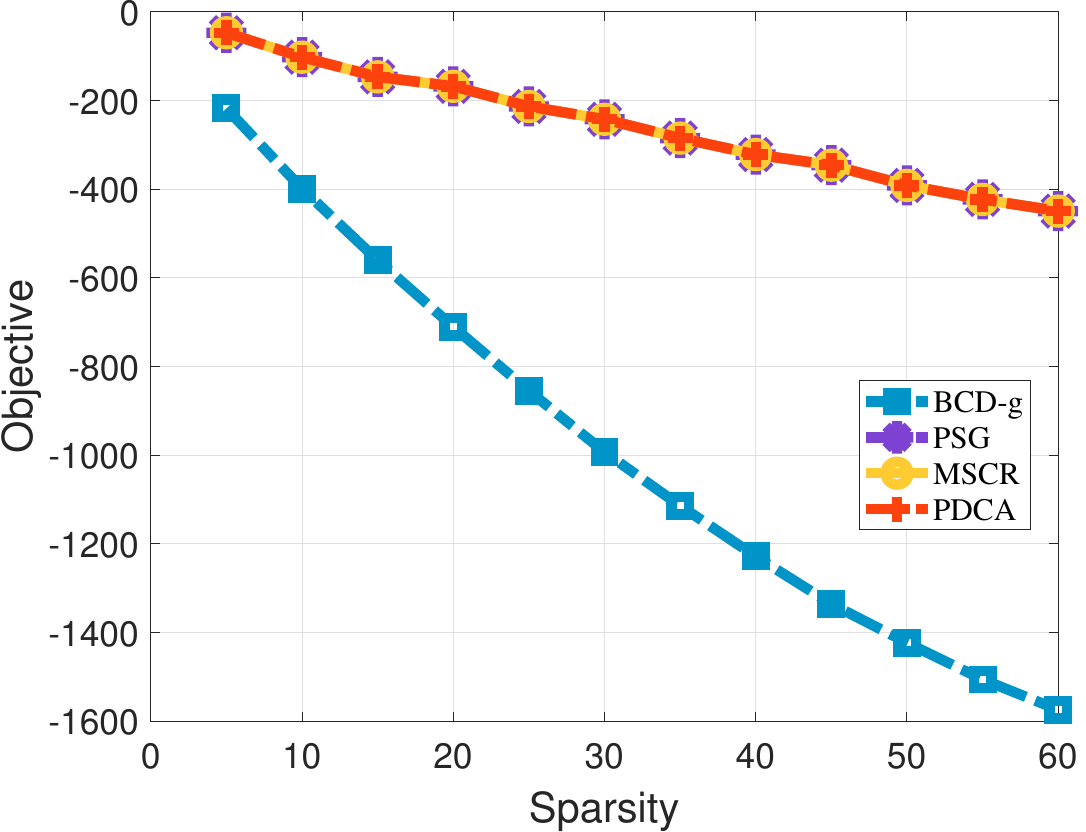}
			\caption{NNSPCA in MNIST-a}
		\end{subfigure}
		\begin{subfigure}{0.2\textwidth}
			\includegraphics[width=\textwidth]{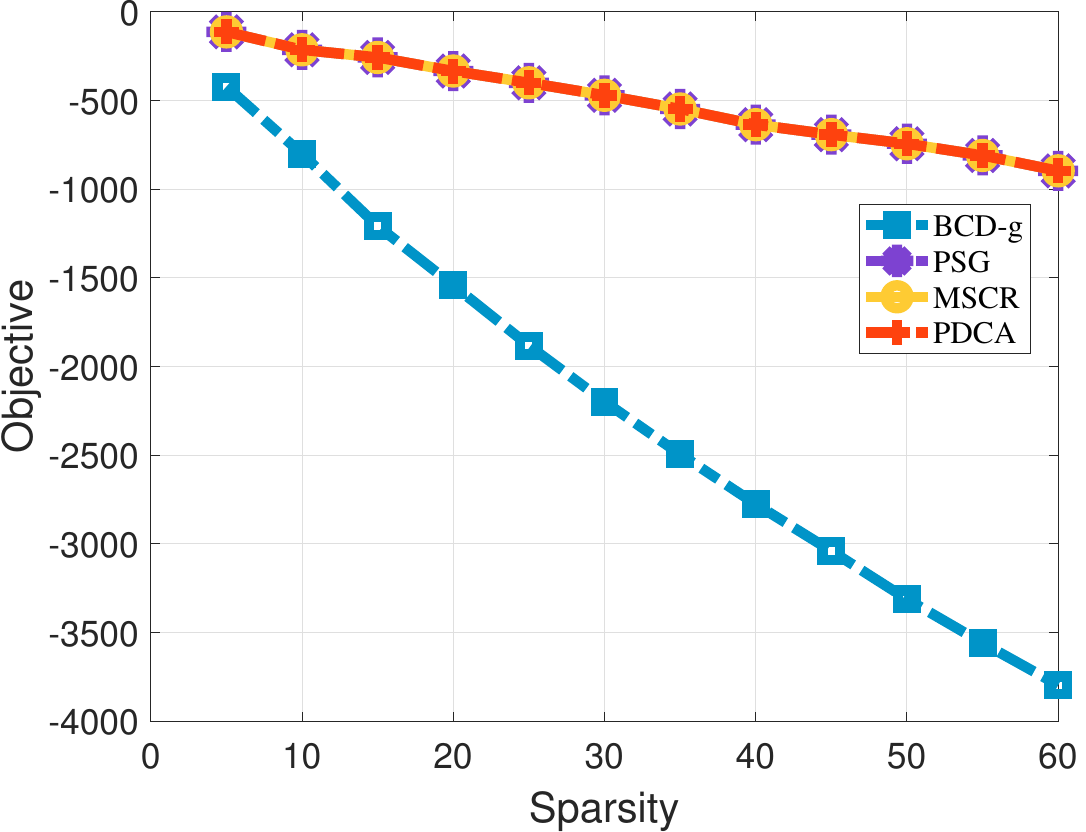}
			\caption{NNSPCA in MNIST-b}
		\end{subfigure}
		\caption{Objective values for the sparse index tracking problem and non-negative sparse PCA problem with varying sparsity $s$ across 20 datasets.}
		\label{fig:diff1}
		
	\end{figure}

	\captionsetup[subfigure]{font=tiny}
	\begin{figure}[htbp]
		\centering
		\begin{subfigure}{0.2\textwidth}
			\includegraphics[width=\textwidth]{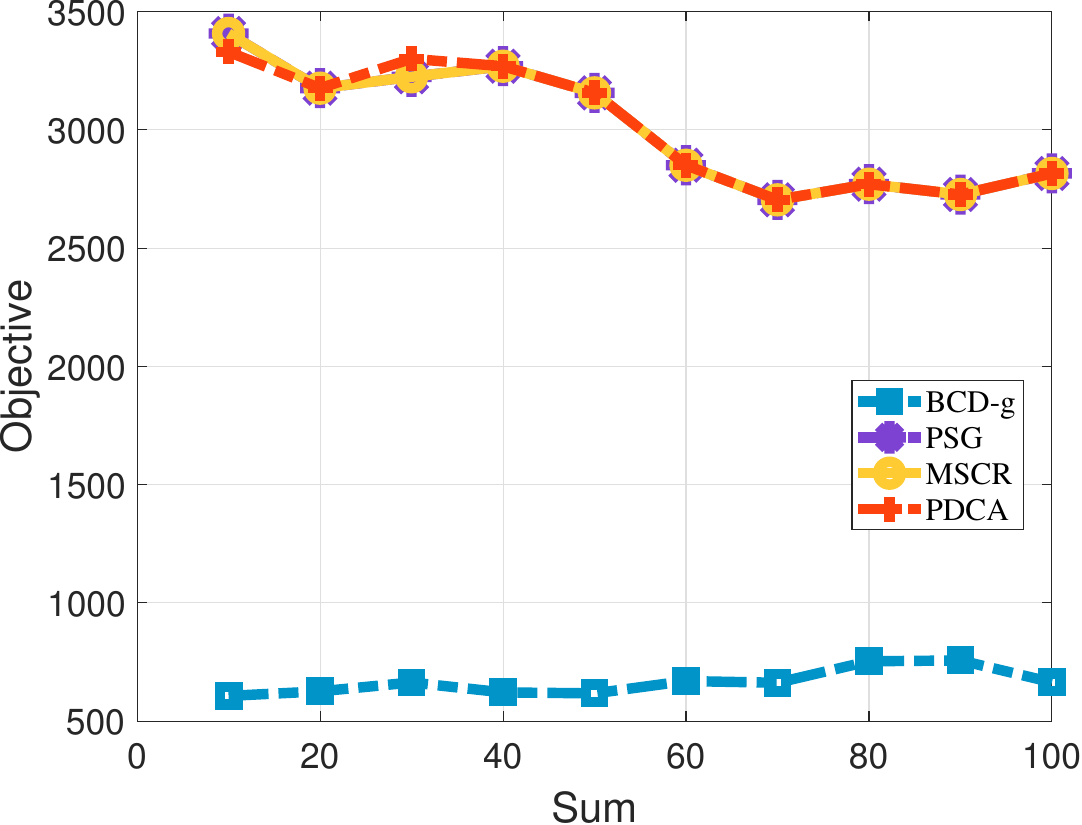}
			\caption{DCPB1 in randn-a}
		\end{subfigure}
		\begin{subfigure}{0.2\textwidth}
			\includegraphics[width=\textwidth]{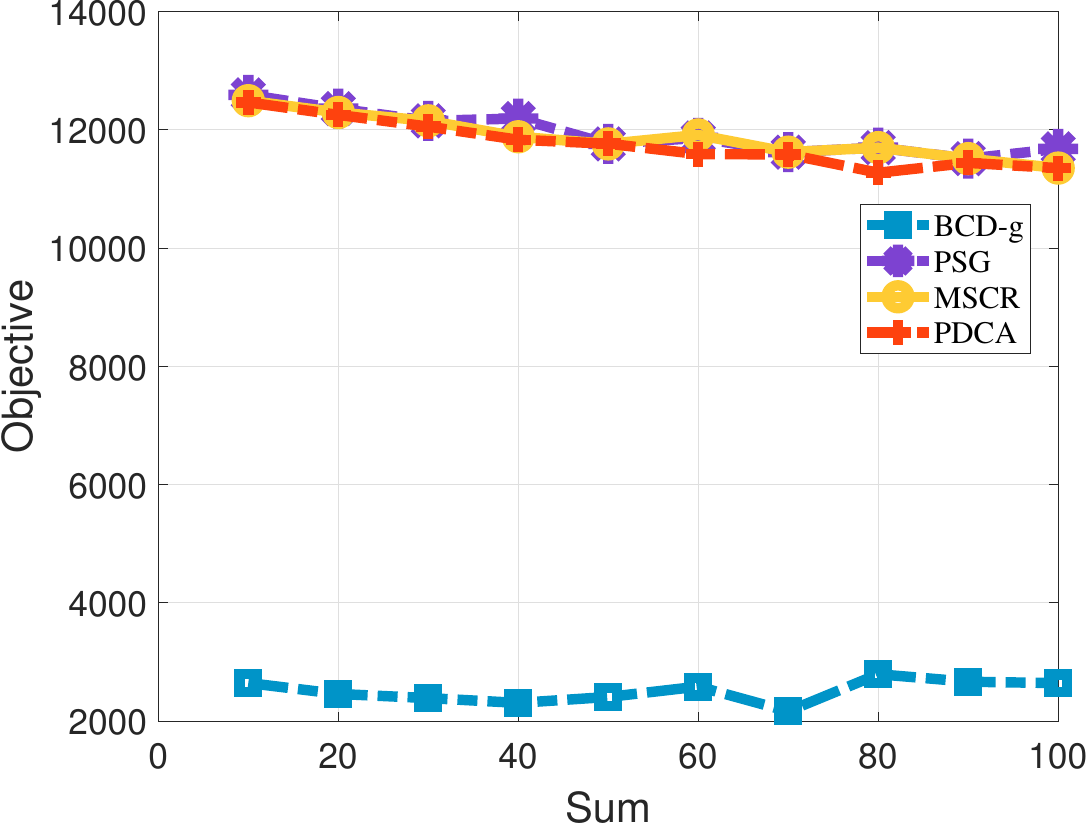}
			\caption{DCPB1 in randn-b}
		\end{subfigure}
		\begin{subfigure}{0.2\textwidth}
			\includegraphics[width=\textwidth]{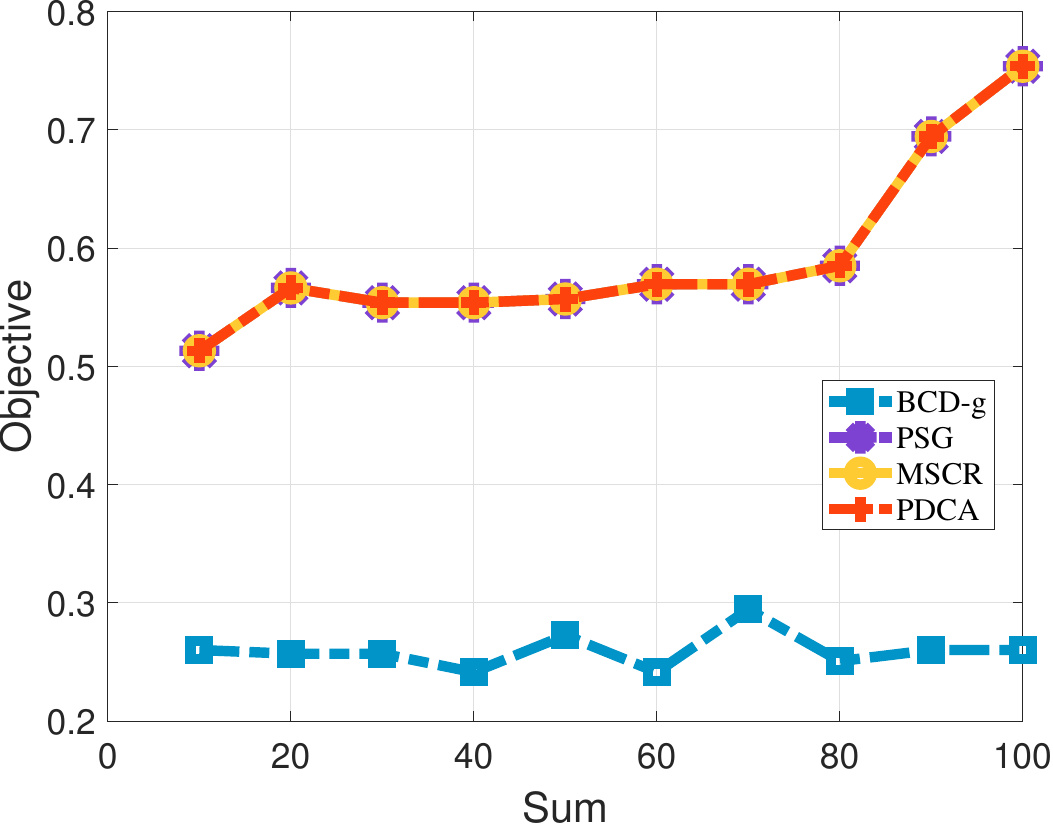}
			\caption{DCPB1 in TDT2-a}
		\end{subfigure}
		\begin{subfigure}{0.2\textwidth}
			\includegraphics[width=\textwidth]{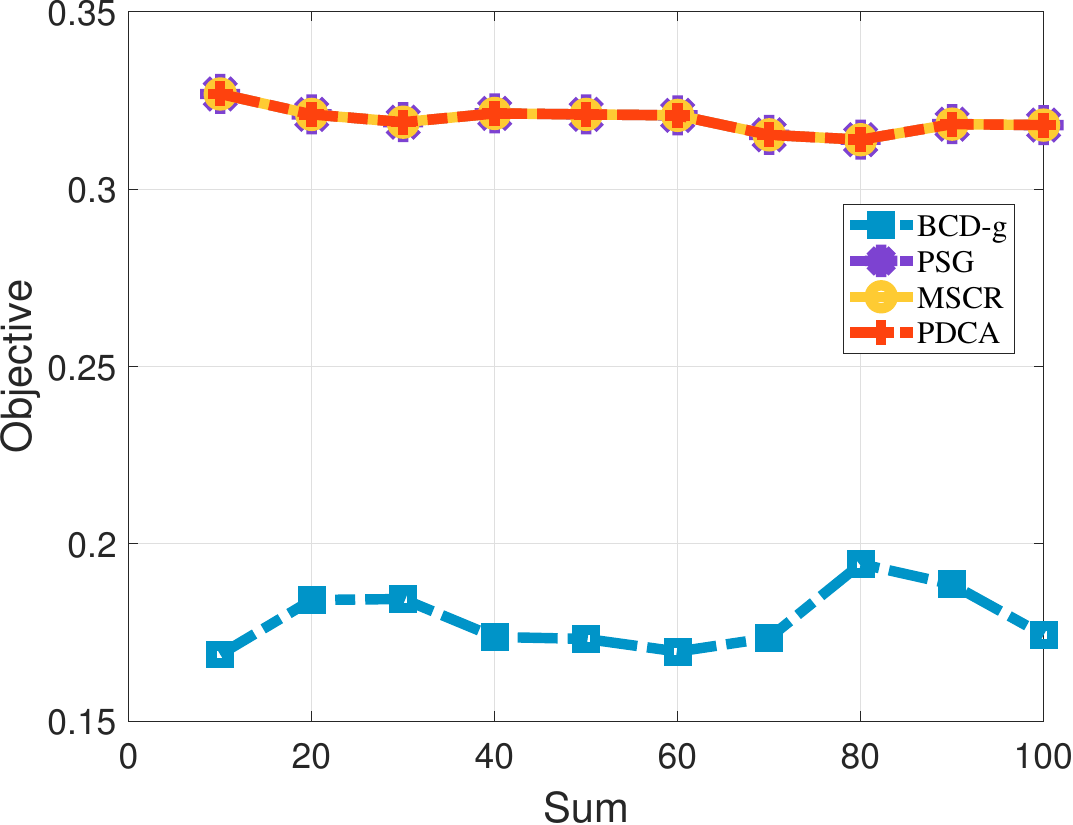}
			\caption{DCPB1 in TDT2-b}
		\end{subfigure}
		\begin{subfigure}{0.2\textwidth}
			\includegraphics[width=\textwidth]{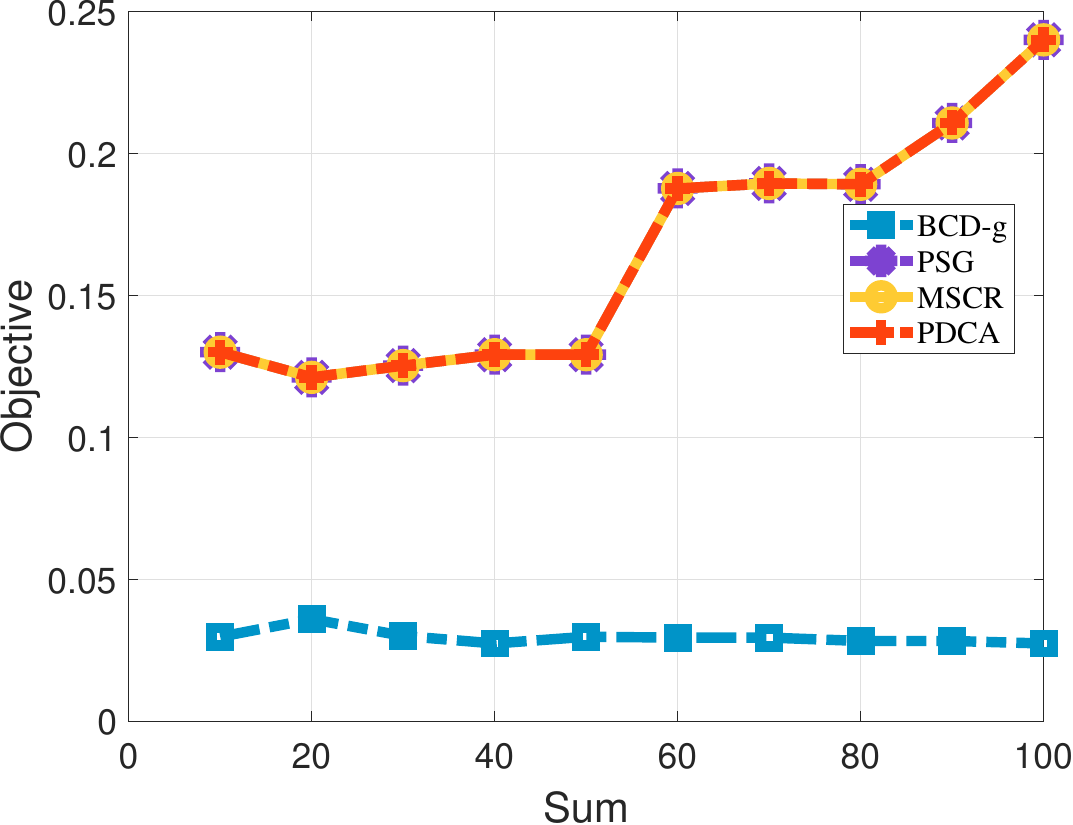}
			\caption{DCPB1 in 20News-a}
		\end{subfigure}
		\begin{subfigure}{0.2\textwidth}
			\includegraphics[width=\textwidth]{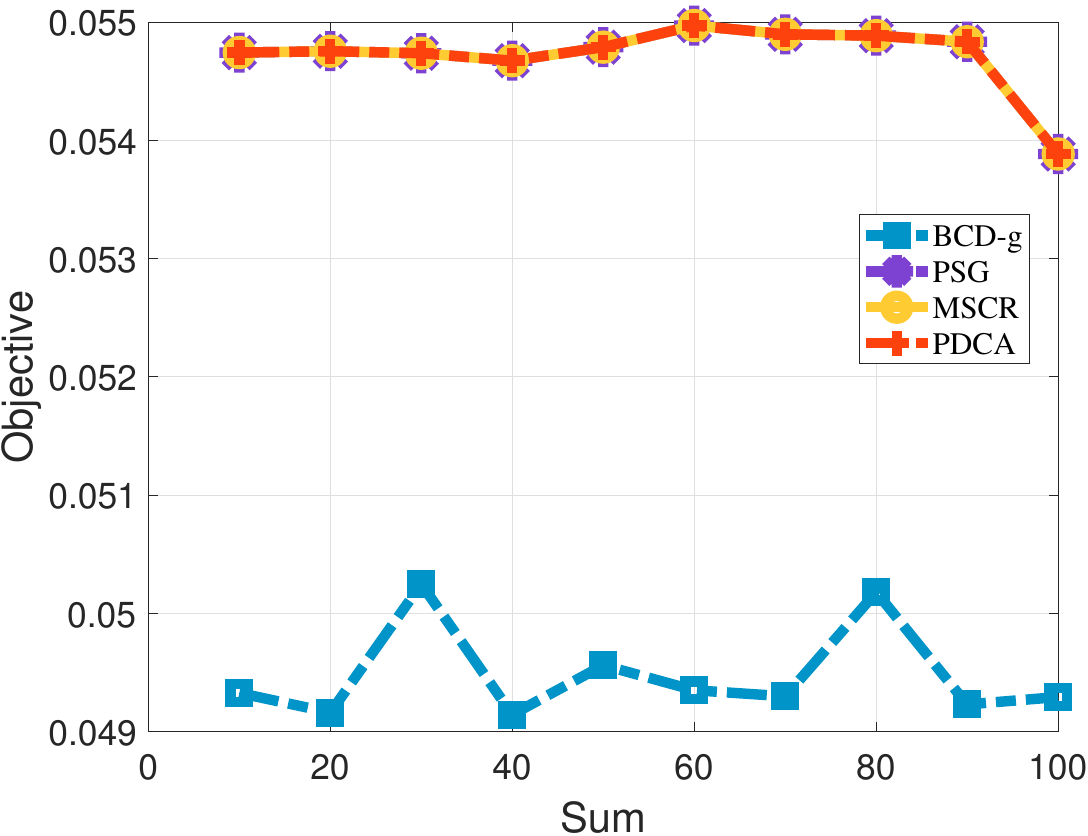}
			\caption{DCPB1 in 20News-b}
		\end{subfigure}
		\begin{subfigure}{0.2\textwidth}
			\includegraphics[width=\textwidth]{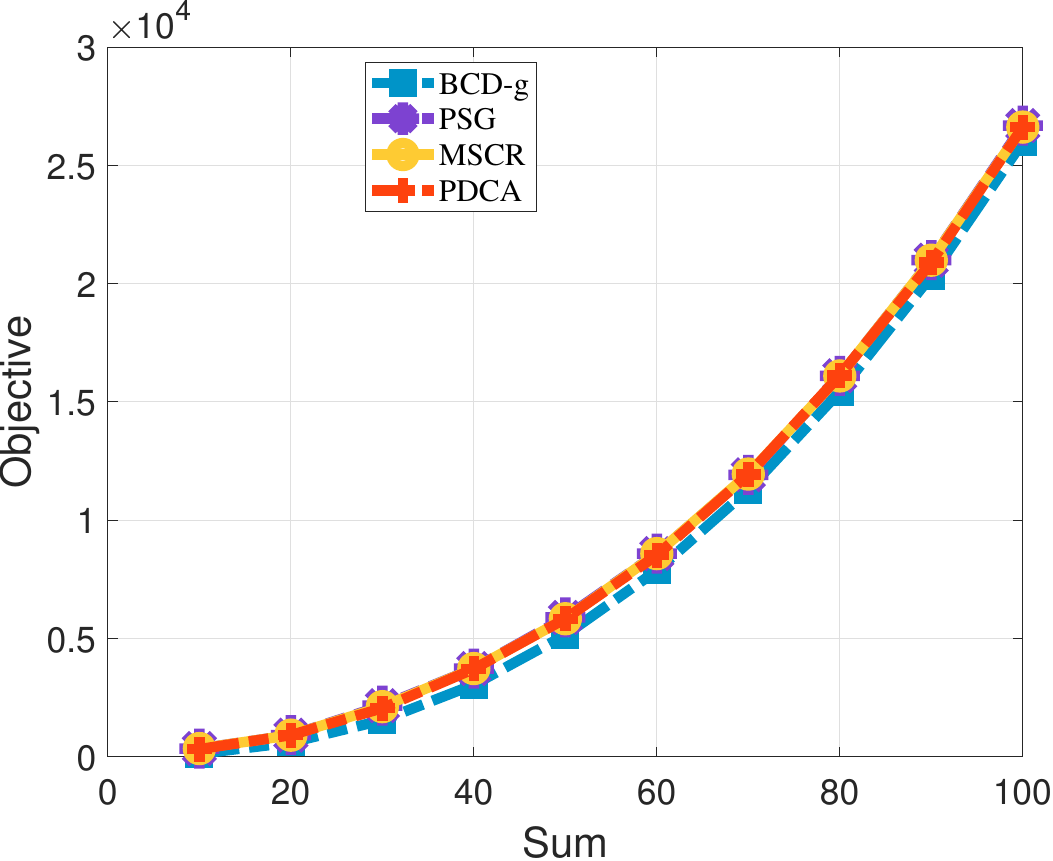}
			\caption{DCPB1 in Cifar-a}
		\end{subfigure}
		\begin{subfigure}{0.2\textwidth}
			\includegraphics[width=\textwidth]{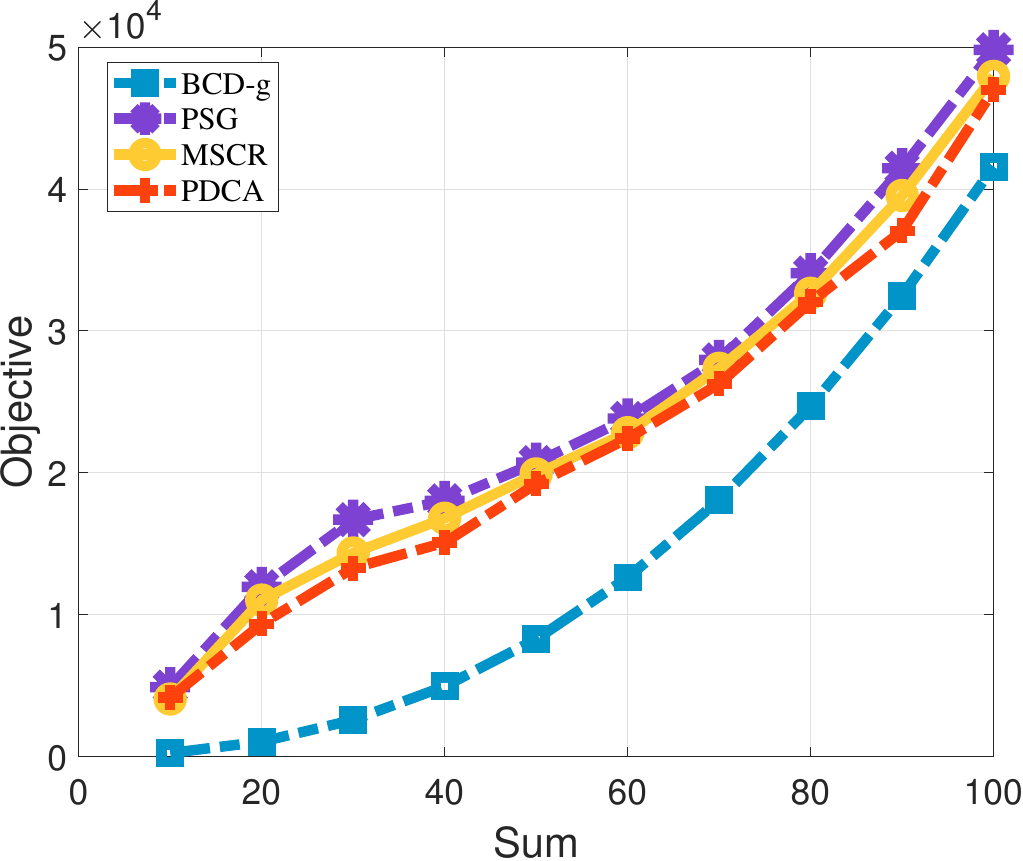}
			\caption{DCPB1 in Cifar-b}
		\end{subfigure}
		\begin{subfigure}{0.2\textwidth}
			\includegraphics[width=\textwidth]{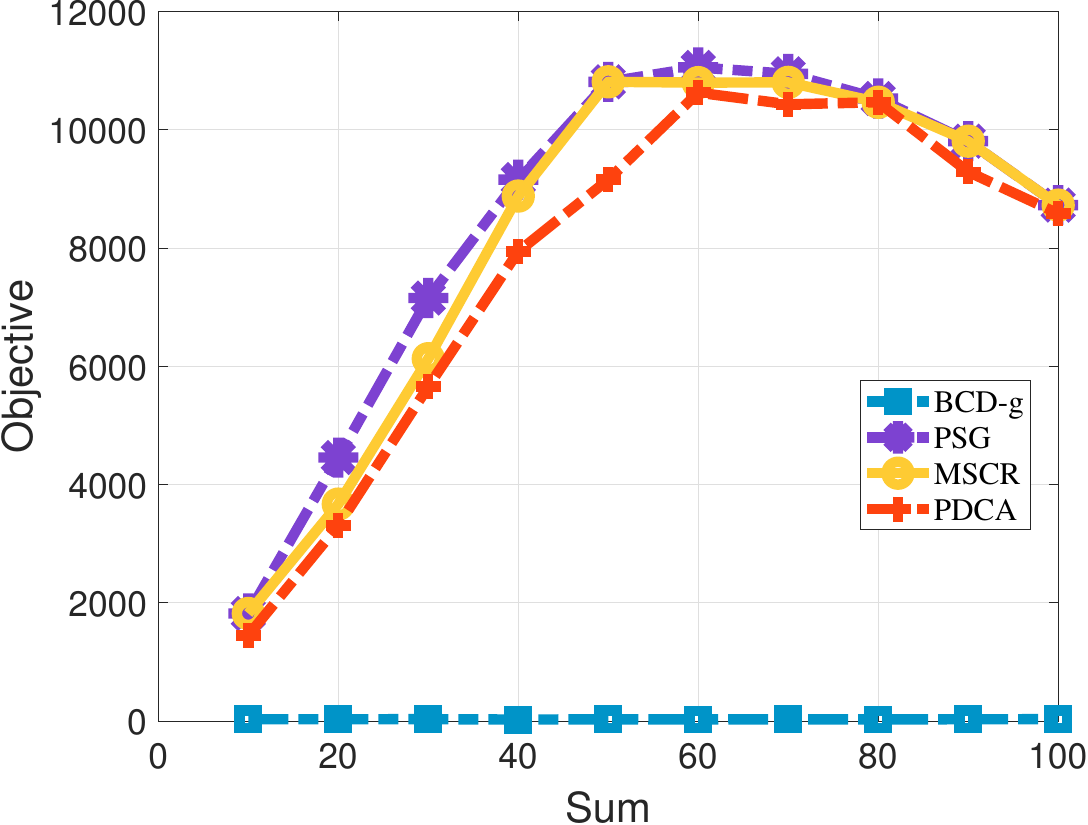}
			\caption{DCPB1 in MNIST-a}
		\end{subfigure}
		\begin{subfigure}{0.2\textwidth}
			\includegraphics[width=\textwidth]{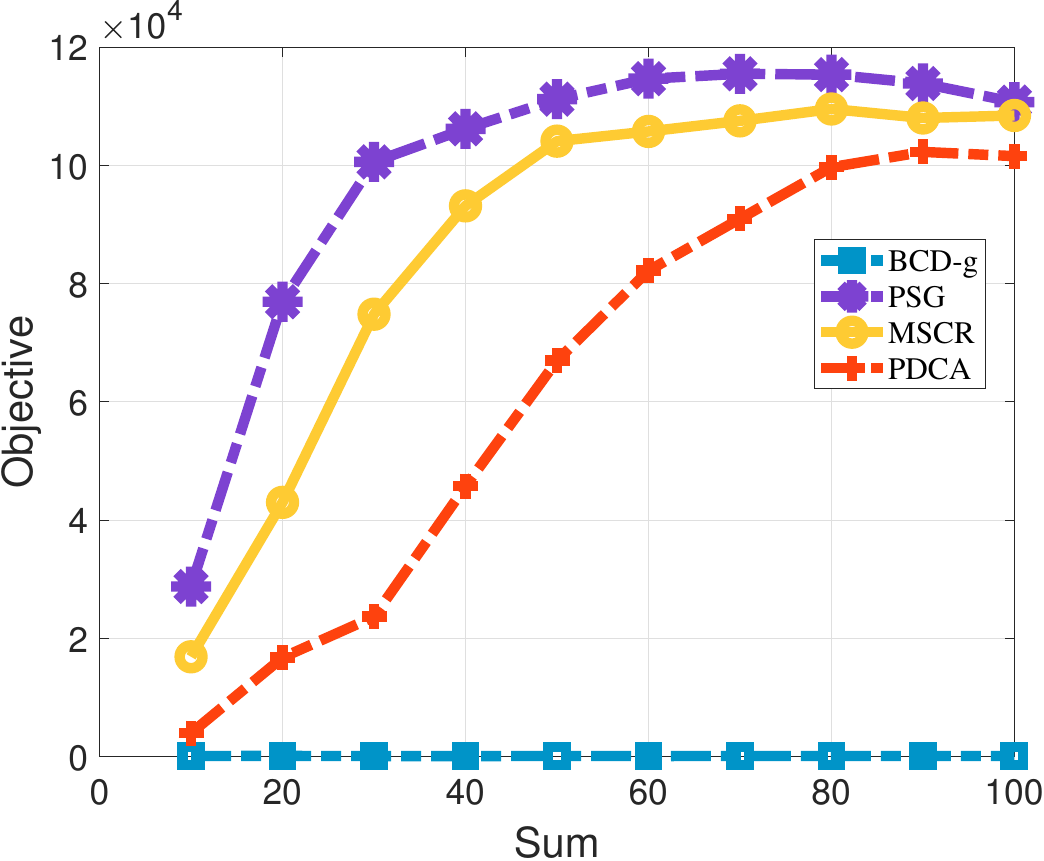}
			\caption{DCPB1 in MNIST-b}
		\end{subfigure}
		\begin{subfigure}{0.2\textwidth}
			\includegraphics[width=\textwidth]{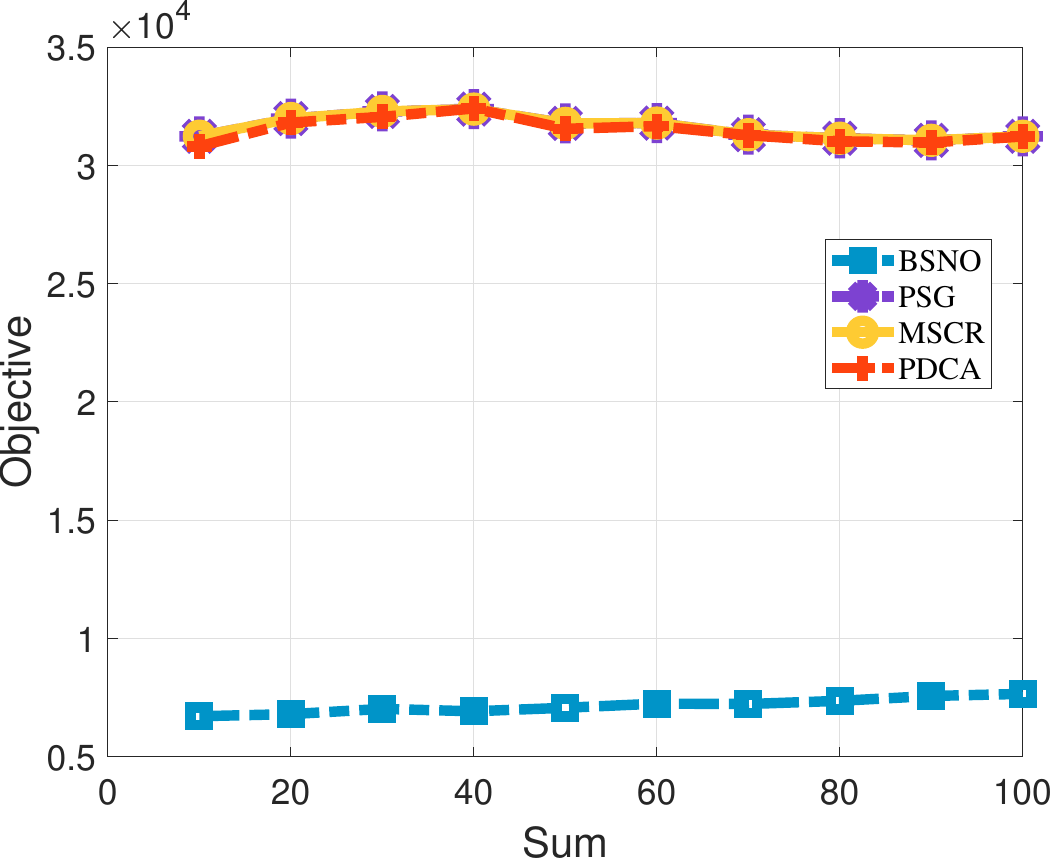}
			\caption{DCPB2 in randn-a}
		\end{subfigure}
		\begin{subfigure}{0.2\textwidth}
			\includegraphics[width=\textwidth]{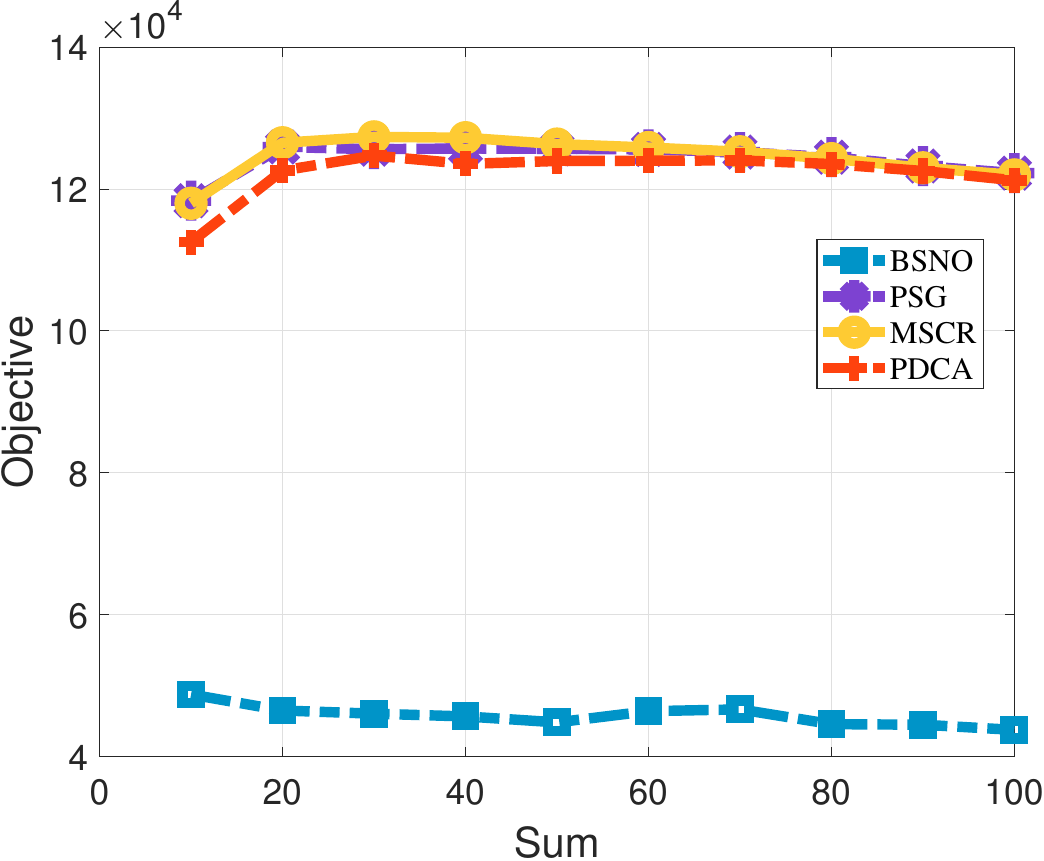}
			\caption{DCPB2 in randn-b}
		\end{subfigure}
		\begin{subfigure}{0.2\textwidth}
			\includegraphics[width=\textwidth]{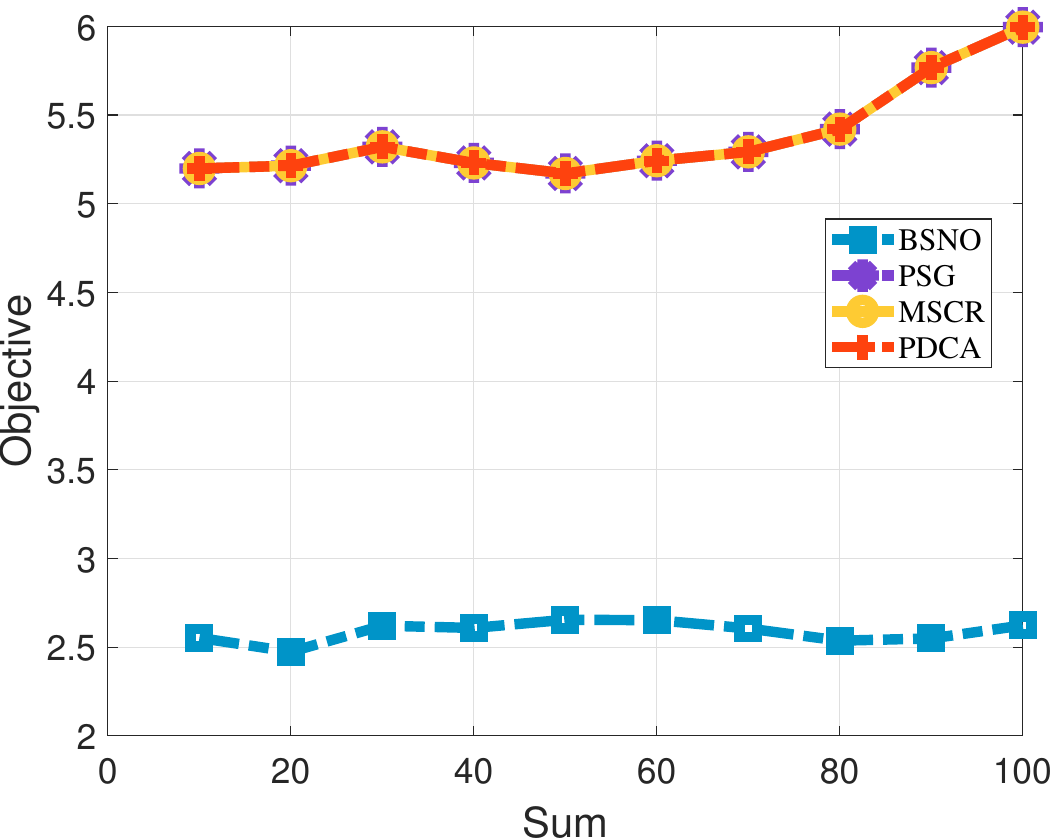}
			\caption{DCPB2 in TDT2-a}
		\end{subfigure}
		\begin{subfigure}{0.2\textwidth}
			\includegraphics[width=\textwidth]{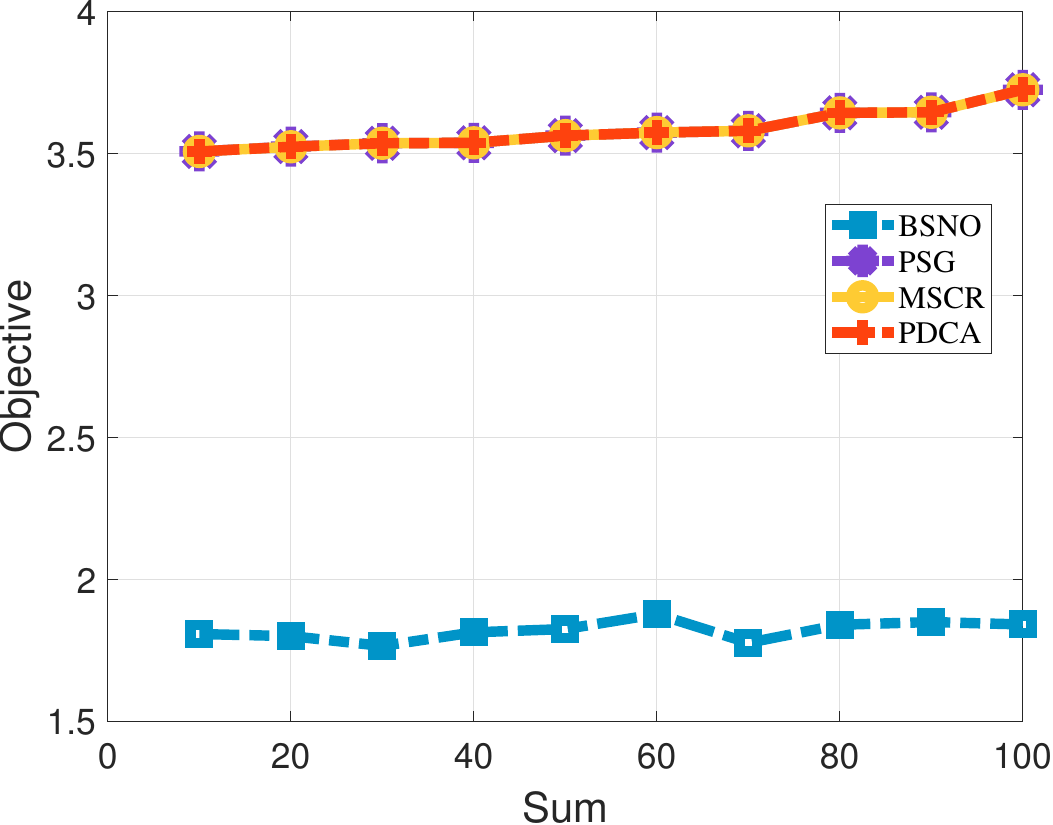}
			\caption{DCPB2 in TDT2-b}
		\end{subfigure}
		\begin{subfigure}{0.2\textwidth}
			\includegraphics[width=\textwidth]{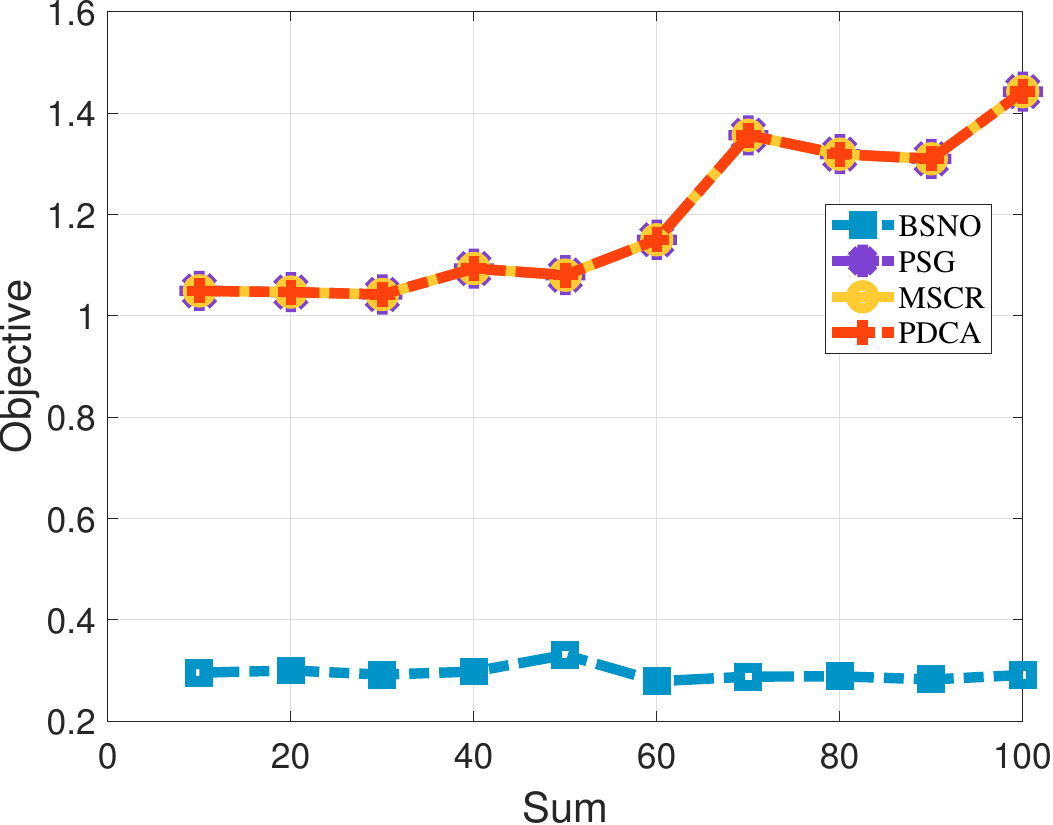}
			\caption{DCPB2 in 20News-a}
		\end{subfigure}
		\begin{subfigure}{0.2\textwidth}
			\includegraphics[width=\textwidth]{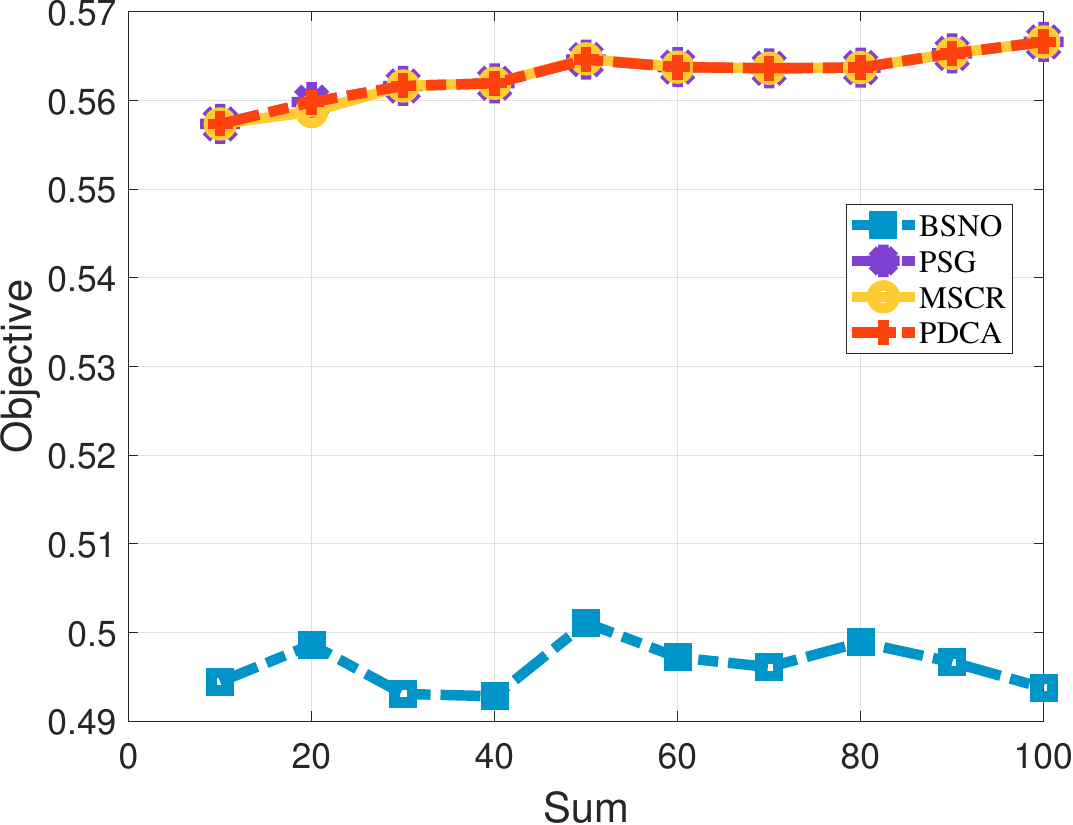}
			\caption{DCPB2 in 20News-b}
		\end{subfigure}
		\begin{subfigure}{0.2\textwidth}
			\includegraphics[width=\textwidth]{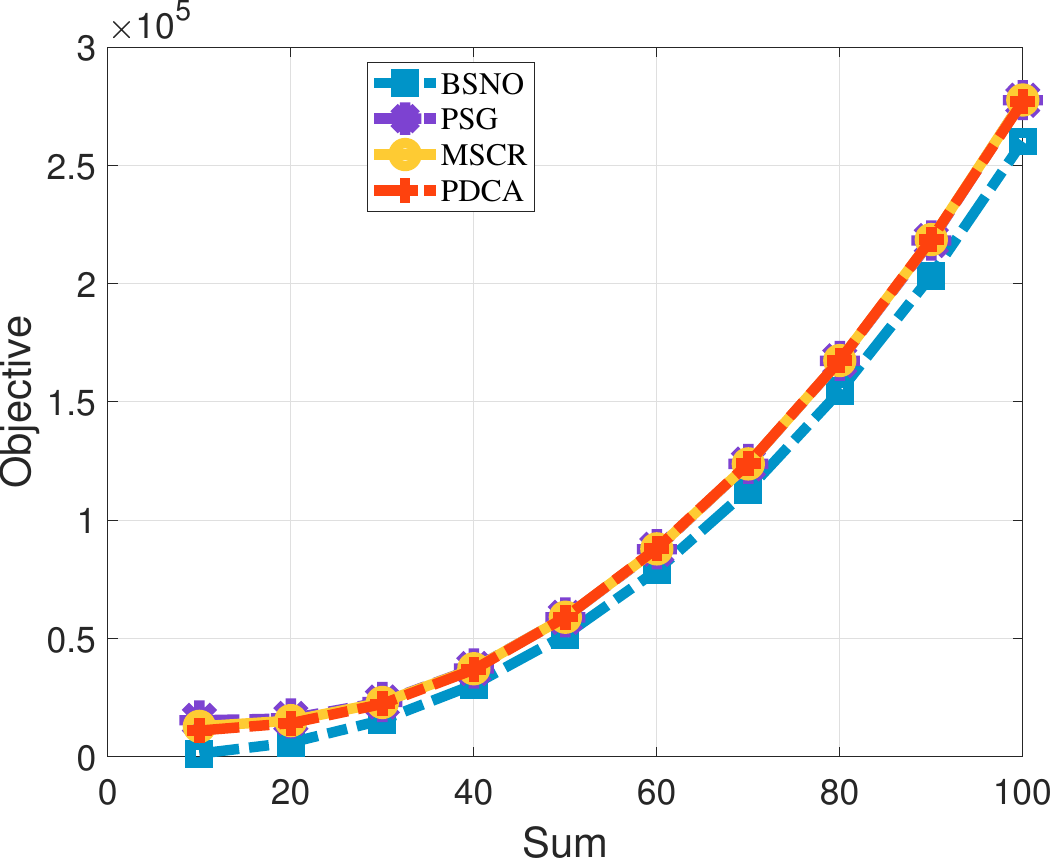}
			\caption{DCPB2 in Cifar-a}
		\end{subfigure}
		\begin{subfigure}{0.2\textwidth}
			\includegraphics[width=\textwidth]{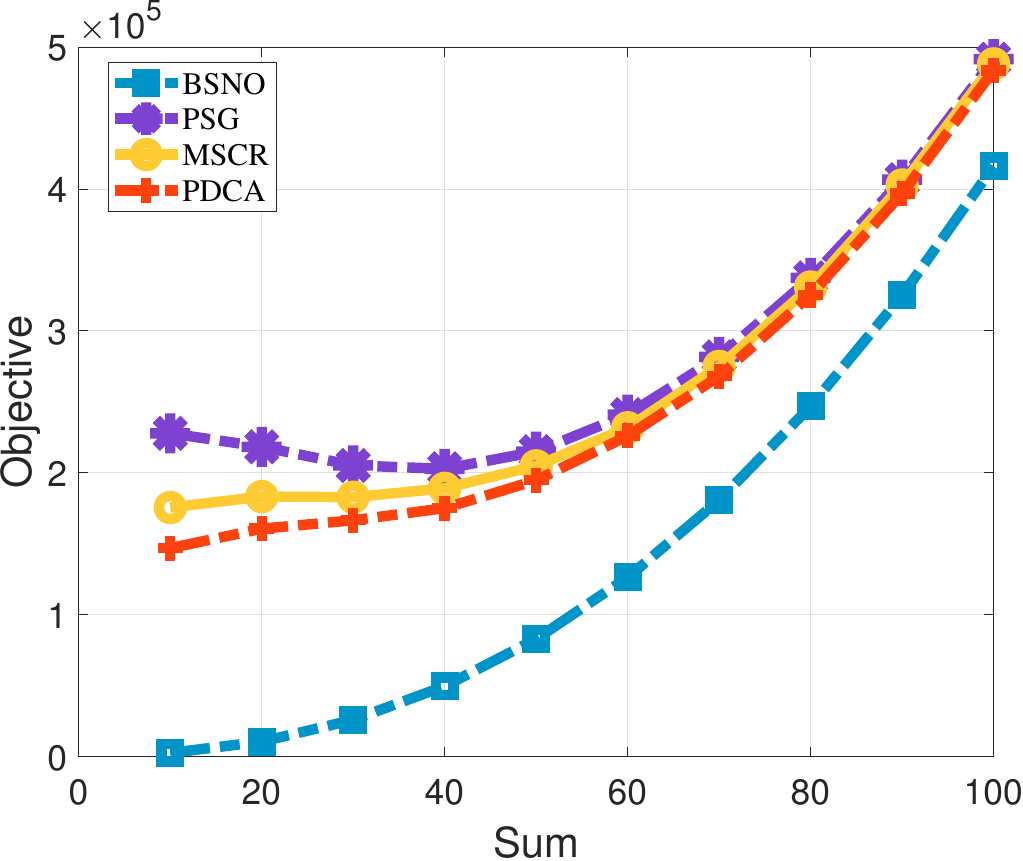}
			\caption{DCPB2 in Cifar-b}
		\end{subfigure}
		\begin{subfigure}{0.2\textwidth}
			\includegraphics[width=\textwidth]{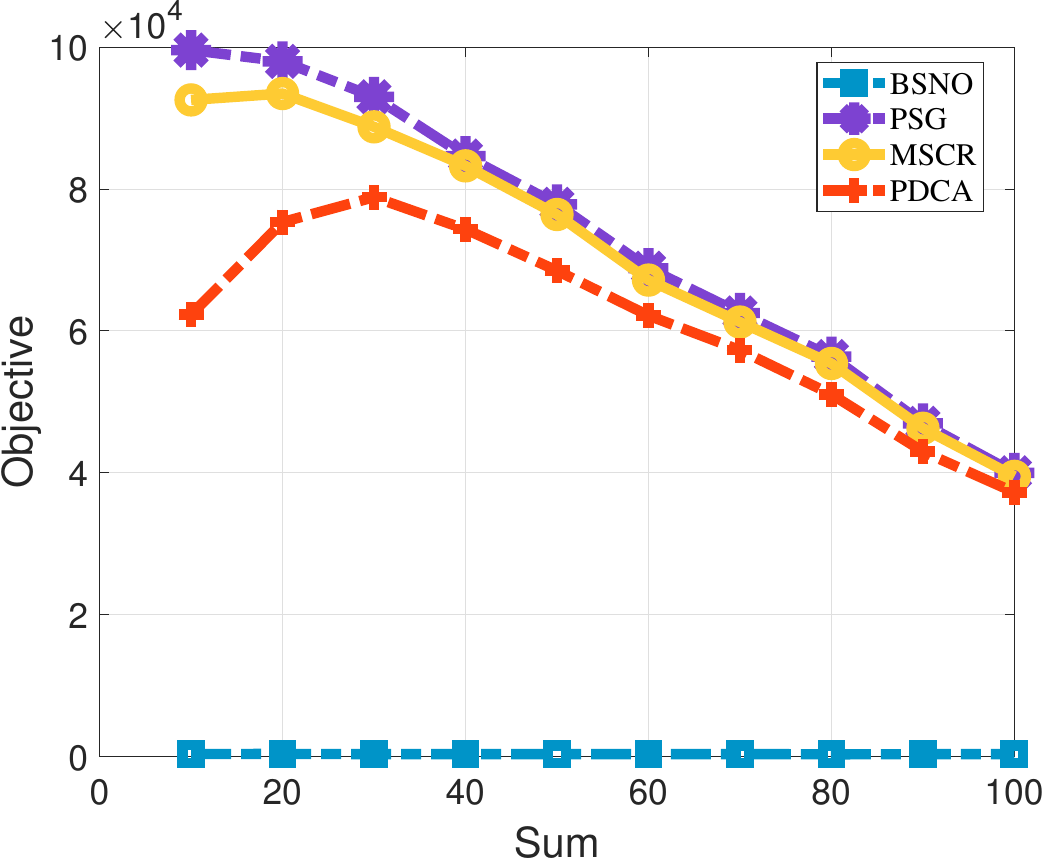}
			\caption{DCPB2 in MNIST-a}
		\end{subfigure}
		\begin{subfigure}{0.2\textwidth}
			\includegraphics[width=\textwidth]{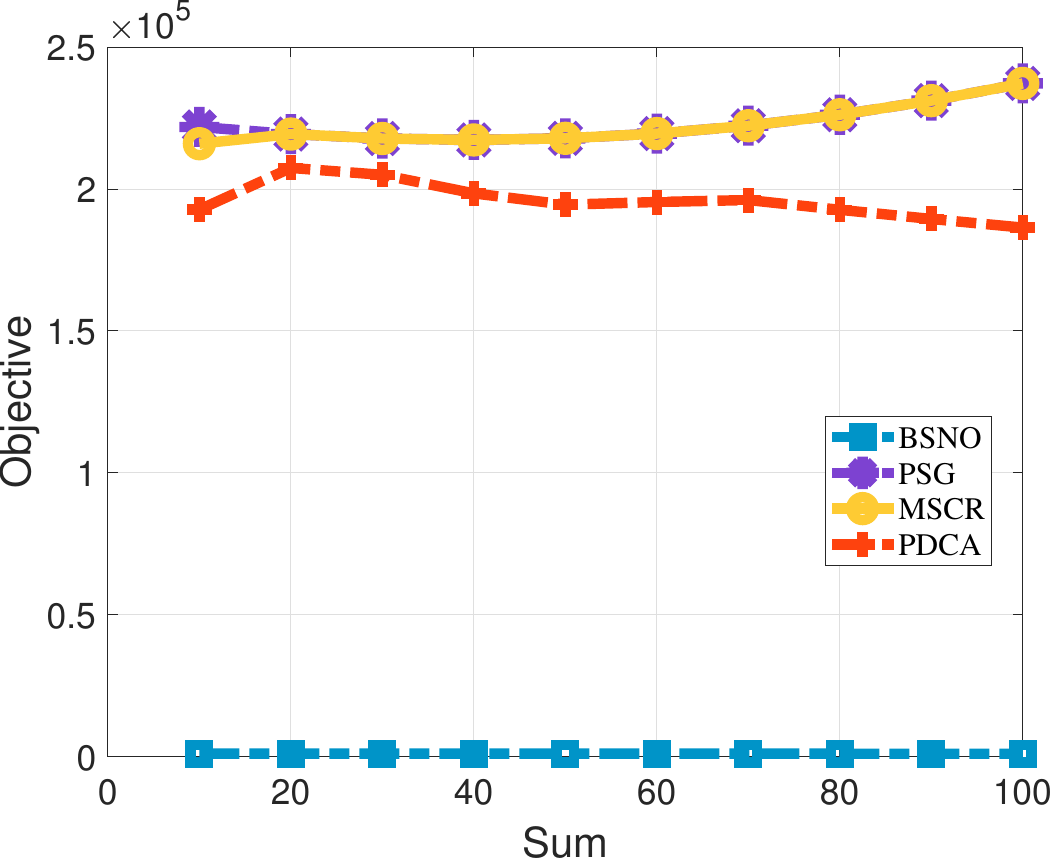}
			\caption{DCPB2 in MNIST-b}
		\end{subfigure}
		\caption{Objective values for the two DC penalized binary optimizations with varying sum $c$ across 20 datasets.}
		\label{fig:diff2}
	\end{figure}

	\section{Conclusion}\label{sec:conclusion}

	We introduce BCD-g and BCD-l-$k$ for addressing a class of structured nonconvex optimization problems with nonseparable constraints. We offer an optimality hierarchy analysis of the proposed method. This method leverages specific function structures to avoid bad local minima and achieve better stationary points. Furthermore, we demonstrate that the developed method can converge to coordinate-wise stationary points with linear convergence rates. Our method has exhibited superior performance compared to existing approaches, both theoretically and experimentally.
	 d

	\normalem

	\bibliography{mybib}
	\bibliographystyle{plain}

	\appendix
	\onecolumn
	{\huge \textbf{Appendix}}
	
	\section{Proofs of Results in Section \ref{sec:opt}}
	
	\begin{lemma} 
		\noi For any coordinate-wise stationary point $\ddot{\x}$, the function $g(\ddot{\x}) = -\|\ddot{\x}\|_{[s]} \triangleq - \sum_{j=1}^{k}|\ddot{\x}_{[j]}|$ is locally $\rho$-bounded nonconvex with $\rho < + \infty$. 
	\end{lemma}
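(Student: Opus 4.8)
The plan is to exploit the piecewise-linear structure of $\phi(\x)\triangleq\|\x\|_{[s]}=\sum_{j=1}^{s}|\x_{[j]}|$, which is convex and piecewise linear: $\phi(\x)=\max_{\v\in V}\la\v,\x\ra$, where $V\subset\mathbb{R}^{n}$ is the finite set of vectors with exactly $s$ nonzero entries, each equal to $\pm1$ (for the nonnegative iterates in our applications one may take all signs $+$). Hence $z=-\phi$ is concave and piecewise linear, and on a neighbourhood of any point it equals $-\max_{\v\in A}\la\v,\cdot\ra$, where $A$ is the finite set of pieces active there. The whole statement then reduces to one structural fact: \emph{coordinate-wise stationarity of $\ddot\x$ forces a single piece to be active at $\ddot\x$}, i.e.\ the index set realizing $\|\cdot\|_{[s]}$ is locally stable at $\ddot\x$, so that $\phi$ (hence $z$) is affine on some ball $B(\ddot\x,r_{0})$. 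Granting this, $\partial z(\ddot\x)=\{\nabla z(\ddot\x)\}$ is single-valued, the inequality of Definition~\ref{defition:rho} holds with $\rho=0$ for every $\y\in B(\ddot\x,r_{0})$, and a finite global modulus follows from the Lipschitz continuity of $z$; conversely, if two pieces were active, $\partial z(\ddot\x)$ would be multi-valued and the quadratic-growth bound would break for one of its elements --- which is exactly where the CWS hypothesis has to be used.

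The crux, and the step I expect to be the main obstacle, is extracting that local stability from coordinate-wise stationarity. First I would argue by contradiction: suppose two distinct pieces $\v,\v'$ are active at $\ddot\x$. For $\|\cdot\|_{[s]}$ neighbouring linearity cells differ by a single swap of the top-$s$ index set, so after relabelling $\v-\v'=e_{i}-e_{j}$ with $i\ne j$ (if several top-$s$ sets are simultaneously optimal at $\ddot\x$, one first passes to such an elementary-swap pair inside $A$), and the shared boundary value $\ddot\x_{i}=\ddot\x_{j}$ must be strictly positive --- otherwise $\phi$ agrees near $\ddot\x$ with a single linear functional along the relevant feasible directions. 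Choose any working set $\B\in\mathbb{N}^{k}$ with $i,j\in\B$ (possible since $k\ge2$) and take $\d_{\B}$ supported on $\{i,j\}$ with $\d_{i}=\delta$, $\d_{j}=-\delta$, so $\r=\U_{\B}\d_{\B}=\delta(e_{i}-e_{j})$. For small $|\delta|$ this keeps $\ddot\x+\r$ feasible for $\Omega$ (exactly for the $\ell_{1}$-type constraint; up to an $\mathcal{O}(\delta^{2})$ correction along the sphere for the unit-$\ell_{2}$ constraint) and keeps the nonnegativity block $h$ finite, since $\ddot\x_{i}=\ddot\x_{j}>0$. Because $\v$ and $\v'$ are active at $\ddot\x$, the identity $\la\v-\v',e_{i}-e_{j}\ra=\|e_{i}-e_{j}\|_{2}^{2}=2$ gives $\phi(\ddot\x+\r)-\phi(\ddot\x)\ge\alpha\delta+2\max\{\delta,0\}$ with $\alpha\triangleq\la\v',e_{i}-e_{j}\ra$; since $\phi$ enters $\mathcal{M}_{\B}$ only through $g(\ddot\x+\r)=-\lambda\phi(\ddot\x+\r)$, while the quadratic and $h$ contributions are $\mathcal{O}(\delta^{2})$ and $0$, this yields $\mathcal{M}_{\B}(\ddot\x,\d_{\B})-\mathcal{M}_{\B}(\ddot\x,\mathbf{0}_{k})\le\delta\,c_{0}-\lambda\alpha\delta-2\lambda\max\{\delta,0\}+\mathcal{O}(\delta^{2})$, where $c_{0}\triangleq\nabla_{i}f(\ddot\x)-\nabla_{j}f(\ddot\x)$. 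The coefficient of the leading term is $c_{0}-\lambda\alpha-2\lambda$ for $\delta>0$ and $c_{0}-\lambda\alpha$ for $\delta<0$; since $\lambda>0$ these cannot be simultaneously nonnegative and nonpositive, so one sign of $\delta$ makes $\mathcal{M}_{\B}(\ddot\x,\d_{\B})<\mathcal{M}_{\B}(\ddot\x,\mathbf{0}_{k})$ for all small $|\delta|$, contradicting Definition~\ref{defition:cws}. Hence a single piece is active at $\ddot\x$.

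To close, on $B(\ddot\x,r_{0})$ the function $z$ coincides with its tangent affine map, so $\partial z(\ddot\x)=\{\nabla z(\ddot\x)\}$ and Definition~\ref{defition:rho}'s inequality holds with $\rho=0$ for $\y\in B(\ddot\x,r_{0})$; for $\|\y-\ddot\x\|_{2}\ge r_{0}$ I would use that $\phi$ is $\sqrt{s}$-Lipschitz ($\|\v\|_{2}=\sqrt{s}$ for all $\v\in V$), so $\|\nabla z(\ddot\x)\|_{2}\le\sqrt{s}$ and $z(\ddot\x)-z(\y)-\la\ddot\x-\y,\nabla z(\ddot\x)\ra\le 2\sqrt{s}\,\|\y-\ddot\x\|_{2}\le\tfrac{2\sqrt{s}}{r_{0}}\|\y-\ddot\x\|_{2}^{2}$; taking $\rho\triangleq\tfrac{4\sqrt{s}}{r_{0}}<+\infty$ validates the inequality for all $\y$, which is precisely local $\rho$-bounded nonconvexity of $z$ at $\ddot\x$. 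Minor variants of the swap computation (with $e_{i}-e_{j}$ replaced by the appropriate signed difference) cover the degenerate case $\|\ddot\x\|_{0}<s$ and mixed-sign patterns, so the only genuinely delicate point is the middle paragraph: showing that coordinate-wise stationarity rules out an ambiguity in the top-$s$ selection at $\ddot\x$ while respecting feasibility of the swap direction, and checking that the linear term $\la\nabla f(\ddot\x),\cdot\ra$ can never neutralize the one-sided contribution of the crease.
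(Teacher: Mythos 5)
Your proof takes a genuinely different and more ambitious route than the paper's. The paper's argument (Appendix A.1) never actually uses the CWS hypothesis: it bounds $z(\ddot{\x})-z(\y)-\la \ddot{\x}-\y,\partial z(\ddot{\x})\ra$ by $2\sqrt{n}\,\|\ddot{\x}-\y\|_2$ via the triangle inequality for $\|\cdot\|_{[s]}$ and the bound $\|\partial\|\ddot{\x}\|_{[s]}\|_2\leq\sqrt{n}$, and then converts this linear bound into a quadratic one by \emph{assuming} the two reference points satisfy $\|\ddot{\x}-\y\|_2\geq\epsilon$, yielding $\rho=\tfrac{2\sqrt{n}}{\epsilon}$. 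You instead aim at the stronger statement that a single uniform $\rho$ works for all $\y$, by arguing that coordinate-wise stationarity removes the crease of $\|\cdot\|_{[s]}$ at $\ddot{\x}$. That would be a more interesting result, and your swap computation is the right tool for nondegenerate ties $\ddot{\x}_i=\ddot{\x}_j>0$.

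However, the crux claim fails in the degenerate case you relegate to ``minor variants''. If $\|\ddot{\x}\|_0<s$, or more generally if the tie for the $s$-th slot occurs at coordinates with $\ddot{\x}_i=\ddot{\x}_j=0$, the swap direction $\delta(e_i-e_j)$ is blocked for \emph{both} signs of $\delta$ by the nonnegativity part of $h$ (one coordinate goes negative), so $\mathcal{M}_{\B}(\ddot{\x},\d_{\B})=+\infty$ along it and CWS imposes no constraint; such a point can be coordinate-wise stationary while $\|\cdot\|_{[s]}$ keeps a genuine crease there. At a crease no uniform $\rho$ exists: take $n=3$, $s=2$, $\ddot{\x}=(1,0,0)$, the subgradient $-(e_1+e_2)\in\partial z(\ddot{\x})$ with $z=-\|\cdot\|_{[2]}$, and $\y=(1,0,t)$; then $z(\ddot{\x})-z(\y)-\la\ddot{\x}-\y,-(e_1+e_2)\ra=t$, which no finite $\rho$ can dominate by $\tfrac{\rho}{2}t^2$ as $t\downarrow 0$ (and every other element of $\partial z(\ddot{\x})$ is defeated by the symmetric direction). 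So your argument cannot close without either excluding degenerate CWS points or retreating, as the paper does, to the separation assumption $\|\ddot{\x}-\y\|_2\geq\epsilon$ --- at which point the CWS hypothesis and the entire piecewise-linear analysis become unnecessary. A secondary gap: the lemma is stated for the abstract framework, where feasibility of the swap direction under a general nonseparable constraint $u(\x)=0$ is not guaranteed; your argument leans on the specific simplex and sphere structure of the applications.
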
 
	
	\subsection{Proof of Lemma \ref{lemma:rhobounded}}\label{app:lemma:rhobounded}
	\begin{proof}
		\noi For any $\ddot{\x}$ and a given $k$, the subgradient of $\|\ddot{\x}\|_{[s]}$ can be computed as $\partial \|\ddot{\x}\|_{[s]} = \left\{
		\begin{aligned}
			&\text{sign}(\ddot{\x}_i),\ i \in \vartriangle_k(\ddot{\x})\ \text{and}\ \ddot{\x}_i \neq 0\\
			&[-1,1],\ \text{else}
		\end{aligned}
		\right.$, where $\vartriangle_k(\ddot{\x})$ is the index of the largest $k$ components of $\ddot{\x}$. Given two reference points $\ddot{\x} \neq \y$, we assume that there exists a constant $\epsilon > 0$ satisfying $\|\ddot{\x}-\y\|_2\geq \epsilon$. We have: $ z(\ddot{\x}) - z(\y) - \la \ddot{\x}- \y, \partial g(\ddot{\x}) \ra = - \|\ddot{\x}\|_{[s]} + \|\y\|_{[s]} - \la \ddot{\x}- \y, \partial (-\|\ddot{\x}\|_{[s]}) \ra 	\overset{\step{1}}{\leq} |\y-\ddot{\x}\|_{[s]}+ \|\y-\ddot{\x}\|_2 \cdot \|\partial \|\ddot{\x}\|_{[s]}\|_2 \overset{\step{2}}{\leq}  \|\y-\ddot{\x}\|_1+\|\y-\ddot{\x}\|_2 \cdot \sqrt{n}   \overset{\step{3}}{\leq}  2\sqrt{n} \|\ddot{\x}-\y\|_2 	\overset{\step{4}}{\leq}  \tfrac{2\sqrt{n}}{\epsilon} \|\ddot{\x}-\y\|_2^2$, where step $\step{1}$ adapts the triangle inequality that $\|\y\|_{[s]} - \|\ddot{\x}\|_{[s]} \leq \|\y-\ddot{\x}\|_{[s]}$ as $\|\cdot\|_{[s]}$ is a norm; step $\step{2}$ uses the fact that $\|\partial \|\ddot{\x}\|_{[s]}\|_2 \leq \sqrt{n}$; step $\step{3}$ uses the fact that $\|\ddot{\x}\|_2\leq \|\ddot{\x}\|_1 \leq \sqrt{n}\|\ddot{\x}\|_2$ for all $\ddot{\x}$; step $\step{4}$ uses $\|\ddot{\x}-\y\|_2\geq \epsilon$. Hence, $z(\x)$ is a $\rho$-bounded nonconvex function with $\rho < + \infty$.
	\end{proof}
	\subsection{Proof of Theorem \ref{theo:descent}}\label{app:theo:descent}
	
	\begin{proof}
		\noi We derive the following inequalities:	
		\beq
		\sum_{\B\in \Pi_n^k} g(\x + \U_\B\d_\B)
		&\overset{\step{1}}{\leq} &  \sum_{\B\in \Pi_n^k} g(\x) + \la \U_\B\d_\B, \partial g(\x) \ra  
		\overset{\step{2}}{=}  C_n^k g(\x) + C_n^k \tfrac{k}{n}\la \d, \partial g(\x)\ra \nonumber \\ 
		&\overset{\step{3}}{\leq} &  C_n^k g(\x) + C_n^k \tfrac{k}{n} \left(g(\x + \d) - g(\x) + \tfrac{\rho}{2}\|\d\|_2^2 \right),\label{eq:gx}
		\eeq
		\noi where step \step{1} uses the fact that $-g(\x)$ is convex with $g(\y)\leq g(\x) + \la \y-\x,\ \partial g(\x)\ra$; step \step{2} uses the equality in \textbf{Part} \textbf{(ii)} in Lemma \ref{lemma:bound:4}; step \step{3} uses the fact that $g(\x)$ is locally $\rho$-bounded nonconvex and it holds that $g(\x)\leq g(\y) + \la \x-\y,\partial g(\x)\ra+\tfrac{\rho}{2}\|\x-\y\|_2^2$ with $\y = \x + \d$.
		\noi For any $\ddot{\x}$ and $\ddot{\x}+\d$ are in feasible set $\Omega$, we obtain follow inequalities:
		\beq
		f(\ddot{\x}) + h(\ddot{\x}) + g(\ddot{\x})
		&\overset{\step{1}}{\leq} & f(\ddot{\x}) + \la \U_\B\d_\B,\nabla f(\ddot{\x})  \ra + h(\ddot{\x} + \U_\B\d_\B) + g(\ddot{\x} + \U_\B\d_\B) + \tfrac{1}{2} \|\d_\B\|_{\Q_+}^2 \nonumber\\
		\Rightarrow h(\ddot{\x}) + g(\ddot{\x})
		&\leq & \la\U_\B\d_\B ,\nabla f(\ddot{\x})  \ra + h(\ddot{\x} + \U_\B\d_\B) + g(\ddot{\x} + \U_\B\d_\B)  + 
		\tfrac{1}{2} \|\d_\B\|_{\Q_+}^2, \label{ieq:therom1}
		\eeq
		where step \step{1} uses the optimality of \textbf{CSW}-point and the definition $\Q_+ \triangleq \Q_{\B\B} + \theta\I_k$.
		\noi Summing the inequality (\ref{ieq:therom1}) over $\B\in \Pi_n^k$, we have:
		\beq
		&& C_n^k \left(h(\ddot{\x}) + g(\ddot{\x})\right)\nonumber\\
		&\leq& \sum_{\B\in \Pi_n^k} \bigg ( \la \U_\B\d_\B,\nabla f(\ddot{\x})  \ra + \tfrac{1}{2} \|\d_\B\|_{\Q_+}^2 + h(\ddot{\x} + \U_\B\d_\B) + g(\ddot{\x} + \U_\B\d_\B) \bigg )  \nonumber\\
		&\overset{\step{1}}{\leq} & \sum_{\B\in \Pi_n^k} \bigg ( \la \U_\B\d_\B , \nabla f(\ddot{\x})  \ra + h(\ddot{\x} +\U_\B\d_\B) + g(\ddot{\x} + \U_\B\d_\B) \bigg ) + \sum_{\B\in \Pi_n^k} \tfrac{\Qup + \theta}{2} \|\d_\B\|_2^2  \nonumber\\
		&\overset{\step{2}}{=} & \sum_{\B\in \Pi_n^k} \bigg ( \la \U_\B\d_\B , \nabla f(\ddot{\x})  \ra + h(\ddot{\x} +\U_\B\d_\B) + g(\ddot{\x} + \U_\B\d_\B) \bigg ) + \tfrac{kC_n^k(\Qup + \theta)}{2n} \|\d\|_2^2 \nonumber\\
		&\overset{\step{3}}{\leq} &  \sum_{\B\in \Pi_n^k} \bigg ( \la \U_\B\d_\B,\nabla f(\ddot{\x})  \ra + h(\ddot{\x} +\U_\B\d_\B) \bigg )  + \tfrac{kC_n^k(\Qup + \theta)}{2n} \|\d\|_2^2 + C_n^k g(\ddot{\x}) \nonumber\\
		&& +\ C_n^k \tfrac{k}{n} \left(g(\ddot{\x} + \d) - g(\ddot{\x}) + \tfrac{\rho}{2}\|\d\|_2^2 \right) \nonumber\\
		&\overset{\step{4}}{\leq} &  \sum_{\B\in \Pi_n^k} \la \U_\B\d_\B,\nabla f(\ddot{\x})  \ra   + C_n^k h(\ddot{\x}) + C_n^k g(\ddot{\x}) +  \tfrac{kC_n^k}{n} \psi + \tfrac{kC_n^k(\Qup + \theta + \rho)}{2n} \|\d\|_2^2, \label{ieq:hxgx}
		\eeq

		\noi step \step{1} uses the definition: $\Qup \triangleq \|\Q\|_2$ and the inequality $\sum_{\B\in \Pi_n^k} \tfrac{1}{2} \|\d_\B\|_{\Q_+}^2 \leq \sum_{\B\in \Pi_n^k} \tfrac{\Qup + \theta}{2} \|\d_\B\|_2^2 $; step \step{2} uses the \textbf{Part} \textbf{(i)} in Lemma \ref{lemma:bound:4}; step \step{3} uses (\ref{eq:gx}); step \step{4} uses the \textbf{Part} \textbf{(iv)} in Lemma \ref{lemma:bound:4} and the definition $\psi \triangleq h(\ddot{\x} + \d) - h(\ddot{\x}) + g(\ddot{\x} + \d) - g(\ddot{\x})$. According to (\ref{ieq:hxgx}), we obtain following inequalities:
		\beq
		0\ &\leq &  \sum_{\B\in \Pi_n^k}  \la \U_\B\d_\B , \nabla f(\ddot{\x})  \ra + \tfrac{kC_n^k }{n} \psi + \tfrac{kC_n^k(\Qup + \theta + \rho)}{2n} \|\d\|_2^2 \nonumber\\
		&\overset{\step{1}}{=} & \la \tfrac{kC_n^k }{n} \d,\ \nabla f(\ddot{\x})\ra + \tfrac{kC_n^k }{n} \psi + \tfrac{kC_n^k(\Qup + \theta + \rho)}{2n} \|\d\|_2^2 \nonumber\\ 
		& = & \la \d,\ \nabla f(\ddot{\x})\ra + \psi + \tfrac{\Qup + \theta + \rho}{2}\|\d\|_2^2  \nonumber\\ 
		&\overset{\step{2}}{\leq} & f(\ddot{\x} + \textbf{d}) - f(\ddot{\x}) + \psi + \tfrac{\Qup + \theta + \rho}{2}\|\d\|_2^2 \nonumber\\ 
		&\overset{\step{3}}{=} & F(\ddot{\x}+\textbf{d})-F(\ddot{\x})  + \tfrac{\Qup + \theta + \rho}{2}\|\d\|_2^2 \nonumber\\ 
		\Rightarrow F(\ddot{\x}) &\leq& F(\ddot{\x}+\textbf{d}) + \tfrac{\Qup + \theta + \rho}{2}\|\d\|_2^2, \label{eq:theorem1}
		\eeq 
		where step \step{1} uses the \textbf{Part} \textbf{(ii)} in Lemma \ref{lemma:bound:4}; step \step{2} uses the convexity of $f(\cdot)$: $\la \textbf{d},\ \nabla f(\x)\ra \leq f(\x + \textbf{d}) - f(\x)$; step \step{3} uses the definition of $F(\x) \triangleq f(\x) + h(\x) + g(\x)$ and $\psi = h(\ddot{\x} + \d) - h(\ddot{\x}) + g(\ddot{\x} + \d) - g(\ddot{\x})$.
	\end{proof}
	
	\subsection{Proof of Theorem \ref{theo:optimality}}\label{app:theo:optimality}

	\begin{proof} 
		\noi (\textbf{i}) \{Optimal Point $\bar{\x}$\} $\subseteq$ \{\textbf{CWS}-Point $\ddot{\x}$\}
		
		\noi Since $f(\cdot)$ is convex and continuously differentiable, it holds as:
		\beq
		f(\x + \U_\B\d_\B)	&\leq& f(\x) + \la \d_\B, [\nabla f(\x)]_\B \ra  + \tfrac{1}{2}\|\d_\B\|_{\Q_{\B\B}}^2,\label{eq:lipschitz}
		\eeq
		\noi where $\nabla f(\cdot)$ is coordinate-wise Lipschitz continuous with $\Q_{\B\B} \geq 0$.
		
		\noi By the optimality of $\bar{\x}$, for any $\x$, it follows that: $f(\bar{\x}) + h(\bar{\x}) + g(\bar{\x})\leq f(\x) + h(\x) + g(\x)$. Letting $\x = \bar{\x} + \U_\B\d_\B$, since the gradient of $f(\cdot)$ is coordinate-wise Lipschitz continuous (\ref{eq:lipschitz}), we have:
		\beq
		&& f(\bar{\x}) + h(\bar{\x}) + g(\bar{\x}) \nn\\
		&\leq& f(\bar{\x}+\U_\B\d_\B) + h(\bar{\x}+\U_\B\d_\B) + g(\bar{\x} + \U_\B\d_\B) \nonumber\\
		&\overset{\step{1}}{\leq}& f(\bar{\x})+ \la  \U_\B\d_\B, \nabla f(\bar{\x})\ra  + \tfrac{1}{2} \|\d_\B\|_{\Q_{\B\B}}^2 + h(\bar{\x}+\U_\B\d_\B) + g(\bar{\x} + \U_\B\d_\B), \label{ieq:F(bar{x})}
		\eeq
		where step \step{1} uses the coordinate-wise Lipschitz continuity of $\nabla f(\cdot)$.
		Rearrange terms for (\ref{ieq:F(bar{x})}) and using the fact that $\theta\geq 0$, we have:
		\beq
		h(\bar{\x}) + g(\bar{\x}) &\leq &  h(\bar{\x} +\U_\B\d_\B) + g(\bar{\x} + \U_\B\d_\B)) + \la  \U_\B\d_\B , \nabla f(\bar{\x})\ra + \tfrac{1}{2} \|\d_\B\|_{\Q_{\B\B}}^2 \nonumber\\
		&\leq & h(\bar{\x} + \U_{\B}\d_{\B}) + g(\bar{\x} +\U_{\B}\d_{\B}) + \la \U_{\B}\d_{\B}, \nabla f(\bar{\x})\ra  + \tfrac{1}{2} \|\d_{\B}\|_{\Q_+}^2\nonumber\\
		\Rightarrow \mathcal{M}_{\B}(\bar{\x},\mathbf{0}_k) &\overset{\step{1}}{\leq} & \underset{\d_\B}{\text{min}}\ \mathcal{M}_{\B}(\bar{\x}, \d_\B),\ \forall \B\nonumber\\
		\Rightarrow \mathbf{0}_k &\in & \bar{\mathcal{M}}_{\B}(\bar{\x}),\ \forall \B,
		\eeq
		where the step \step{1} uses the fact that $\mathcal{M}_{\B}(\bar{\x},\mathbf{0}_k) = h(\bar{\x}) + g(\bar{\x})$ and the definition $\underset{\d_\B}{\text{min}}\ \mathcal{M}_{\B}(\bar{\x}, \d_\B)$. It suggests that the optimal point is a \textbf{CWS}-point.

		~\\
		\noi $(\textbf{ii})$ \{\textbf{CWS}-Point $\ddot{\x}$\} $\subseteq$ \{Critical Point\ $\breve{\x}$\}
		
		For any solution $\d_{\B}$ of Problem (\ref{plb:subproblemalg}), we have the following first-order optimality condition with any $\x$ and working set $\B$: $	\mathbf{0}_k \in  [\nabla f(\x)]_{\B} + (\Q_{\B\B} + \theta\I_k) \d_{\B} + [ \partial h(\x  + \U_{\B}\d_{\B})]_{\B} + [\partial g(\x  + \U_{\B}\d_{\B})]_{\B} +  [\partial \mathcal{I}_{\Omega}(\x  + \U_{\B}\d_{\B})]_{\B}$. When $\x = \ddot{\x}$ the first-order optimality condition, $\d_{\B} = \mathbf{0}_k$ according to the Definition \ref{defition:cws} of \textbf{CWS}-point. Subsequently, we have following results for any working set $\B$: $\mathbf{0}_k \in  [\nabla f(\x)]_{\B} + [ \partial h(\x)]_{\B} + [\partial g(\x)]_{\B} +  [\partial \mathcal{I}_{\Omega}(\x)]_{\B}$. Therefore, any \textbf{CWS}-point is also a critical point. 
	\end{proof}

	\section{Proofs of Results in Section \ref{sec:conv}}
	
	\subsection{Proof of Theorem \ref{theo:global}}\label{app:theo:global}
	
	\begin{proof}
		\noi \textbf{(i)} When $\bar{\d}_{\B^t}$ is a global solution searching in working set $\B^t$ at $t$ iteration by BCD-g method, we have: $\mathcal{M}(\x^t,\bar{\d}_{\B^t}) \leq \mathcal{M}(\x^t,\mathbf{0}_k)$. Using the definition $\Q_+ \triangleq \Q_{\B^t\B^t} + \theta \I_k$ and $\bar{\r}^t = \U_{\B^t}\bar{\d}_{\B^t}$, then we provide following inequalities:
		\beq
		f(\x^t) + \la \bar{\r}^t, \nabla f(\x^t) \ra + h(\x^t + \bar{\r}^t) + g(\x^t +  \bar{\r}^t) + \tfrac{1}{2} \|\bar{\d}_{\B^t}\|_{\Q_+}^2 & \leq & f(\x^t) + h(\x^t) + g(\x^t)\nn\\
		h(\x^t + \bar{\r}^t) - h(\x^t) + g(\x^t  + \bar{\r}^t) - g(\x^t) + \tfrac{1}{2} \|\bar{\d}_{\B^t}\|_{\Q_+}^2 & \leq & -\la \bar{\r}^t, \nabla f(\x^t)\ra \nn\\
		h(\x^t + \bar{\r}^t) - h(\x^t) + g(\x^t  + \bar{\r}^t) - g(\x^t) + \tfrac{\theta}{2} \|\bar{\d}_{\B^t}\|_2^2 & \overset{\step{1}}{\leq} & f(\x^t) - f(\x^t + \bar{\r}^t) \nn\\
		\Rightarrow F(\x^{t+1})-F(\x^t)  & \overset{\step{2}}{\leq} & -\tfrac{\theta}{2} \|\bar{\d}_{\B^t}\|_2^2\nn\\
		& \overset{\step{3}}{=} & - \tfrac{\theta}{2}\|\x^{t+1} - \x^t\|_2^2,\label{ineq:optd}
		\eeq
		
		\noi where step \step{1} uses the fact that $\nabla f(\cdot)$ is coordinate-wise Lipschitz continuous; step \step{2} uses the definition $F(\x^{t+1}) \triangleq f(\x^{t+1})+h(\x^{t+1}) + g(\x^{t+1})$ and step \step{3} use the fact that $\x^{t+1} = \x^{t} + \U_{\B^t}\bar{\d}_{\B^t}$.
		
		When $\bar{\d}_{\B^t}$ is a solution given by BCD-l-$k$ method at $t$ iteration, we have: $\mathcal{N}(\x^t,\bar{\d}_{\B^t}) \leq \mathcal{N}(\x^t,\mathbf{0}_k)$. We define $\Q_+ \triangleq \Q_{\B^t\B^t} + \theta \I_k$ and $\bar{\r}^t = \U_{\B^t}\bar{\d}_{\B^t}$, then have following inequalities:
		\beq
		f(\x^t) + h(\x^t + \bar{\r}^t) + g(\x^t) + \la \bar{\r}^t, \nabla f(\x^t) + \partial g(\x^t)\ra  + \tfrac{1}{2} \|\bar{\d}_{\B^t}\|_{\Q_+}^2 & \leq & f(\x^t) + h(\x^t) + g(\x^t)\nn\\
		h(\x^t + \bar{\r}^t) - h(\x^t) + \tfrac{1}{2} \|\bar{\d}_{\B^t}\|_{\Q_+}^2 + \la \bar{\r}^t, \nabla f(\x^t) + \partial g(\x^t)\ra& \leq &  0 \nn\\
		f(\x^t + \bar{\r}^t) - f(\x^t) + g(\x^t + \bar{\r}^t) - g(\x^t) + h(\x^t + \bar{\r}^t) - h(\x^t) & \overset{\step{1}}{\leq} & - \tfrac{\theta}{2}\|\bar{\d}_{\B^t}\|_2^2\nn\\
		\Rightarrow F(\x^{t+1})-F(\x^t)  & \overset{\step{2}}{\leq} & - \tfrac{\theta }{2}\|\bar{\d}_{\B^t}\|_2^2\nn\\
		& \overset{}{=} & - \tfrac{\theta}{2}\|\x^{t+1} - \x^t\|_2^2,\nn
		\eeq
		
		\noi where step \step{1} uses the fact that $\nabla f(\cdot)$ is coordinate-wise Lipschitz continuous and $-g(\x)$ is convex; step \step{2} uses the definition $F(\x^{t+1}) \triangleq f(\x^{t+1})+h(\x^{t+1}) + g(\x^{t+1})$.
		
		
		%
		
		\noi \textbf{(ii)} We obtain a lower bound on the progress made by each iteration for our two BCD methods: $\mathbb{E}_{\B^t}[F(\x^{t+1})]-F(\x^t) \leq  -\mathbb{E}_{\B^t} [\tfrac{\theta}{2}\|\x^{t+1}-\x^t\|^2_2]$. Summing up the inequality above over $t=0,1,...,T-1$, we have: $\mathbb{E}_{\xi^T}[\tfrac{\theta}{2}  \sum_{t=0}^{T-1}\|\x^{t+1} - \x^t\|^2_2]  \leq  \mathbb{E}_{\xi^T}[F(\x^0) - F(\x^{T})]\leq  \mathbb{E}_{\xi^T}[F(\x^0) - F(\bar{\x})]$. As a result, there exists an index $\bar{t}$ with $0 \leq \bar{t} \leq T-1$ such that:
		\beq
		\mathbb{E}_{\xi^T}[\|\x^{\bar{t}+1} - \x^{\bar{t}}\|_2^2] \leq \tfrac{2(F(\x^0) - F(\bar{\x}))}{ \theta T}\label{eq:expT}
		\eeq
		\textbf{(iii)} Besides, for any $\bar{t}$, we have:
		\beq
		\mathbb{E}_{\xi^{\bar{t}}}[\|\x^{\bar{t}+1} - \x^{\bar{t}}\|_2^2] = \tfrac{1}{C_n^k} \sum_{\B\in \Pi_n^k} \|\bar{\d}_{\B^{\bar{t}}}\|_2^2.\label{eq:expM}
		\eeq
		Combining (\ref{eq:expT}) and (\ref{eq:expM}), we have the following result: $	\tfrac{1}{C_n^k} \sum_{\B\in \Pi_n^k}  \|\bar{\d}_{\B^{\bar{t}}}\|_2^2\leq \tfrac{2(F(\x^0) - F(\bar{\x}))}{ \theta T}$. Therefore, we conclude that the BCD-g and BCD-l-$k$ method find a $\hat{\epsilon}$-approximate \textbf{CWS}-point or a $\hat{\epsilon}$-approximate critical point in at most $T$ iterations in the
		sense of expectation, where $T \leq \lceil \tfrac{2(F(\x^0) - F(\bar{\x}))}{ \theta \hat{\epsilon}} \rceil = \mathcal{O}(\hat{\epsilon}^{-1})$.
	\end{proof}

	\subsection{Proof of Lemma \ref{lemma:globallybounded}}\label{app:lemma:globallybounded}
	\begin{proof}
		First, we prove that, for all $\alpha \geq 0$, it holds that:
		\beq
		h(\alpha) \triangleq  4 C \alpha - 2\epsilon \alpha^2 \leq  \tfrac{2C^2}{\epsilon} . \label{eq:h:maximizer}
		\eeq
		\noi We notice that $h(\alpha)$ is concave, and maximum always exists. Setting its derivative of $h(\alpha)$ in Equation (\ref{eq:h:maximizer}) to zero yields $0= \nabla h(\alpha) = 4 C - 4 \epsilon \alpha $. The maximizer can be computed as: $\bar{\alpha} = \tfrac{C}{\epsilon}$. Therefore, we have: $h(\alpha) \leq h(\bar{\alpha}) = 4  \tfrac{C^2}{\epsilon} - 2\epsilon (\tfrac{C}{\epsilon})^2 = \tfrac{2C^2}{\epsilon}$.
		We have the following inequalities: $\tfrac{z(\x) - z(\y) - \la \x-\y,\partial z(\x)  \ra - \epsilon}{ \tfrac{1}{2} \|\x-\y\|_2^2} \overset{\step{1}}{\leq} \tfrac{  2C\|\x-\y\| - \epsilon}{ \tfrac{1}{2} \|\x-\y\|_2^2} = \tfrac{  4C }{ \|\x-\y\|} - \tfrac{ 2\epsilon}{ \|\x-\y\|_2^2} \overset{\step{2}}{\leq} \tfrac{2C^2}{\epsilon}$, where step \step{1} uses the fact that $z(\x)$ is $C$-Lipschitz continuous; step \step{2} uses Inequality (\ref{eq:h:maximizer}) with $\alpha = \tfrac{1}{\|\x-\y\|_2}$.
	\end{proof}
	
	\subsection{Proof of Lemma \ref{lemma:bound:4}}
	\label{app:lemma:bound:4}

	\begin{proof}
		\noi \textbf{(i)}  $\B$ is selected from $\Pi_n^k$ randomly and uniformly. For any $\x \in \mathbb{R}^n$ and $\z \in \mathbb{R}^n$, we have: $\mathbb{E}_{\B}[\la \x_{\B}, \d_{\B} \ra] = \tfrac{1}{C_n^k} \sum_{\B \in \Pi_n^k} \la \x_{\B}, \d_{\B} \ra \overset{\step{1}}{=} \tfrac{k}{n}\la \x, \d \ra$ where step \step{1} uses the fact that each entry $(\mathbf{x}_i \cdot \mathbf{z}_i)$ appears for $\left( C_n^k \cdot \tfrac{k}{n} \right)$ times for all $i \in [n]$.

		\noi \textbf{(ii)} For any $\x \in \mathbb{R}^n$ and $\z \in \mathbb{R}^n$, we derive: $ \sum_{\B \in \Pi_n^k} \la \U_{\B}\x_{\B} , \z \ra \overset{\step{1}}{=} \sum_{\B \in \Pi_n^k} \la \x_{\B} , \z_{\B} \ra \overset{\step{2}}{=} C_n^k \tfrac{k}{n} \la \x, \z \ra$,
		where step \step{1} uses $[\U_{\B}\x_{\B}]_{\B^c} = \textbf{0}^{n-k}$ and step \step{2} use Lemma \ref{lemma:bound:4} \textbf{(i)}.

		\noi \textbf{(iii)} For any $\x \in \mathbb{R}^n$, $\d \in \mathbb{R}^n$, we derive the following equalities:
		\beq
		\textstyle\sum_{\B\in \Pi_n^k} \| \x + \U_\B\d_\B \|_2^2 & = &  \textstyle\sum_{\B\in \Pi_n^k} \left( \|\x\|_2^2 + 2\la \d_\B, \x_\B \ra + \|\d_\B\|_2^2 \right) \nn\\
		& \overset{\step{1}}{=} & \textstyle C_n^k \left( \|\x\|_2^2 + C_n^k\tfrac{2k}{n}\la\d , \x\ra + C_n^k\tfrac{k}{n} \|\d\|_2^2 \right)  \nn\\
		& = &\textstyle C_n^k \left(  \|\x\|_2^2 + \tfrac{2k}{n} \la \d , \x\ra + \tfrac{k}{n}\|\d\|_2^2 \right) \nn\\
		& = & \textstyle C_n^k \left( (1 - \tfrac{k}{n})\|\x\|_2^2 + \tfrac{k}{n}\|\x\|_2^2 + \tfrac{2k}{n} \la \d , \x\ra + \tfrac{k}{n}\|\d\|_2^2 \right) \nn\\
		& = & \textstyle C_n^k \left( (1 - \tfrac{k}{n})\|\x\|_2^2 + \tfrac{k}{n}\|\x + \d\|_2^2 \right),
		\eeq
		\noi step \step{1} uses Lemma \ref{lemma:bound:4} \textbf{(i)} and \textbf{(ii)}. Dividing both sides by $C_n^k$, we swiftly conclude this lemma.

		\noi \textbf{(iv)} We obtain the following results:
		\beq
		\textstyle  \sum_{\B\in \Pi_n^k} f(\x + \U_\B\d_\B) & \overset{\step{1}}{\leq} &  \textstyle  \sum_{\B\in \Pi_n^k}\left( f(\x) + \la \U_\B\d_\B,\nabla f(\x) \ra + \tfrac{1}{2} \|\d_\B\|_{\Q_{\B\B}}^2 \right)\nn \\ 
		& \overset{\step{2}}{=} & \textstyle C_n^k f(\x) + \la C_n^k \tfrac{k}{n} \d,\nabla f(\ddot{\x})\ra +  \sum_{\B\in \Pi_n^k} \tfrac{1}{2} \|\d_\B\|_{\Q_+}^2 - \sum_{\B\in \Pi_n^k} \tfrac{\theta}{2} \|\d_\B\|_{2}^2 \nn \\ 
		& \overset{\step{3}}{=} & \textstyle C_n^k f(\x) + \la C_n^k \tfrac{k}{n} \d,\nabla f(\ddot{\x})\ra +  C_n^k  \E_{\B}[ \tfrac{1}{2} \|\d_\B\|_{\Q_+}^2 ] -  C_n^k  \E_{\B}[ \tfrac{\theta}{2} \|\d_\B\|_{2}^2 ] \nn \\ 
		& \overset{}{\leq} &  \textstyle C_n^k f(\x) + \la C_n^k \tfrac{k}{n} \d,\nabla f(\ddot{\x})\ra +  C_n^k  \E_{\B}[ \tfrac{1}{2} \|\d_\B\|_{\Q_+}^2], 
		\eeq
		\noi where step \step{1} uses the coordinate-wise Lipschitz continuity of $\nabla f(\x)$; step \step{2} uses claim \textbf{(ii)} in Lemma \ref{lemma:bound:4}; step \step{3} uses the fact that $\sum_{\B\in \Pi_n^k}  \|\d_\B\|_{\Q_+}^2 = C_n^k  \E_{\B}[ \|\d_\B\|_{\Q_+}^2 ]$.

		\noi \textbf{(v)} We have the following results:
		\beq
		\textstyle \sum_{\B\in \Pi_n^k} h(\x + \U_\B\d_\B )	 &=& \textstyle \sum_{\B\in \Pi_n^k}\left( \sum_{i \in \B} h_{i}(\x_{i} + \d_{i}) + \sum_{j \in \B^c} h_{j}(\x_{j})\right)\nn\\
		& = & \textstyle \sum_{\B\in \Pi_n^k} \sum_{i \in \B} h_{i}(\x_{i} + \d_{i}) + \sum_{\B\in \Pi_n^k} \sum_{j \in \B^c} h_{j}(\x_{j})\nn\\
		& = & \textstyle C_n^k(\tfrac{k}{n}h(\x+\d) + (1 - \tfrac{k}{n}) h(\x)).
		\eeq

		\noi \textbf{(vi)} We have the following inequalities according step \step{2} in (\ref{eq:gx}):
		\beq
		\sum_{\B\in \Pi_n^k} g(\x + \U_\B\d_\B)
		\overset{\step{1}}{\leq}  \sum_{\B\in \Pi_n^k} g(\x) + \la \U_\B\d_\B, \partial g(\x) \ra  \overset{\step{2}}{=}  C_n^k g(\x) + C_n^k \tfrac{k}{n}\la \d, \partial g(\x)\ra \label{ieq:lemma:g(x)}
		\eeq
		\noi where step \step{1} uses the fact that $-g(\x)$ is convex with $g(\y)\leq g(\x) + \la \y-\x,\ \partial g(\x)\ra$; step \step{2} uses \textbf{(ii)} in Lemma \ref{lemma:bound:4}.
		
		\noi \textbf{(vii)} According to (\ref{ieq:lemma:g(x)}), we derive following results:
		\beq	
		\sum_{\B\in \Pi_n^k} g(\x + \U_\B\d_\B) & \leq & C_n^k g(\x) + C_n^k \tfrac{k}{n}\la \d, \partial g(\x)\ra \nn\\
		&\overset{\step{1}}{\leq} & C_n^k g(\x) + C_n^k \tfrac{k}{n} \left(g(\x + \d) - g(\x) + \tfrac{\rho}{2}\|\d\|_2^2 + \epsilon \right),
		\eeq
		\noi step \step{1} uses the fact that $-g(\x)$ is $\rho$-$\epsilon$ bounded nonconvex and it holds that $g(\x)\leq g(\y) + \la \x-\y,\partial g(\x)\ra+\tfrac{\rho}{2}\|\x-\y\|_2^2 + \epsilon$ with $\y = \x + \d$.
	\end{proof}
	
	\subsection{Proof of Theorem \ref{theo:dec:x:f}}
	\label{app:theo:dec:x:f}
	Before the proof, we give following lemma.
	\begin{lemma}\label{lemma:boundedterm}
		For any $\w \in \mathbb{R}^n$, we have following result: $\tfrac{k}{n}\la \w - \x^{t+1} , \nabla f(\x^t) + \partial h(\x^{t+1})\ra \leq \tfrac{k}{n}f(\w)  +  \E_{\B^t}[ \tfrac{1}{2} \|\x^{t+1}-\x^{t} \|_{\H^t_+}^2 - f(\x^{t+1}) - h(\x^{t+1})]  + (1 - \tfrac{k}{n})(f(\x^t) + h(\x^t)) + \tfrac{k}{n}h(\w)$, where $\x^t$ and $\x^{t+1}$ are generated by BCD-g or BCD-l-$k$.
	\end{lemma}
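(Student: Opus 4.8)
The plan is to exploit the optimality of $\bar{\d}_{\B^t}$ in the subproblem (\ref{plb:subproblemalg}) (or (\ref{plb:subproblemalglocal})), then take expectation over the random working set $\B^t$ using the averaging identities of Lemma \ref{lemma:bound:4}. First I would write down the first-order optimality condition for $\bar{\d}_{\B^t}$: since $\bar{\d}_{\B^t}$ minimizes the convex surrogate $\mathcal{M}_{\B^t}(\x^t,\cdot)$ (or $\mathcal{N}_{\B^t}(\x^t,\cdot)$) over the feasible directions, and $\x^{t+1}=\x^t+\U_{\B^t}\bar{\d}_{\B^t}$, one obtains for any feasible $\d_{\B^t}$ the variational inequality
\beq
\la \U_{\B^t}(\d_{\B^t}-\bar{\d}_{\B^t}),\ \nabla f(\x^t) + \bar{\Q}\U_{\B^t}\bar{\d}_{\B^t}\ra + h(\x^t+\U_{\B^t}\d_{\B^t}) - h(\x^{t+1}) \geq 0,\nn
\eeq
where I have used $h$'s separability and convexity to handle the nonsmooth term, and $\bar{\Q}=\Q+\theta\I$. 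Specializing $\d_{\B^t}$ so that $\x^t+\U_{\B^t}\d_{\B^t}$ is the ``$\B^t$-restricted'' version of a target point $\w$ (feasibility is preserved since $\Omega$ is assumed convex and we move only within block $\B^t$), this becomes a bound on $\la \w_{\B^t}-\x^{t+1}_{\B^t},\ [\nabla f(\x^t)]_{\B^t}+[\partial h(\x^{t+1})]_{\B^t}\ra$ in terms of $h(\w)$, $h(\x^{t+1})$, and the quadratic term $\tfrac12\|\x^{t+1}-\x^t\|^2_{\H_+^t}$ with $\H_+^t = \U_{\B^t}\U_{\B^t}^\top\Q\U_{\B^t}\U_{\B^t}^\top + \theta\I_n$ as in Lemma \ref{lemma:iequality}.

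Next I would take $\mathbb{E}_{\B^t}[\cdot]$ of both sides. On the left, $\mathbb{E}_{\B^t}[\la \w_{\B^t}-\x^{t+1}_{\B^t},\ \cdot\ \ra]$ splits: the part involving $\w$ and $\x^t$ (which is fixed given $\xi^{t-1}$) contributes $\tfrac{k}{n}\la \w-\x^t, \cdot\ra$ by Lemma \ref{lemma:bound:4}(i)–(ii), and combining with the descent structure converts $\la\w-\x^t,\nabla f(\x^t)\ra$ into $\tfrac kn(f(\w)-f(\x^t))$ via convexity of $f$, while the $-\x^{t+1}$ piece is kept as is. For the $h$-terms I would use Lemma \ref{lemma:bound:4}(v): $\mathbb{E}_{\B^t}[h(\x^{t+1})] = \tfrac kn h(\text{full update}) + (1-\tfrac kn)h(\x^t)$, which produces the $(1-\tfrac kn)h(\x^t)$ and $\tfrac kn h(\w)$ contributions. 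Similarly the $f$ Lipschitz surrogate term, after expectation, yields $\mathbb{E}_{\B^t}[\tfrac12\|\x^{t+1}-\x^t\|^2_{\H_+^t}] $ plus the cross term that recombines with $(1-\tfrac kn)f(\x^t)$ and $-f(\x^{t+1})$, invoking Lemma \ref{lemma:bound:4}(iv). Collecting the fixed pieces into $(1-\tfrac kn)(f(\x^t)+h(\x^t))$ gives exactly the claimed inequality.

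The main obstacle I anticipate is bookkeeping the restriction-to-block argument cleanly: one must choose, for each $\B^t$, a feasible test direction whose $\B^t$-block equals $\w_{\B^t}-\x^t_{\B^t}$ while keeping $\x^t + \U_{\B^t}\d_{\B^t}\in\Omega$, and then argue that, in expectation over $\B^t$, these block-restricted test points average correctly to terms in $\w$ and $\x^t$. This is where convexity of $\Omega$ and the identities in Lemma \ref{lemma:bound:4} parts (i)–(v) do the real work; once the correct test direction is identified the rest is the routine algebra of matching coefficients $\tfrac kn$ and $1-\tfrac kn$. A secondary subtlety is that for the BCD-l-$k$ method the surrogate also linearizes $g$, so the $\partial g(\x^t)$ term appears in the optimality condition; but since the lemma's statement only involves $f$ and $h$, that term must be shown to either cancel or be absorbed — I would handle this by noting it enters the $q^{t+1}$/$r^{t+1}$ recursion in Theorems \ref{theo:dec:x:f}–\ref{theo:dec:x:f2} separately and is not part of this intermediate bound.
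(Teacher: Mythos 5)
Your plan is built around the first-order optimality (variational inequality) of $\bar{\d}_{\B^t}$ in the subproblem, tested against a block-restricted move toward $\w$. The paper's proof of this lemma uses no optimality condition at all: it simply splits the left-hand side as
\begin{equation*}
\tfrac{k}{n}\la \w - \x^{t} , \nabla f(\x^t)\ra + \tfrac{k}{n}\la \x^t - \x^{t+1} , \nabla f(\x^t)\ra + \tfrac{k}{n}\la \w - \x^{t+1} , \partial h(\x^{t+1})\ra,
\end{equation*}
bounds the first and third terms by convexity of $f$ and $h$ (giving $\tfrac{k}{n}(f(\w)-f(\x^t))$ and $\tfrac{k}{n}(h(\w)-h(\x^{t+1}))$), replaces $-\tfrac{k}{n}\la \x^{t+1}-\x^t,\nabla f(\x^t)\ra$ using the expectation form of the block descent inequality (Lemma \ref{lemma:bound:4}(iv)), and finally substitutes the separability identity $\E_{\B^t}[h(\x^{t+1})]$ from Lemma \ref{lemma:bound:4}(v). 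The inequality is thus a purely convexity-plus-averaging statement about \emph{any} block update, and is valid for both BCD-g and BCD-l-$k$ precisely because it never looks at how $\bar{\d}_{\B^t}$ was chosen.

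The gap in your route is concrete and, as written, fatal: the test point $\x^t + \U_{\B^t}(\w_{\B^t}-\x^t_{\B^t})$ is generally \emph{not} feasible for the nonseparable constraint $u(\x)=0$, even when $\Omega$ is convex. For example, with $\Omega=\{\x:\x^\top\mathbf{1}=c\}$, replacing only the block $\B^t$ of $\x^t$ by the corresponding block of $\w$ changes the coordinate sum unless the two blocks happen to have equal sums, so the variational inequality cannot be invoked with that direction. You flag this yourself as ``the main obstacle'' but do not resolve it, and within your framework it cannot be resolved without additional structure. Your secondary worry about the $\partial g(\x^t)$ term is likewise a symptom of the same misdirection: since the paper's argument never touches the subproblem's optimality condition here, $g$ simply does not appear in this lemma; it is handled separately in the proofs of Theorems \ref{theo:dec:x:f} and \ref{theo:dec:x:f2}, where the optimality condition is genuinely needed. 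You do correctly identify all the averaging identities (Lemma \ref{lemma:bound:4}(i)--(v)) and the role of convexity of $f$ and $h$, so the repair is to drop the variational-inequality scaffolding entirely and apply those tools directly to the three-term decomposition above.
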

	
	\begin{proof}
		Applying \textbf{Part} \textbf{(iv)} and \textbf{Part} \textbf{(v)} as presented in Lemma \ref{lemma:bound:4} with $\x = \x^t$ and $\d = \x^{t+1} - \x^t$, we obtain:
		\beq
		\E_{\B^t}[f(\x^{t+1})] & \leq&  \textstyle f(\x^{t}) + \la  \tfrac{k}{n}( \x^{t+1}-\x^{t}),\ \nabla f(\x^{t})\ra + \sum_{\B\in \Pi_n^k} \tfrac{1}{2C_n^k} \|\d_\B\|_{\Q_+}^2\label{expctation of fxt+1}\\
		\E_{\B^t}[h(\x^{t+1})] &=& \textstyle \tfrac{1}{n}h(\x^{t+1}) + (1 - \tfrac{1}{n}) h(\x^t)\label{expctation of hxt+1}
		\eeq

		\noi We bound $\tfrac{k}{n}\la \w - \x^{t+1} , \nabla f(\x^t) + \partial h(\x^{t+1})\ra$ by following inequalities:
		\beq
		&& \textstyle  \tfrac{k}{n} \la \w - \x^{t+1} , \nabla f(\x^t) + \partial h(\x^{t+1})\ra \nonumber\\
		&= & \textstyle  \tfrac{k}{n}\la \w - \x^{t} , \nabla f(\x^t)\ra + \tfrac{k}{n}\la \x^t - \x^{t+1} , \nabla f(\x^t)\ra + \tfrac{k}{n}\la \w - \x^{t+1} , \partial h(\x^{t+1})\ra \nonumber\\
		&\overset{ \step{1}}{\leq} & \textstyle \tfrac{k}{n}(f(\w) - f(\x^t)) - \tfrac{k}{n}\la \x^{t+1} - \x^t , \nabla f(\x^t)\ra + \tfrac{k}{n}(h(\w) - h(\x^{t+1}))\nonumber\\
		&\overset{\step{2}}{\leq} & \textstyle  \tfrac{k}{n}f(\w)  +  \E_{\B^t}[ \tfrac{1}{2} \|\x^{t+1}-\x^{t} \|_{\H^t_+}^2 - f(\x^{t+1})]  + (1 - \tfrac{k}{n})f(\x^t)) + \tfrac{k}{n}(h(\w) - h(\x^{t+1}))\nonumber\\
		&\overset{\step{3}}{\leq} & \textstyle  \tfrac{k}{n}f(\w)  +  \E_{\B^t}[ \tfrac{1}{2} \|\x^{t+1}-\x^{t} \|_{\H^t_+}^2 - f(\x^{t+1}) - h(\x^{t+1})]  + (1 - \tfrac{k}{n})(f(\x^t) + h(\x^t)) + \tfrac{k}{n}h(\w) ,\nn\\
		\eeq
		where step \step{1} uses the convexity of $f(\cdot)$ and $h(\cdot)$ that: $\la\w - \x^t, \nabla f(\x^t)\ra \leq  f(\w) - f(\x^t),
		\la\w - \x^{t+1}, \partial h(\x^{t+1})\ra \leq  h(\w) - h(\x^{t+1})$; step \step{2} uses (\ref{expctation of fxt+1}); step \step{3} uses (\ref{expctation of hxt+1}).
	\end{proof}
	
	\noi We now give the proof of Theorem \ref{theo:dec:x:f}.
	\begin{proof}
		\textbf{(i)} \noi Let $\ddot{\x}$ be any \textbf{CWS}-point, $\x^{t}$ and $\x^{t+1}$ be any solution generated by BCD-g. Based on Lemma \ref{lemma:iequality}, we have $\H^t \triangleq \UBt \UBt\trans \Q  \UBt \UBt \trans \in \mathbb{R}^{n\times n}$, $\H_+^t = \H^t + \theta \I_n$, $\Hlow = \min_{\B} \| \UB\trans \Q  \UB \|  \in \mathbb{R}$, and $\Hup = \max_{\B} \| \UB\trans \Q  \UB \|  \in \mathbb{R}$.
		
		First, for any solution $\bar{\d}_{\B^{t}}$ in subproblem (\ref{plb:subproblem}), we have the following first-order optimality condition:
		\beq \label{eq:first:order:opt}
		\mathbf{0}_k \in  [\nabla f(\x^t)]_{\B} + (\Q_{\B\B} + \theta\I_k) (\x^{t+1}-\x^{t})_{\B} + [ \partial h(\x^{t+1})]_{\B} + [\partial g(\x^{t+1})]_{\B} +  [\partial \mathcal{I}_{\Omega}(\x^{t+1})]_{\B}. \label{eq:optimality}
		\eeq
		
		Second, given the set $\Omega$ is convex, we conclude that the function $\mathcal{I}_{\Omega}(\x)$ is also convex. We have the following inequality for all $\x$ and $\x'$ when $z(\x)$ is a convex function: $\la \x-\x',\partial z(\x') \ra\leq z(\x) - z(\x')$. Applying this inequality with $\x= \ddot{\x}$, $\x'= \x^{t+1}$ and $z(\x) = \mathcal{I}_{\Omega}(\x)$, we have:
		\beq \label{eq:delta:upper:bound}
		\la \ddot{\x} - \x^{t+1} , \partial \mathcal{I}_{\Omega}(\x^{t+1}) \ra \leq \mathcal{I}_{\Omega}(\ddot{\x}) - \mathcal{I}_{\Omega}(\x^{t+1}) = 0 \label{ieq:indicator},
		\eeq
		\noi where the equality holds due to the fact that $\ddot{\x} \in \Omega$ and $\x^{t+1} \in \Omega$ for all $t$.

		Third, applying \textbf{Part} \textbf{(vi)} in Lemma \ref{lemma:bound:4} with $\x = \x^t$ and $\d = \x^{t+1} - \x^t$, we obtain:
		\beq
		\E_{\B^t}[g(\x^{t+1})] &\leq& \textstyle g(\x^t) + \tfrac{k}{n}g(\x^{t+1}) - \tfrac{k}{n}g(\x^{t}) +  \tfrac{k\rho}{2n}\|\x^{t+1} - \x^t\|_2^2 + \tfrac{k}{n}\epsilon. \label{expctation of gxt+1}
		\eeq

		Fourth, for any $\x^t$, $\x^{t+1}$, and $\ddot{\x}$, due to the uniform and random selection of the working set $\B^t$, we observe that:
		\beq
		\textstyle \E_{\B^t}[\|\x^{t+1} - \ddot{\x}\|_2^2] = 	\tfrac{1}{C_n^k} \sum_{\B\in \Pi_n^k} \| \x^t + \UB \d_{\B} - \ddot{\x}\|_2^2 \label{eq:expctation of xt+1}.
		\eeq
		\noi Applying the equality in \textbf{Part} \textbf{(iii)} in Lemma \ref{lemma:bound:4} with $\x = \x^t - \ddot{\x}$ and $\d = \x^{t+1} - \x^t$, we have:
		\beq
		\textstyle \tfrac{k}{n}\|\x^{t+1} - \ddot{\x} \|_2^2 + (1 - \tfrac{k}{n})\|\x^{t} - \ddot{\x}\|_2^2  = \tfrac{1}{C_n^k} \sum_{\B\in \Pi_n^k} \| \x^t + \U_{\B}\d_{\B} - \ddot{\x}\|_2^2. \label{eq:sum of xt+1}
		\eeq
		\noi Combining the two inequalities(\ref{eq:expctation of xt+1}) and (\ref{eq:sum of xt+1}), we have:
		\beq
		\E_{\B^t}[\|\x^{t+1} - \ddot{\x}\|_2^2] = \tfrac{k}{n}\|\x^{t+1} - \ddot{\x} \|_2^2 + (1 - \tfrac{k}{n})\|\x^{t} - \ddot{\x}\|_2^2. \label{eq:expectation and sum for xt+1}
		\eeq
		
		%

		We derive the following results:
		\beq \label{eq:Pythagoras}
		&& \E_{\B^t}[\tfrac{1}{2} \|\x^{t+1} - \ddot{\x}\|_{\H_+^t}^2] - \tfrac{1}{2} \|\x^{t} - \ddot{\x}\|_{\H_+^t}^2\nn\\
		&\overset{\step{1}}{=} & \E_{\B^t}[\la \x^{t+1} - \ddot{\x}, \H_+^t(\x^{t+1} - \x^t) \ra] - \E_{\B^t}[\tfrac{1}{2}\|\x^{t+1} - \x^t\|_{\H_+^t}^2]\nn\\
		&\overset{}{=} & \E_{\B^t}[\la \x^{t+1}_{\B^t} - \ddot{\x}_{\B^t},(\Q_{\B\B} + \theta \I_k)(\x^{t+1}_{\B^t} - \x^t_{\B^t})\ra] - \E_{\B^t}[\tfrac{1}{2}\|\x^{t+1} - \x^t\|_{\H_+^t}^2]\nn\\
		&\overset{\step{2}}{=} & -\E_{\B^t}[\la \x^{t+1}_{\B^t} - \ddot{\x}_{\B^t},  \omega_{\B^t} \ra ]  - \E_{\B^t}[\tfrac{1}{2}\|\x^{t+1} - \x^t\|_{\H_+^t}^2]\nn\\
		&\overset{\step{3}}{\leq}  & \tfrac{k}{n}\la \ddot{\x} - \x^{t+1} , \nabla f(\x^t) + \partial h(\x^{t+1}) + \partial g(\x^{t+1}) \ra - \E_{\B^t}[\tfrac{1}{2}\|\x^{t+1} - \x^t\|_{\H_+^t}^2],
		\eeq
		\noi where step \step{1} uses the Pythagoras relation that: $\forall \x, \y, \z, \tfrac{1}{2}\|\y - \z\|_2^2 - \tfrac{1}{2}\|\x - \z\|_2^2 = \la \y-\z, \y-\x \ra - \tfrac{1}{2}\|\x - \y\|_2^2$; step \step{2} uses the optimality condition in (\ref{eq:optimality}) and the definition: $\omega \triangleq \nabla f(\x^{t}) + \partial h(\x^{t+1}) + \partial g(\x^{t+1}) + \partial \mathcal{I}_{\Omega} (\x^{t+1})$; step \step{3} uses the inequity (\ref{ieq:indicator}) and \textbf{Part (i)} in Lemma \ref{lemma:bound:4}.

		\noi We now bound the term $\tfrac{k}{n}\la \ddot{\x} - \x^{t+1} ,  \partial g(\x^{t+1})\ra$ in (\ref{eq:Pythagoras}) by following inequalities:
		\beq
		&& \textstyle \tfrac{k}{n}\la \ddot{\x} - \x^{t+1} ,  \partial g(\x^{t+1})\ra\nn\\
		&\overset{ \step{1}}{\leq} &  \textstyle \tfrac{k}{n}g(\ddot{\x}) - \tfrac{k}{n}g(\x^{t+1}) + \tfrac{k\rho}{2n}\|\ddot{\x} - \x^{t+1}\|_2^2 + \tfrac{k}{n}\epsilon\nonumber\\
		&\overset{ \step{2}}{\leq} &  \textstyle \tfrac{k}{n}g(\ddot{\x}) - \tfrac{k}{n}g(\x^{t+1}) + \tfrac{\rho}{2} \{ \E_{\B^t}[\|\ddot{\x} - \x^{t+1}\|_2^2] - (1-\tfrac{k}{n}) \|\ddot{\x} - \x^{t}\|_2^2\} + \tfrac{k}{n}\epsilon\nonumber\\
		&\overset{\step{3}}{\leq} &  \textstyle \tfrac{k}{n}g(\ddot{\x}) - \tfrac{k}{n}g(\x^{t+1}) + \tfrac{\rho}{2 c_1 }   \E_{\B^t}[\|\ddot{\x} - \x^{t+1}\|_{\H^t_+}^2] - \tfrac{\rho}{2 c_2} (1 - \tfrac{k}{n})\|\ddot{\x} - \x^t\|_{\H^t_+}^2 + \tfrac{k}{n}\epsilon\nonumber\\
		&\overset{\step{4}}{\leq} &  \textstyle \tfrac{k}{n}g(\ddot{\x}) - \E_{\B^t}[g(\x^{t+1})] + (1 - \tfrac{k}{n})  g(\x^t) +  \tfrac{k\rho}{2n}\|\x^{t+1} - \x^t\|_2^2 + \tfrac{\rho}{2 c_1 }   \E_{\B^t}[\|\ddot{\x} - \x^{t+1}\|_{\H^t_+}^2]\nn\\
		&&- \tfrac{\rho}{2 c_2} (1 - \tfrac{k}{n})\|\ddot{\x} - \x^t\|_{\H^t_+}^2 + \tfrac{2k}{n}\epsilon,\label{bounded term 1}
		\eeq
		\noi where step \step{1} uses the globally $\rho$-$\epsilon$ bounded non-convexity of $g(\cdot)$; step \step{2} uses (\ref{eq:expectation and sum for xt+1}); step \step{3} uses Lemma \ref{lemma:iequality}; step \step{4} uses (\ref{expctation of gxt+1}).
		
		\noi Letting $\w = \ddot{\x}$ in Lemma \ref{lemma:boundedterm}, we now bound the term $\tfrac{k}{n}\la \ddot{\x} - \x^{t+1} , \nabla f(\x^t) + \partial h(\x^{t+1})\ra$ in (\ref{eq:Pythagoras}) as following:
		\beq
		&& \textstyle  \tfrac{k}{n} \la \ddot{\x} - \x^{t+1} , \nabla f(\x^t) + \partial h(\x^{t+1})\ra \nonumber\\
		&\overset{}{\leq} & \textstyle  \tfrac{k}{n}f(\ddot{\x})  +  \E_{\B^t}[ \tfrac{1}{2} \|\x^{t+1}-\x^{t} \|_{\H^t_+}^2 - f(\x^{t+1}) - h(\x^{t+1})]  + (1 - \tfrac{k}{n})(f(\x^t) + h(\x^t)) + \tfrac{k}{n}h(\ddot{\x}) ,\nn\\
		\label{bounded term 2}
		\eeq
		
		\noi Combining (\ref{eq:Pythagoras}), (\ref{bounded term 1}) and (\ref{bounded term 2}), and using the fact that $F(\x) = f(\x) + h(\x) + g(\x)$, we derive:
		\beq 
		&& \E_{\B^t}[\tfrac{c_1 - \rho}{2c_1} \|\x^{t+1} - \ddot{\x}\|_{\H_+^t}^2] - \tfrac{c_2 - \rho(1-\tfrac{k}{n})}{2c_2} \|\x^{t} - \ddot{\x}\|_{\H_+^t}^2\nn\\
		&\leq & \tfrac{k}{n} F(\ddot{\x}) - \tfrac{k}{n} F(\x^t)  - \E_{\B^t}[F(\x^{t+1})] + F(\x^t)  +  \tfrac{k\rho}{2n}\|\x^{t+1} - \x^t\|_2^2 + \tfrac{2k}{n}\epsilon  \nn\\
		& \overset{\step{1}}{\leq} & \tfrac{k}{n} F(\ddot{\x}) - \tfrac{k}{n} F(\x^t)  - \E_{\B^t}[F(\x^{t+1})] + F(\x^t)  +  \tfrac{k\rho}{\theta}(F(\x^t) - \E_{\B^t}[F(\x^{t+1})]) + \tfrac{2k}{n}\epsilon \nn\\
		& \overset{\step{2}}{=} & - \tfrac{k}{n} \ddot{q}^t +(1 + \tfrac{k\rho}{\theta}) (\ddot{q}^t-\E_{\B^t}[\ddot{q}^{t+1}])+ \tfrac{2k}{n}\epsilon,
		\eeq
		where step \step{1} uses the sufficient decrease condition that $\tfrac{1}{2n}\|\x^{t+1} - \x^t\|_2^2 = \E_{\B^t}[\tfrac{1}{2}\|\x^{t+1} - \x^t\|_2^2] \leq \tfrac{1}{\theta}\E_{\B^t} [F(\x^t) - F(\x^{t+1})]$; step \step{2} uses the
		definition of $\ddot{q}^t \triangleq F(\x^t) - F(\ddot{\x})$ and the fact that $F(\x^t) - F(\x^{t+1}) = \ddot{q}^{t} - \ddot{q}^{t+1}$. Using the definitions that $\ddot{r}^{t+1} \triangleq \tfrac{1}{2} \|\x^{t+1} - \ddot{\x}\|_{\H_+^t}^2$, $\beta_1 \triangleq 1 - \tfrac{\rho}{c_1}$, $\beta_2 \triangleq 1 - \tfrac{\rho}{c_2}(1-\tfrac{k}{n})$ and $\gamma \triangleq (1 + \tfrac{k\rho}{\theta})$, we rewrite the inequality above as:
		\beq
		(1 - \tfrac{\rho}{c_1}) \E_{\B^t} [\ddot{r}^{t+1}] - (1 - \tfrac{\rho}{c_2}(1-\tfrac{k}{n})) \ddot{r}^{t} & \leq & - \tfrac{k}{n} \ddot{q}^t + (1 + \tfrac{k\rho}{\theta}) (\ddot{q}^t-\E_{\B^t}[\ddot{q}^{t+1}]) + \tfrac{2k}{n}\epsilon\nn\\
		\Rightarrow \beta_1 \E_{\B^t} [\ddot{r}^{t+1}] + \gamma \E_{\B^t}[\ddot{q}^{t+1}] & \leq & \beta_2 \ddot{r}^{t} + (\gamma - \tfrac{k}{n})\ddot{q}^t + \tfrac{2k}{n}\epsilon.
		\eeq
		\textbf{(ii)} We now derive following results when $\beta_1 > 0$. The function $\mathcal{S}_{\B}(\x,\d_\B) + h(\x + \U_{\B}\d_{\B}) + \tfrac{\theta}{2}\|\x + \U_{\B}\d_{\B} - \x \|_2^2$ is considered to be ($\text{min}(\Q) + \theta$)-strongly convex with respect to $\d_\B$, and the term $g(\x + \U_{\B}\d_{\B})$ is globally $\rho$-$\epsilon$ bounded nonconvex. Therefore, $\mathcal{M}_{\B}(\x,\d_\B)$ is convex if: $\text{min}(\Q) + \theta - \rho  > \epsilon \Leftrightarrow \beta_1 > 0$. When $F(\cdot)$ satisfies the Luo-Tseng error bound assumption, we bound the term $\ddot{r}^t$ as: 
		\beq
		\ddot{r}^t  &\triangleq &  \tfrac{c_2}{2}\|\x^t - \ddot{\x}\|_2^2
		\overset{\step{1}}{\leq} \tfrac{c_2}{2}\left(\tfrac{\delta}{C_n^k}\right)^2 \left(\sum_{\B\in \Pi_n^k} |\bar{\mathcal{M}}_{\B}(\x^t)| \right)^2
		\overset{\step{2}}{\leq} \tfrac{c_2}{2}\left(\tfrac{\delta}{C_n^k}\right)^2 C_n^k \left(\sum_{\B\in \Pi_n^k} |\bar{\mathcal{M}}_{\B}(\x^t)|^2 \right)\nn\\
		&\overset{\step{3}}{\leq}& \tfrac{c_2}{2}\left(\tfrac{\delta}{C_n^k}\right)^2 \left( C_n^k \right)^2 \left( \E_{\B^t}[\|\x^{t+1} - \x^t\|_2^2] \right)
		\overset{}{=} \tfrac{c_2\delta^2}{\theta} \cdot \tfrac{\theta}{2}(\E_{\B^t}[\|\x^{t+1} - \x^t\|_2^2])\nn\\
		&\overset{\step{4}}{\leq}& \tfrac{c_2\delta^2}{\theta}( F(\x^t) - \E_{\B^t}[F(\x^{t+1})])
		\overset{}{=} \tfrac{c_2\delta^2}{\theta}\left( \ddot{q}^t - \E_{\B^t}[\ddot{q}^{t+1}] \right)
		\overset{\step{5}}{=} \kappa_0\left( \ddot{q}^t - \E_{\B^t}[\ddot{q}^{t+1}] \right),
		\eeq
		where step \step{1} uses the Luo-Tseng error bound assumption that $\|\x^t - \ddot{\x}\|_2^2 \leq \delta^2(\tfrac{1}{C_n^k} \sum_{\B\in \Pi_n^k} \\ |\text{dist} (\mathbf{0},\bar{\mathcal{M}}_{\B}(\x)|)^2$ or any \textbf{CWS}-point $\ddot{\x}$; step \step{2} uses $\forall \x \in \R^n,\ \|\x\|_1^2 \leq n\|\x\|_2^2$; step \step{3} uses the fact that $\E_{\B^t}[\|\x^{t+1} - \x^t\|_2^2] =  \E_{\B^t}[\|\x^t + \bar{\mathcal{M}}_{\B}(\x^t) - \x^t\|_2^2] = \E_{\B^t}[ |\bar{\mathcal{M}}_{\B}(\x^t)|^2 ] = \tfrac{1}{C_n^k}\sum_{\B\in \Pi_n^k} |\bar{\mathcal{M}}_{\B}(\x^t)|^2$; step \step{4} uses $\E_{\B^t}[\tfrac{1}{2}\|\x^{t+1} - \x^t\|_2^2] \leq \tfrac{1}{\theta} \E_{\B^t}[F(\x^t) - F(\x^{t+1})]$; step \step{5} uses the definition of $\kappa_0 \triangleq \tfrac{c_2\delta^2}{\theta}$. Since $\beta_1 > 0$, we derive $\beta_2 > 0$. Then we have:
		\beq
		&&\beta_1 \E_{\B^t} [\ddot{r}^{t+1}] + \gamma \E_{\B^t}[\ddot{q}^{t+1}] \leq \beta_2 \ddot{r}^{t} + (\gamma - \tfrac{k}{n})\ddot{q}^t + \tfrac{2k}{n}\epsilon\nn\\
		&\Rightarrow&\gamma \E_{\B^t}[\ddot{q}^{t+1}] \leq \beta_2 \kappa_0   \left(\ddot{q}^t - \E_{\B^t}[\ddot{q}^{t+1}]\right) + (\gamma - \tfrac{k}{n}) \ddot{q}^t + \tfrac{2k}{n}\epsilon\nonumber\\
		&\Rightarrow& (\gamma + \beta_2\kappa_0) \E_{\B^t}[\ddot{q}^{t+1}] \leq (\gamma + \beta_2\kappa_0 - \tfrac{k}{n})\ddot{q}^t + \tfrac{2k}{n}\epsilon\nonumber\\
		&\Rightarrow& \E_{\B^t}[\ddot{q}^{t+1}] \overset{\step{1}}{\leq} (1 -  \tfrac{k}{n\kappa_1}) \ddot{q}^t + \tfrac{2k}{n\kappa_1}\epsilon \nonumber\\
		&\Rightarrow& \E_{\xi^t}[\ddot{q}^{t+1}] \overset{\step{2}}{\leq} \ddot{\kappa}^{t+1} \ddot{q}^0 + \tfrac{2k(\ddot{\kappa}^{t+2} -1)}{n\kappa_1(\ddot{\kappa} - 1)}\epsilon = \ddot{\kappa}^{t+1} \ddot{q}^0 + 2(1 - \ddot{\kappa}^{t+2})\epsilon \leq \ddot{\kappa}^{t+1} \ddot{q}^0 + 2\epsilon,
		\eeq 
		
		\noi where step \step{1} uses the definition $\kappa_1 \triangleq \gamma + \beta_2\kappa_0$; step \step{2} uses the definition $\ddot{\kappa} \triangleq 1 -  \tfrac{k}{n\kappa_1}$. 
		
		\noi \textbf{(iii)} Considering the $\sigma$ with $\sigma \leq 2\ddot{q}^0$, we now discuss the situation where the inequality $\E_{\xi^t}[\ddot{q}^{t+1}] \leq \sigma$ holds. We define: $\beta_2' \triangleq 1 - \tfrac{2C^2}{c_2\epsilon}(1-\tfrac{k}{n})$,  $\kappa_1' \triangleq 1 - \tfrac{k}{n(\gamma + \beta_2'\kappa_0)}$. Based on Lemma \ref{lemma:globallybounded}, we have: $\ddot{\kappa} = \kappa_1'$. Since $\kappa_1 > 1$, $0 < \ddot{\kappa} < 1$, we derive following result:
		\beq
		\left\{
		\begin{aligned}
			\ddot{\kappa}^{t+1} \ddot{q}^0 \leq \tfrac{\sigma}{2}\\
			2\epsilon \leq \tfrac{\sigma}{2}
		\end{aligned}
		\right.
		\Rightarrow \left\{
		\begin{aligned}
			t + 1 &\geq \log_{\kappa_1'}\tfrac{\sigma}{2\ddot{q}^0} \\
			\epsilon &\leq \tfrac{\sigma}{4}
		\end{aligned}
		\right.
		\Rightarrow t+1 \geq \log_{\ddot{\kappa}'} \tfrac{\sigma}{2\ddot{q}^0},
		\eeq
		
		\noi where $\beta_3 \triangleq 1 - \tfrac{8C^2}{c_2\sigma}(1-\tfrac{k}{n})$ and $\ddot{\kappa}'$ is defined as $1 - \tfrac{k}{n(\gamma + \beta_3 \kappa_0)}$. Hence, when $t + 1 \geq log_{\ddot{\kappa}'}\tfrac{\sigma}{2\ddot{q}^0}$, we obtain that $\E_{\xi^t}[\ddot{q}^{t+1}] \leq \sigma$. Thus, we finish the proof of this theorem. 
	\end{proof}

	\subsection{Proof of Theorem \ref{theo:dec:x:f2}}
	\label{app:theo:dec:x:f2}
	
	\begin{proof}
		\textbf{(i)} \noi Let $\breve{\x}$ be any critical point. For any solution $\x^{t}$ and $\x^{t+1}$ generated by BCD-l-$k$, we have the following first-order optimality condition:
		\beq 
		\mathbf{0}_k \in  [\nabla f(\x^t)]_{\B} + (\Q_{\B\B} + \theta\I_k) (\x^{t+1}-\x^{t})_{\B} + [ \partial h(\x^{t+1})]_{\B} + [\partial g(\x^{t})]_{\B} +  [\partial \mathcal{I}_{\Omega}(\x^{t+1})]_{\B}. \label{eq:first:order:optlocal}
		\eeq

		\noi Using \textbf{Part (vi)} in Lemma \ref{lemma:bound:4} and letting $\d = \x^{t+1} - \x^t$, we obtain:
		\beq
		\E_{\B^t}[g(\x^{t+1})] \leq \textstyle g(\x^t) + \tfrac{k}{n}\la \partial g(\x^t), \x^{t+1} - \x^t \ra. \label{expctation of gxt}
		\eeq

		We derive the following results:
		\beq \label{eq:Pythagoras:local}
		&& \E_{\B^t}[\tfrac{1}{2} \|\x^{t+1} - \breve{\x}\|_{\H_+^t}^2] - \tfrac{1}{2} \|\x^{t} - \breve{\x}\|_{\H_+^t}^2\nn\\
		&\overset{\step{1}}{=} & \E_{\B^t}[\la \x^{t+1} - \breve{\x}, \H_+^t(\x^{t+1} - \x^t) \ra] - \E_{\B^t}[\tfrac{1}{2}\|\x^{t+1} - \x^t\|_{\H_+^t}^2]\nn\\
		&\overset{}{=} & \E_{\B^t}[\la \x^{t+1}_{\B^t} - \breve{\x}_{\B^t},(\Q_{\B\B} + \theta \I_k)(\x^{t+1}_{\B^t} - \x^t_{\B^t})\ra] - \E_{\B^t}[\tfrac{1}{2}\|\x^{t+1} - \x^t\|_{\H_+^t}^2]\nn\\
		&\overset{\step{2}}{=} & -\E_{\B^t}[\la \x^{t+1}_{\B^t} - \breve{\x}_{\B^t},  \omega'_{\B^t} \ra ]  - \E_{\B^t}[\tfrac{1}{2}\|\x^{t+1} - \x^t\|_{\H_+^t}^2]\nn\\
		&\overset{\step{3}}{\leq}  & \tfrac{k}{n}\la \breve{\x} - \x^{t+1} , \nabla f(\x^t) + \partial h(\x^{t+1}) + \partial g(\x^{t}) \ra - \E_{\B^t}[\tfrac{1}{2}\|\x^{t+1} - \x^t\|_{\H_+^t}^2],
		\eeq
		\noi where step \step{1} applies the Pythagoras relation that: $\forall \x, \y, \z, \tfrac{1}{2}\|\y - \z\|_2^2 - \tfrac{1}{2}\|\x - \z\|_2^2 = \la \y-\z, \y-\x \ra - \tfrac{1}{2}\|\x - \y\|_2^2$; step \step{2} uses the optimality condition in (\ref{eq:first:order:optlocal}) and the definition: $\omega' \triangleq \nabla f(\x^{t}) + \partial h(\x^{t+1}) + \partial g(\x^{t}) + \partial \mathcal{I}_{\Omega} (\x^{t+1})$; step \step{3} uses the inequity (\ref{ieq:indicator}) and \textbf{Part (i)} in Lemma \ref{lemma:bound:4}.

		\noi We bound the term $\tfrac{k}{n}\la \breve{\x} - \x^{t+1} ,  \partial g(\x^{t})\ra$ in (\ref{eq:Pythagoras:local}) as follows:
		\beq
		&& \textstyle \tfrac{k}{n}\la \breve{\x} - \x^{t+1}, \partial g(\x^{t})\ra\nn\\
		& = & \tfrac{k}{n} \la \breve{\x} - \x^t, \partial g(\x^{t})\ra +  \tfrac{k}{n} \la \x^t - \x^{t+1} , \partial g(\x^{t})\ra\nn\\
		&\textstyle \overset{\step{1}}{\leq}& \tfrac{k}{n} g(\breve{\x}) - \tfrac{k}{n} g(\x^t) + \tfrac{k\rho}{2n} \|\breve{\x}- \x^t\|_2^2 + \tfrac{k\epsilon}{n}+ \tfrac{k}{n} \la \x^t - \x^{t+1}, \partial g(\x^t) \ra \nn\\
		&\textstyle \overset{\step{2}}{\leq}& \tfrac{k}{n} g(\breve{\x}) - \tfrac{k}{n} g(\x^t) + \tfrac{k\rho}{2n} \|\breve{\x}- \x^t\|_2^2 + \tfrac{k\epsilon}{n} + g(\x^t) -	\E_{\B^t}[g(\x^{t+1})],\label{bounded term 1:local}
		\eeq
		where step \step{1} uses the globally $\rho$-$\epsilon$ bounded non-convexity of $g(\cdot)$; step \step{2} uses (\ref{eq:expectation and sum for xt+1}); step \step{3} uses Lemma \ref{lemma:iequality}; step \step{4} uses (\ref{expctation of gxt}).
		
		\noi By applying Lemma \ref{lemma:boundedterm} with $\w = \breve{\x}$, we derive following inequalities:
		\beq
		&& \textstyle  \tfrac{k}{n} \la \breve{\x} - \x^{t+1} , \nabla f(\x^t) + \partial h(\x^{t+1})\ra \nn\\
		& \leq & \textstyle  \tfrac{k}{n}f(\breve{\x})  +  \E_{\B^t}[ \tfrac{1}{2} \|\x^{t+1}-\x^{t} \|_{\H^t_+}^2 - f(\x^{t+1}) - h(\x^{t+1})]  + (1 - \tfrac{k}{n})(f(\x^t) + h(\x^t)) + \tfrac{k}{n}h(\breve{\x}).\nn\\
		\label{bounded term 2:local}
		\eeq
		
		\noi Combining (\ref{eq:Pythagoras:local}), (\ref{bounded term 1:local}) and (\ref{bounded term 2:local}), and using $F(\x) = f(\x) + h(\x) + g(\x)$, we have:
		\beq 
		&& \E_{\B^t}[\tfrac{1}{2} \|\x^{t+1} - \breve{\x}\|_{\H_+^t}^2] - \tfrac{1}{2} \|\x^{t} - \breve{\x}\|_{\H_+^t}^2\nn\\
		& \leq & \tfrac{k}{n} F(\breve{\x}) + (1 - \tfrac{k}{n}) F(\x^t) - \E_{\B^t}[F(\x^{t+1})] + \tfrac{k\rho}{2n}\|\breve{\x} - \x^t\|_2^2 + \tfrac{k}{n}\epsilon  \nn\\
		& \overset{\step{1}}{\leq} & \tfrac{k}{n} F(\breve{\x}) + (1 - \tfrac{k}{n}) F(\x^t) - \E_{\B^t}[F(\x^{t+1})] + \tfrac{k\rho}{2nc_1}\|\breve{\x} - \x^t\|_{\H_+^t}^2 + \tfrac{k}{n}\epsilon  \nn\\
		& \overset{\step{2}}{=} & (1 - \tfrac{k}{n}) \breve{q}^t - \E_{\B^t}[\breve{q}^{t+1}] + \tfrac{k\rho}{2nc_1}\|\breve{\x} - \x^t\|_{\H_+^t}^2 + \tfrac{k}{n}\epsilon,
		\eeq
		where step \step{1} uses the fact that $c_1\|\x\|_2^2 \leq \|\x\|^2_{\H_+}$ in Lemma \ref{lemma:iequality}; step \step{2} uses the definition of $\breve{q}^t \triangleq F(\x^t) - F(\breve{\x})$ and $F(\x^t) - F(\x^{t+1}) = \breve{q}^{t} - \breve{q}^{t+1}$. With the definition $\breve{r}^{t+1} \triangleq \tfrac{1}{2} \|\x^{t+1} - \breve{\x}\|_{\H_+^t}^2$, we rewrite the inequality above as:
		\beq
		\E_{\B^t} [\breve{r}^{t+1}] + \E_{\B^t}[\breve{q}^{t+1}]  & \leq &  (1 + \tfrac{k\rho}{nc_1}) \breve{r}^{t}  + (1 - \tfrac{k}{n}) \breve{q}^t + \tfrac{k}{n}\epsilon.
		\eeq
		\textbf{(ii)} We now consider the case when $F(\cdot)$ satisfies the Luo-Tseng error bound assumption for critical point $\breve{\x}$. We bound the term $\breve{r}^t$ using the same strategy in \ref{app:theo:dec:x:f} by following inequalities: 
		\beq
		\breve{r}^t & \triangleq & \tfrac{c_2}{2}\|\x^t - \breve{\x}\|_2^2
		\overset{\step{1}}{\leq} \tfrac{c_2}{2}\left(\tfrac{\delta}{C_n^k}\right)^2 \left(\sum_{\B\in \Pi_n^k} |\bar{\mathcal{N}}_{\B}(\x^t)| \right)^2
		\overset{\step{2}}{\leq} \tfrac{c_2}{2}\left(\tfrac{\delta}{C_n^k}\right)^2 C_n^k \left(\sum_{\B\in \Pi_n^k} |\bar{\mathcal{N}}_{\B}(\x^t)|^2 \right)\nn\\
		&\overset{\step{3}}{\leq} &\tfrac{c_2}{2}\left(\tfrac{\delta}{C_n^k}\right)^2 \left( C_n^k \right)^2 \left( \E_{\B^t}[\|\x^{t+1} - \x^t\|_2^2] \right)
		\overset{}{=} \tfrac{c_2\delta^2}{\theta} \cdot \tfrac{\theta}{2}\left( \E_{\B^t}[\|\x^{t+1} - \x^t\|_2^2] \right) \nn\\
		&\overset{\step{4}}{\leq} &\tfrac{c_2\delta^2}{\theta}( F(\x^t) -  \E_{\B^t}[F(\x^{t+1})] ) 
		\overset{}{=} \tfrac{c_2\delta^2}{\theta}\left( \breve{q}^t - \E_{\B^t}[\breve{q}^{t+1}] \right)
		\overset{\step{5}}{=} \kappa_0\left( \breve{q}^t - \E_{\B^t}[\breve{q}^{t+1}] \right),
		\eeq
		where step \step{1} uses the assumption that $\|\x^t - \breve{\x}\|_2^2 \leq \delta^2(\tfrac{1}{C_n^k} \sum_{\B\in \Pi_n^k} |\text{dist}(\mathbf{0},\bar{\mathcal{N}}_{\B}(\x)|)^2$ for any coordinate-wise stationary point $\breve{\x}$; step \step{2} uses $\|\x\|_1^2 \leq n\|\x\|_2^2$; step \step{3} uses $\E_{\B^t}[\|\x^{t+1} - \x^t\|_2^2] =  \E_{\B^t}[\|\x^t + \bar{\mathcal{N}}_{\B}(\x^t) - \x^t\|_2^2] = \E_{\B^t}[ |\bar{\mathcal{N}}_{\B}(\x^t)|^2 ] = \tfrac{1}{C_n^k}\sum_{\B\in \Pi_n^k} |\bar{\mathcal{N}}_{\B}(\x^t)|^2$; step \step{4} uses the fact that  $\E_{\B^t}[\tfrac{1}{2}\|\x^{t+1} - \x^t\|_2^2] \leq \tfrac{1}{\theta} \E_{\B^t}[F(\x^t) - F(\x^{t+1})]$; step \step{5} uses $\kappa_0 = \tfrac{c_2\delta^2}{\theta}$.

		\noi Then we have following inequalities:
		\beq
		&& \E_{\B^t} [\breve{r}^{t+1}] + \E_{\B^t}[\breve{q}^{t+1}] \leq (1 + \tfrac{k\rho}{nc_1}) \breve{r}^{t} + (1 - \tfrac{k}{n})\breve{q}^t + \tfrac{k}{n}\epsilon\nn\\
		&\Rightarrow& \E_{\B^t}[\breve{q}^{t+1}] \leq \kappa_0(1 + \tfrac{k\rho}{nc_1}) \left(\breve{q}^t - \E_{\B^t}[\breve{q}^{t+1}]\right) + (1 - \tfrac{k}{n}) \breve{q}^t + \tfrac{k}{n}\epsilon\nonumber\\
		&\Rightarrow& (\kappa_0(1 + \tfrac{k\rho}{nc_1}) + 1)\E_{\B^t}[\breve{q}^{t+1}] \leq (\kappa_0(1 + \tfrac{k\rho}{nc_1}) + 1 - \tfrac{k}{n})\breve{q}^t + \tfrac{k}{n}\epsilon\nonumber\\
		&\Rightarrow& \E_{\B^t}[\breve{q}^{t+1}] \overset{\step{1}}{\leq} (1 - \tfrac{k}{n\kappa_2}) \breve{q}^t + \tfrac{k}{n\kappa_2}\epsilon \nonumber\\
		&\Rightarrow& \E_{\xi^t}[\breve{q}^{t+1}] \overset{\step{2}}{\leq} \breve{\kappa}^{t+1} \breve{q}^0 + \tfrac{k(\breve{\kappa}^{t+2} - 1)}{n\kappa_2(\breve{\kappa} - 1)}\epsilon = \breve{\kappa}^{t+1} \breve{q}^0 + (1 - \breve{\kappa}^{t+2})\epsilon \leq \breve{\kappa}^{t+1} \breve{q}^0 + \epsilon,
		\eeq 
		
		\noi where step \step{1} uses the definition $\kappa_2 \triangleq \kappa_0(1 + \tfrac{k\rho}{nc_1}) + 1$; step \step{2} uses the definition $\breve{\kappa} \triangleq 1 -  \tfrac{k}{n\kappa_2}$. 
		
		\noi \textbf{(iii)} Given the $\sigma$ with $\sigma \leq 2\breve{q}^0$, we now consider the situation where the inequality $\E_{\xi^t}[\breve{q}^{t+1}] \leq \sigma$ is satisfied. We first define: $\kappa_3 \triangleq \kappa_0(1 + \tfrac{2Ck}{nc_1\epsilon}) + 1$ and $\kappa_4 \triangleq 1 - \tfrac{k}{n\kappa_3}$. Then we have: $\breve{\kappa} = \kappa_4$ according to Lemma \ref{lemma:globallybounded}. Since $\kappa_2 > 1$, $0 < \breve{\kappa} < 1$, we have:
		\beq
		\left\{
		\begin{aligned}
			\breve{\kappa}^{t+1} \breve{q}^0 \leq \tfrac{\sigma}{2}\\
			\epsilon \leq \tfrac{\sigma}{2}
		\end{aligned}
		\right.
		\Rightarrow \left\{
		\begin{aligned}
			t + 1 &\geq \log_{\kappa_4}\tfrac{\sigma}{2\breve{q}^0} \\
			\epsilon &\leq \tfrac{\sigma}{2}
		\end{aligned}
		\right.
		\overset{\step{1}}{\Rightarrow}t+1 \geq \log_{\breve{\kappa}'} \tfrac{\sigma}{2\breve{q}^0},
		\eeq
		
		\noi where step \step{1} uses the definition $\kappa_3' \triangleq \kappa_0(1 + \tfrac{4Ck}{nc_1\sigma}) + 1$ and $\breve{\kappa}' \triangleq 1 - \tfrac{k}{n\kappa_3'}$. Therefore, $\E_{\xi^t}[\breve{q}^{t+1}] \leq \sigma$ when $t + 1 \geq log_{\breve{\kappa}'}\tfrac{\sigma}{2\breve{q}^0}$
	\end{proof}

	\section{Proofs of Results in Section \ref{sec:geo}}
	
	\subsection{Proof of Theorem \ref{theo:excat:index}}\label{app:theo:excat:index}
	\begin{proof}
		Let $\ddot{\x}$ be any coordinate-wise stationary point of Problem (\ref{plb:geo:topk}). We have the following result for all $i\in[n]$ and $j\in[n]$ with $i\neq j$:
		\beq \label{eq:sparse:1:opt}
		0 \in \arg \min_{\eta} \,  \mathcal{P}(\eta)\triangleq \tfrac{\alpha}{2} \eta^2 + \beta \eta   - \lambda \|\ddot{\x} + \eta e_i - \eta e_j \|_{[s]},
		\eeq
		\noi where $\alpha = \bar{\Q}_{ii} + \bar{\Q}_{jj} - 2\bar{\Q}_{ij}$ and $\beta= \nabla_i f(\ddot{\x}) - \nabla_j f(\ddot{\x})$. Using the results in our previous discussions, we have:
		\begin{align} \label{eq:dfdfopdf}
			\begin{split}
				0 \in \arg \min_{\eta \in \{\eta_1,\eta_2,\eta_3,\eta_4,\eta_5\}} \mathcal{P}(\eta),
			\end{split}
		\end{align}
		\noi $\text{where}~\eta_1 = -\x_{i},\eta_2 = \x_j,\eta_3 =  \tfrac{-\beta}{\alpha},\eta_4 =  (\lambda - \beta) / \alpha,\eta_5 =  -(\lambda + \beta) / \alpha .$ Note that $(\eta_1,\eta_2,\eta_3,\eta_4,\eta_5)$ are the five breakpoints of Problem (\ref{eq:sparse:1:opt}). 
		
		\noi Since $\lambda > 2\| \nabla f(\ddot{\x})\|_{\infty}$, we have: $\forall i,\,\lambda - \nabla_i f(\ddot{\x}) + \nabla_j f(\ddot{\x}) >0,\,-\nabla_i f(\ddot{\x}) + \nabla_j f(\ddot{\x}) - \lambda <0$, leading to $\eta_4 \neq 0$, $\eta_5\neq 0$. The optimality condition in (\ref{eq:dfdfopdf}) reduces to
		\beq\label{eq:dfdfopdf2}
		0 \in \arg \min_{\eta \in \{\eta_1,\eta_2,\eta_3\}} \,\mathcal{P}(\eta) .
		\eeq
		
		\noi We denote $\mathcal{T}(\ddot{\x})$ as the indices of non-zero components of $\ddot{\x}$ in magnitude and analyze two cases for (\ref{eq:dfdfopdf2}). 
		
		\text{Case \textbf{(i)}}: $0 = \eta_1$ or $0=\eta_2$. It implies $-\ddot{\x}_i=0$ or $-\ddot{\x}_j = 0$, leading to $i \notin \mathcal{T}(\ddot{\x})$ or $j \notin \mathcal{T}(\ddot{\x})$. Hence $\|\ddot{\x}\|_1 = \sum_{i \in \mathcal{T}(\ddot{\x})} \ddot{\x}_i$ with $\ddot{\x} \geq 0$. Note that $\mathcal{T}(\ddot{\x})$ is a subset of top-$s$ set. Therefore, we have $\|\ddot{\x}\|_1 = \sum_{i=1}^s |\ddot{\x}_{[i]}|$. 
		
		\text{Case \textbf{(ii)}}: $0 = \eta_3$. When $i,j \notin \mathcal{T}(\ddot{\x})$, this situation is discussed in \text{Case \textbf{(i)}}. When $i,j \in \mathcal{T}(\ddot{\x})$, it implies that $0 = \tfrac{\nabla_i f(\ddot{\x})}{\alpha} - \tfrac{\nabla_j f(\ddot{\x})}{\alpha}  \Rightarrow \nabla_i f(\ddot{\x}) = \nabla_j f(\ddot{\x})$.
	\end{proof}

	\subsection{Proof of Theorem \ref{theo:extreme:bin}}\label{app:theo:extreme:bin}
	We first present the following useful lemma.
	\begin{lemma}\label{lemma:pmbounded}
		For all $\alpha \in \mathbb{R}$, $\beta \in \mathbb{R}$, $x \in \mathbb{R}$, $y \in \mathbb{R}$ with $\alpha < 0$, we bound: $\tfrac{\alpha}{2}(x \pm 1)^2 + \beta(x \pm 1) \leq \tfrac{\alpha}{2} - \tfrac{\beta^2}{2\alpha}.$
	\end{lemma}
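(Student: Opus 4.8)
The plan is to reduce the asserted bound to an elementary statement about a single squared quantity by completing the square. Set $q(t)\triangleq\tfrac{\alpha}{2}t^2+\beta t$. For $\alpha\neq 0$ one has the identity $q(t)=\tfrac{1}{2\alpha}\big((\alpha t+\beta)^2-\beta^2\big)$, and likewise $\tfrac{\alpha}{2}-\tfrac{\beta^2}{2\alpha}=\tfrac{1}{2\alpha}\big(\alpha^2-\beta^2\big)$. Since $\alpha<0$, multiplying through by $2\alpha$ reverses the inequality, so the claim $q(x\pm1)\leq\tfrac{\alpha}{2}-\tfrac{\beta^2}{2\alpha}$ is equivalent to $\big(\alpha(x\pm1)+\beta\big)^2\geq\alpha^2$ for the sign in question. (The auxiliary variable $y$ plays no role in the statement.)

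Next I would write $s\triangleq\alpha x+\beta$, so that $\alpha(x+1)+\beta=s+\alpha$ and $\alpha(x-1)+\beta=s-\alpha$. The key observation is the parallelogram-type identity $(s+\alpha)^2+(s-\alpha)^2=2s^2+2\alpha^2\geq 2\alpha^2$, which forces $\max\{(s+\alpha)^2,(s-\alpha)^2\}\geq\alpha^2$; more precisely, if $s\geq 0$ then $(s-\alpha)^2=(s+|\alpha|)^2\geq\alpha^2$, while if $s\leq 0$ then $(s+\alpha)^2=(|s|+|\alpha|)^2\geq\alpha^2$. Translating back through the equivalence of the first paragraph, the sign opposite to $\operatorname{sign}(\alpha x+\beta)$ yields $\tfrac{\alpha}{2}(x\pm1)^2+\beta(x\pm1)\leq\tfrac{\alpha}{2}-\tfrac{\beta^2}{2\alpha}$, which is the content of the lemma; it is exactly this sign choice that is used later, since in the binary setting a block update may always push the relevant coordinate toward whichever of $+1,-1$ decreases the concave surrogate.

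There is essentially no obstacle: the argument is just the completion of the square together with the two-term sum-of-squares estimate, the only point of care being the direction flip from $\alpha<0$. The one thing worth flagging when the lemma is invoked in the proof of Theorem \ref{theo:extreme:bin} is that the symbol ``$\pm$'' should be read as ``for the appropriate sign'' rather than ``for both signs'': for a fixed sign the inequality can fail when $\alpha(x\pm1)+\beta$ is near $0$, but the sum-of-squares bound guarantees the usable sign always exists, and that is all the extreme-point argument needs.
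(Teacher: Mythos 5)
Your argument is correct and is essentially the paper's own: both complete the square to reduce the claim to $(\alpha(x\pm1)+\beta)^2\geq\alpha^2$ and use $\alpha<0$ to flip the inequality. The one substantive point is your caveat about the meaning of ``$\pm$'', and you are right to flag it: the inequality as literally stated for a fixed sign can fail (take $\alpha=-1$, $\beta=0$, $x=1$: the minus sign gives $0\not\leq-\tfrac{1}{2}$), so only the ``for at least one sign'' (equivalently $\min_{\pm}$) version is true. In fact the paper's own last step, $\tfrac{1}{2\alpha}\left(|\alpha x+\beta|+\alpha\right)^2\leq\tfrac{\alpha}{2}$, is not justified by ``$|\alpha x+\beta|\geq0$ and $\alpha<0$'': it is equivalent to $\left(|\alpha x+\beta|+\alpha\right)^2\geq\alpha^2$, which fails whenever $0<|\alpha x+\beta|<2|\alpha|$. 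The chain should instead lower-bound the linear term by $\min\left(\pm(\alpha x+\beta)\right)=-|\alpha x+\beta|$, which produces $\tfrac{1}{2\alpha}\left(|\alpha x+\beta|-\alpha\right)^2\leq\tfrac{\alpha}{2}$ and is precisely your ``usable sign'' observation. Since Theorem \ref{theo:extreme:bin} only invokes Lemma \ref{lemma:pmbounded} through $\min(\mathcal{P}(l),\mathcal{P}(u))$, your reading is the one the downstream argument actually requires, and your proof of that version is complete.
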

	\begin{proof}
		$\tfrac{\alpha}{2}(x \pm 1)^2 + \beta(x \pm 1) + \tfrac{\beta^2}{2\alpha} = \tfrac{\alpha}{2}x^2 + \beta x + \tfrac{\beta^2}{2\alpha} + \tfrac{\alpha}{2} + (\pm \alpha x \pm \beta) \leq  \tfrac{\alpha}{2}x^2 + \beta x + \tfrac{\beta^2}{2\alpha} + \tfrac{\alpha}{2} + \max(\pm \alpha x \pm \beta) \overset{\step{1}}{=} \tfrac{1}{2\alpha}(\alpha x + \beta)^2 + \tfrac{\alpha}{2} + |\alpha x + \beta| \overset{}{=}  \tfrac{1}{2\alpha}(|\alpha x + \beta| + \alpha)^2 \overset{\step{2}}{\leq}  \tfrac{\alpha}{2}$, where \step{1} uses $\max(\pm x \pm c) = |x+c|$, \step{2} uses the fact that: $|\alpha x + \beta| \geq 0$ and $\alpha < 0$.
	\end{proof}
	We now present the proof of Theorem \ref{theo:extreme:bin}.
	\begin{proof}
		\noi Let $\ddot{\x} \in \mathbb{R}^n$ be any \textbf{CWS}-point of Problem (\ref{plb:geo:bin}). We have following subproblem for any $\ddot{\x}$:
		\beq
		0 \in \argmin_{\eta} \mathcal{P}(\eta) \triangleq \tfrac{\alpha}{2} \eta^2 + \beta \eta,\ s.t.\ l \leq \eta \leq u,\label{plb:geo:subbin}
		\eeq
		where $l = \max(-\ddot{\x}_i-1,\ddot{\x}_j+1)$, $u = \min(1-\ddot{\x}_i,\ddot{\x}_j-1)$, $\alpha = \bar{\Q}_{ii} + \bar{\Q}_{jj} - 2\bar{\Q}_{ij}$, and $\beta = \nabla_{i} \hat{f}(\x) - \nabla_{j} \hat{f}(\x)$. Hence, Problem (\ref{plb:geo:subbin}) has three breakpoints $\{l,u,e\}$, with $e = -\tfrac{\beta}{\alpha}$, leading to: $	\min(\mathcal{P}(l),\mathcal{P}(u)) = \min(\tfrac{\alpha}{2} l^2 + \beta l,\tfrac{\alpha}{2} u^2 + \beta u)\overset{\step{1}}{\leq}-\tfrac{\beta^2}{2\alpha} = \mathcal{P}(e)$, where step \step{1} uses Lemma \ref{lemma:pmbounded} and $\alpha < 0$. We obtain: $l = 0$ or $u = 0$, resulting in either $l = -\ddot{\x}_i-1 = 0$ or $l = \ddot{\x}_j+1 = 0$ or $u = 1-\ddot{\x}_i = 0$ or $u = \ddot{\x}_j-1 = 0$. We conclude that $|\ddot{\x}_r| = 1$ for all $r \in [n]$.
	\end{proof}

	\subsection{Proof of Theorem \ref{theo:exact:bin}}\label{app:theo:exact:bin}
	We first give following lemmas.
	\begin{lemma}\label{lemma:xi}
		For any $\alpha \in \mathbb{R} $, $ c \in \mathbb{R} $ and $ \xi \in \mathbb{R} $ with $ \alpha > 0 $ and $ \xi \in (-1, 1) $, we have: $\min\left(\tfrac{\alpha}{2}(\xi \mp 1)^2 + c(\xi \mp 1)\right) \leq \tfrac{\alpha}{2}(1 - \xi^2)$.
	\end{lemma}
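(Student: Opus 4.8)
The plan is to collapse the two-branch minimum into a single convex combination of the two branches and verify that this particular combination equals the right-hand side exactly. First I would substitute $a \triangleq 1-\xi$ and $b \triangleq 1+\xi$, so that $a,b \in (0,2)$ and $a+b=2$ (this is the only place the hypothesis $\xi\in(-1,1)$ is used), with $\xi-1 = -a$ and $\xi+1 = b$. Writing $\phi(x) \triangleq \tfrac{\alpha}{2}x^2 + c x$, the two sign choices in the statement become $\phi(-a) = \tfrac{\alpha}{2}a^2 - ca$ and $\phi(b) = \tfrac{\alpha}{2}b^2 + cb$, while the claimed upper bound is $\tfrac{\alpha}{2}(1-\xi^2) = \tfrac{\alpha}{2}ab$.

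The key step is to weight the two branches by $\tfrac{b}{2}$ and $\tfrac{a}{2}$ respectively; these weights are nonnegative and sum to $1$ precisely because $a,b\ge 0$ and $a+b=2$, so this is a genuine convex combination. A direct expansion, in which the cross terms $\mp\tfrac{cab}{2}$ cancel, gives
\begin{align*}
\tfrac{b}{2}\,\phi(-a) + \tfrac{a}{2}\,\phi(b) = \tfrac{b}{2}\big(\tfrac{\alpha}{2}a^2 - ca\big) + \tfrac{a}{2}\big(\tfrac{\alpha}{2}b^2 + cb\big) = \tfrac{\alpha}{4}ab(a+b) = \tfrac{\alpha}{2}ab .
\end{align*}
Since the minimum of two real numbers never exceeds any convex combination of them, $\min\big(\phi(-a),\phi(b)\big) \le \tfrac{\alpha}{2}ab = \tfrac{\alpha}{2}(1-\xi^2)$, which is the assertion after undoing the substitution.

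The only nontrivial point — and the one I expect to be the actual obstacle — is spotting the correct, $\xi$-dependent weights $\tfrac{1+\xi}{2}$ and $\tfrac{1-\xi}{2}$: the naive equal-weight average yields $\tfrac12\phi(-a)+\tfrac12\phi(b) = \tfrac{\alpha}{2}(1+\xi^2)+c\xi$, which is too large, so the symmetric choice fails. Once the right weights are identified the remainder is a one-line cancellation. I would also remark that the inequality in fact holds for any $\alpha$, since the cross-term cancellation is sign-independent; the hypothesis $\alpha>0$ is kept only to match the setting in which the lemma is invoked, paralleling Lemma \ref{lemma:pmbounded}.
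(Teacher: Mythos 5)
Your proof is correct, but it is packaged differently from the paper's. The paper subtracts the right-hand side, uses $\min(\mp x)=-|x|$ to reduce the gap to $\xi(\alpha\xi+c)-|\alpha\xi+c|$, and bounds this by $(|\xi|-1)\,|\alpha\xi+c|\leq 0$; you instead exhibit the right-hand side \emph{exactly} as the convex combination of the two branches with weights $\tfrac{1+\xi}{2}$ and $\tfrac{1-\xi}{2}$, and invoke $\min(A,B)\leq \lambda A+(1-\lambda)B$. The two arguments are algebraically equivalent — writing $u=\alpha\xi+c$, one has $\min(A,B)=\tfrac{A+B}{2}-|u|$ and your convex combination equals $\tfrac{A+B}{2}-\xi u$, so your inequality is again $\xi u-|u|\leq 0$ — but the emphases differ: the paper's version makes the slack $(1-|\xi|)\,|\alpha\xi+c|$ explicit, while yours isolates a clean identity (the weighted average of the two branches equals $\tfrac{\alpha}{2}ab$ with $a=1-\xi$, $b=1+\xi$) and makes transparent where $\xi\in(-1,1)$ enters (nonnegativity of the weights). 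Your side remarks also check out: the hypothesis $\alpha>0$ is indeed not used (neither proof needs it), and the equal-weight average $\tfrac{\alpha}{2}(1+\xi^2)+c\xi$ does fail in general. Both proofs are one-line computations; yours is a legitimate, slightly more structural alternative.
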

	\begin{proof}
		We have: $\min\left(\tfrac{\alpha}{2}(\xi \mp 1)^2 + c(\xi \mp 1)\right) - \tfrac{\alpha}{2} + \tfrac{\alpha}{2}\xi^2 = c \xi + \alpha\xi^2 + \min(\mp \alpha \xi \mp c) \overset{\step{1}}{=}\xi \cdot (\alpha \xi + c) - |\alpha \xi + c| \overset{\step{2}}{\leq} |\xi| \cdot |\alpha \xi + c| - |\alpha \xi + c| = |\alpha \xi + c| \cdot (|\xi| - 1) \overset{\step{3}}{\leq} 0$, where step \step{1} uses $ \min(\mp x) = \min(-x, x) = -|x| $; step \step{2} uses $ xy \leq |x| \cdot |y| $; step \step{3} uses $ \xi \in (-1, 1)$.
	\end{proof}
	\begin{lemma}\label{lemma:xu}
		For any $ x \in \mathbb{R} $ and $u \in \mathbb{R}$ with $x \geq 0$ and $u \geq 0$, we have: $\sqrt{x} - \sqrt{x+u} \leq - \tfrac{u}{2\sqrt{x+u}}$.
	\end{lemma}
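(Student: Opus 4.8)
The plan is to recognize this inequality as the standard first-order concavity estimate for the square-root function and to give a short self-contained proof by rationalization. Throughout I would assume $x+u>0$: if $x=u=0$ the right-hand side is the indeterminate expression $0/0$ and the claim is vacuous, while if $x+u>0$ the quantity $\tfrac{u}{2\sqrt{x+u}}$ is well defined. This edge case is the only point that needs a word of care, and it is dealt with at the outset.

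First I would rewrite the left-hand side using the conjugate: since $\sqrt{x}-\sqrt{x+u} = \tfrac{(\sqrt{x}-\sqrt{x+u})(\sqrt{x}+\sqrt{x+u})}{\sqrt{x}+\sqrt{x+u}} = \tfrac{x-(x+u)}{\sqrt{x}+\sqrt{x+u}} = \tfrac{-u}{\sqrt{x}+\sqrt{x+u}}$, the claim reduces to $\tfrac{-u}{\sqrt{x}+\sqrt{x+u}} \le \tfrac{-u}{2\sqrt{x+u}}$, equivalently $\tfrac{u}{\sqrt{x}+\sqrt{x+u}} \ge \tfrac{u}{2\sqrt{x+u}}$.

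Next, since $x\ge 0$ and $u\ge 0$ give $0\le\sqrt{x}\le\sqrt{x+u}$, we have $0 < \sqrt{x}+\sqrt{x+u} \le 2\sqrt{x+u}$; dividing the nonnegative number $u$ by the smaller of two positive denominators produces the larger quotient, which is exactly the desired inequality. This closes the argument. Alternatively, one may simply invoke concavity of $t\mapsto\sqrt{t}$ on $[0,\infty)$: the gradient inequality $\sqrt{a}\le\sqrt{b}+\tfrac{1}{2\sqrt{b}}(a-b)$ with $a=x$ and $b=x+u$ is the statement verbatim. I do not expect any substantive obstacle in this lemma.
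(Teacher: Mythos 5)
Your argument is correct and is essentially identical to the paper's own proof, which also rationalizes to get $\sqrt{x+u}-\sqrt{x}=\tfrac{u}{\sqrt{x+u}+\sqrt{x}}$ and then bounds the denominator by $2\sqrt{x+u}$. Your explicit handling of the degenerate case $x=u=0$ is a small point of extra care that the paper omits.
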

	\begin{proof}
		We have: $\sqrt{x+u} - \sqrt{x} =  \tfrac{u}{\sqrt{x+u}+\sqrt{x}} \geq \tfrac{u}{\sqrt{x+u}+\sqrt{x+u}}$.
	\end{proof}
	\noi Then we provide the proof of this theorem.
	\begin{proof}
		We first define $\mathcal{H}(\eta) \triangleq \tfrac{\alpha}{2} \eta^2 + \beta \eta$ and $\mathcal{G}(\eta) \triangleq - \lambda \sqrt{\| \ddot{\x} \|_2^2 + 2\eta^2 + 2 \ddot{\x} _{i}\eta - 2 \ddot{\x} _{j }\eta}$. Let $\ddot{\x}$ be any \textbf{CWS}-point of Problem (\ref{plb:geo:bin2}). Then the following result holds for any $ i ,j \in [n] $:
		\beq
		0 \in \arg \min_{l \leq \eta \leq u} \mathcal{P}(\eta) \triangleq \mathcal{H}(\eta) + \mathcal{G}(\eta),\label{condition1}
		\eeq
		where $l = \max(-1 -  \ddot{\x}_{i}, \ddot{\x}_{j} - 1)$, $u = \min(1-  \ddot{\x}_{i}, \ddot{\x}_{j}+1)$, $\alpha = \bar{\Q}_{ii} + \bar{\Q}_{jj} - 2\bar{\Q}_{ij}$, $\beta = \nabla_{i} f(\ddot{\x}) - \nabla_{j} f( \ddot{\x})$.

		\noindent
		\textbf{(i)} We now prove that $\ddot{\x} = 1 $ if $ \lambda > \phi\sqrt{n+2}$, concluding via contradiction. Suppose $1 - \ddot{\x}_i^2 \neq 0$ for certain $i \in [n]$. Given $-1 \leq \ddot{\x} \leq 1$, we derive:
		\beq
		\min(\mathcal{H}(\pm 1 - \ddot{\x}_i)) = \min( \tfrac{\alpha}{2} (\ddot{\x}_i \mp 1)^2 - \beta (\ddot{\x}_i \mp 1)) \overset{\step{1}}{\leq} \tfrac{\alpha}{2} \cdot (1- \ddot{\x}_i^2),\label{hf}
		\eeq
		where step \step{1} uses Lemma \ref{lemma:xi}. When $- \ddot{\x}_{i} > \ddot{\x}_{j}$, $l = -1 -  \ddot{\x}_{i}$, we have following inequality:
		\beq
		\mathcal{G}(- 1 - \ddot{\x}_i) =  - \lambda \sqrt{\| \ddot{\x} \|_2^2 + 2\ddot{\x}_i + 2 \ddot{\x} _{j} + 2\ddot{\x}_i\ddot{\x}_j+ 2 } \leq - \lambda \sqrt{\| \ddot{\x} \|_2^2 +2 - 2\ddot{\x}_i^2}.\label{gf1}
		\eeq
		\noindent When $- \ddot{\x}_{i} < \ddot{\x}_{j}$, $u = -1 -  \ddot{\x}_{i}$, we have:
		\beq
		\mathcal{G}(1 - \ddot{\x}_i) =  - \lambda \sqrt{\| \ddot{\x} \|_2^2 - 2\ddot{\x}_i  - 2 \ddot{\x} _{j} + 2\ddot{\x}_i\ddot{\x}_j} + 2 \leq - \lambda \sqrt{\| \ddot{\x} \|_2^2 +2 - 2\ddot{\x}_i^2}.\label{gf2}
		\eeq
		\noi
		\noindent Combining (\ref{hf}), (\ref{gf1}) and (\ref{gf2}), we have following: $	\min(\mathcal{P}(\pm 1 - \ddot{\x}_i)) - \mathcal{P}(0) \leq \tfrac{\alpha}{2} (1- \ddot{\x}_i^2) - \lambda \sqrt{\| \ddot{\x} \|_2^2 + 2 - 2\ddot{\x}_i^2} + \lambda \|\ddot{\x}\|_2^2 
		\overset{\step{1}}{\leq} \tfrac{\alpha}{2} (1- \ddot{\x}_i^2) - \tfrac{\lambda(1-\ddot{\x}_i^2)}{\sqrt{\| \ddot{\x} \|_2^2 + 2 - 2\ddot{\x}_i^2}}
		\overset{}{\leq}\tfrac{\alpha}{2} (1- \ddot{\x}_i^2) - \tfrac{\lambda(1-\ddot{\x}_i^2)}{\sqrt{n + 2}}
		\overset{\step{2}}{<} 0$, where step \step{1} uses Lemma \ref{lemma:xu} with $x = \| \ddot{\x} \|_2^2 \geq 0$ and $u = 2-2\ddot{\x}_i^2 \geq 0$; step \step{2} uses $\lambda > \phi\sqrt{n+2}$ and $\tfrac{\alpha}{2} \leq \phi$. Inequality $\min(\mathcal{P}(\pm 1 - \ddot{\x}_i)) \leq \mathcal{P}(0)$ contradicts the optimality condition in (\ref{condition1}). Therefore, we have $\forall i,\ 1-\ddot{\x}_i^2 = 0$, leading to $\forall i,\ |\ddot{\x}_i| = 1$ and $\|\ddot{\x}\|= \sqrt{n}$.
	\end{proof}

	\noindent \textbf{(ii)} We now prove that $\ddot{\x}_i \neq 0$ for all $i \in [n]$ when $\lambda > \phi\sqrt{n}$, concluding by contradiction. Assume $\ddot{\x}_i = 0$ for some $i \in [n]$. Problem (\ref{condition1}) reduces to
	\beq
	0 \in \arg\min_{l \leq \eta \leq u} \mathcal{P}(\eta) \triangleq \tfrac{\alpha}{2} \eta^2 + \beta \eta - \lambda \sqrt{\|\ddot{x}\|_2^2 + 2\eta^2}. \label{condition2}
	\eeq
	We consider two cases for $\beta$. \text{Case \textbf{(i)}}: $\beta \neq 0$. If we set $\eta = -\tfrac{\beta}{\alpha}$, we have: $\mathcal{P}(\eta) = -\tfrac{\beta^2}{2\alpha} - \lambda \sqrt{\|\ddot{\x}\|_2^2 + 2(\beta/\alpha)^2} < -\lambda\|\ddot{\x}\| = \mathcal{P}(0)$, which contradicts Equation (\ref{condition2}). \text{Case \textbf{(ii)}}: $\beta = 0$. Given $\lambda > \phi\sqrt{n}$, we choose any constant $\delta > 0$ with $\lambda > \phi\sqrt{n + 2\delta}$. We have: 
	\beq
	\mathcal{P}(\delta) - \mathcal{P}(0) \overset{}{=} \tfrac{\alpha}{2} \delta^2 - \lambda \sqrt{\|\ddot{\x}\|_2^2 + 2\delta^2} + \lambda \|\ddot{\x}\| \overset{\step{1}}{\leq} \tfrac{\alpha}{2} \delta^2 - \tfrac{\lambda \delta^2}{ \sqrt{n + 2\delta^2}} \overset{\step{2}}{<} 0,
	\eeq
	where step \step{1} uses Lemma \ref{lemma:xu} and $\|\ddot{\x}\|_2^2 \leq n$; step \step{2} uses $\lambda > \phi\sqrt{n + 2\delta}$ and $\tfrac{\alpha}{2} \leq \phi$. Therefore, the inequality $\mathcal{P}(\delta) < \mathcal{P}(0)$ contradicts Equation (\ref{condition2}) with $\beta = 0$, and we conclude that $\ddot{\x}_i \neq 0$ for all $i \in [n]$.

	\section{More Experiment Results on BCD-g and BCD-l-$k$ methods}
	
	We also provide a detailed performance evaluation to assess the computational efficiency of four different methods including BCD-g, PSG, MSCR, and PDCA. These four methods are applied to four problems: SIT, NNSPCA, DCPB1, and DCPB2. Figure \ref{fig:cputimeindex} and Figure \ref{fig:cputimebin} illustrate the convergence curves of these methods, offering two insights into their performance across various datasets: (\textbf{i}) The BCD-g method consistently yields the lowest objective values compared to the other methods in all scenarios. (\textbf{ii}) The BCD-g method requires more time to converge to the optimal solution due to updating only two components in each subproblem. In conclusion, despite the increasing computational time BCD-g method takes, its performance is better than other methods significantly. This trade-off is justified by its ability to consistently find the best solutions. Moreover, we compare the performance of BCD-g, BCD-l-$k$, PSG, MSCR, and PDCA on the SIT task. As shown in Figure \ref{fig:cputimeblock}, BCD-l-10 and BCD-l-20 outperform PSG, MSCR, and PDCA in most cases. When $k$ is too small, the BCD method struggles to find an better local solution. On the other hand, when $k$ is too large, the BCD-l-$k$ method sometimes requires significantly more time to converge.

	Finally, we conduct a series of experiments to verify that the \textbf{CWS}-point condition is stronger than the critical point condition. This theoretical analysis is further elaborated in Section \ref{sec:opt}. The experiments involve applying BCD-g method after other three methods (PSG, MSCR, and PDCA). The primary goal of these experiments is to verify whether the BCD-g method is able to jump out of the local minimum identified by other methods and further reduce the objective value. The hybrid methods, which combine BCD-g with other approaches, are denoted as PSG+BCD-g, MSCR+BCD-g, and PDCA+BCD-g. The results, shown in Figure \ref{fig:compareindex} and Figure \ref{fig:comparebin}, demonstrate that our BCD-g method consistently achieves better local minima and leads to a more significant decrease in the objective function value compared to the other methods. This improvement is attributed to the BCD-g method converging to a \textbf{CWS}-point, while PSG, MSCR, and PDCA converge to critical points. In conclusion, these experiments provide compelling evidence that the \textbf{CWS}-point condition is stronger than the critical point condition.

	\begin{figure}[htbp]
		\centering
		\begin{subfigure}{0.2\textwidth}
			\includegraphics[width=\textwidth]{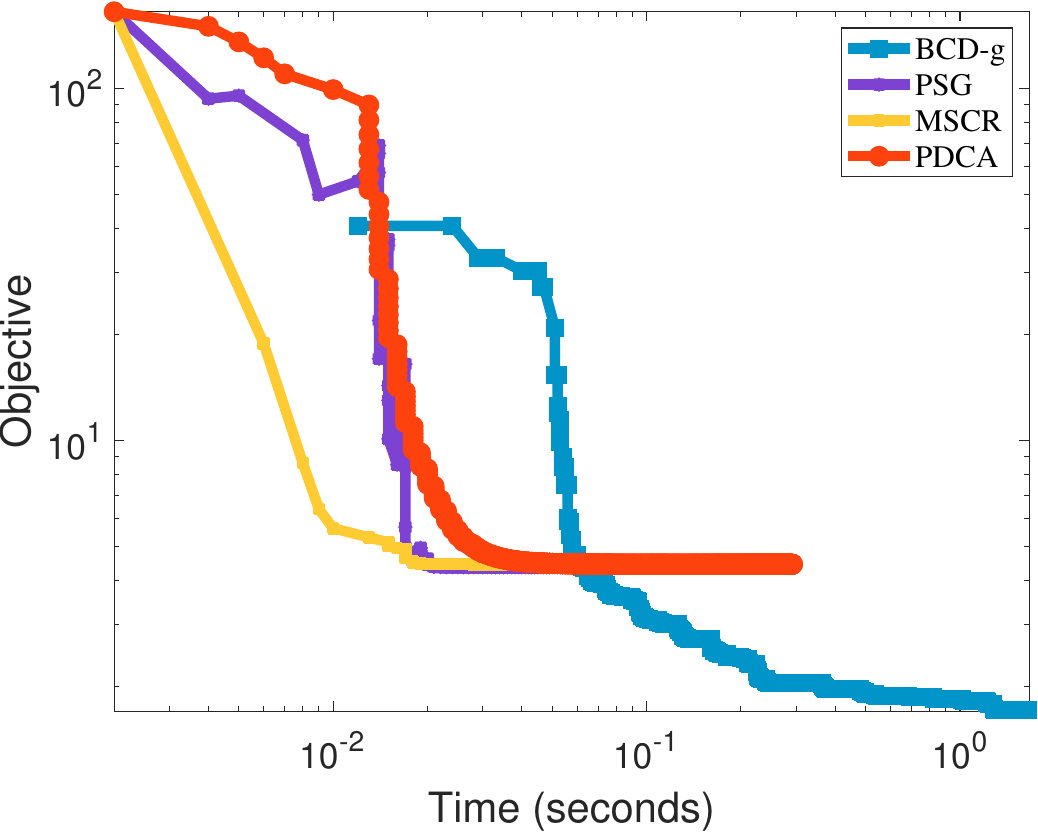}
			\caption{SIT in NAS-16}
		\end{subfigure}
		\begin{subfigure}{0.2\textwidth}
			\includegraphics[width=\textwidth]{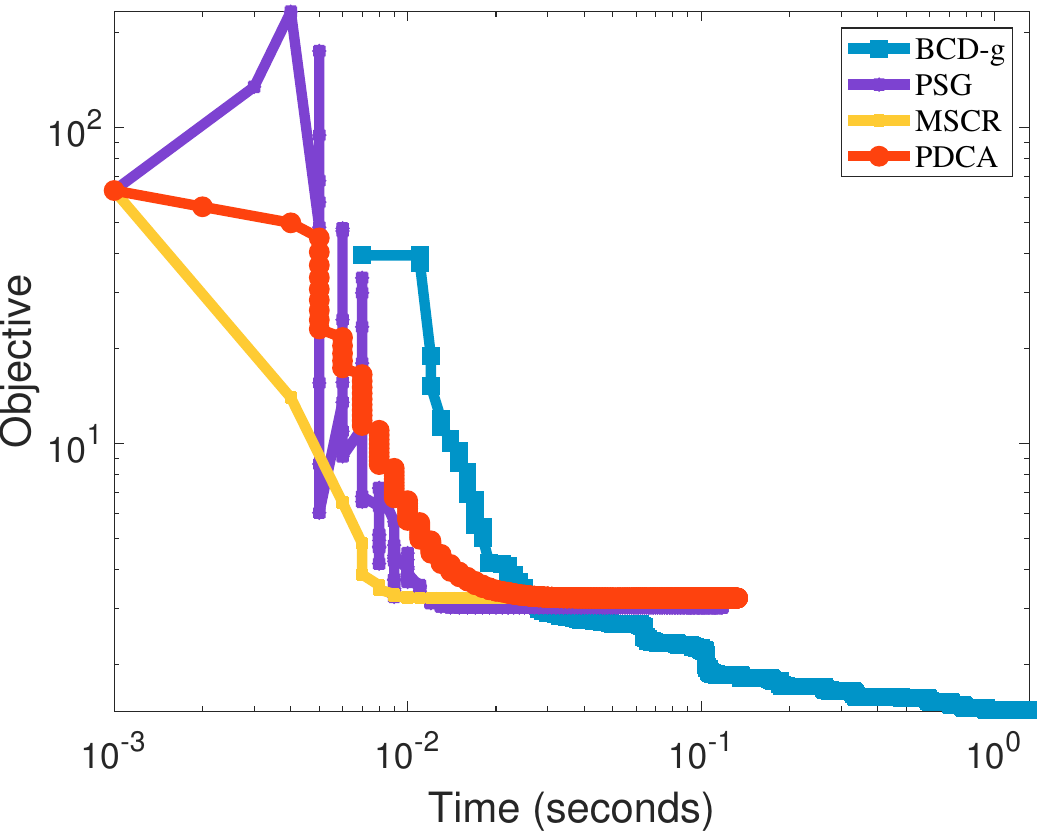}
			\caption{SIT in NAS-17}
		\end{subfigure}
		\begin{subfigure}{0.2\textwidth}
			\includegraphics[width=\textwidth]{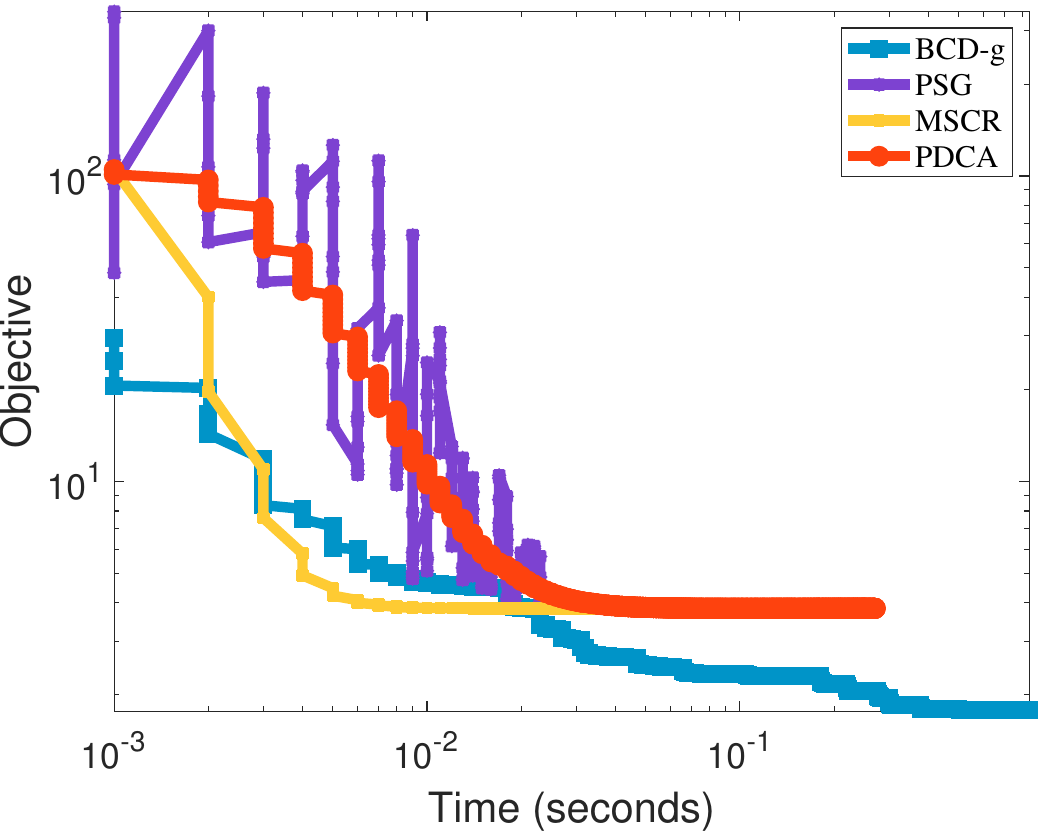}
			\caption{SIT in NAS-18}
		\end{subfigure}
		\begin{subfigure}{0.2\textwidth}
			\includegraphics[width=\textwidth]{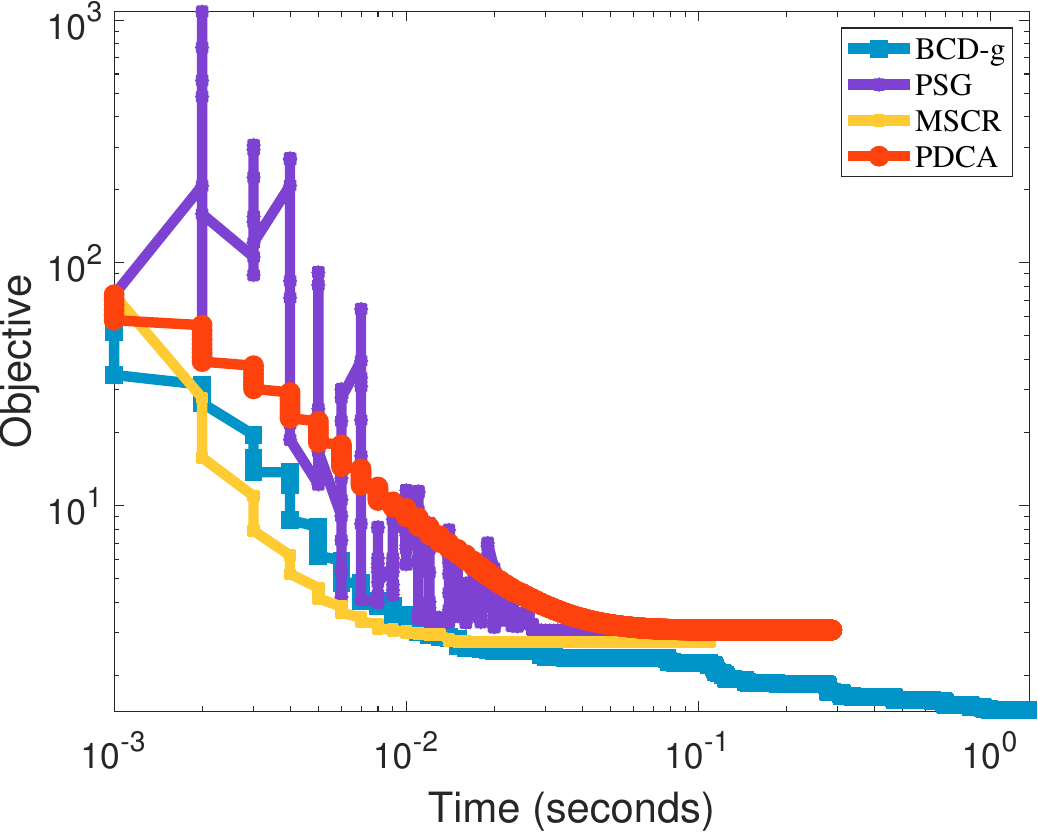}
			\caption{SIT in NAS-19}
		\end{subfigure}
		\begin{subfigure}{0.2\textwidth}
			\includegraphics[width=\textwidth]{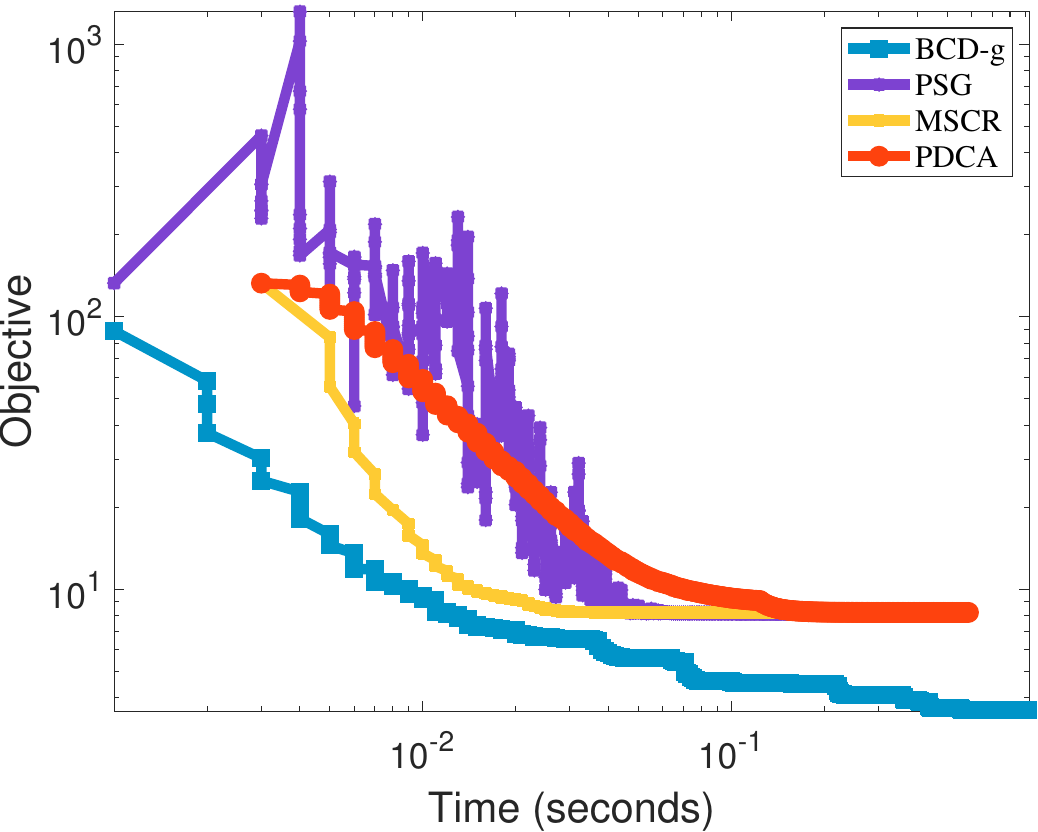}
			\caption{SIT in NAS-20}
		\end{subfigure}
		\begin{subfigure}{0.2\textwidth}
			\includegraphics[width=\textwidth]{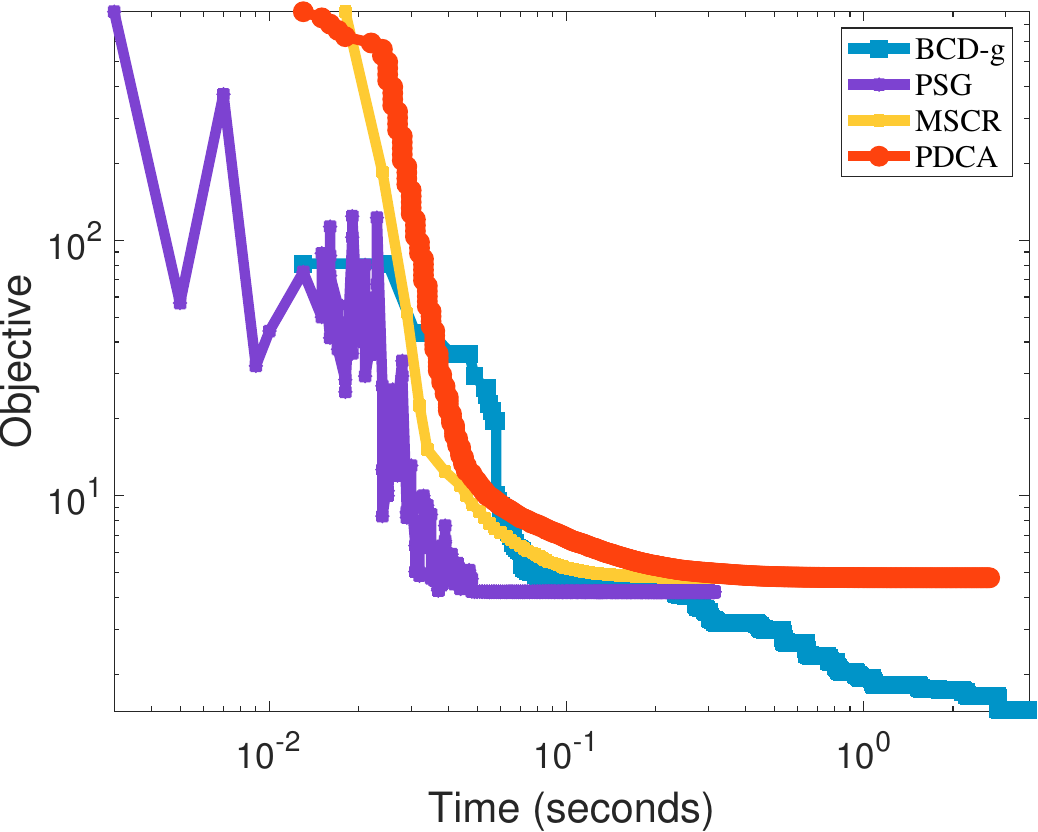}
			\caption{SIT in SP-16}
		\end{subfigure}
		\begin{subfigure}{0.2\textwidth}
			\includegraphics[width=\textwidth]{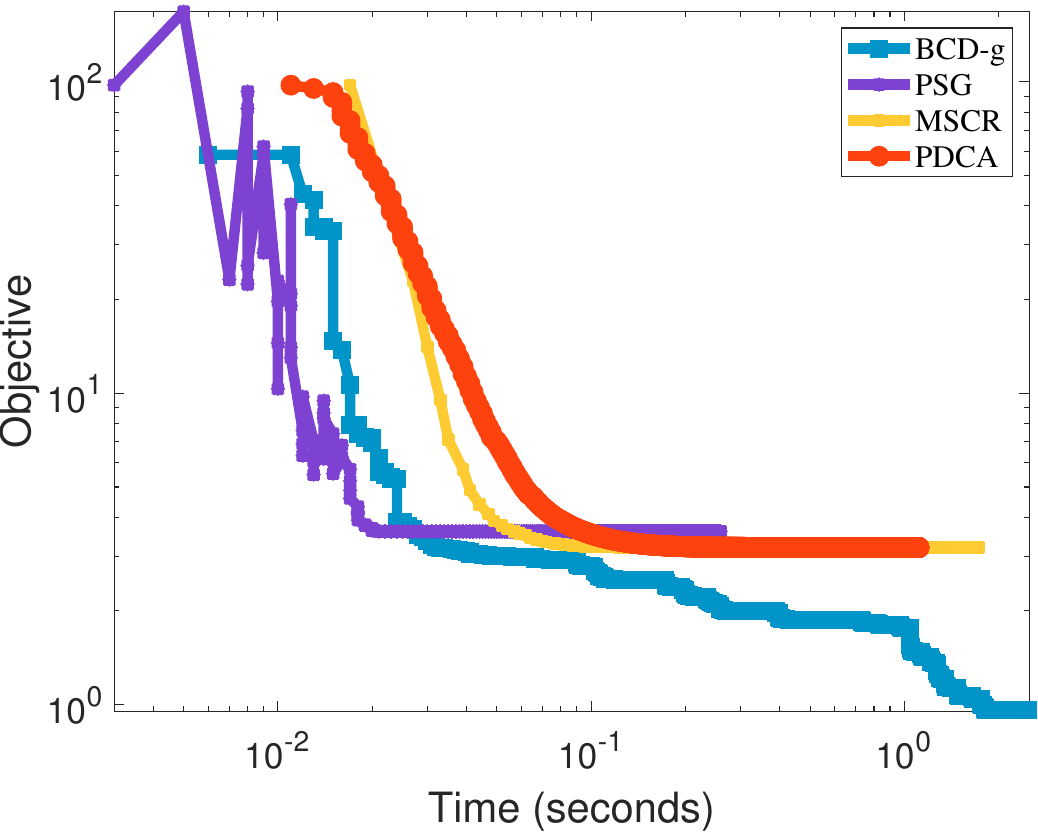}
			\caption{SIT in SP-17}
		\end{subfigure}
		\begin{subfigure}{0.2\textwidth}
			\includegraphics[width=\textwidth]{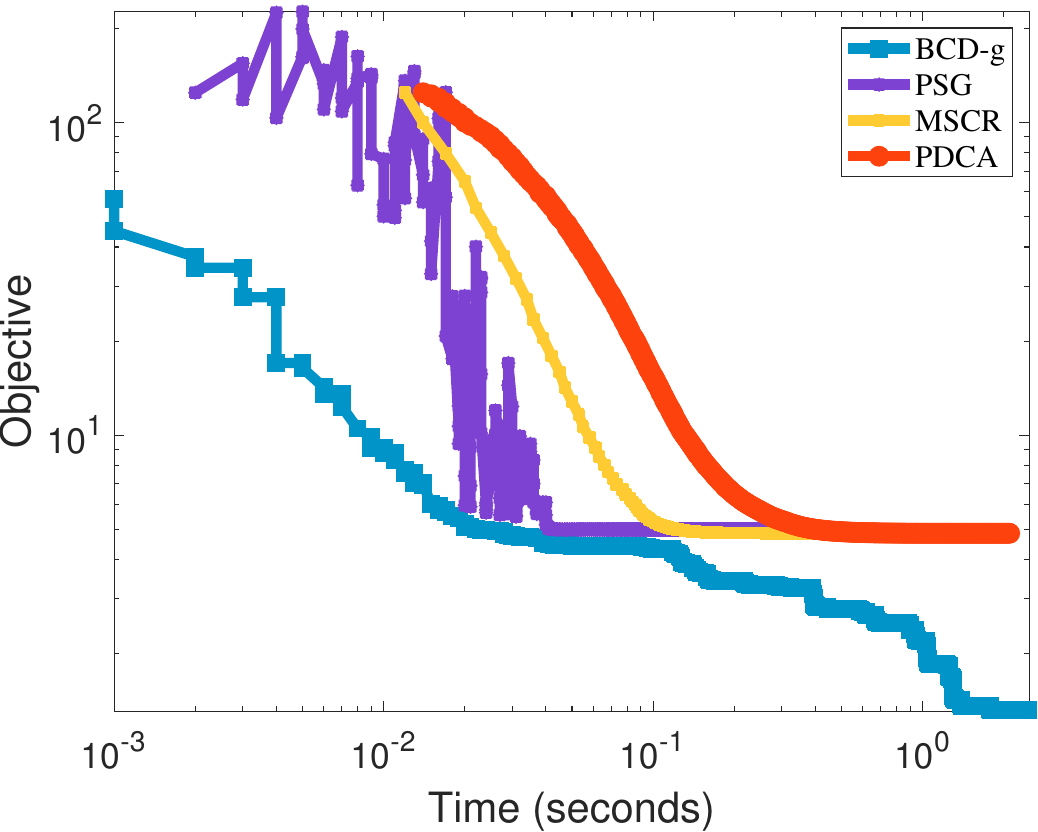}
			\caption{SIT in SP-18}
		\end{subfigure}
		\begin{subfigure}{0.2\textwidth}
			\includegraphics[width=\textwidth]{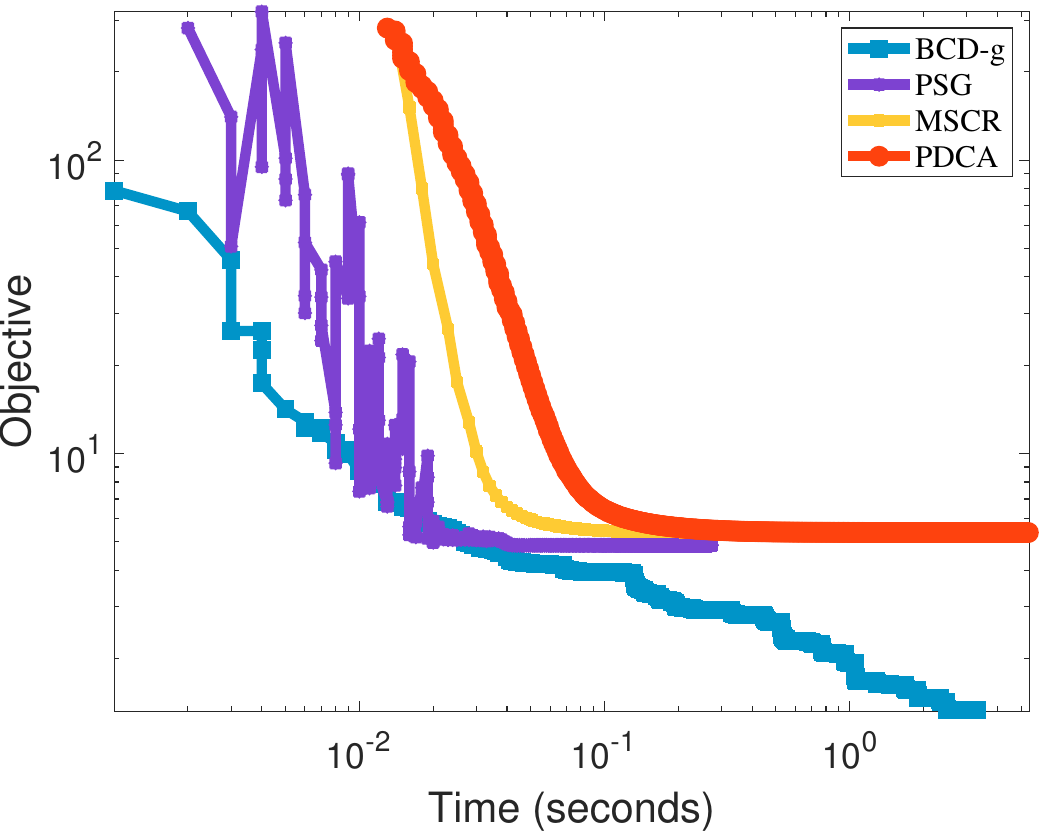}
			\caption{SIT in SP-19}
		\end{subfigure}
		\begin{subfigure}{0.2\textwidth}
			\includegraphics[width=\textwidth]{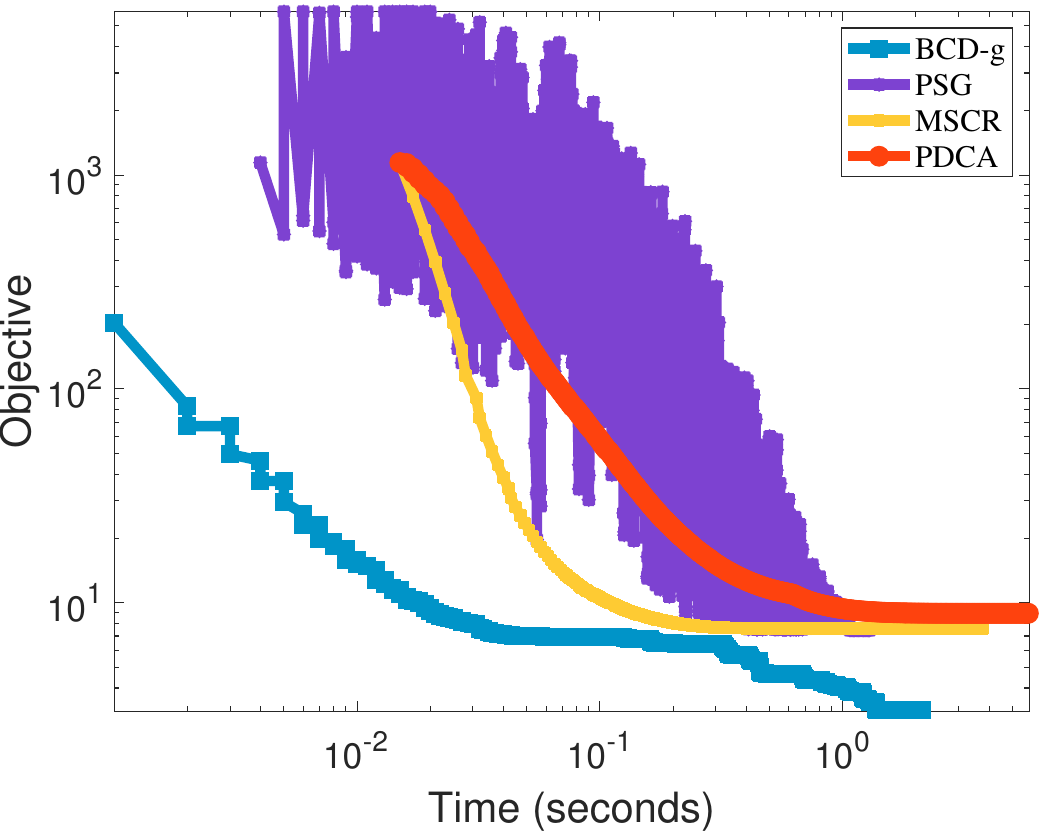}
			\caption{SIT in SP-20}
		\end{subfigure}
		\begin{subfigure}{0.2\textwidth}
			\includegraphics[width=\textwidth]{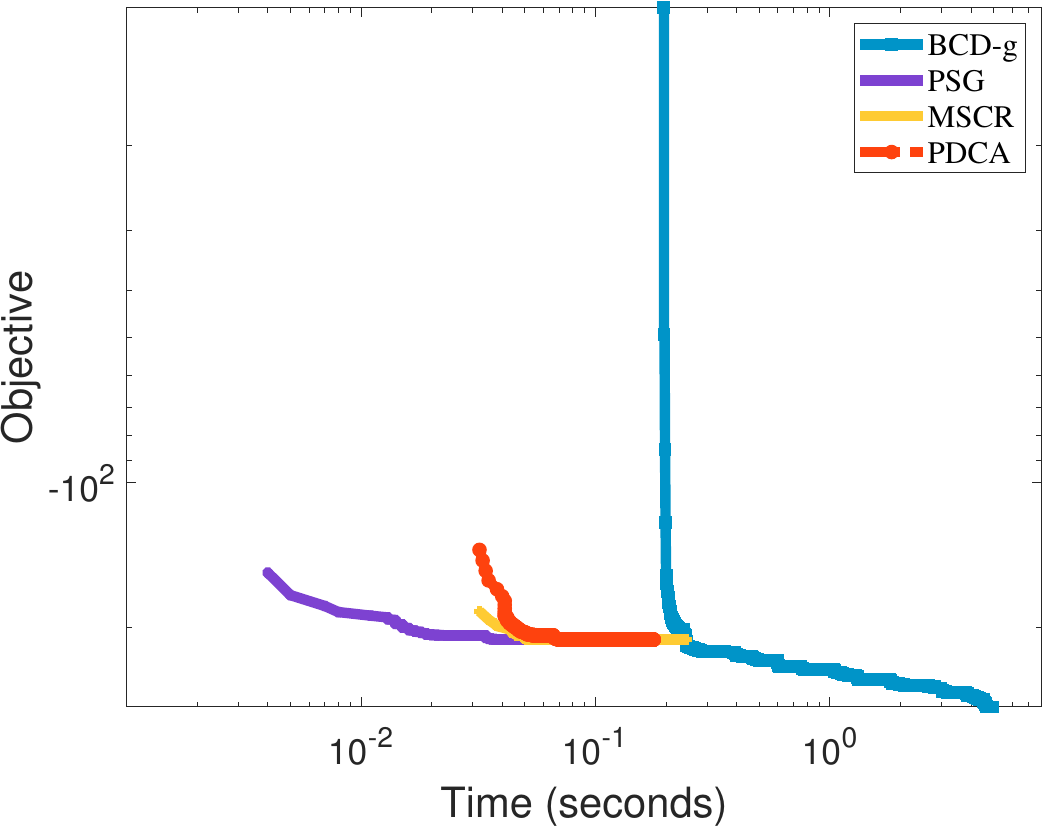}
			\caption{NNSPCA in randn-a}
		\end{subfigure}
		\begin{subfigure}{0.2\textwidth}
			\includegraphics[width=\textwidth]{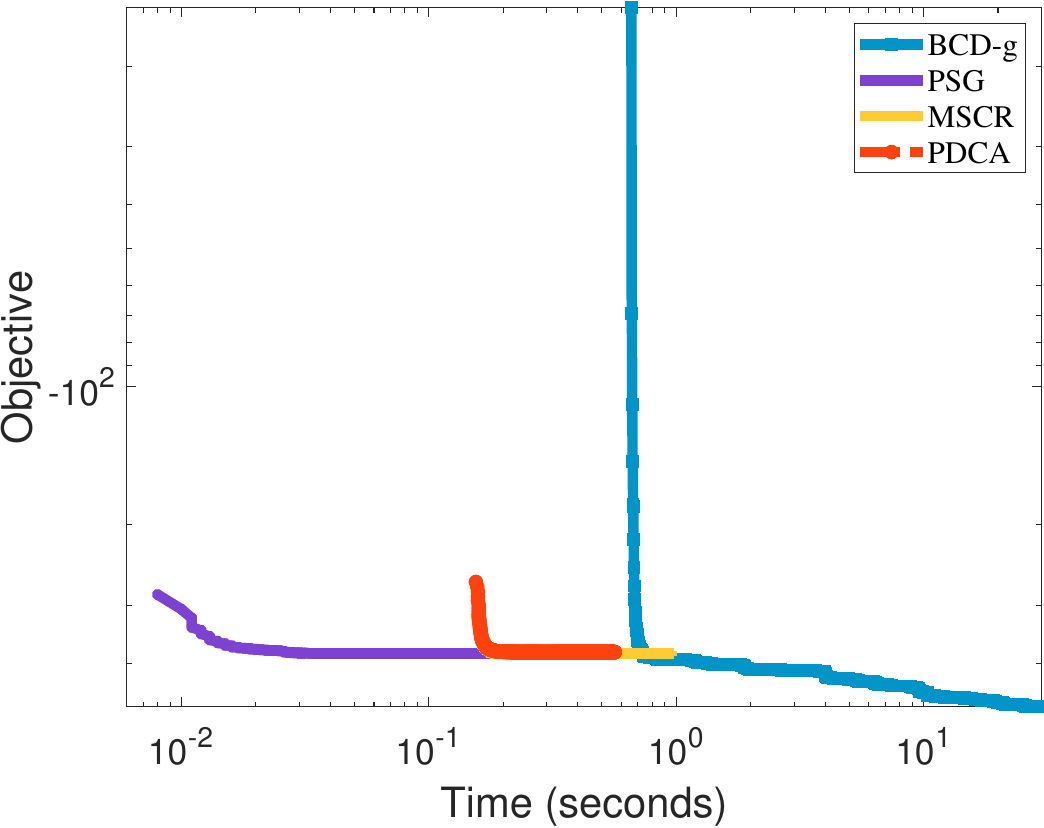}
			\caption{NNSPCA in randn-b}
		\end{subfigure}
		\begin{subfigure}{0.2\textwidth}
			\includegraphics[width=\textwidth]{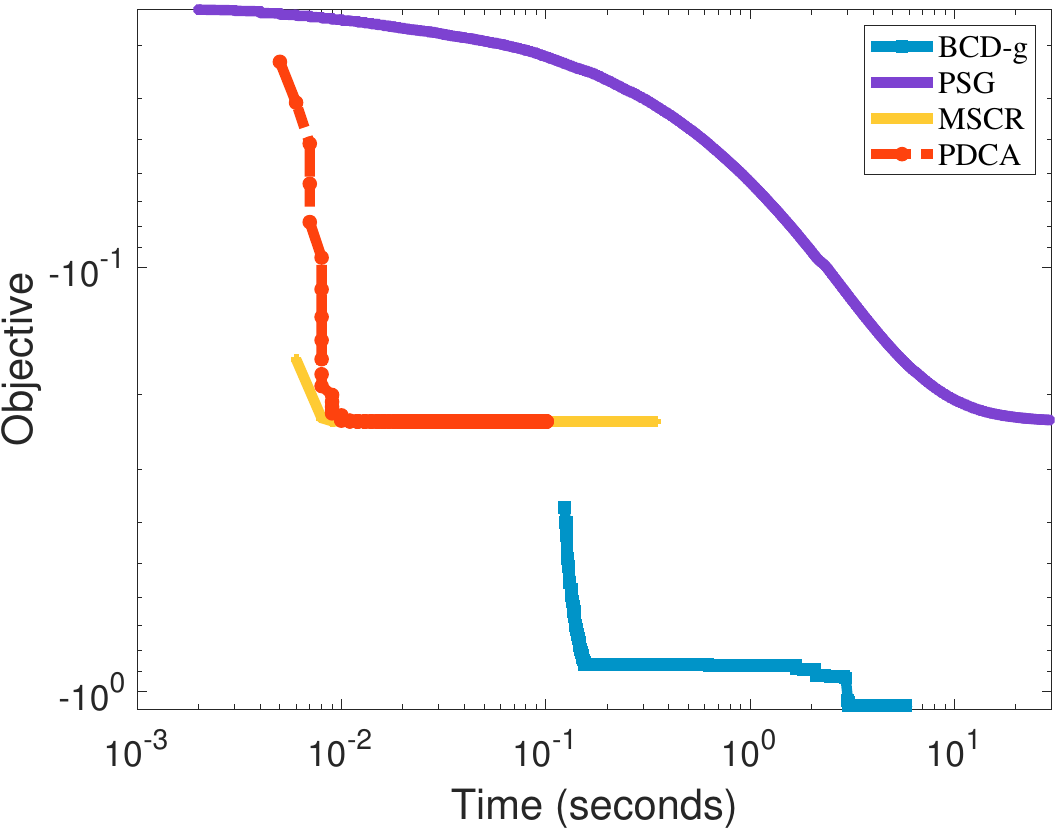}
			\caption{NNSPCA in TDT2-a}
		\end{subfigure}
		\begin{subfigure}{0.2\textwidth}
			\includegraphics[width=\textwidth]{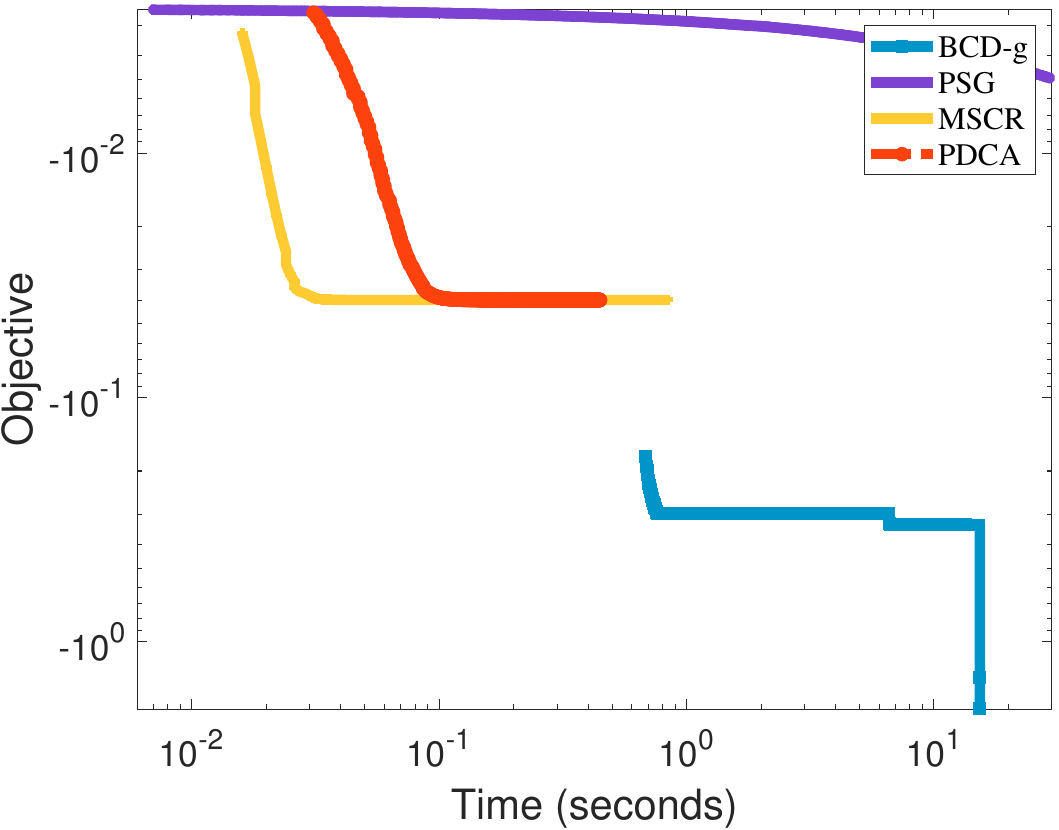}
			\caption{NNSPCA in TDT2-b}
		\end{subfigure}
		\begin{subfigure}{0.2\textwidth}
			\includegraphics[width=\textwidth]{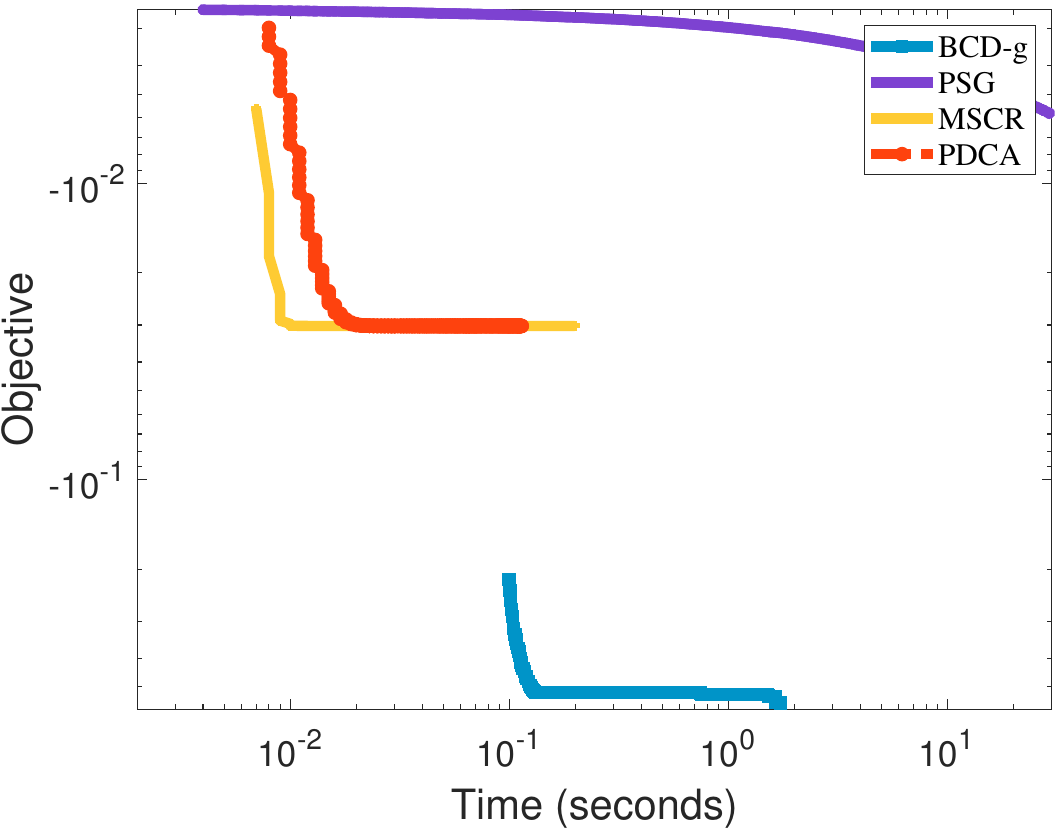}
			\caption{NNSPCA in 20News-a}
		\end{subfigure}
		\begin{subfigure}{0.2\textwidth}
			\includegraphics[width=\textwidth]{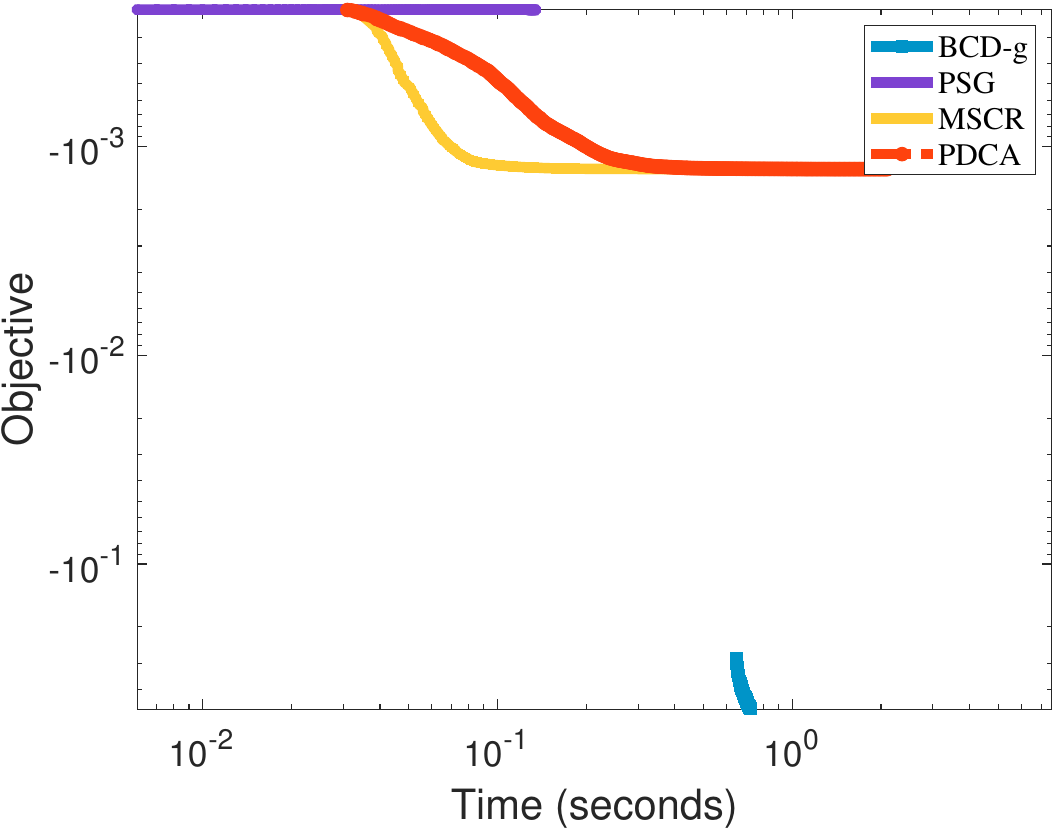}
			\caption{NNSPCA in 20News-b}
		\end{subfigure}
		\begin{subfigure}{0.2\textwidth}
			\includegraphics[width=\textwidth]{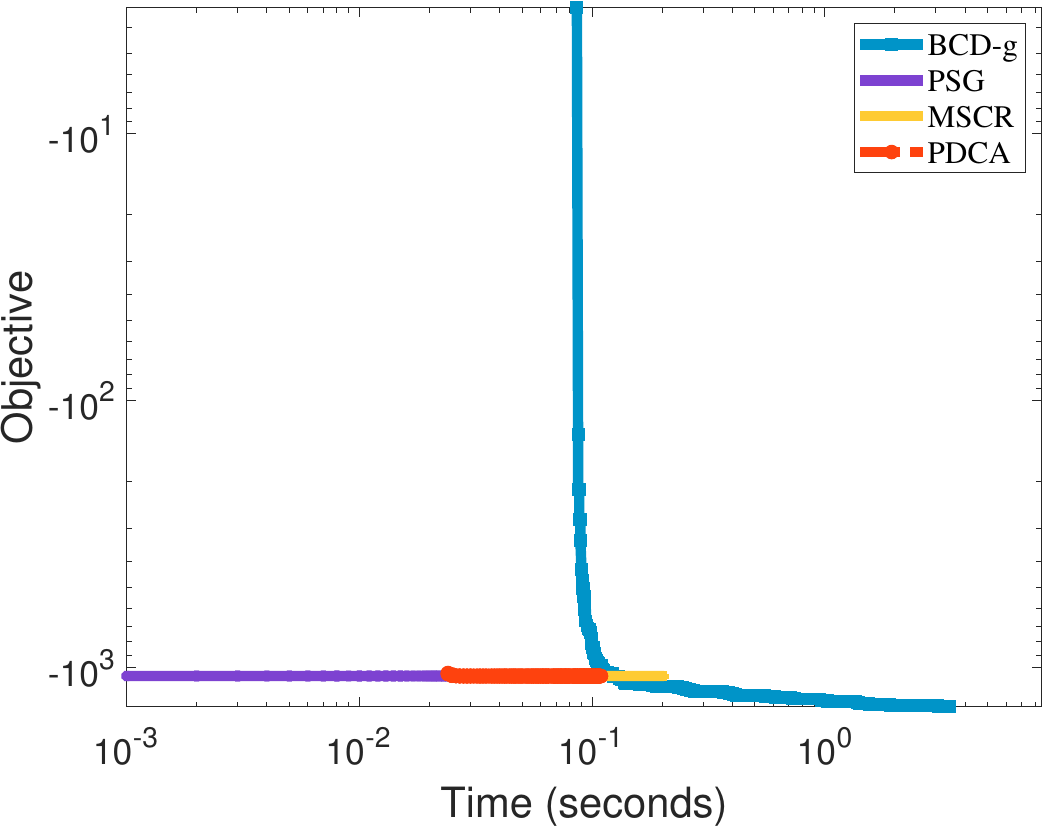}
			\caption{NNSPCA in Cifar-a}
		\end{subfigure}
		\begin{subfigure}{0.2\textwidth}
			\includegraphics[width=\textwidth]{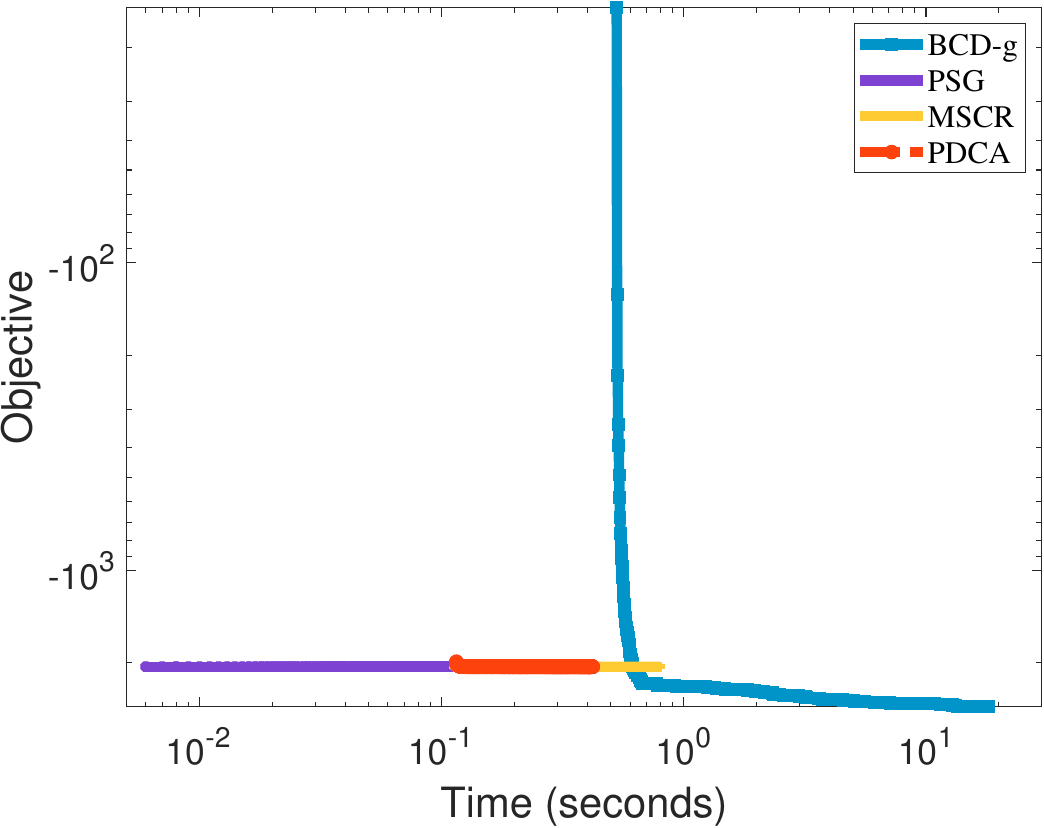}
			\caption{NNSPCA in Cifar-b}
		\end{subfigure}
		\begin{subfigure}{0.2\textwidth}
			\includegraphics[width=\textwidth]{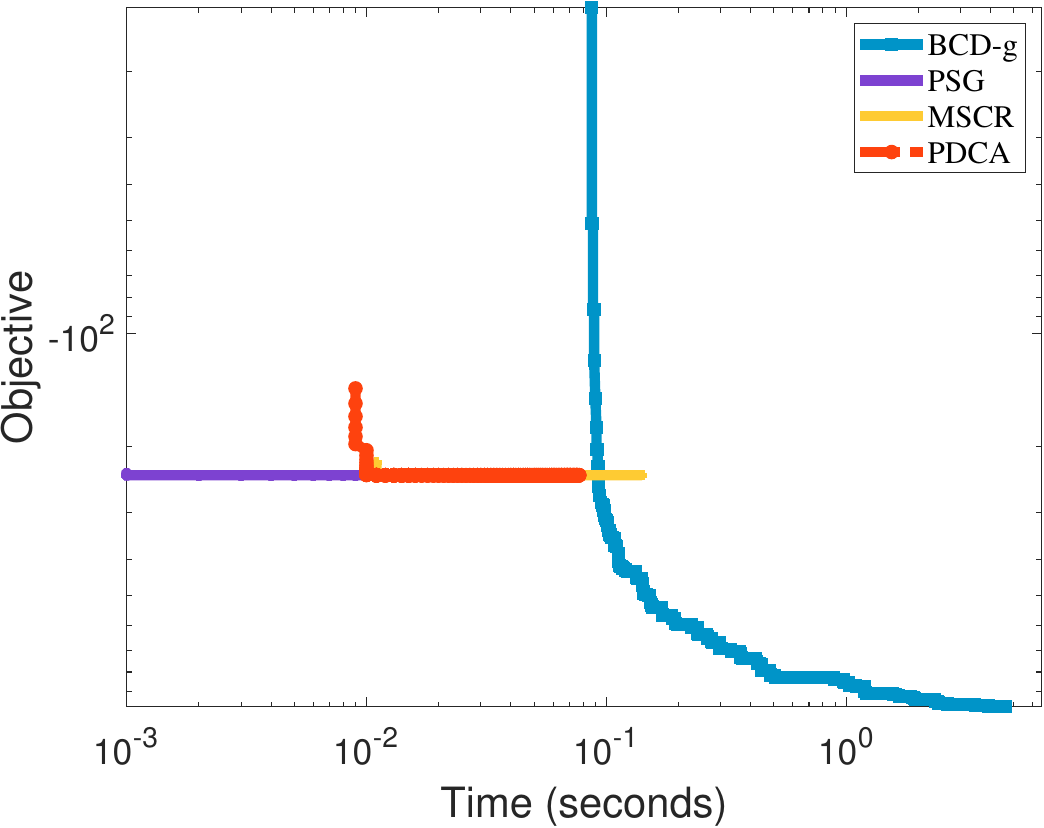}
			\caption{NNSPCA in MNIST-a}
		\end{subfigure}
		\begin{subfigure}{0.2\textwidth}
			\includegraphics[width=\textwidth]{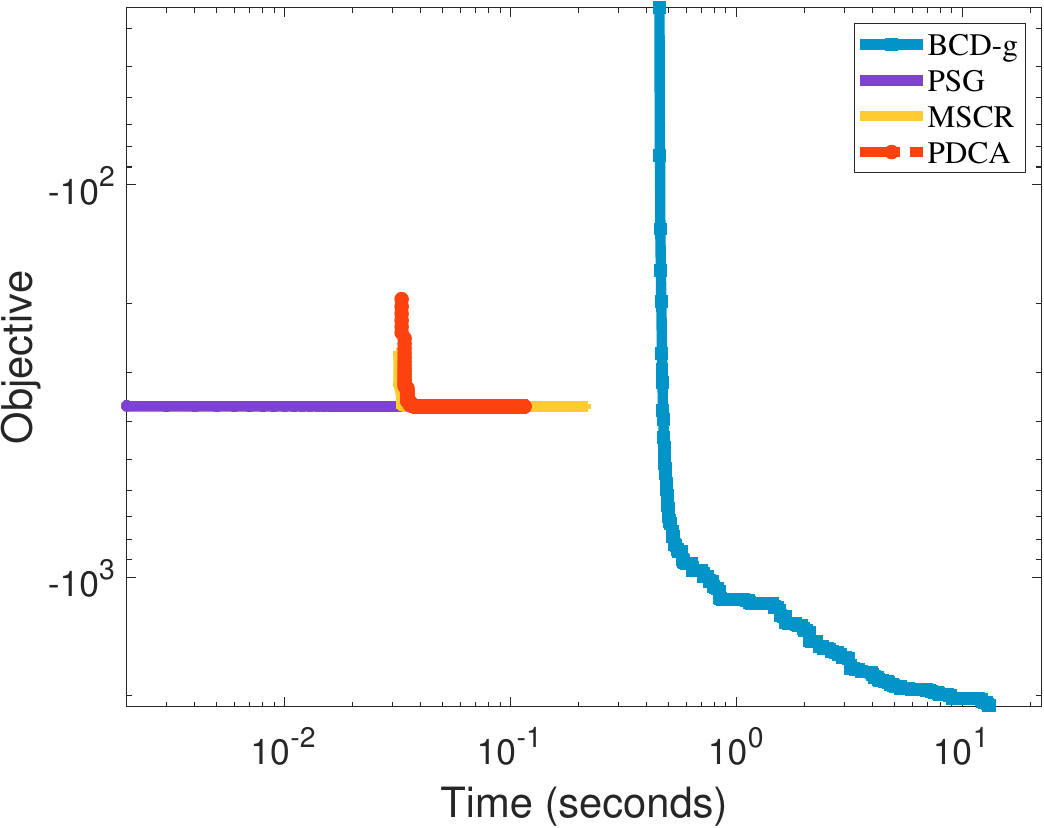}
			\caption{NNSPCA in MNIST-b}
		\end{subfigure}
		\caption{The convergence curve of the compared methods for solving index tracking problem and non-negative sparse PCA problem on 20 different datasets.}
		\label{fig:cputimeindex}
	\end{figure}

	\captionsetup[subfigure]{font=tiny}
	\begin{figure}[htbp]
		\centering
		\begin{subfigure}{0.2\textwidth}
			\includegraphics[width=\textwidth]{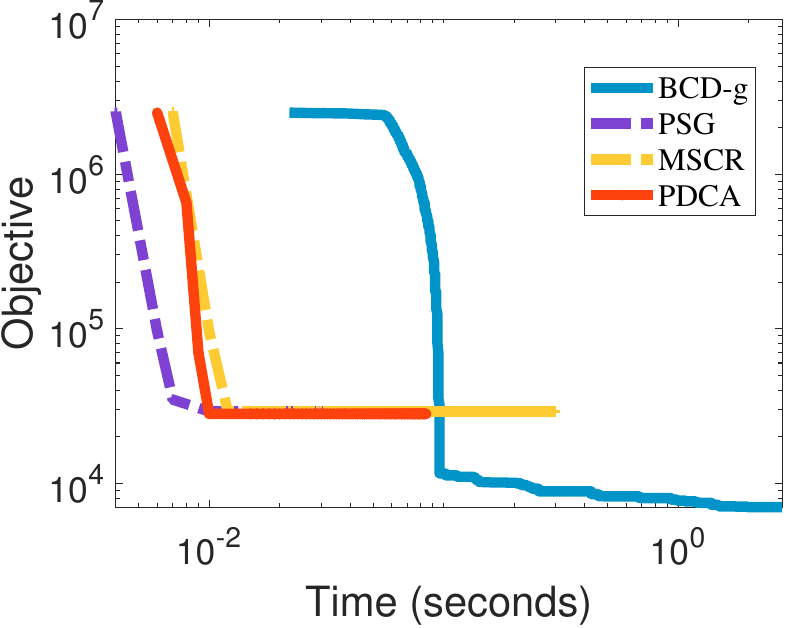}
			\caption{DCPB1 in randn-a}
		\end{subfigure}
		\begin{subfigure}{0.2\textwidth}
			\includegraphics[width=\textwidth]{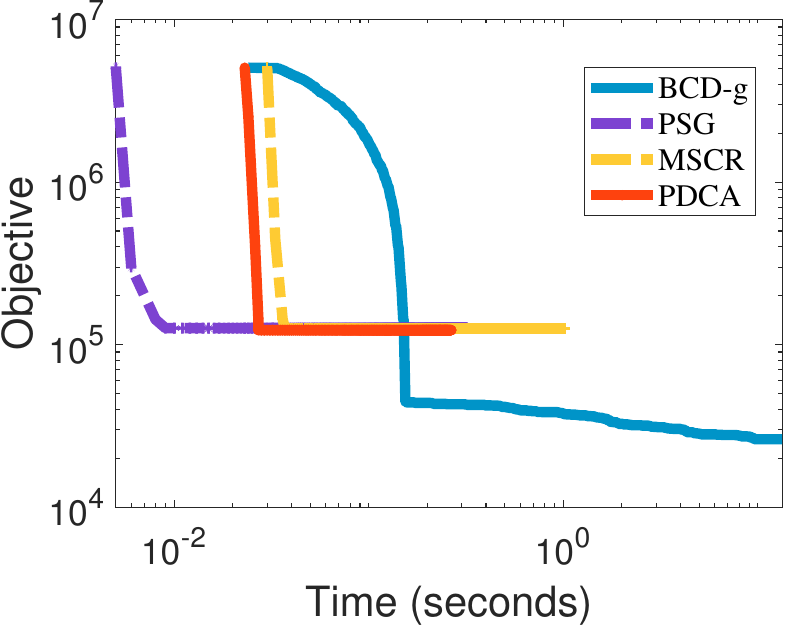}
			\caption{DCPB1 in randn-b}
		\end{subfigure}
		\begin{subfigure}{0.2\textwidth}
			\includegraphics[width=\textwidth]{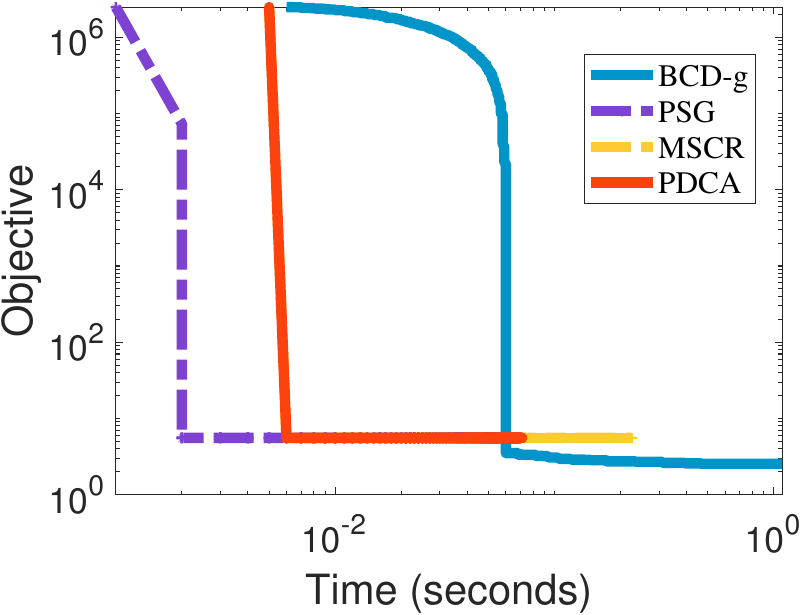}
			\caption{DCPB1 in TDT2-a}
		\end{subfigure}
		\begin{subfigure}{0.2\textwidth}
			\includegraphics[width=\textwidth]{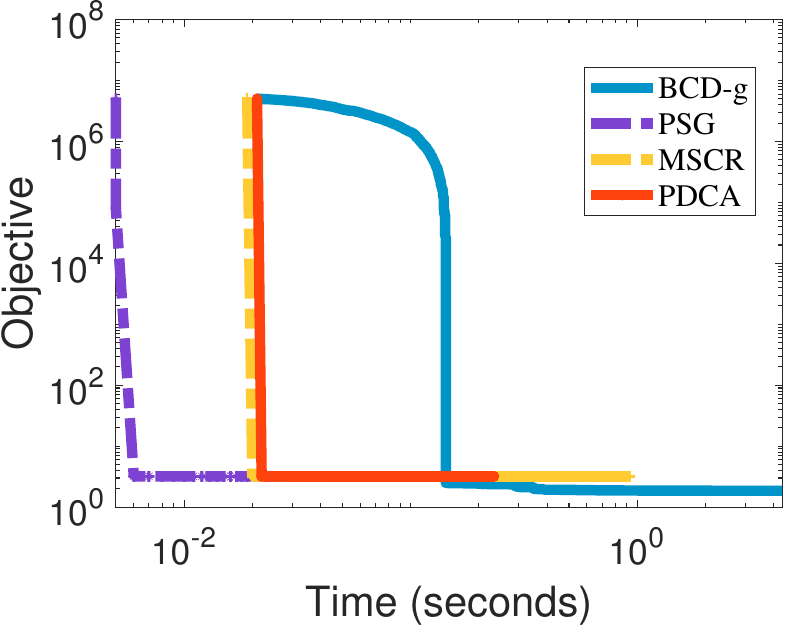}
			\caption{DCPB1 in TDT2-b}
		\end{subfigure}
		\begin{subfigure}{0.2\textwidth}
			\includegraphics[width=\textwidth]{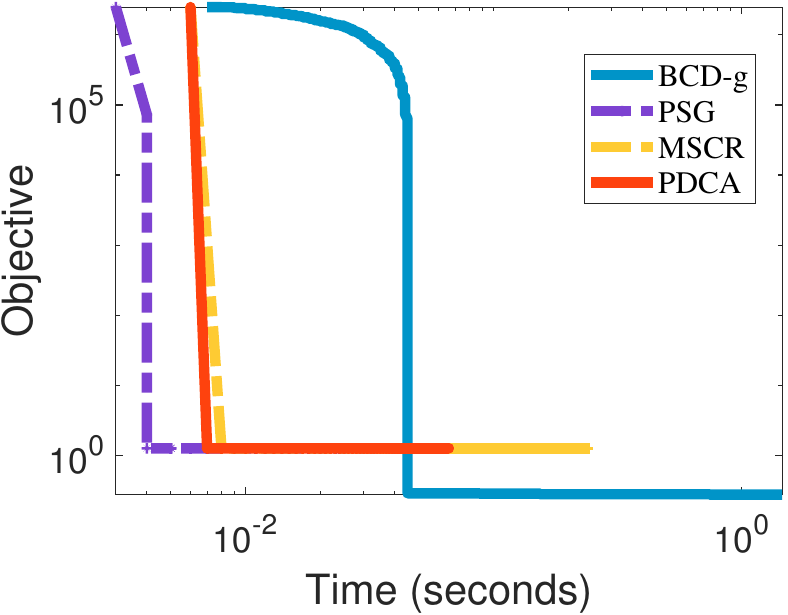}
			\caption{DCPB1 in 20News-a}
		\end{subfigure}
		\begin{subfigure}{0.2\textwidth}
			\includegraphics[width=\textwidth]{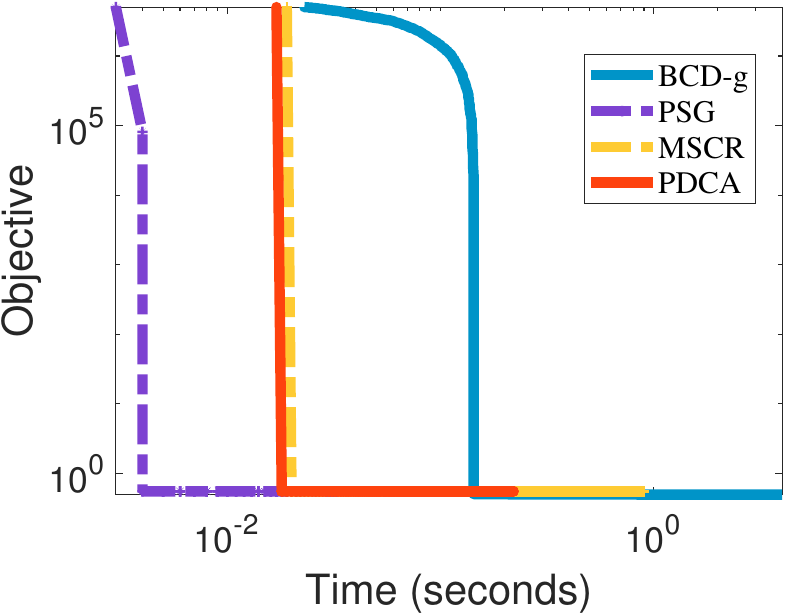}
			\caption{DCPB1 in 20News-b}
		\end{subfigure}
		\begin{subfigure}{0.2\textwidth}
			\includegraphics[width=\textwidth]{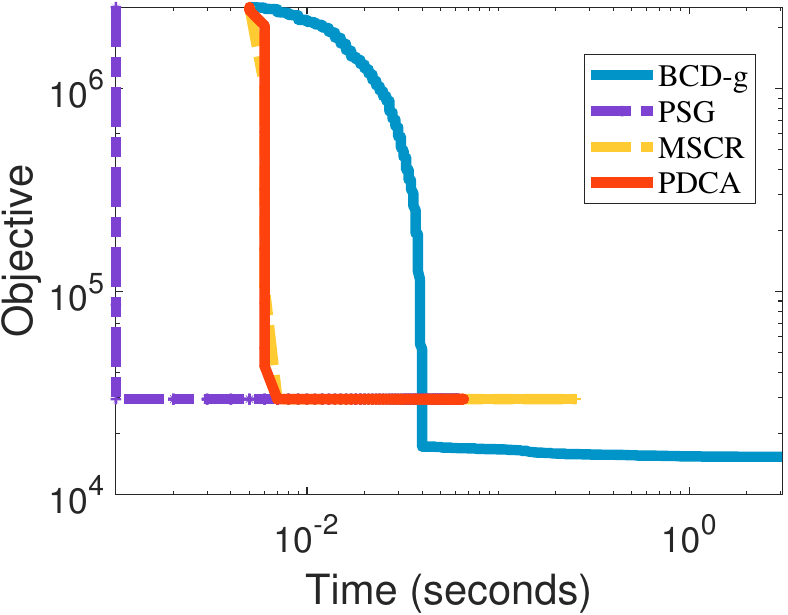}
			\caption{DCPB1 in Cifar-a}
		\end{subfigure}
		\begin{subfigure}{0.2\textwidth}
			\includegraphics[width=\textwidth]{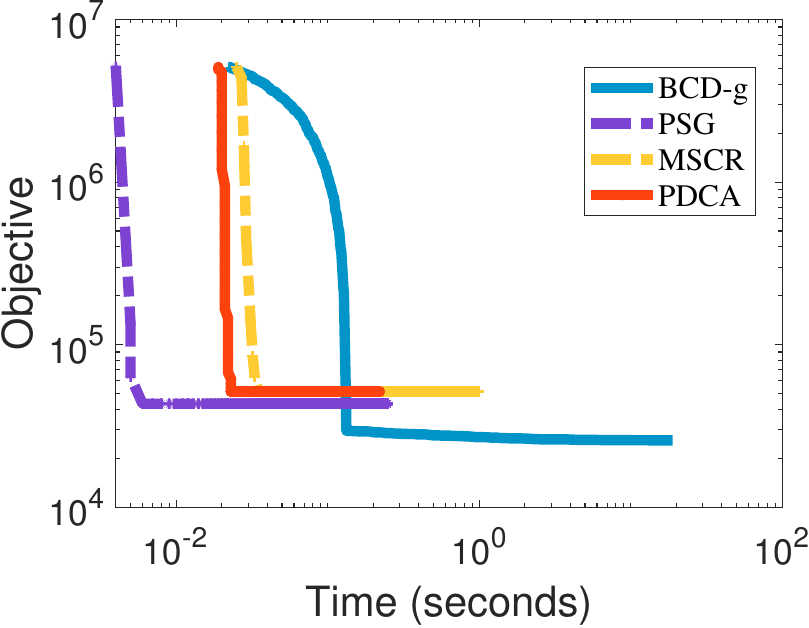}
			\caption{DCPB1 in Cifar-b}
		\end{subfigure}
		\begin{subfigure}{0.2\textwidth}
			\includegraphics[width=\textwidth]{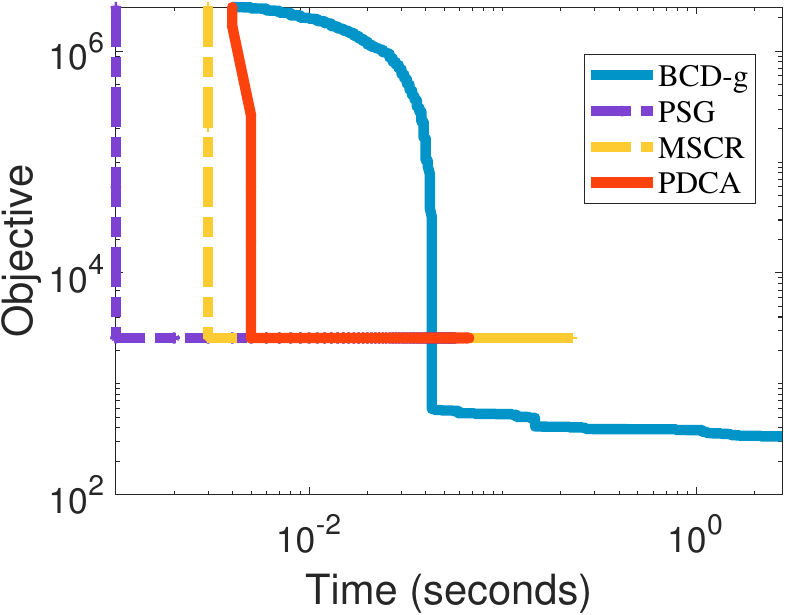}
			\caption{DCPB1 in MNIST-a}
		\end{subfigure}
		\begin{subfigure}{0.2\textwidth}
			\includegraphics[width=\textwidth]{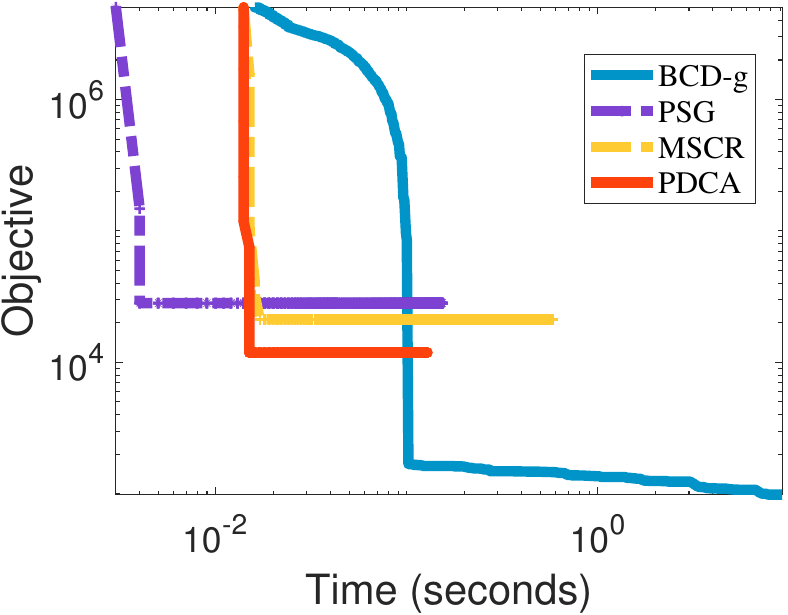}
			\caption{DCPB1 in MNIST-b}
		\end{subfigure}
		\begin{subfigure}{0.2\textwidth}
			\includegraphics[width=\textwidth]{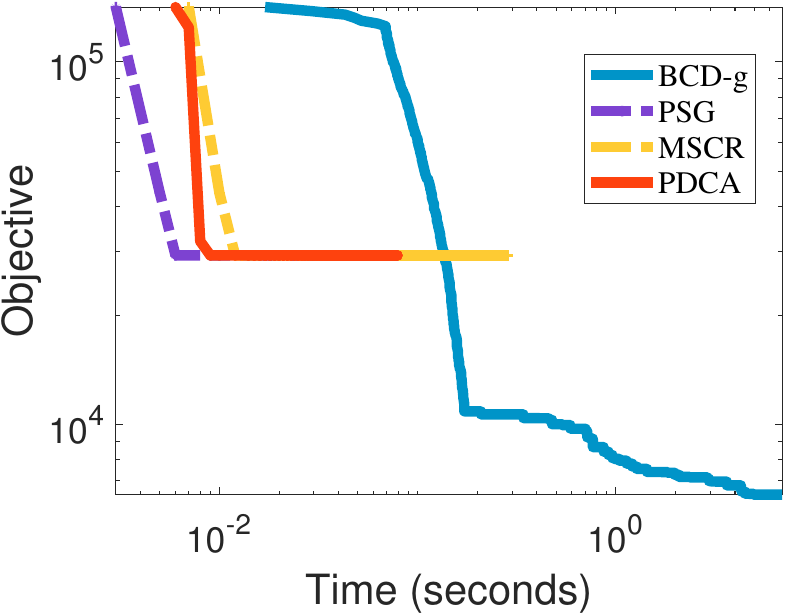}
			\caption{DCPB2 in randn-a}
		\end{subfigure}
		\begin{subfigure}{0.2\textwidth}
			\includegraphics[width=\textwidth]{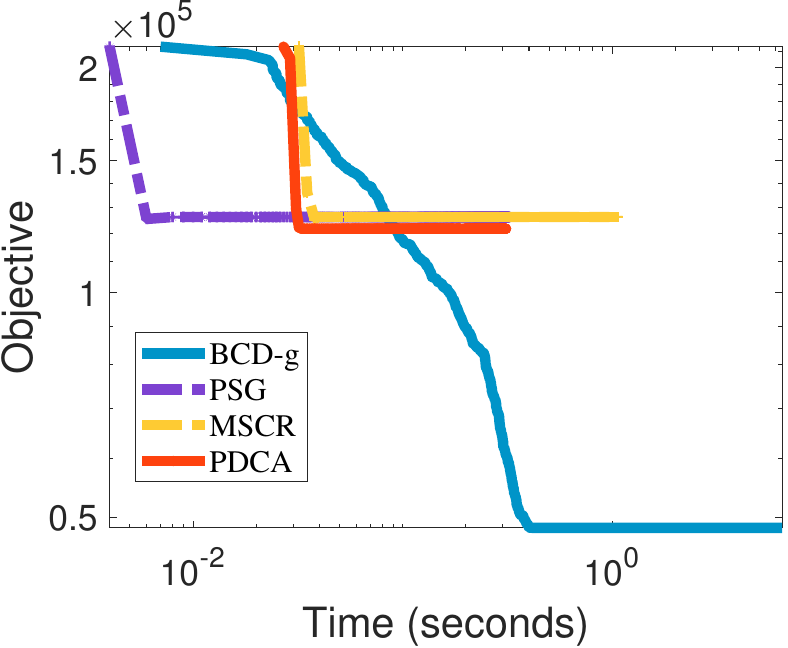}
			\caption{DCPB2 in randn-b}
		\end{subfigure}
		\begin{subfigure}{0.2\textwidth}
			\includegraphics[width=\textwidth]{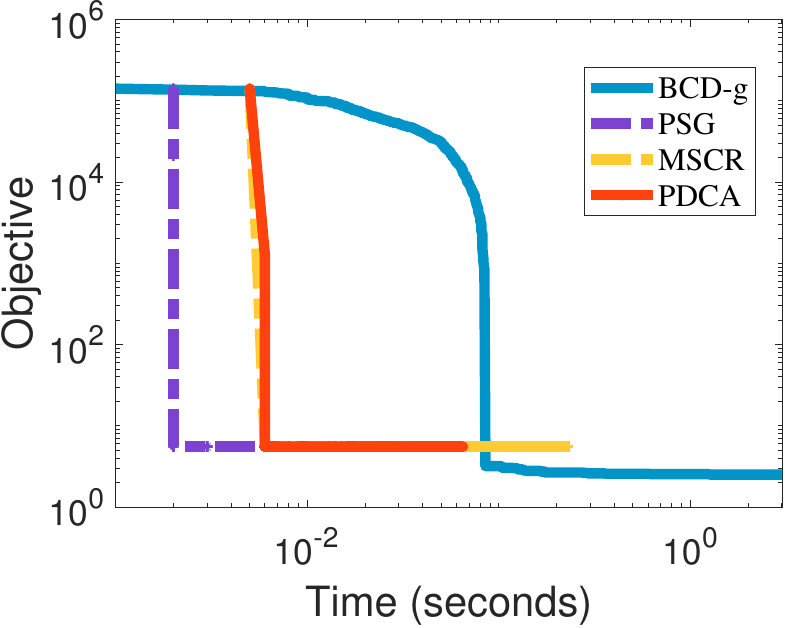}
			\caption{DCPB2 in TDT2-a}
		\end{subfigure}
		\begin{subfigure}{0.2\textwidth}
			\includegraphics[width=\textwidth]{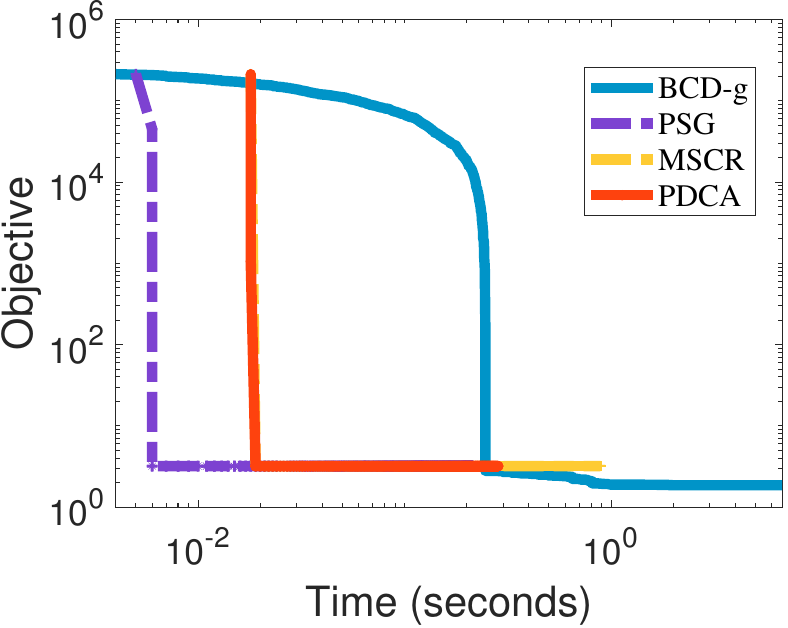}
			\caption{DCPB2 in TDT2-b}
		\end{subfigure}
		\begin{subfigure}{0.2\textwidth}
			\includegraphics[width=\textwidth]{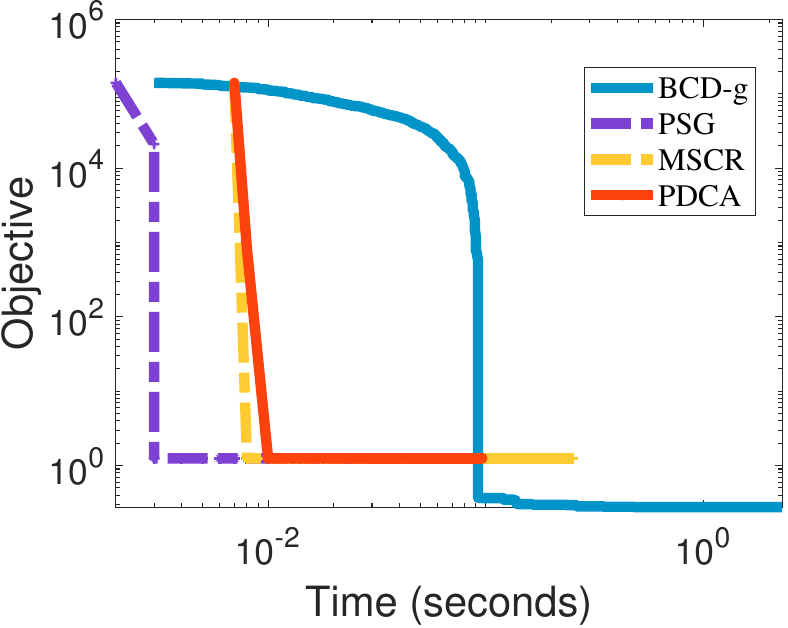}
			\caption{DCPB2 in 20News-a}
		\end{subfigure}
		\begin{subfigure}{0.2\textwidth}
			\includegraphics[width=\textwidth]{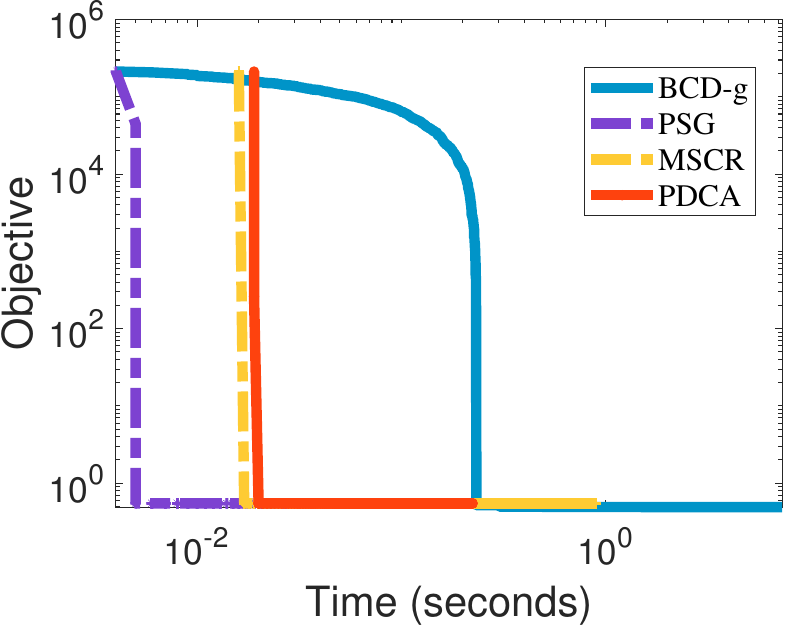}
			\caption{DCPB2 in 20News-b}
		\end{subfigure}
		\begin{subfigure}{0.2\textwidth}
			\includegraphics[width=\textwidth]{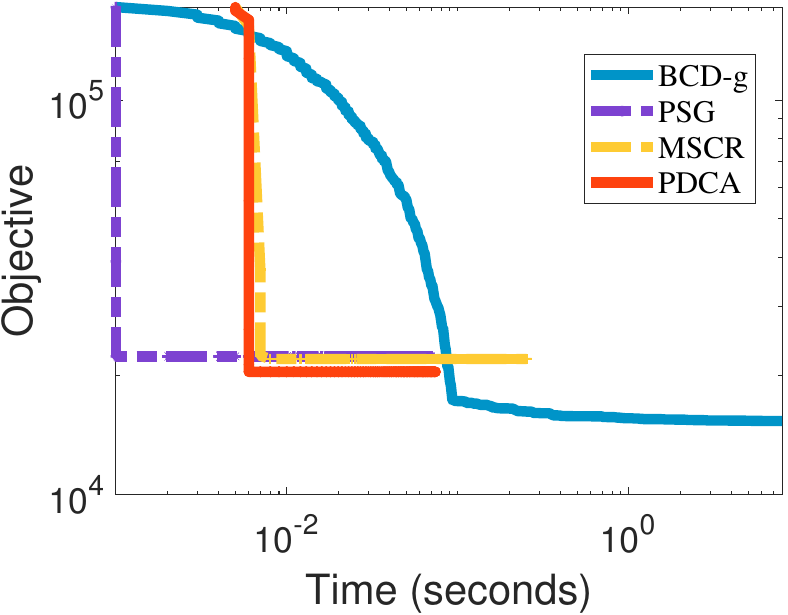}
			\caption{DCPB2 in Cifar-a}
		\end{subfigure}
		\begin{subfigure}{0.2\textwidth}
			\includegraphics[width=\textwidth]{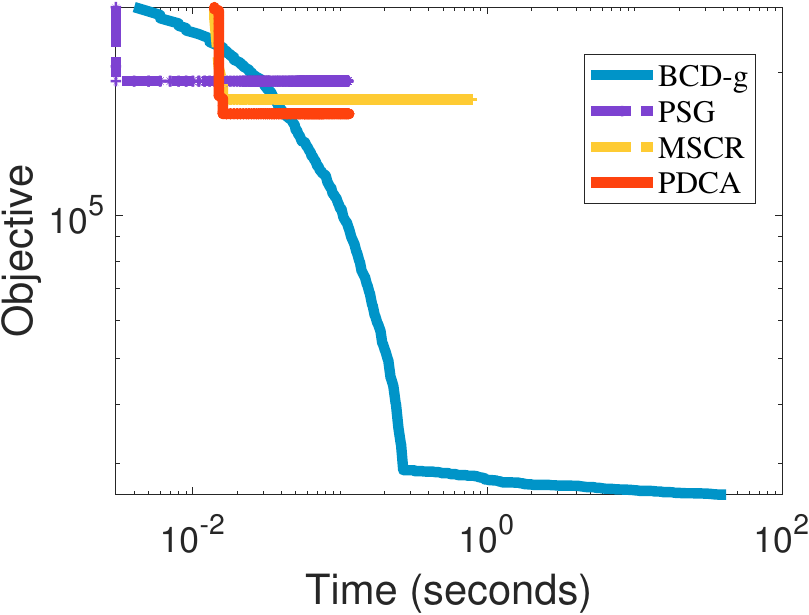}
			\caption{DCPB2 in Cifar-b}
		\end{subfigure}
		\begin{subfigure}{0.2\textwidth}
			\includegraphics[width=\textwidth]{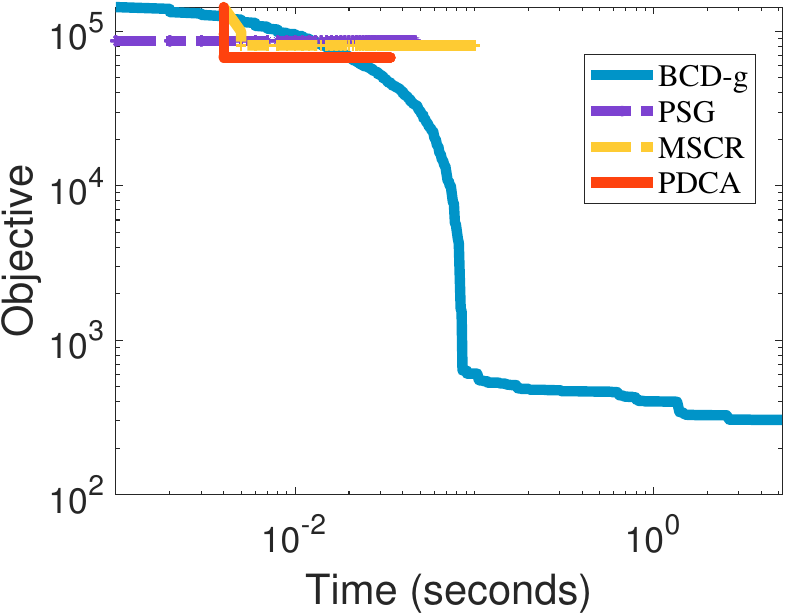}
			\caption{DCPB2 in MNIST-a}
		\end{subfigure}
		\begin{subfigure}{0.2\textwidth}
			\includegraphics[width=\textwidth]{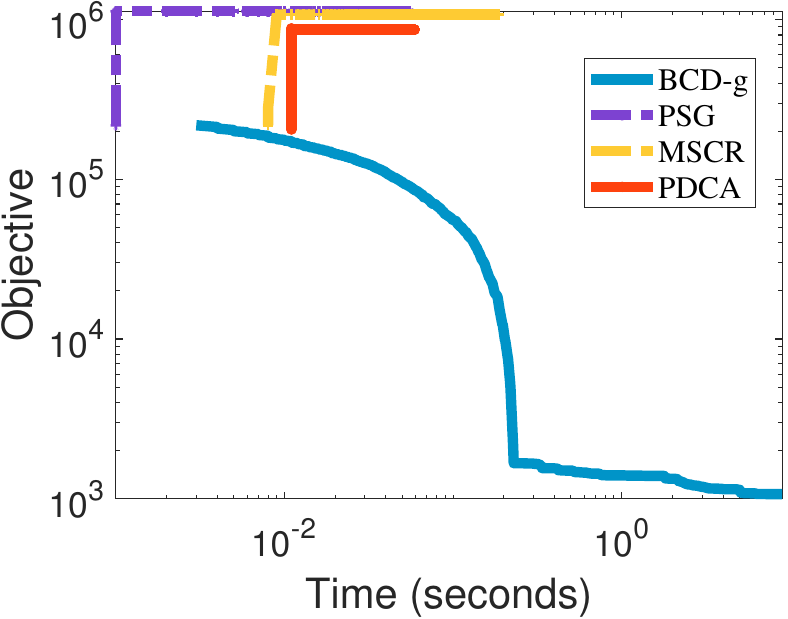}
			\caption{DCPB2 in MNIST-b}
		\end{subfigure}
		\caption{The convergence curve of the compared methods for solving two DC penalized binary optimization problems on 20 different datasets.}
		\label{fig:cputimebin}
	\end{figure}

	\begin{figure}[htbp]
		\centering
		\begin{subfigure}{0.2\textwidth}
			\includegraphics[width=\textwidth]{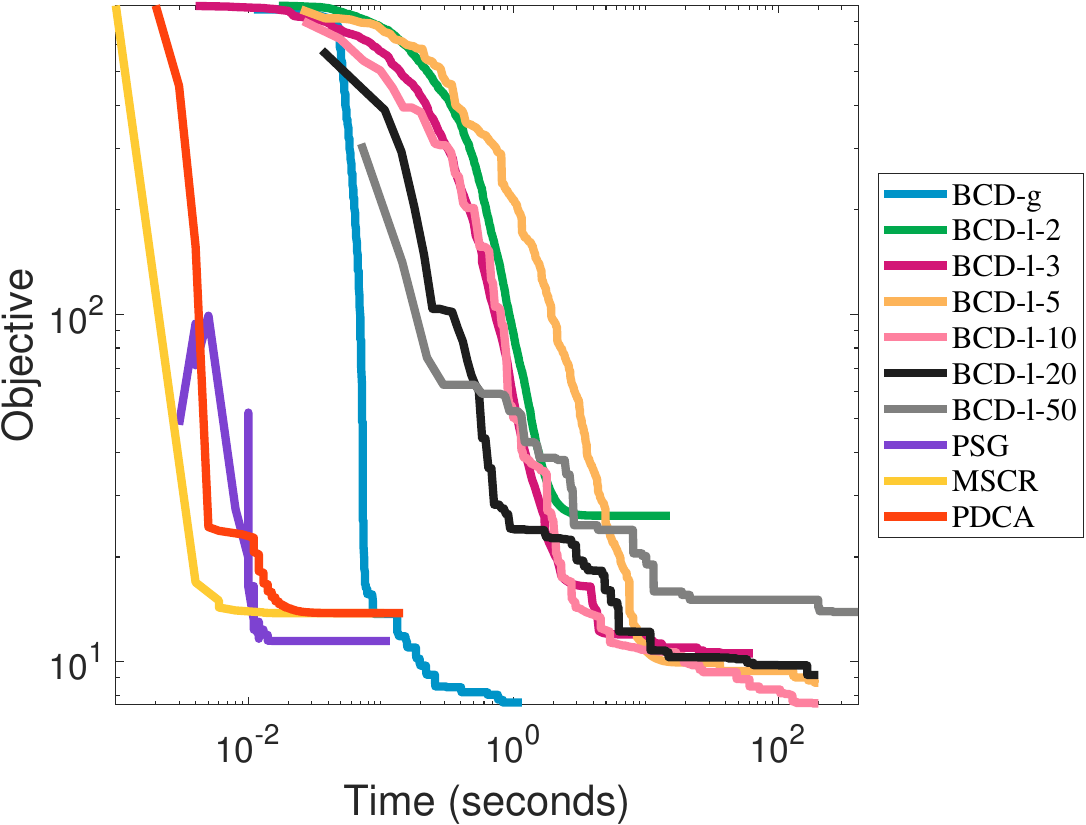}
			\caption{SIT in NAS-16}
		\end{subfigure}
		\begin{subfigure}{0.2\textwidth}
			\includegraphics[width=\textwidth]{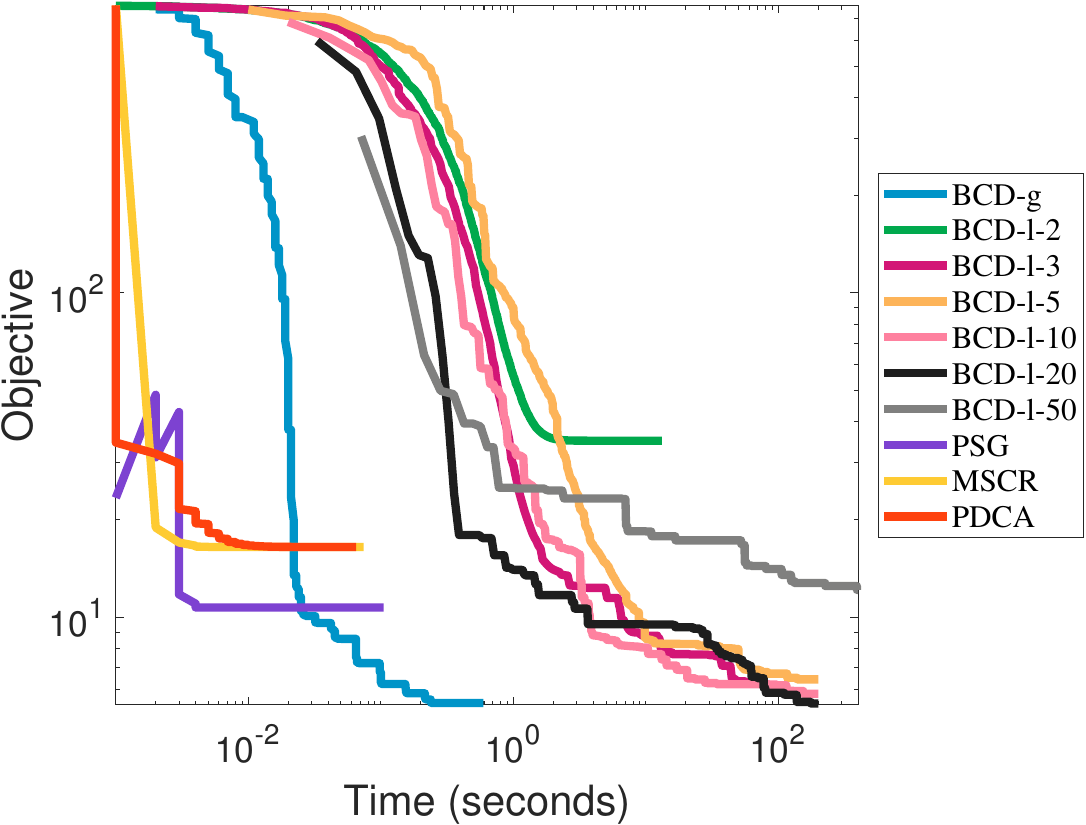}
			\caption{SIT in NAS-17}
		\end{subfigure}
		\begin{subfigure}{0.2\textwidth}
			\includegraphics[width=\textwidth]{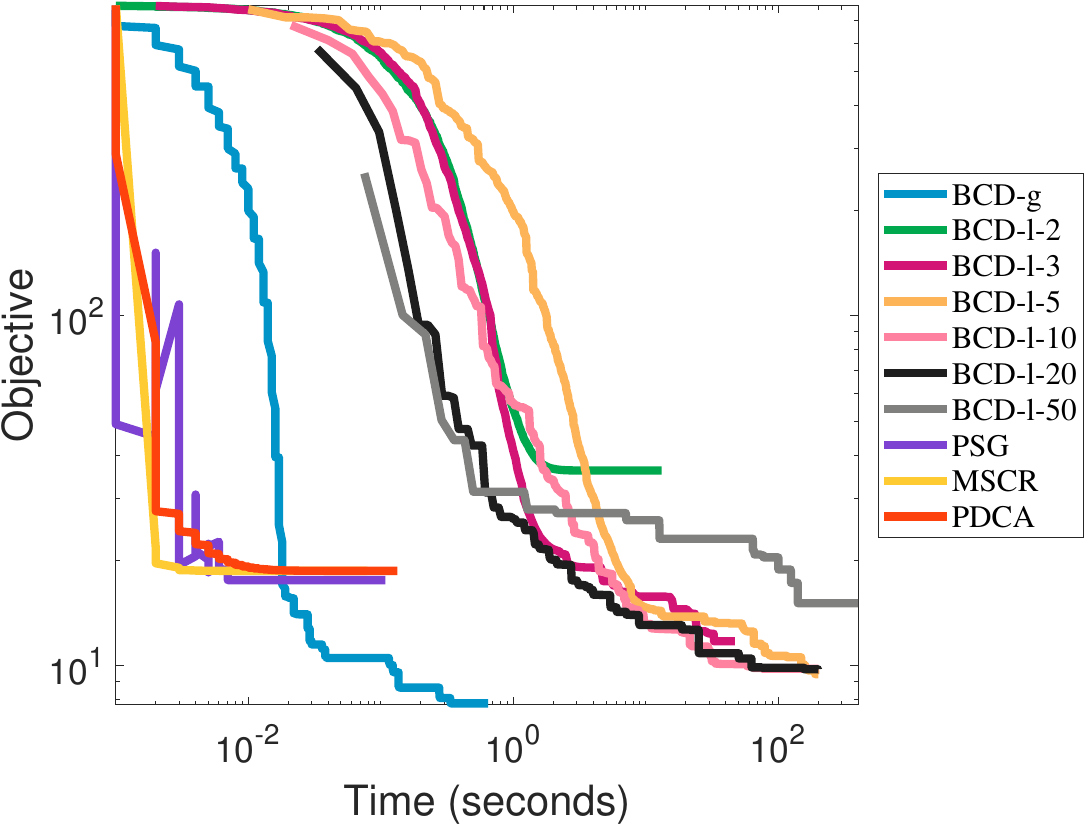}
			\caption{SIT in NAS-18}
		\end{subfigure}
		\begin{subfigure}{0.2\textwidth}
			\includegraphics[width=\textwidth]{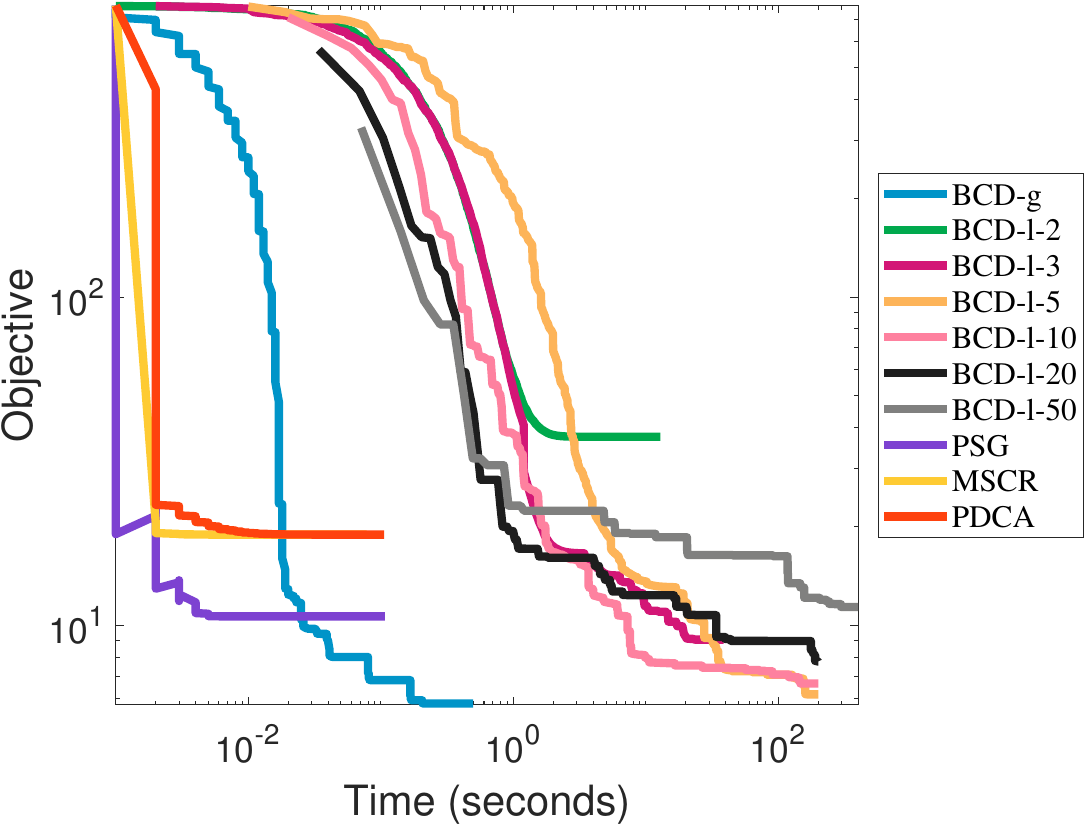}
			\caption{SIT in NAS-19}
		\end{subfigure}
		\begin{subfigure}{0.2\textwidth}
			\includegraphics[width=\textwidth]{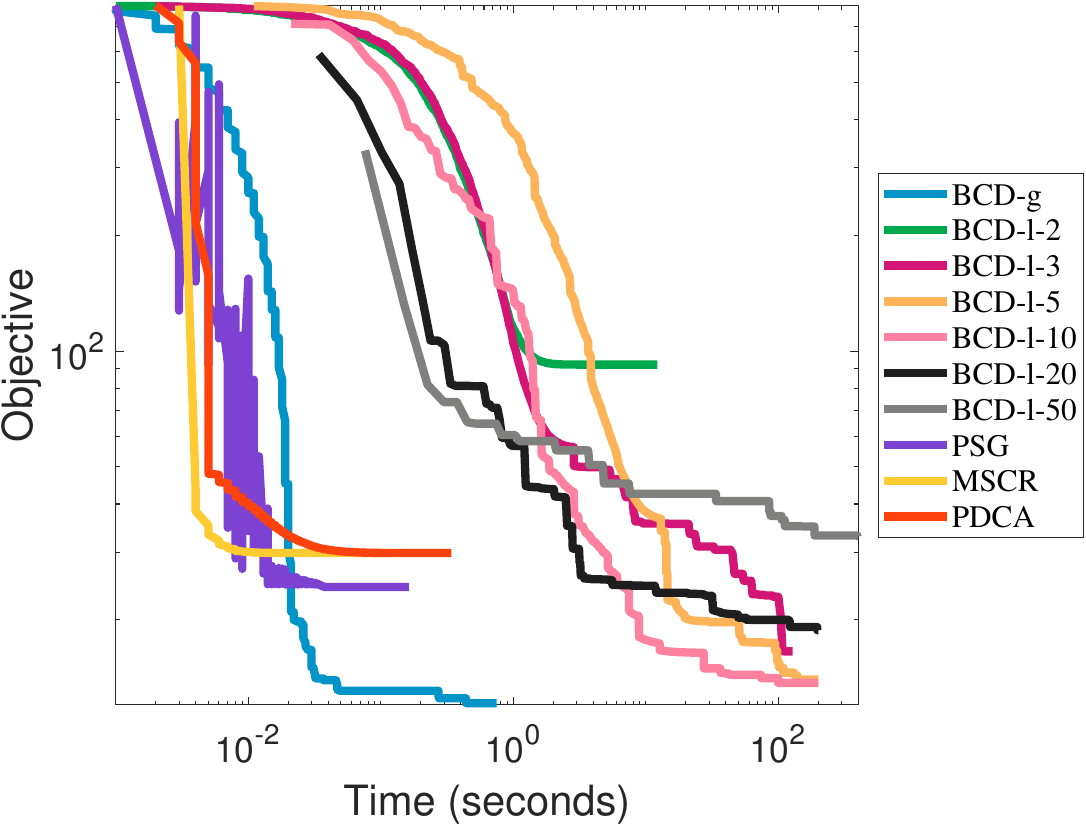}
			\caption{SIT in NAS-20}
		\end{subfigure}
		\begin{subfigure}{0.2\textwidth}
			\includegraphics[width=\textwidth]{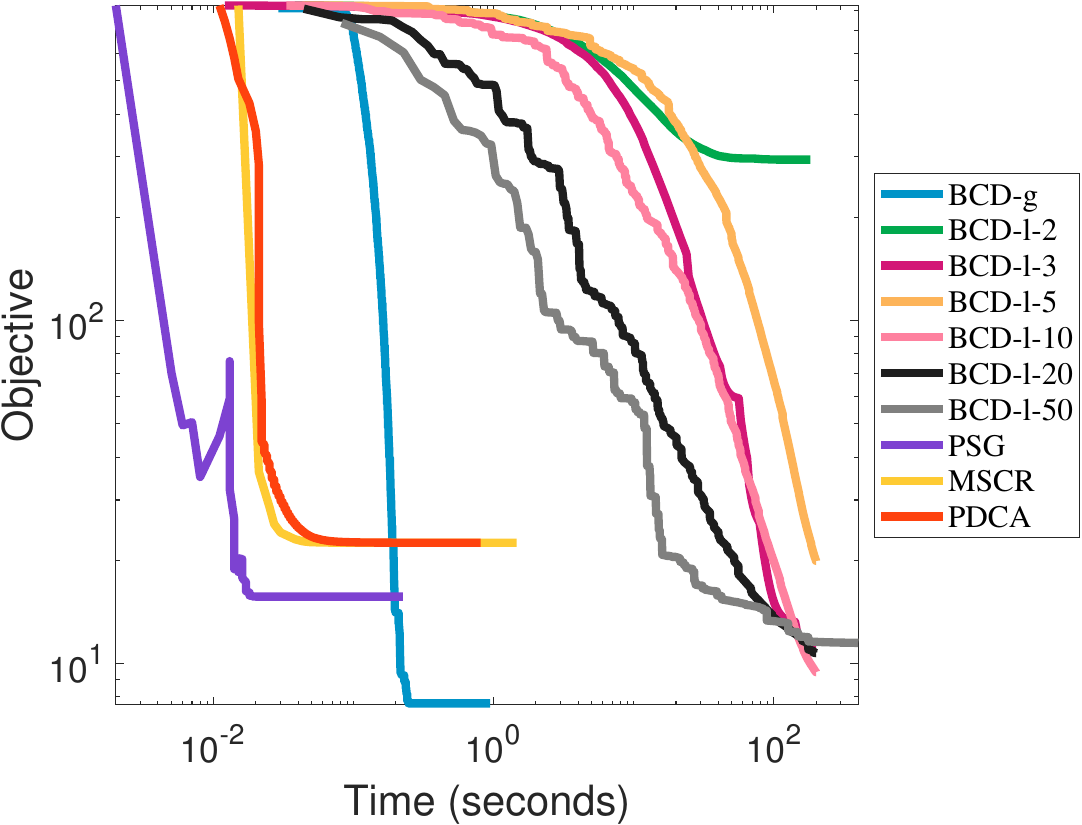}
			\caption{SIT in SP-16}
		\end{subfigure}
		\begin{subfigure}{0.2\textwidth}
			\includegraphics[width=\textwidth]{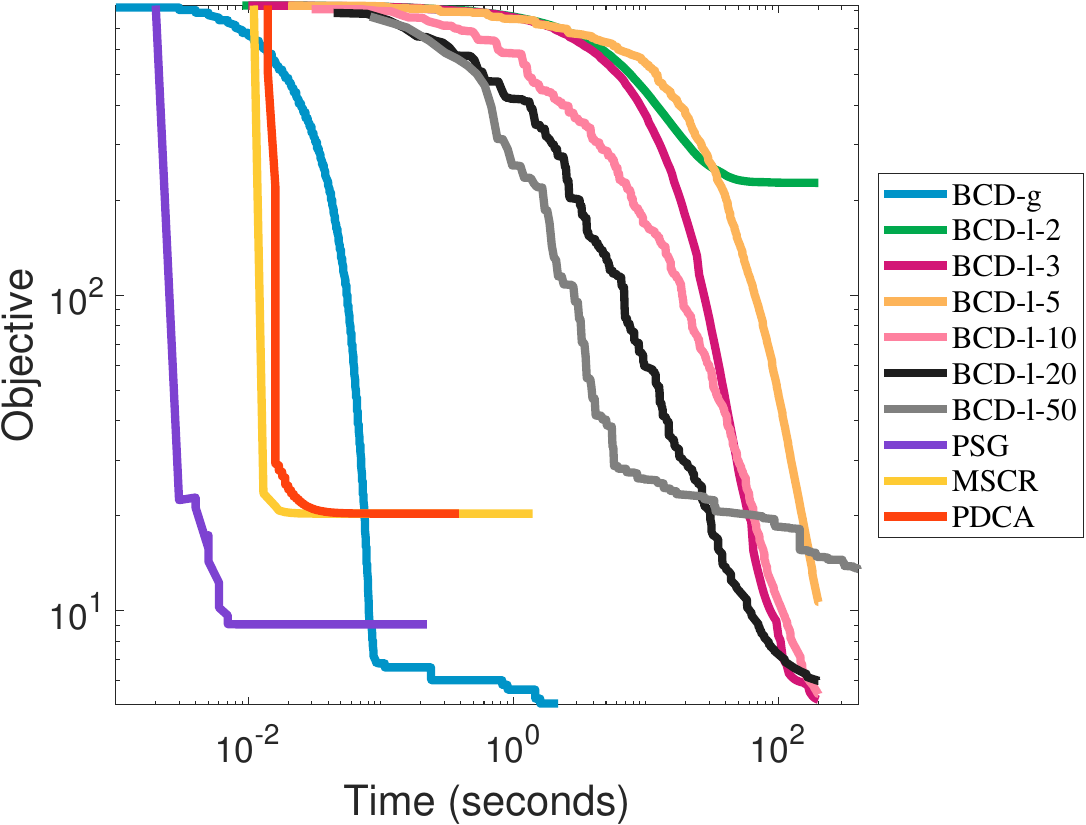}
			\caption{SIT in SP-17}
		\end{subfigure}
		\begin{subfigure}{0.2\textwidth}
			\includegraphics[width=\textwidth]{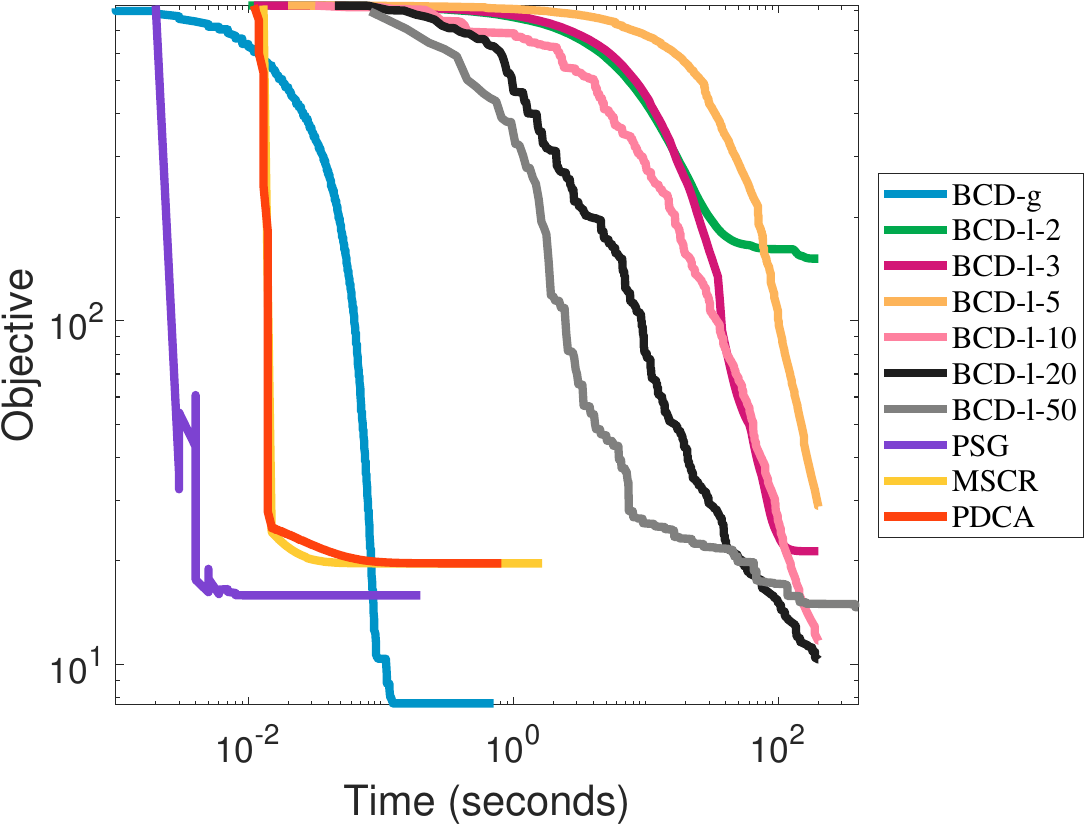}
			\caption{SIT in SP-18}
		\end{subfigure}
		\begin{subfigure}{0.2\textwidth}
			\includegraphics[width=\textwidth]{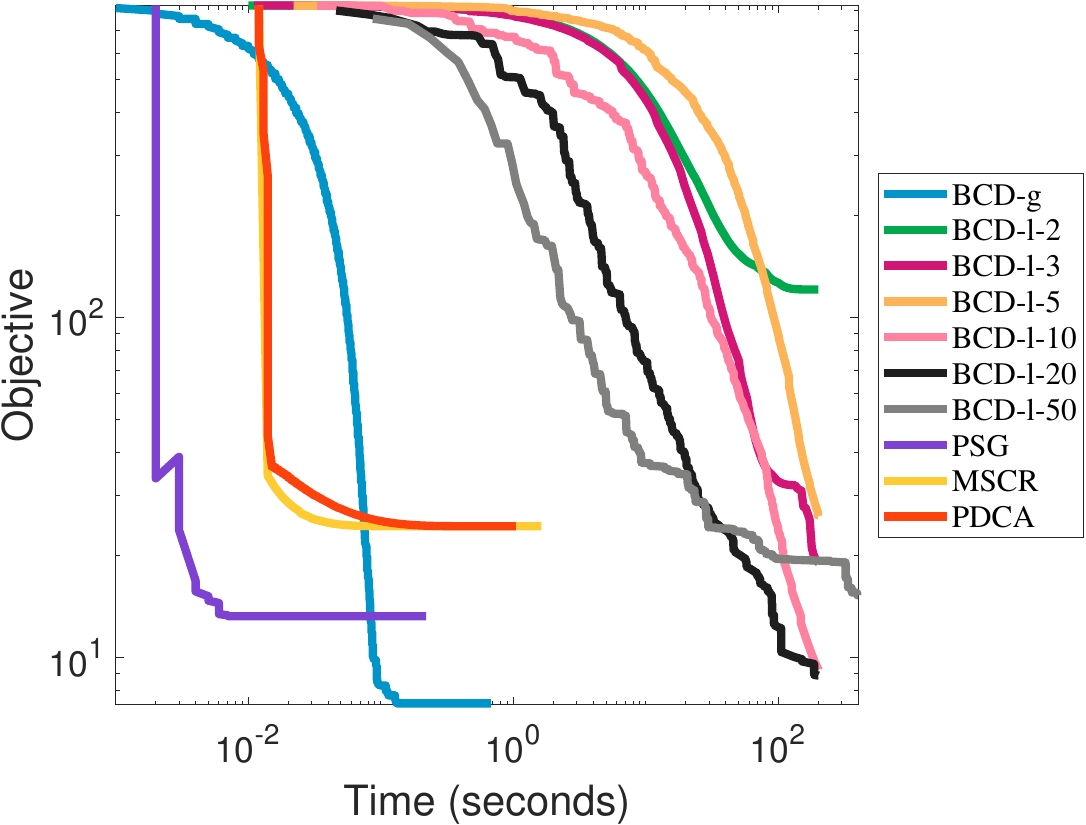}
			\caption{SIT in SP-19}
		\end{subfigure}
		\begin{subfigure}{0.2\textwidth}
			\includegraphics[width=\textwidth]{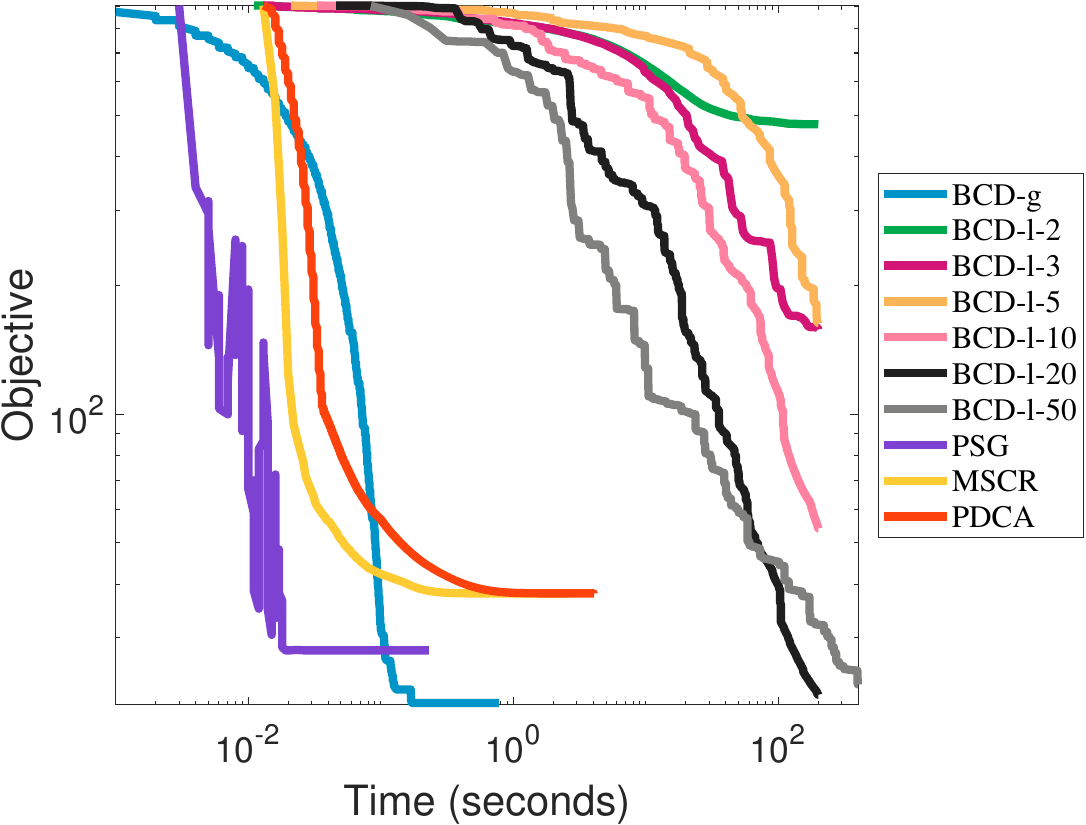}
			\caption{SIT in SP-20}
		\end{subfigure}
		\caption{The convergence curve of the compared methods for solving index tracking problem with varying $k$.}
		\label{fig:cputimeblock}
	\end{figure}

	\begin{figure}[htbp]
		\centering
		\begin{subfigure}{0.2\textwidth}
			\includegraphics[width=\textwidth]{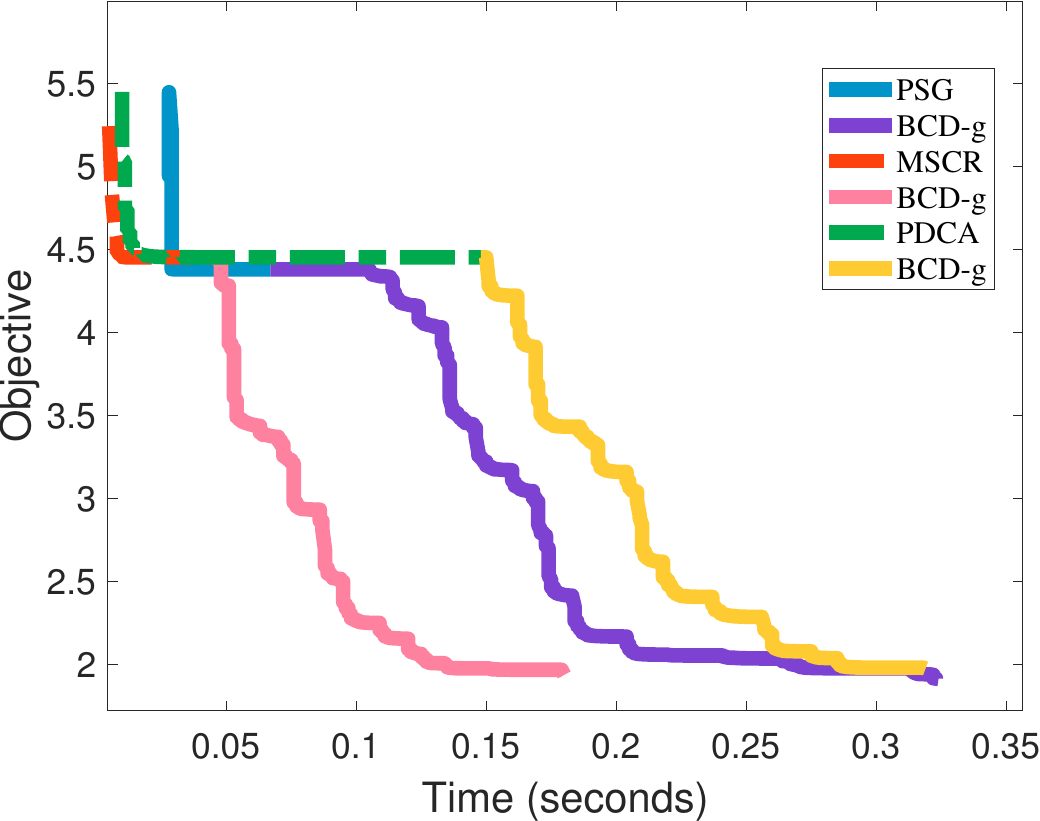}
			\caption{SIT in NAS-16}
		\end{subfigure}
		\begin{subfigure}{0.2\textwidth}
			\includegraphics[width=\textwidth]{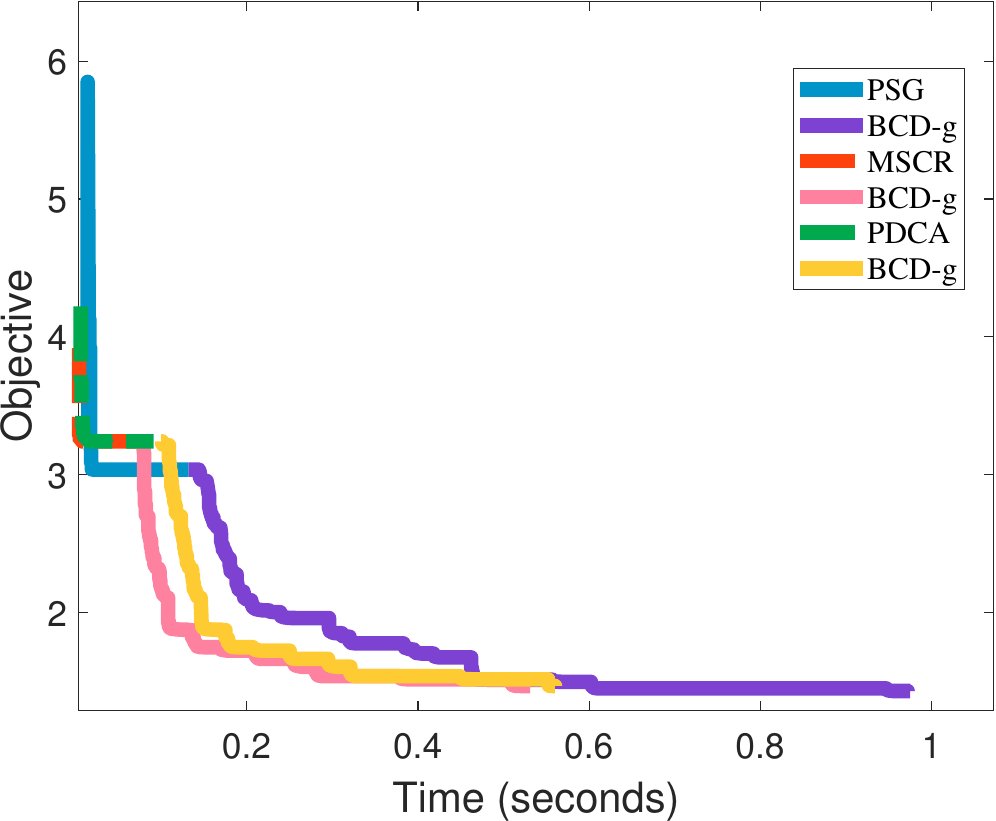}
			\caption{SIT in NAS-17}
		\end{subfigure}
		\begin{subfigure}{0.2\textwidth}
			\includegraphics[width=\textwidth]{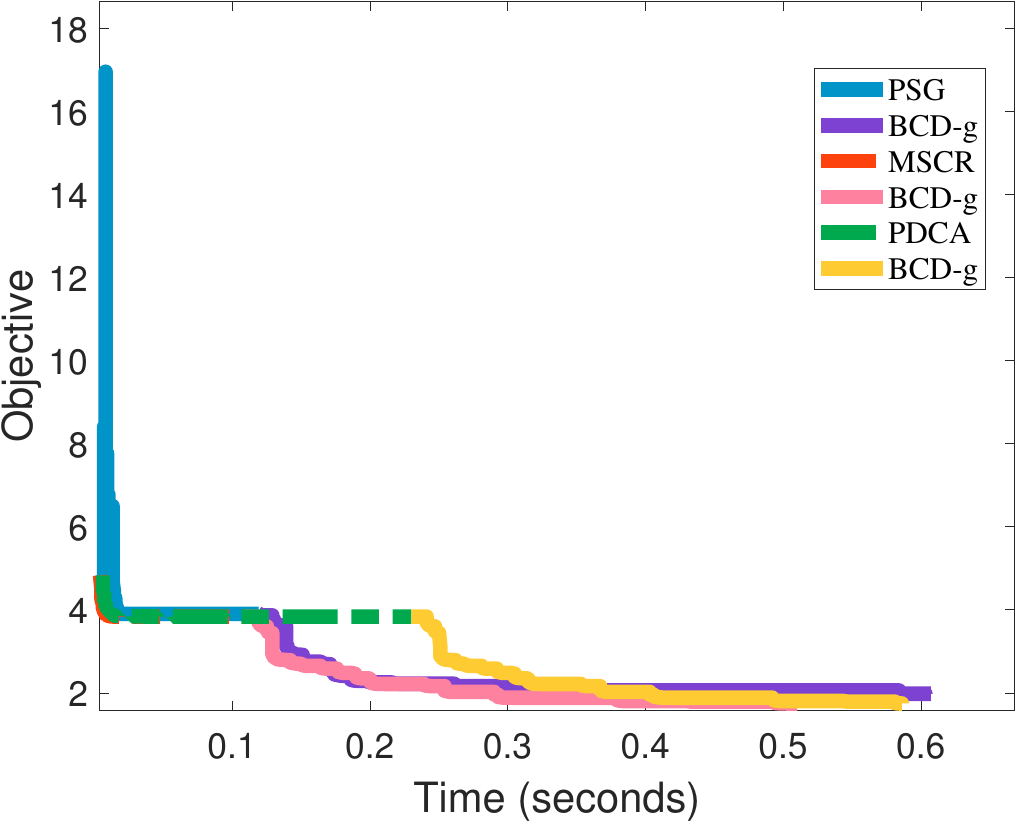}
			\caption{SIT in NAS-18}
		\end{subfigure}
		\begin{subfigure}{0.2\textwidth}
			\includegraphics[width=\textwidth]{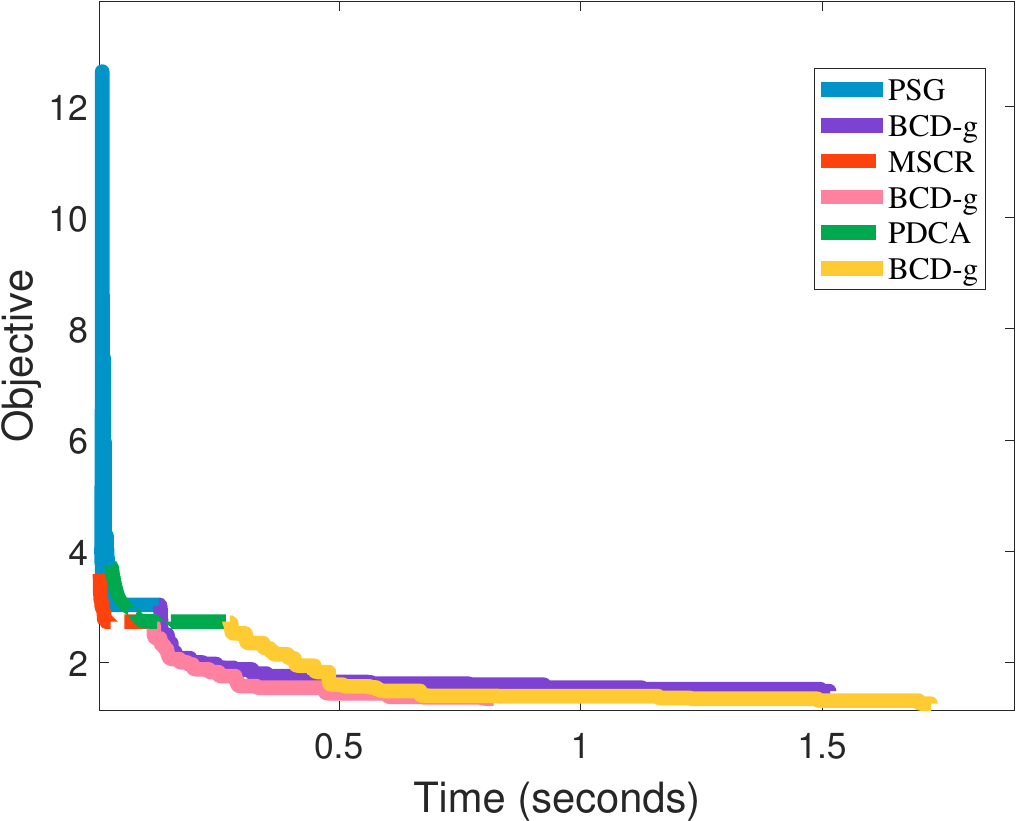}
			\caption{SIT in NAS-19}
		\end{subfigure}
		\begin{subfigure}{0.2\textwidth}
			\includegraphics[width=\textwidth]{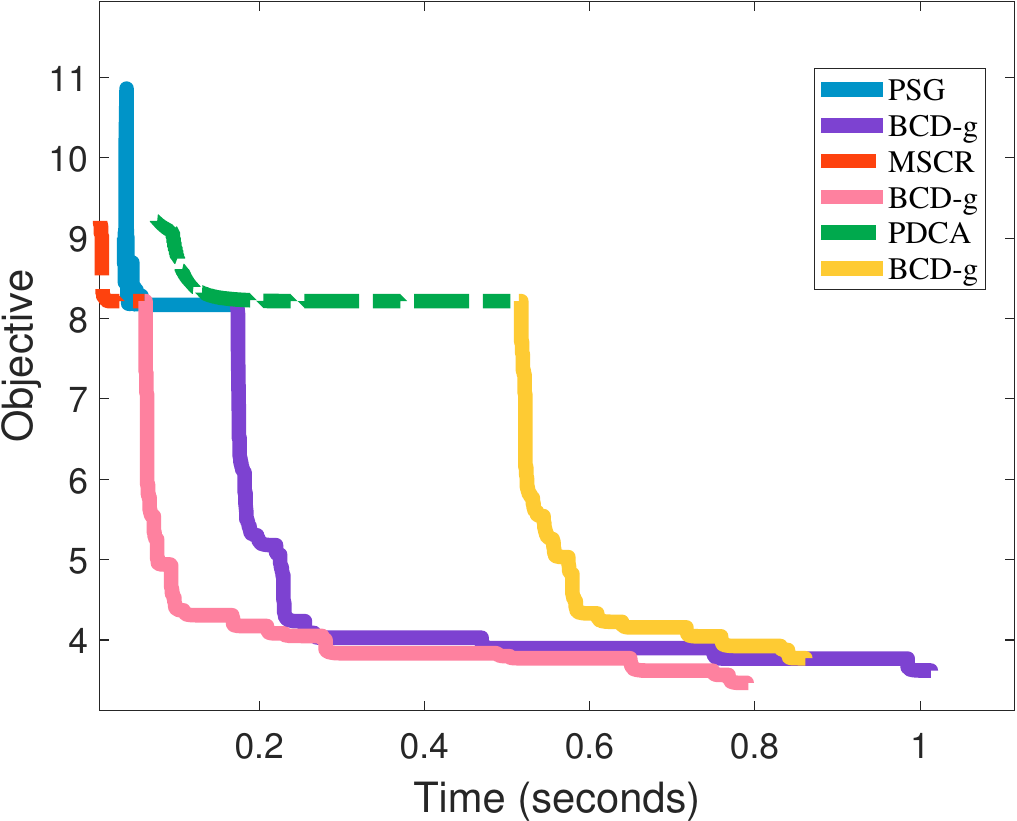}
			\caption{SIT in NAS-20}
		\end{subfigure}
		\begin{subfigure}{0.2\textwidth}
			\includegraphics[width=\textwidth]{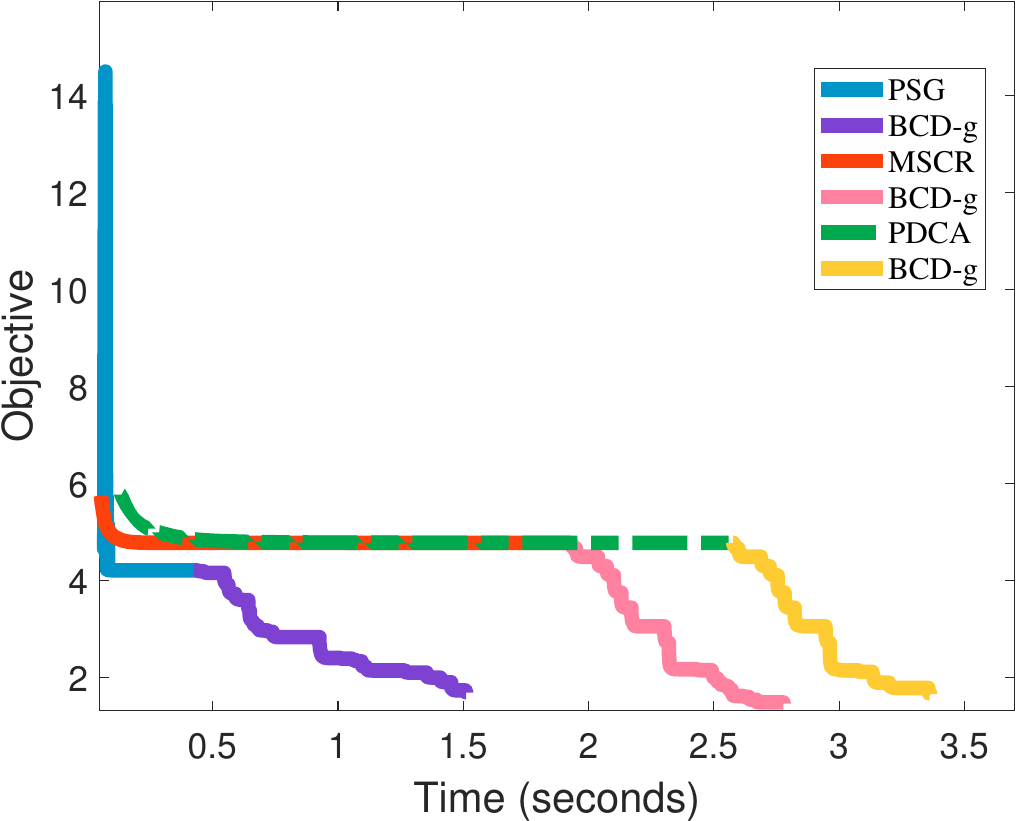}
			\caption{SIT in SP-16}
		\end{subfigure}
		\begin{subfigure}{0.2\textwidth}
			\includegraphics[width=\textwidth]{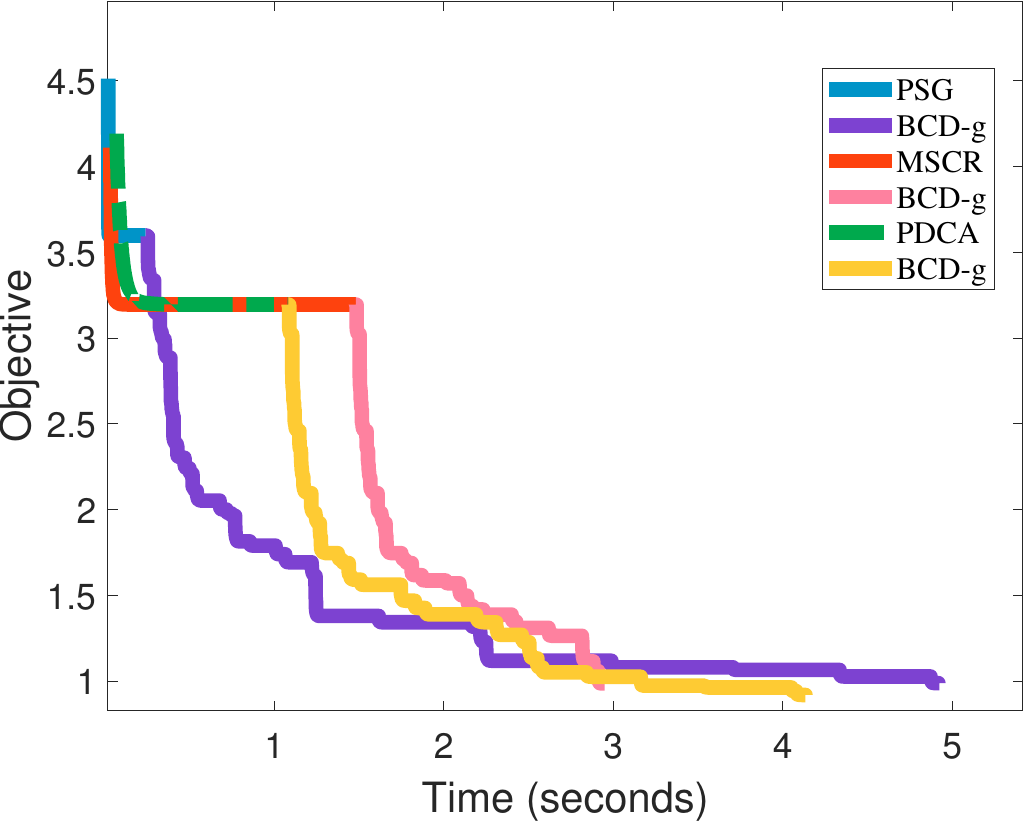}
			\caption{SIT in SP-17}
		\end{subfigure}
		\begin{subfigure}{0.2\textwidth}
			\includegraphics[width=\textwidth]{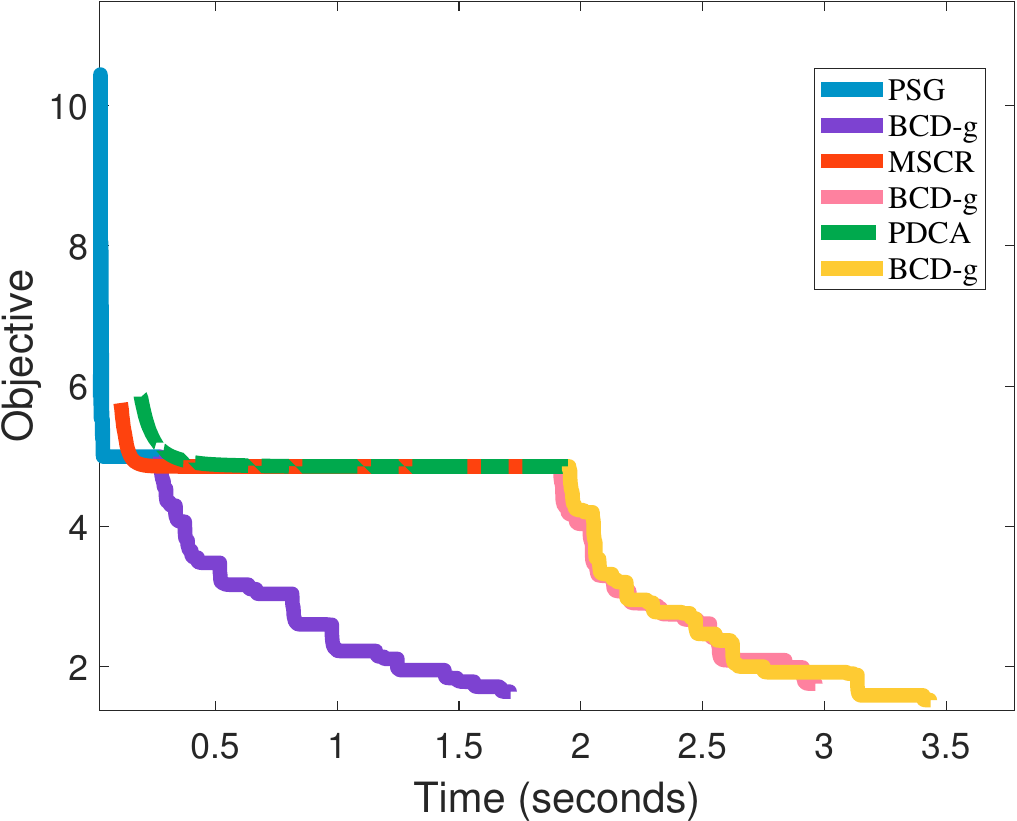}
			\caption{SIT in SP-18}
		\end{subfigure}
		\begin{subfigure}{0.2\textwidth}
			\includegraphics[width=\textwidth]{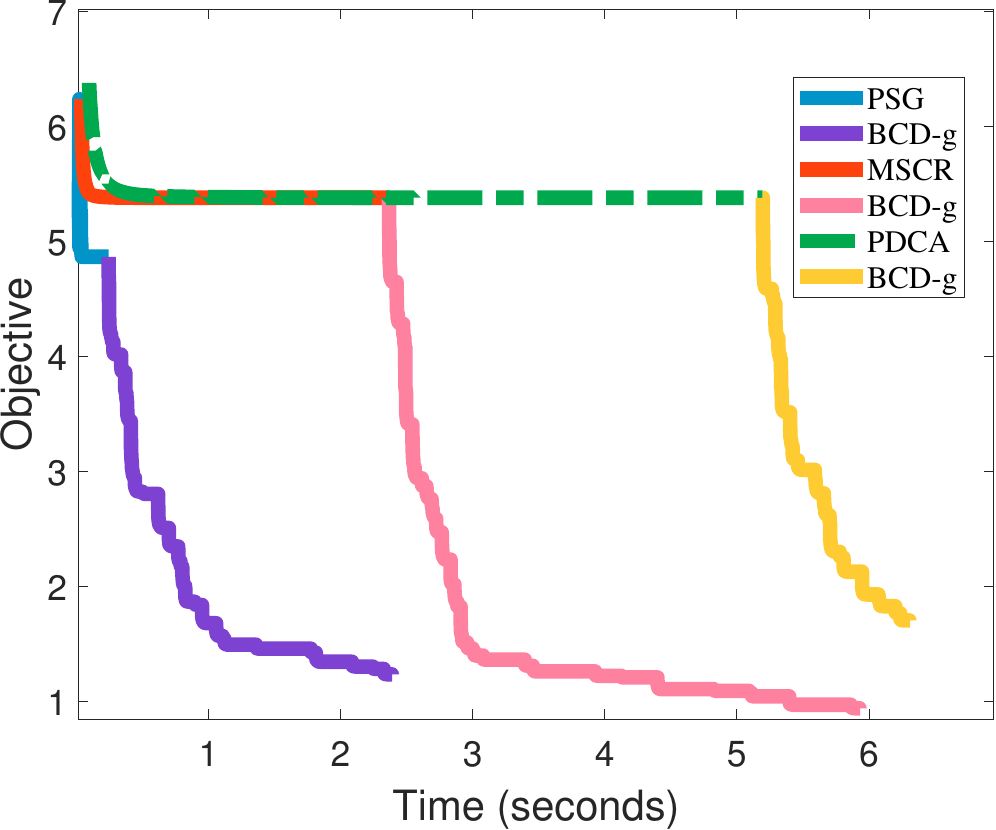}
			\caption{SIT in SP-19}
		\end{subfigure}
		\begin{subfigure}{0.2\textwidth}
			\includegraphics[width=\textwidth]{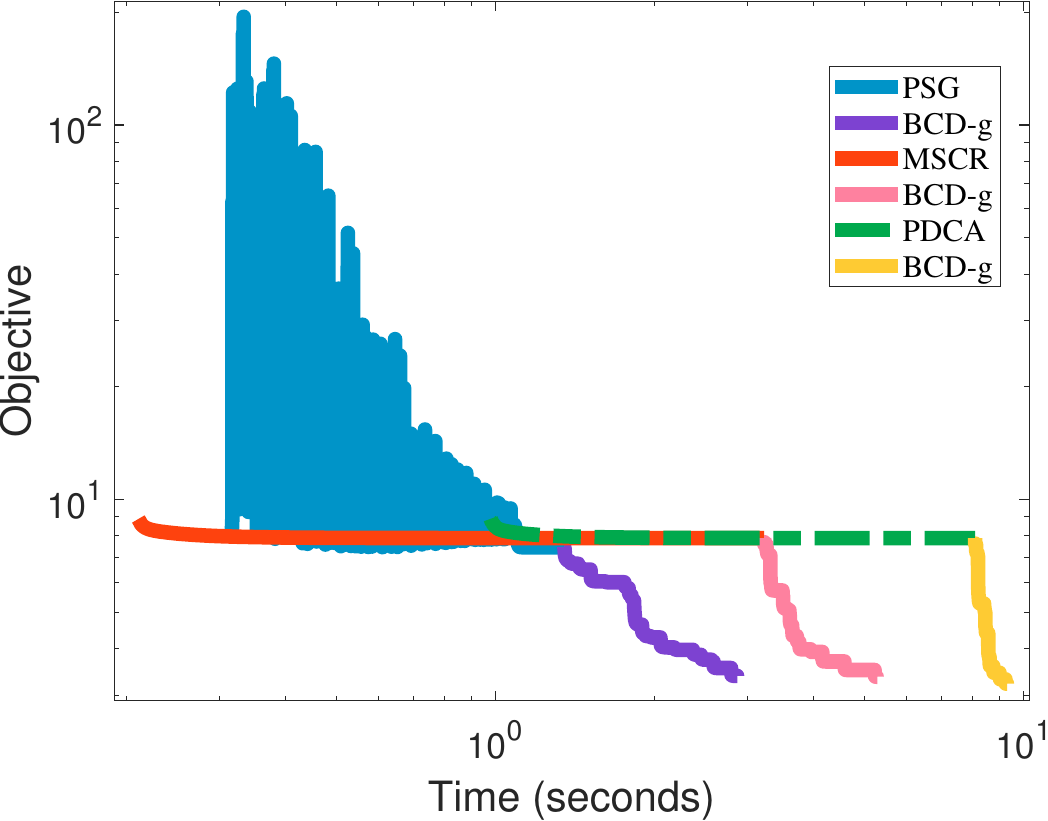}
			\caption{SIT in SP-20}
		\end{subfigure}
		\begin{subfigure}{0.2\textwidth}
			\includegraphics[width=\textwidth]{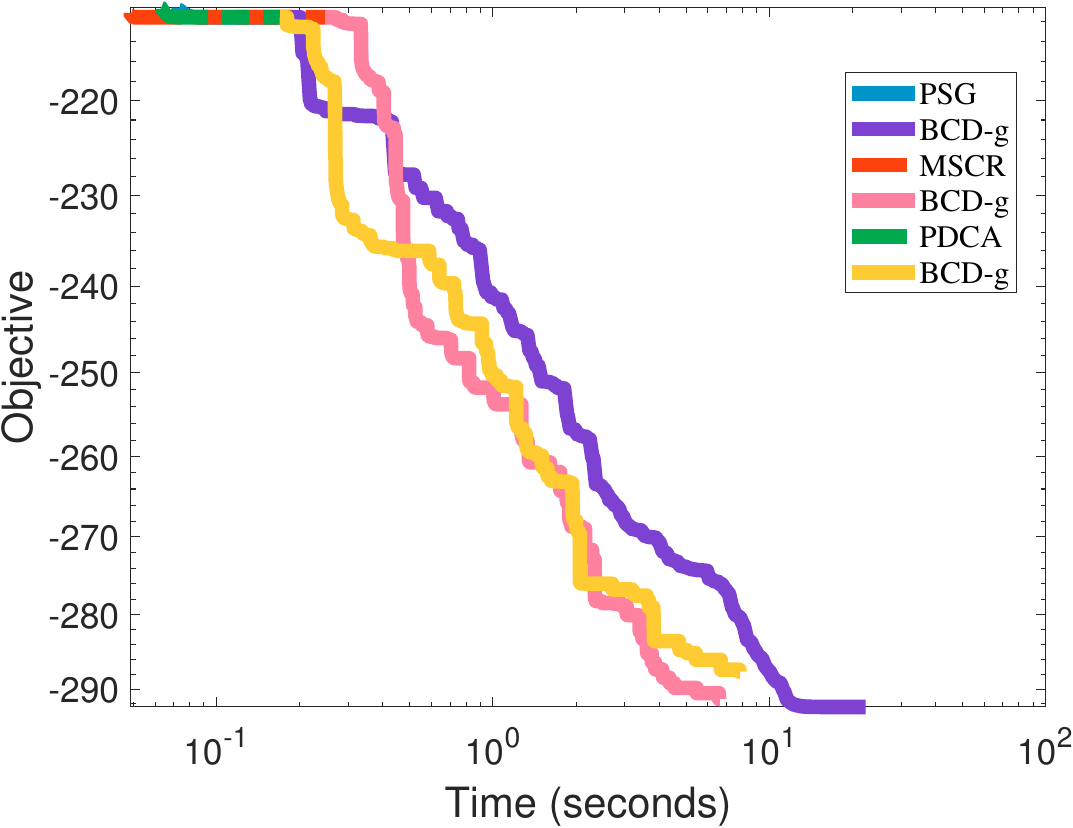}
			\caption{NNSPCA in randn-a}
		\end{subfigure}
		\begin{subfigure}{0.2\textwidth}
			\includegraphics[width=\textwidth]{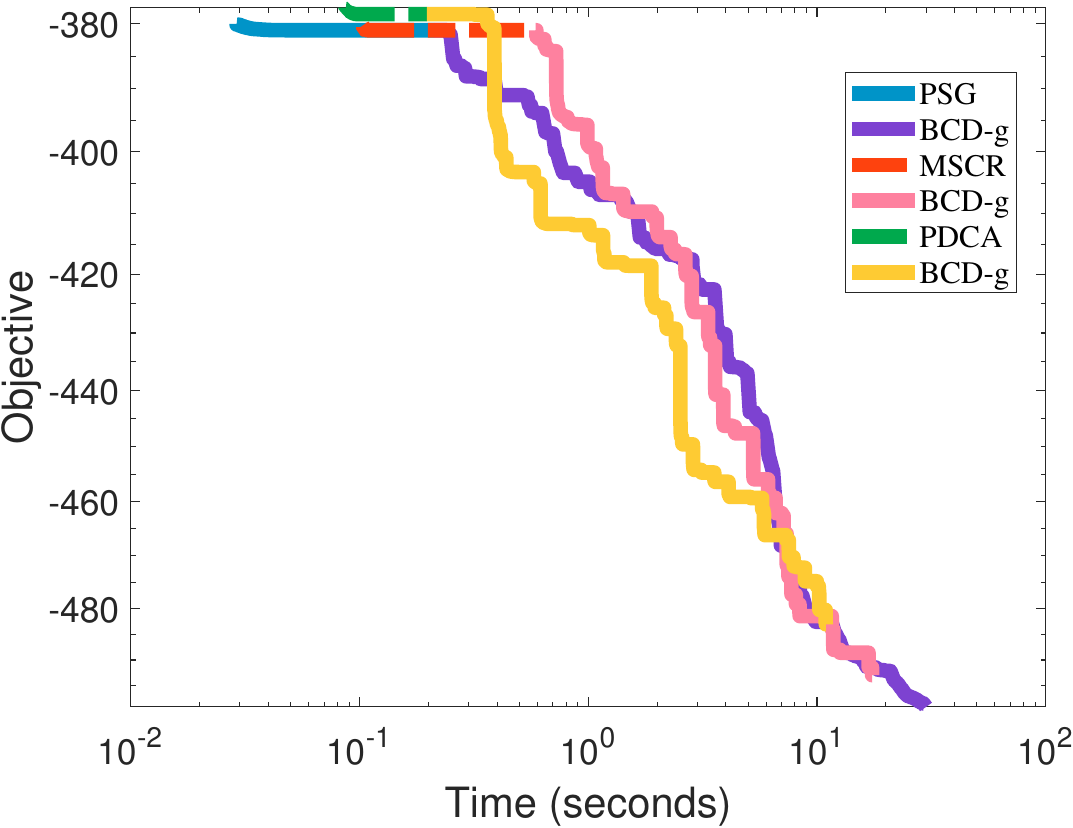}
			\caption{NNSPCA in randn-b}
		\end{subfigure}
		\begin{subfigure}{0.2\textwidth}
			\includegraphics[width=\textwidth]{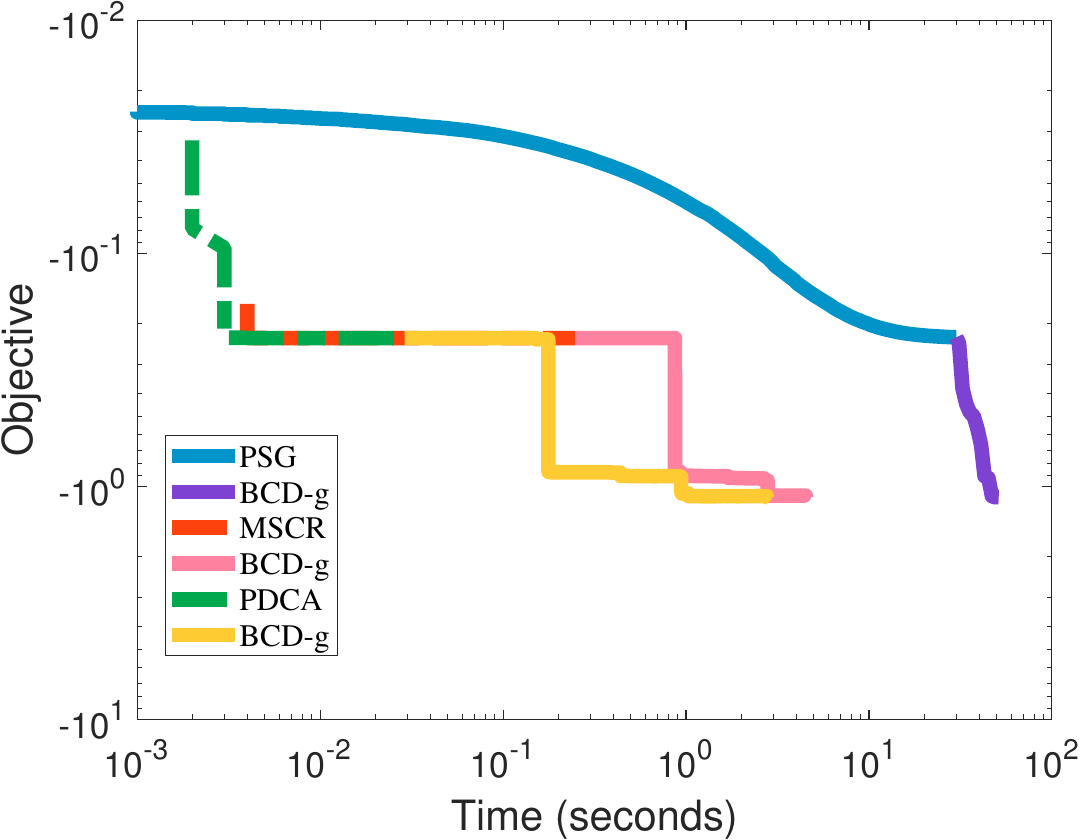}
			\caption{\tiny NNSPCA in TDT2-a}
		\end{subfigure}
		\begin{subfigure}{0.2\textwidth}
			\includegraphics[width=\textwidth]{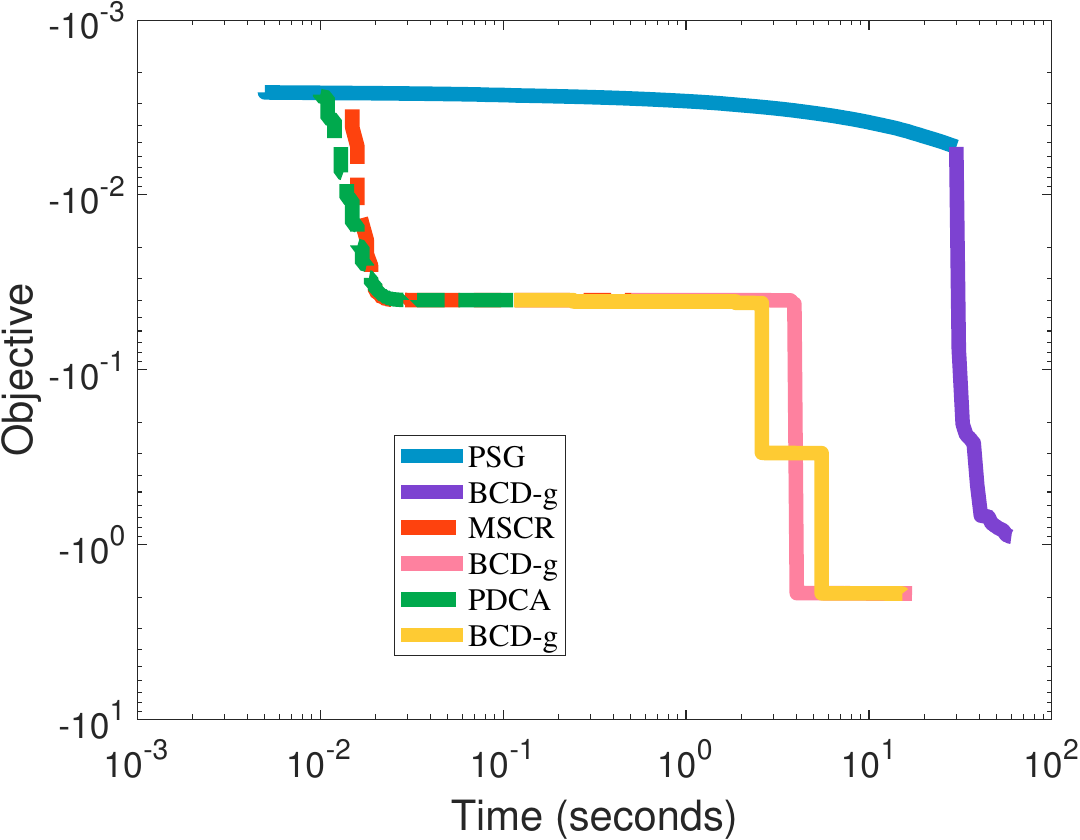}
			\caption{NNSPCA in TDT2-b}
		\end{subfigure}
		\begin{subfigure}{0.2\textwidth}
			\includegraphics[width=\textwidth]{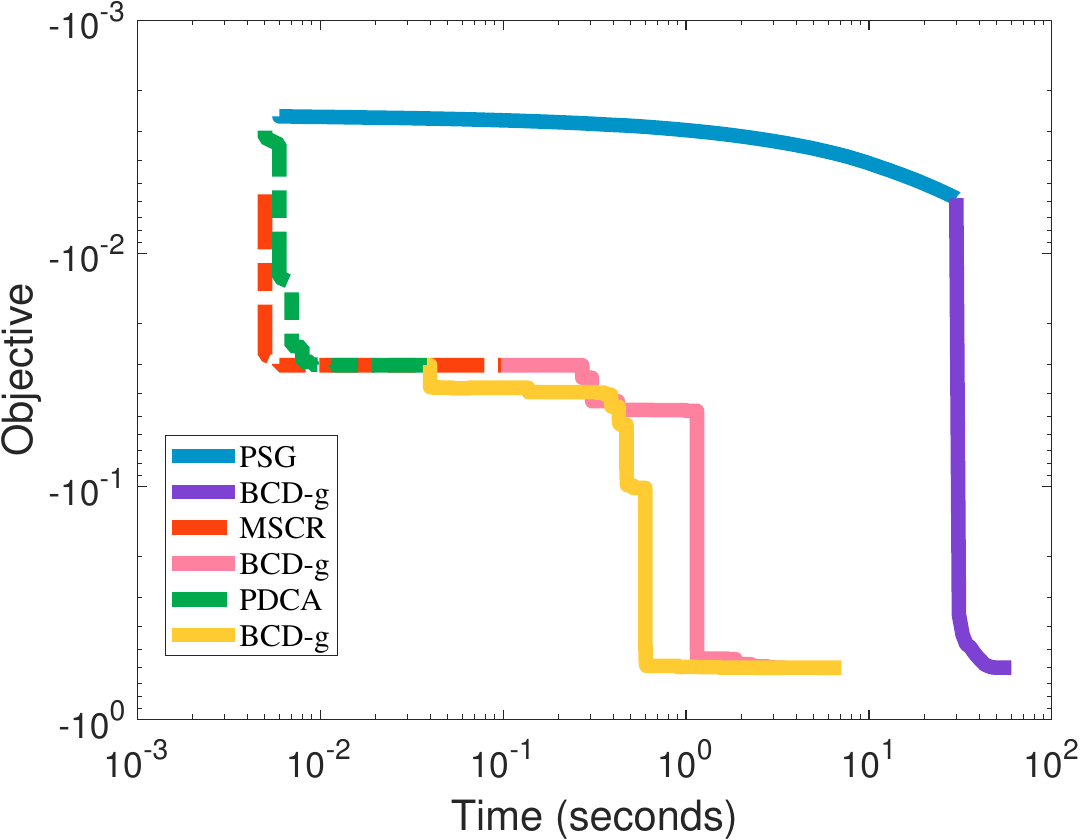}
			\caption{NNSPCA in 20News-a}
		\end{subfigure}
		\begin{subfigure}{0.2\textwidth}
			\includegraphics[width=\textwidth]{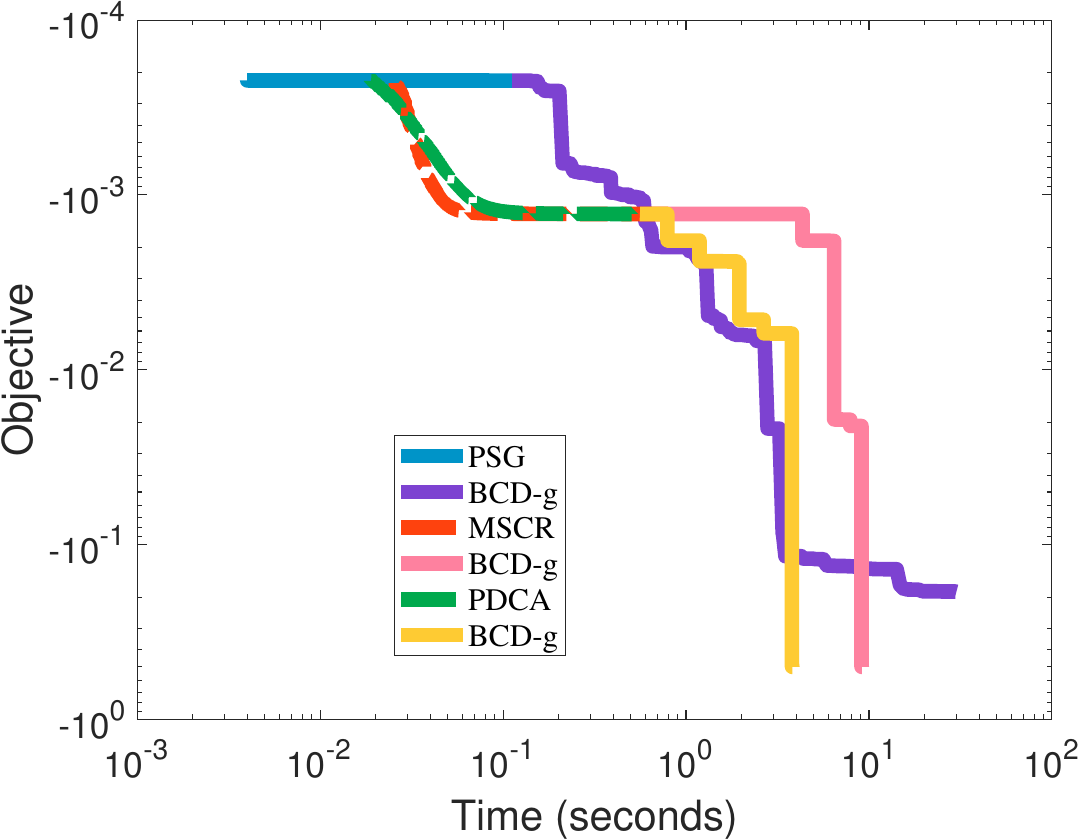}
			\caption{NNSPCA in 20News-b}
		\end{subfigure}
		\begin{subfigure}{0.2\textwidth}
			\includegraphics[width=\textwidth]{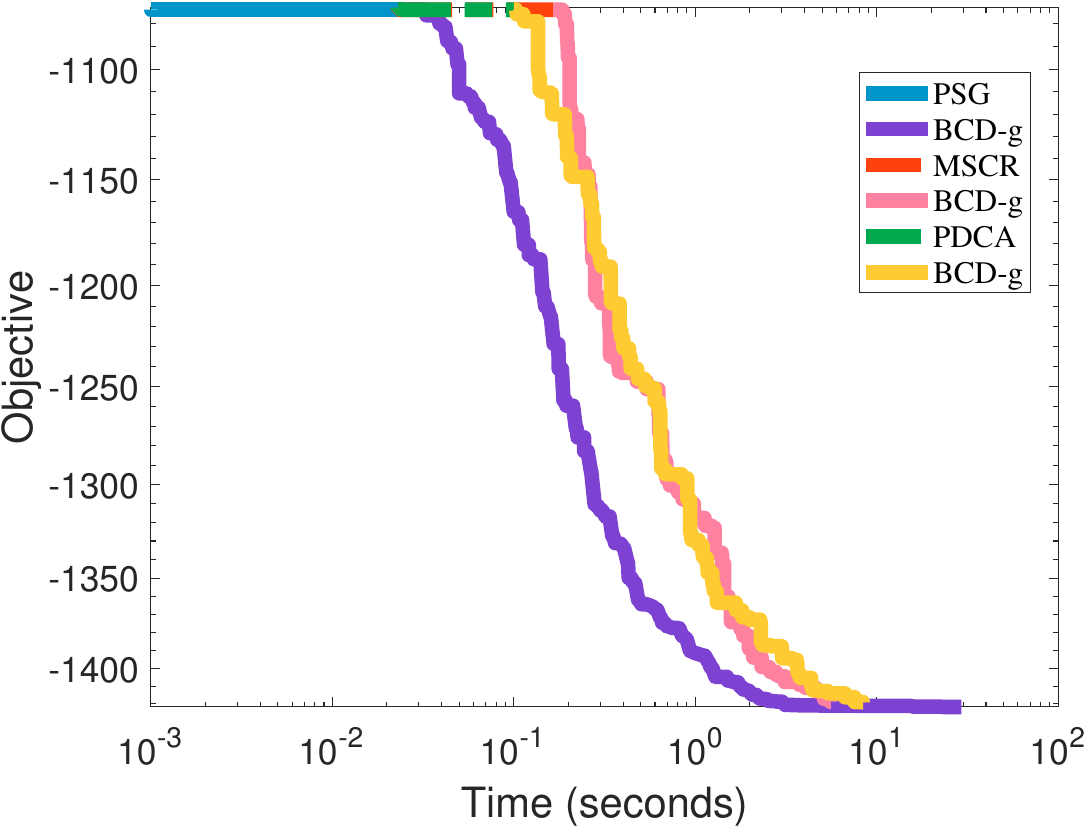}
			\caption{NNSPCA in Cifar-a}
		\end{subfigure}
		\begin{subfigure}{0.2\textwidth}
			\includegraphics[width=\textwidth]{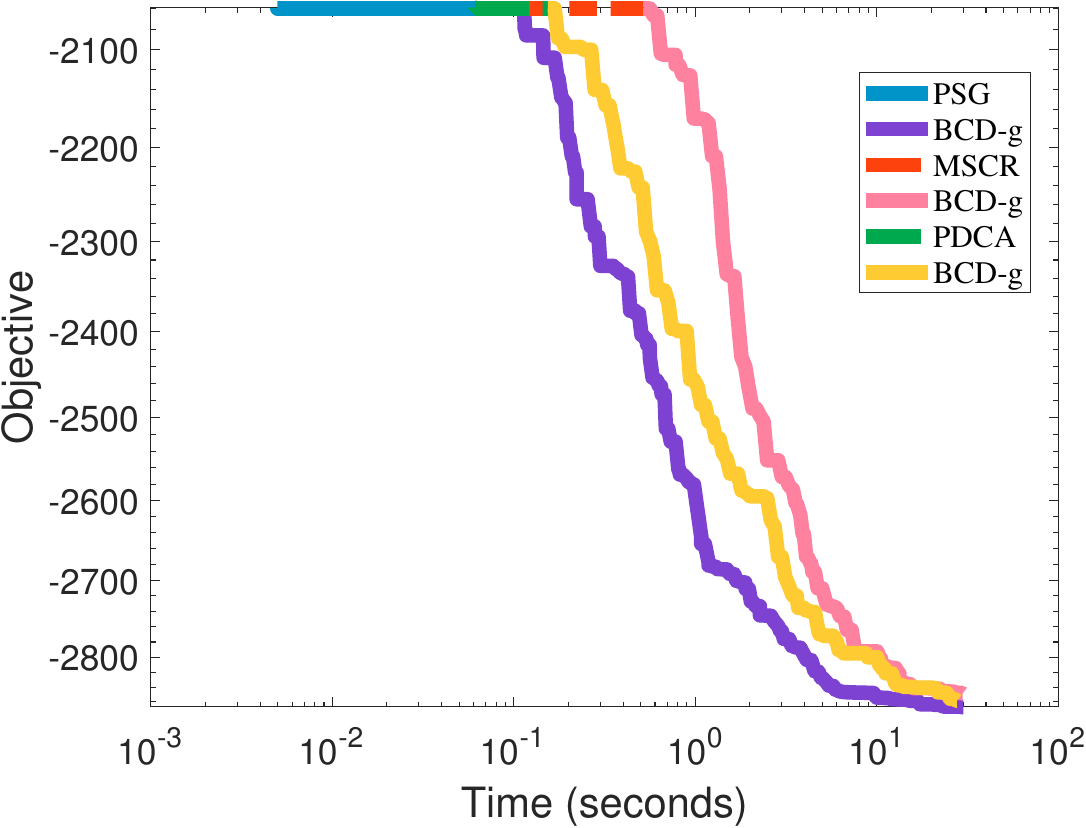}
			\caption{NNSPCA in Cifar-b}
		\end{subfigure}
		\begin{subfigure}{0.2\textwidth}
			\includegraphics[width=\textwidth]{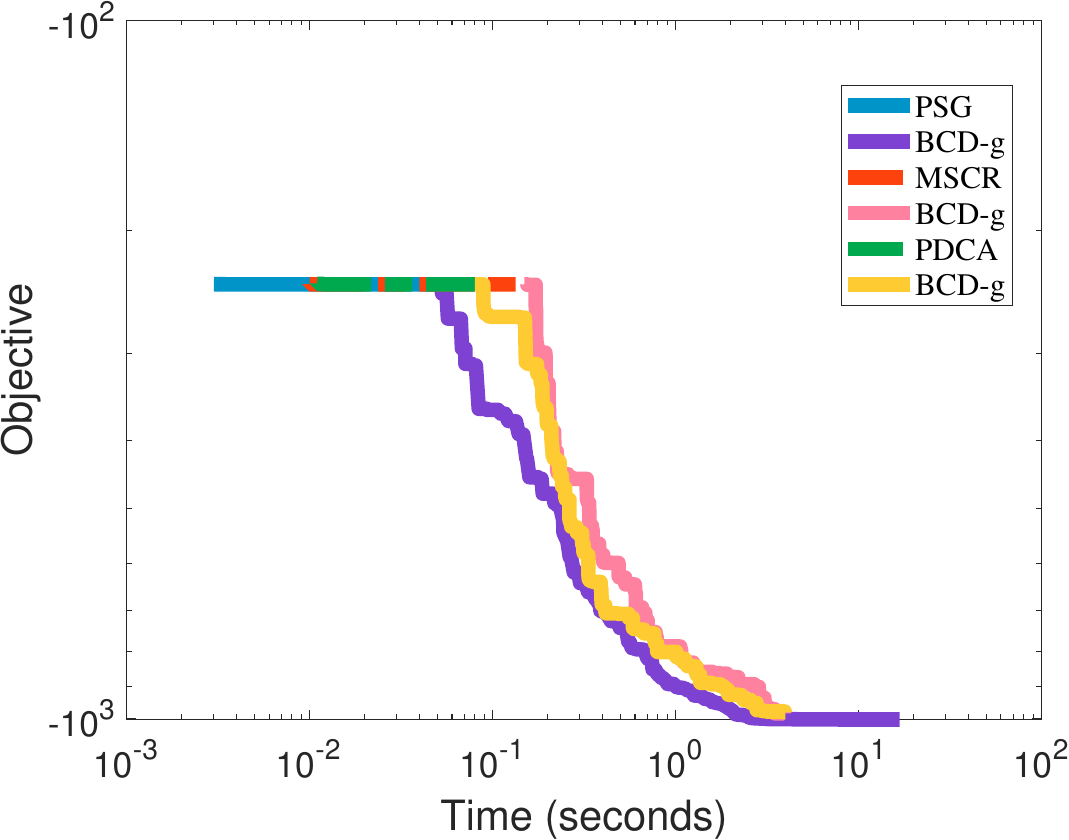}
			\caption{NNSPCA in MNIST-a}
		\end{subfigure}
		\begin{subfigure}{0.2\textwidth}
			\includegraphics[width=\textwidth]{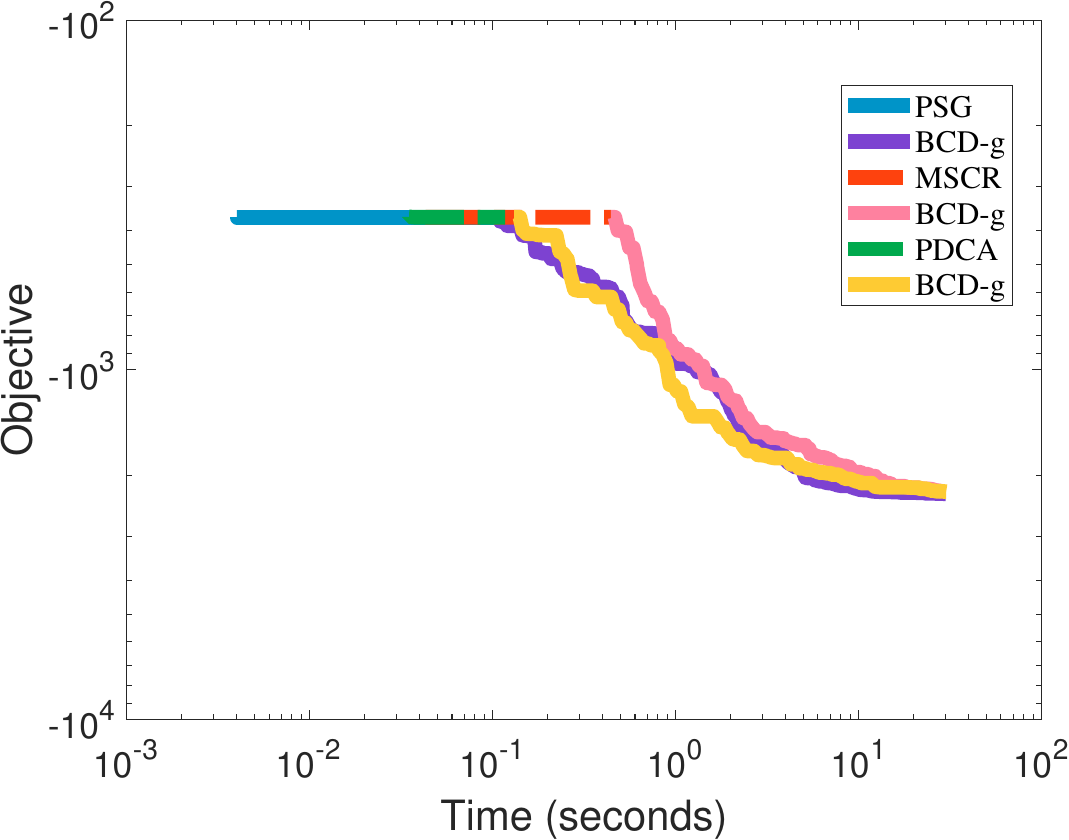}
			\caption{NNSPCA in MNIST-b}
		\end{subfigure}
		\caption{The convergence curve of the hybrid methods for sparse index tracking problem and non-negative sparse PCA problem on 20 different datasets.}
		\label{fig:compareindex}
	\end{figure}

	\captionsetup[subfigure]{font=tiny}
	\begin{figure}[htbp]
		\centering
		\begin{subfigure}{0.2\textwidth}
			\includegraphics[width=\textwidth]{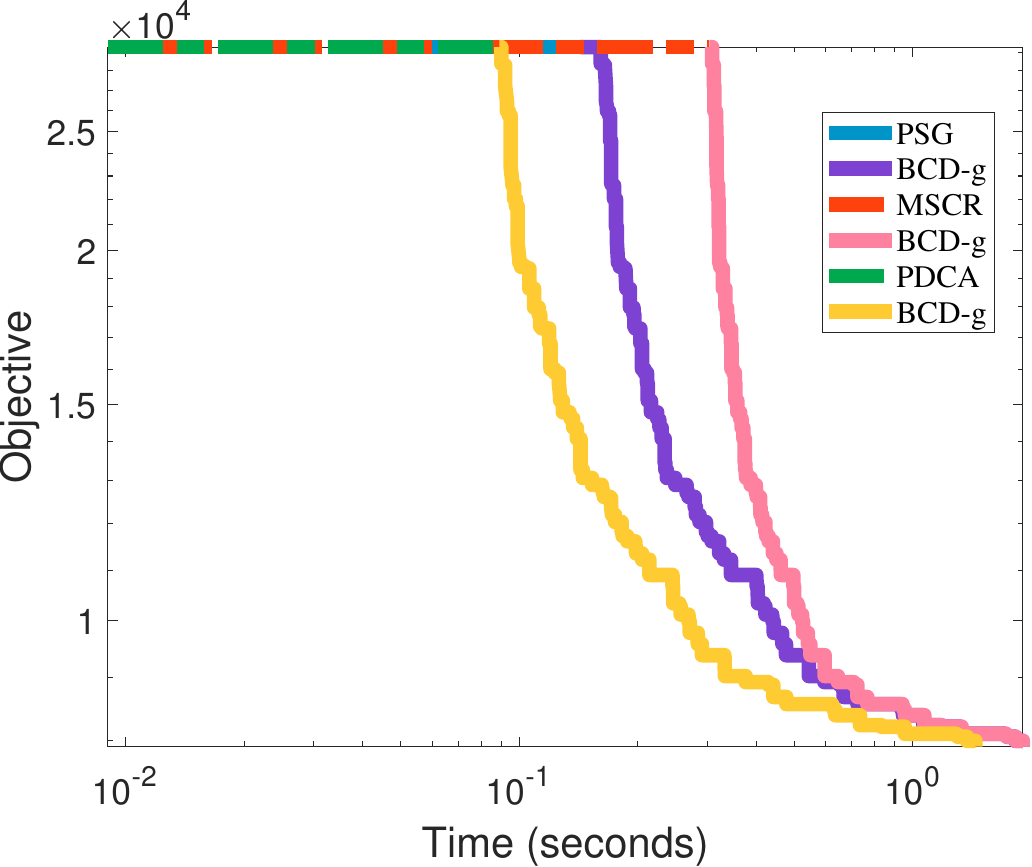}
			\caption{DCPB1 in randn-a}
		\end{subfigure}
		\begin{subfigure}{0.2\textwidth}
			\includegraphics[width=\textwidth]{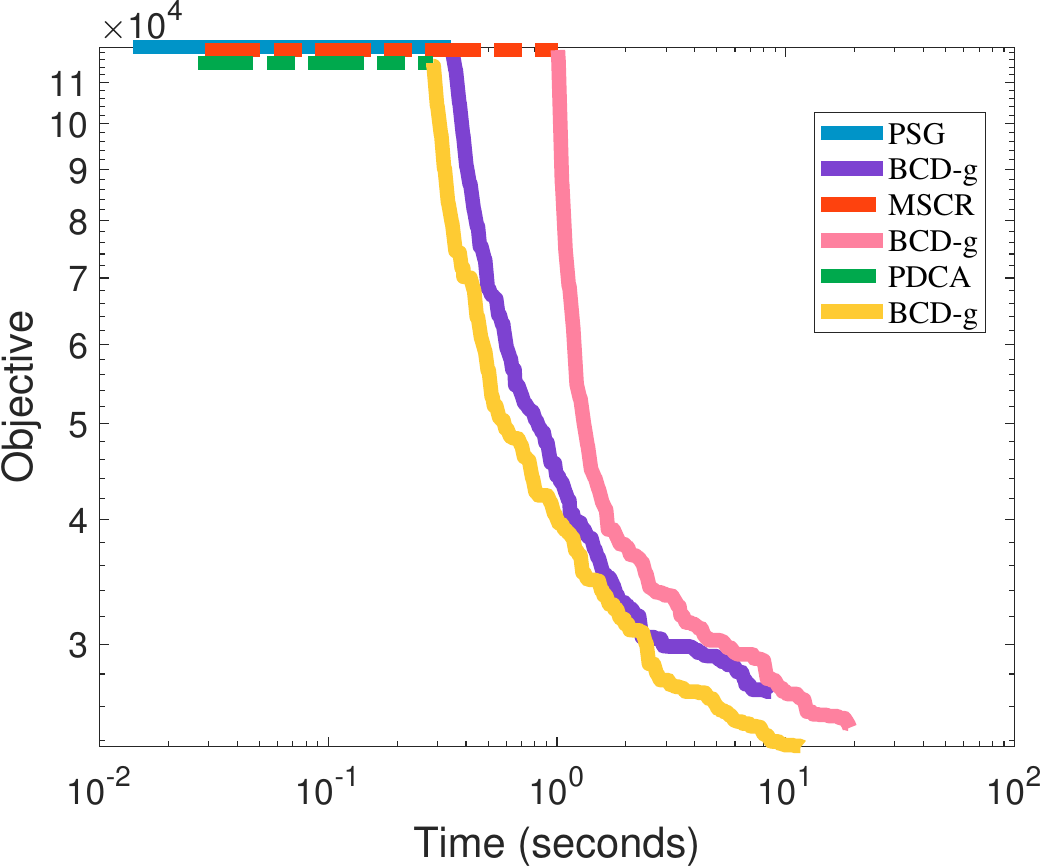}
			\caption{DCPB1 in randn-b}
		\end{subfigure}
		\begin{subfigure}{0.2\textwidth}
			\includegraphics[width=\textwidth]{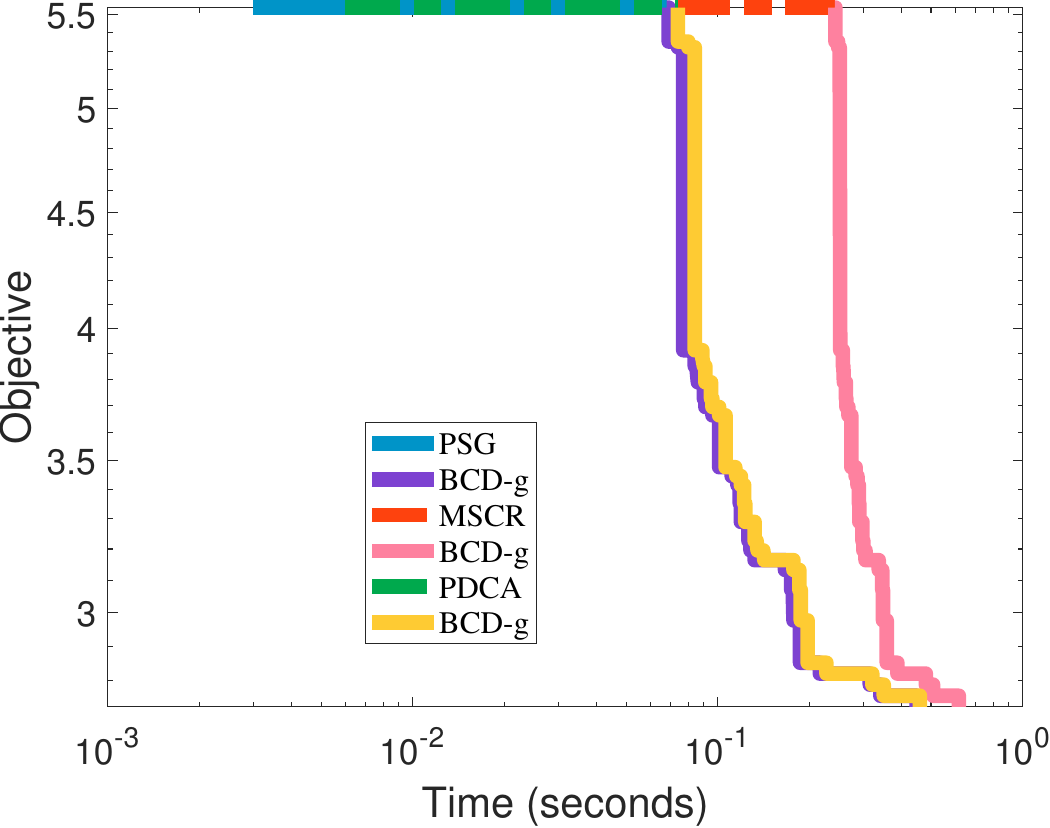}
			\caption{DCPB1 in TDT2-a}
		\end{subfigure}
		\begin{subfigure}{0.2\textwidth}
			\includegraphics[width=\textwidth]{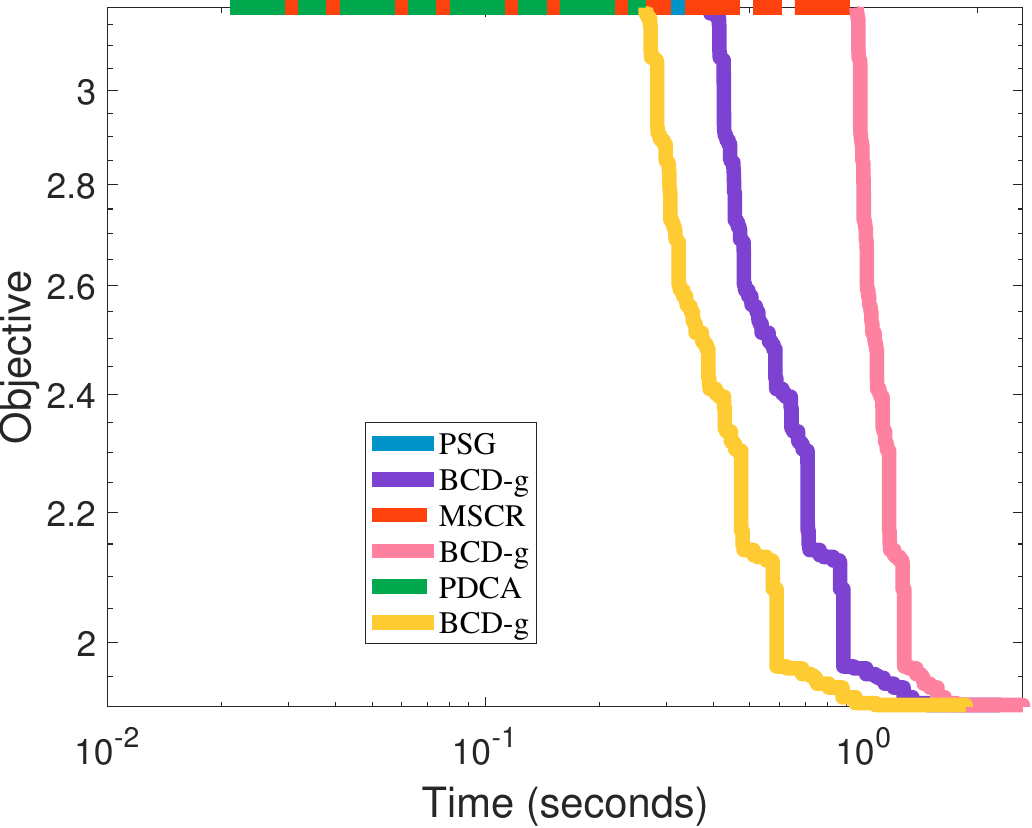}
			\caption{DCPB1 in TDT2-b}
		\end{subfigure}
		\begin{subfigure}{0.2\textwidth}
			\includegraphics[width=\textwidth]{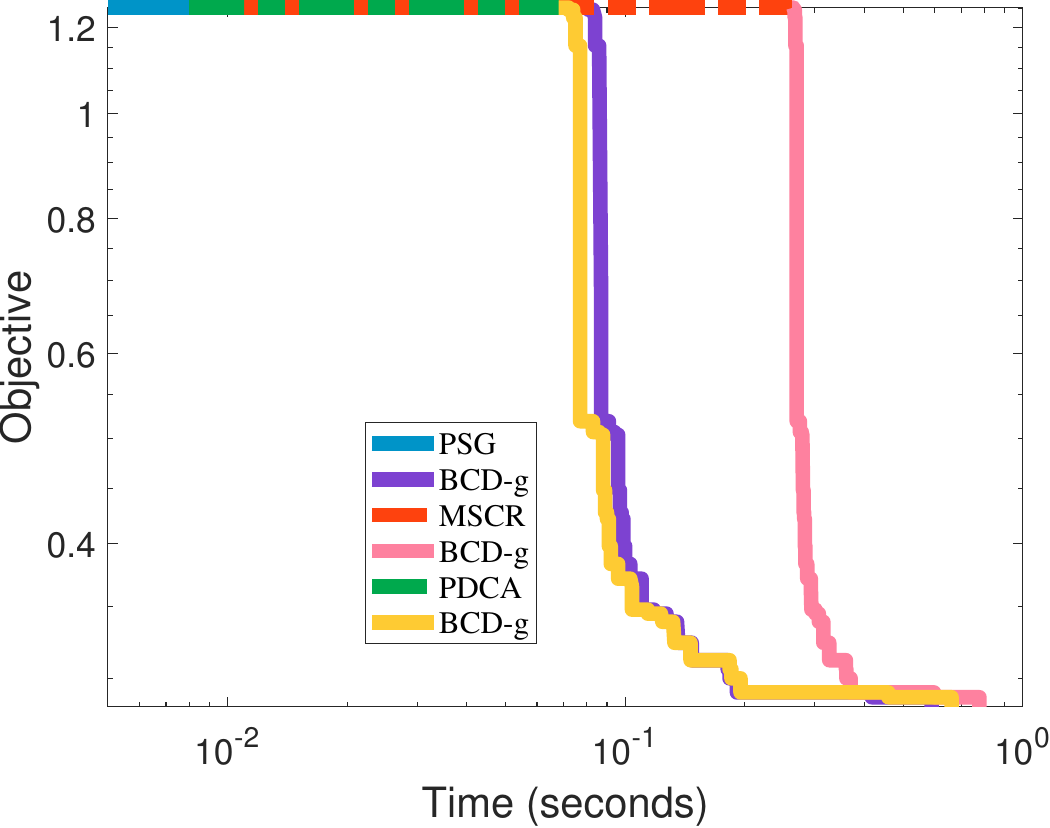}
			\caption{DCPB1 in 20News-a}
		\end{subfigure}
		\begin{subfigure}{0.2\textwidth}
			\includegraphics[width=\textwidth]{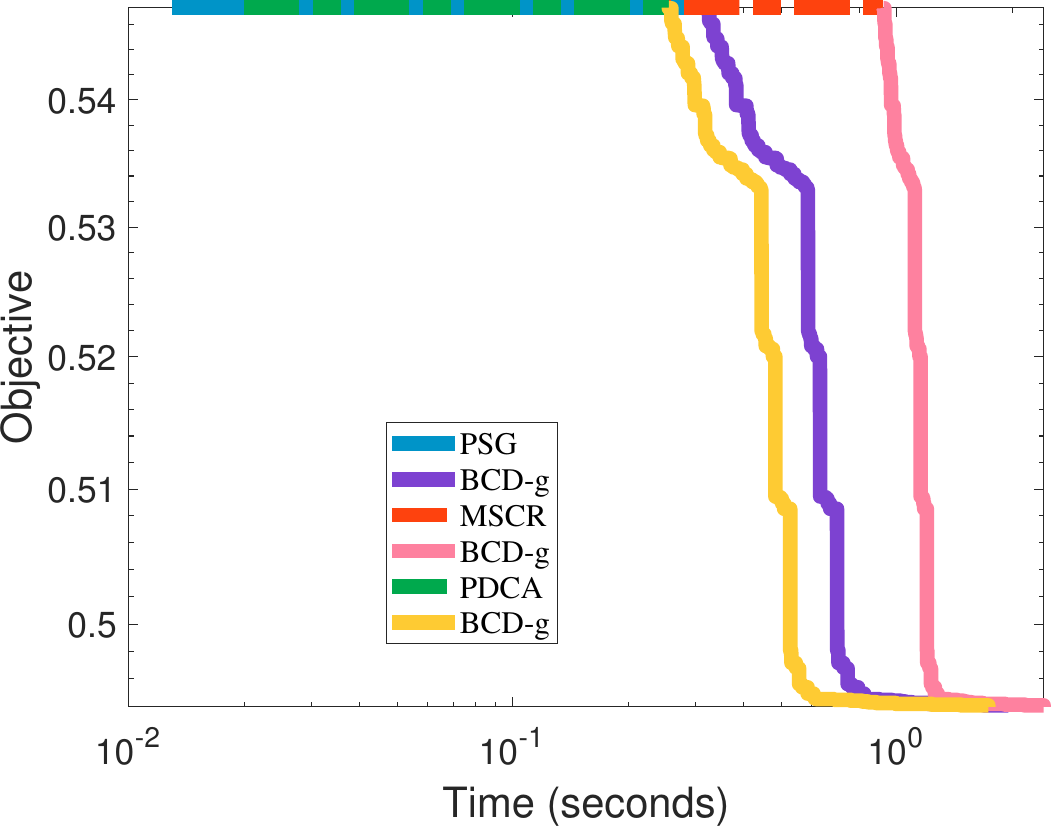}
			\caption{DCPB1 in 20News-b}
		\end{subfigure}
		\begin{subfigure}{0.2\textwidth}
			\includegraphics[width=\textwidth]{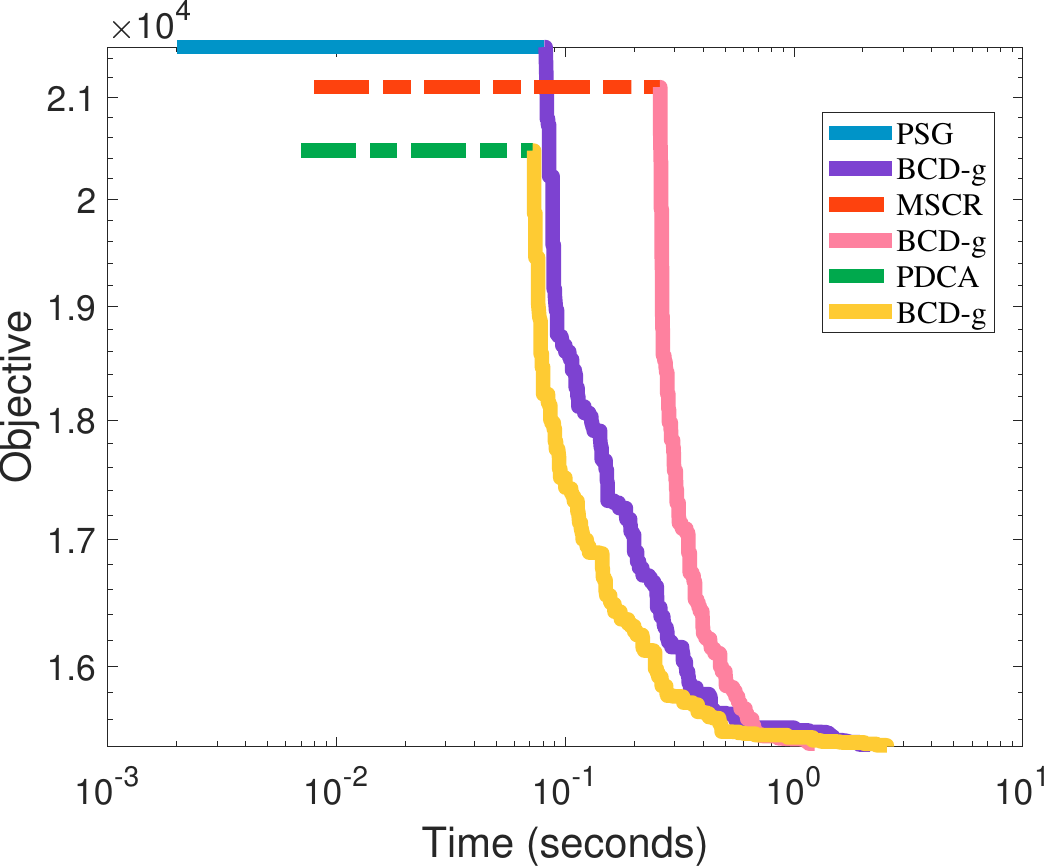}
			\caption{DCPB1 in Cifar-a}
		\end{subfigure}
		\begin{subfigure}{0.2\textwidth}
			\includegraphics[width=\textwidth]{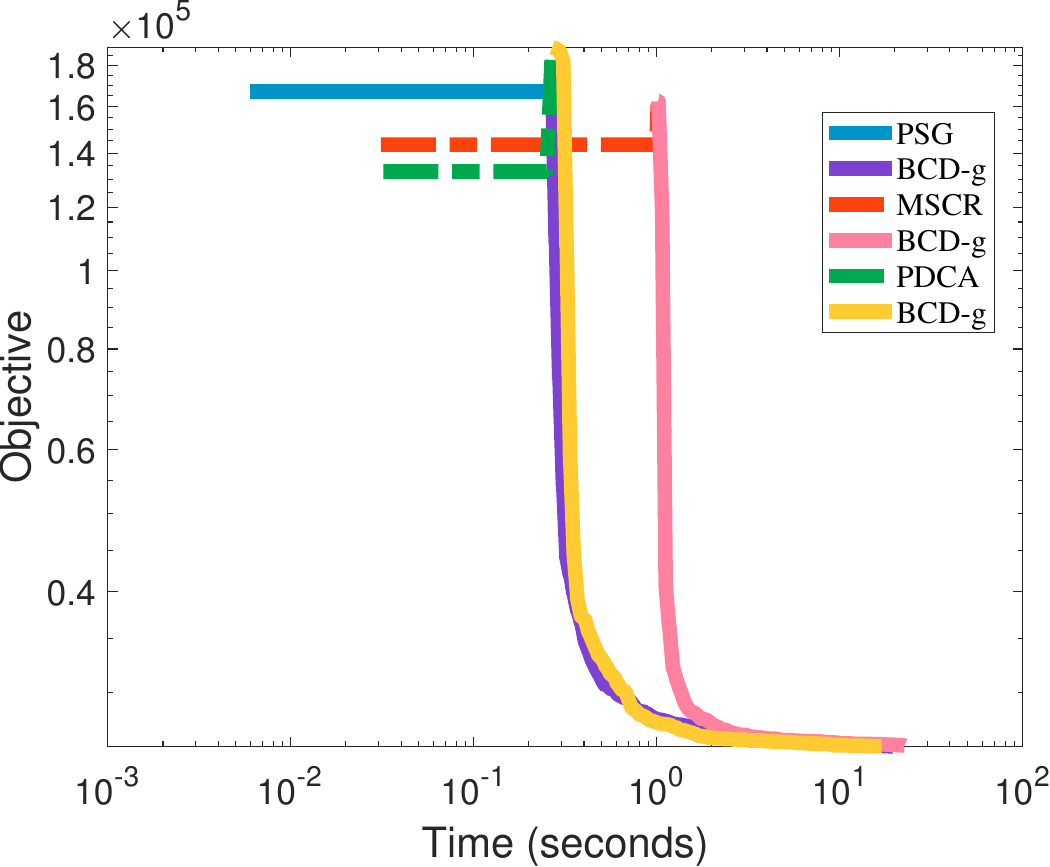}
			\caption{DCPB1 in Cifar-b}
		\end{subfigure}
		\begin{subfigure}{0.2\textwidth}
			\includegraphics[width=\textwidth]{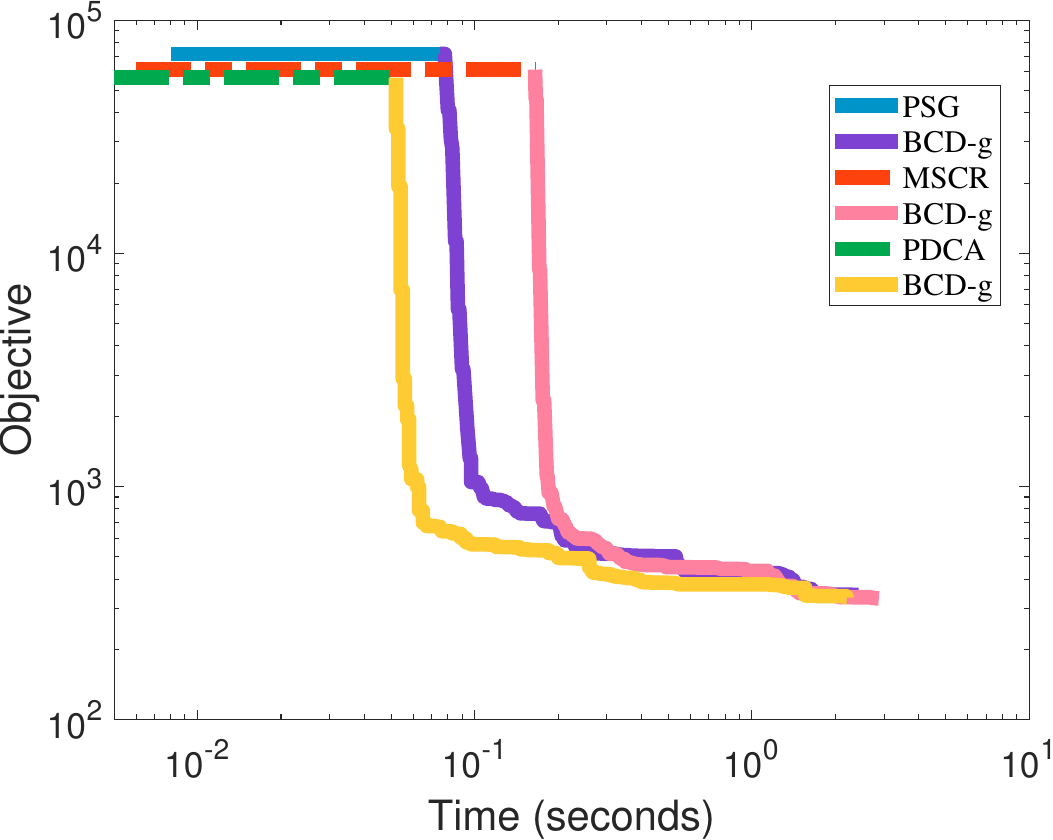}
			\caption{DCPB1 in MNIST-a}
		\end{subfigure}
		\begin{subfigure}{0.2\textwidth}
			\includegraphics[width=\textwidth]{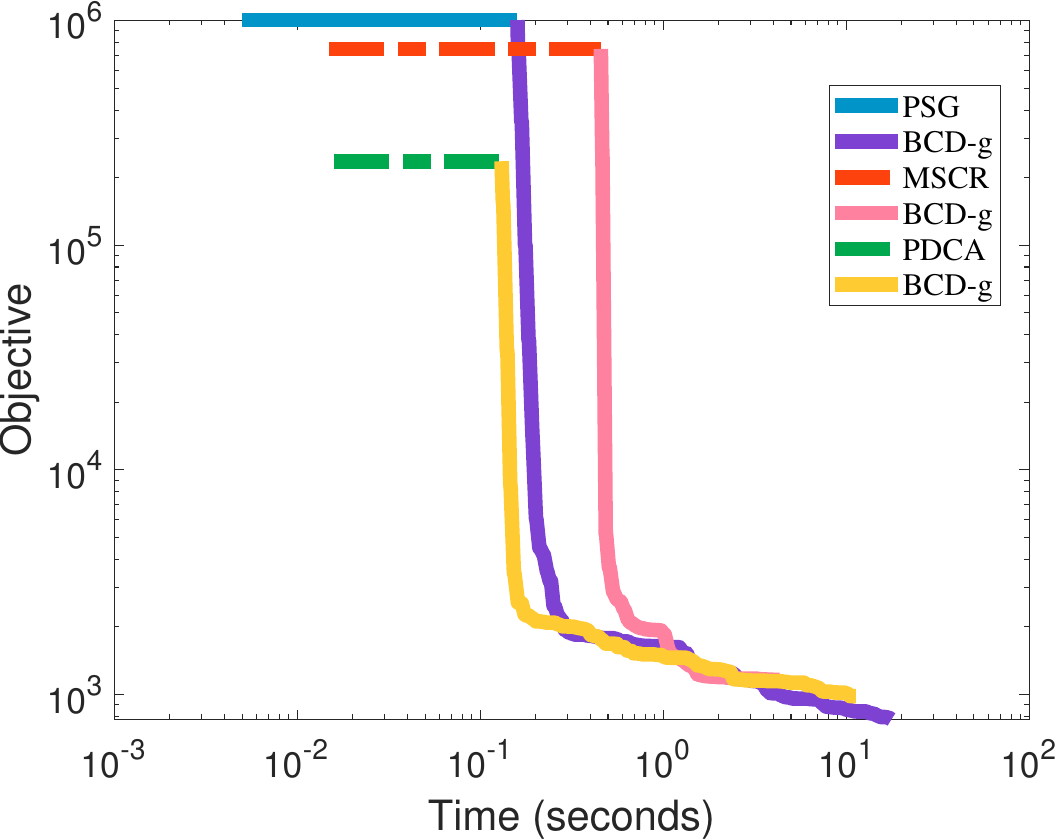}
			\caption{DCPB1 in MNIST-b}
		\end{subfigure}
		\begin{subfigure}{0.2\textwidth}
			\includegraphics[width=\textwidth]{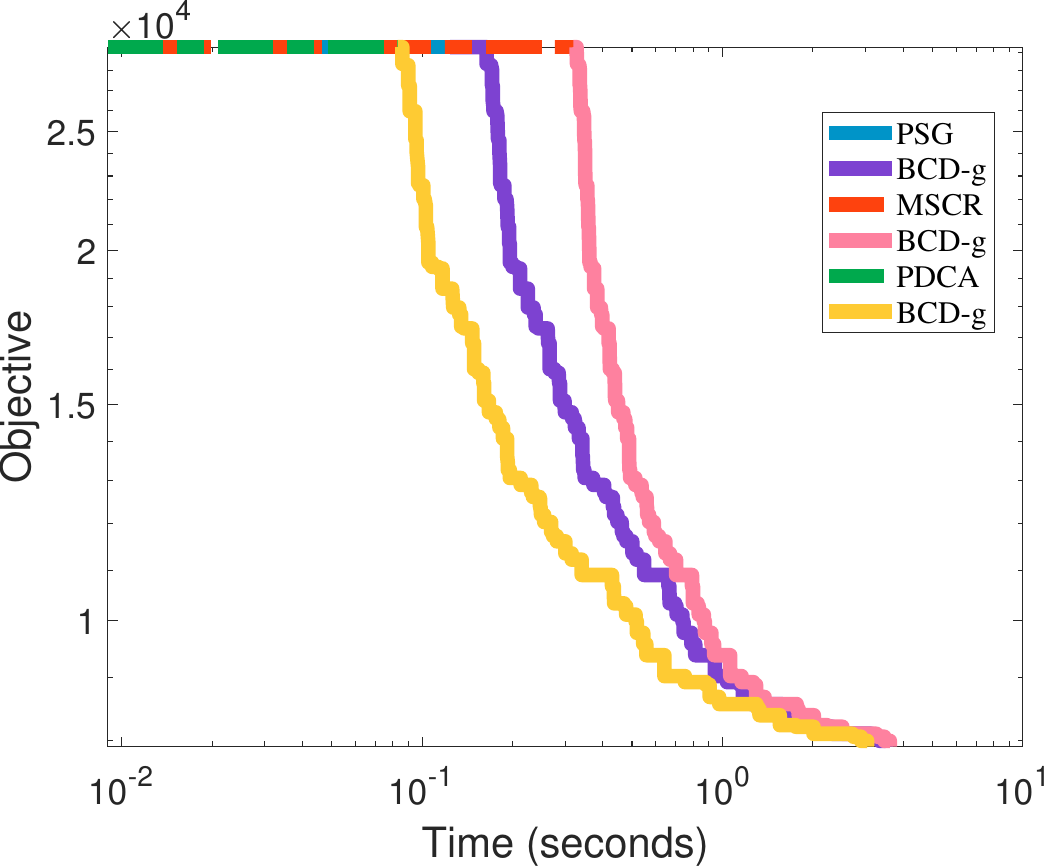}
			\caption{DCPB2 in randn-a}
		\end{subfigure}
		\begin{subfigure}{0.2\textwidth}
			\includegraphics[width=\textwidth]{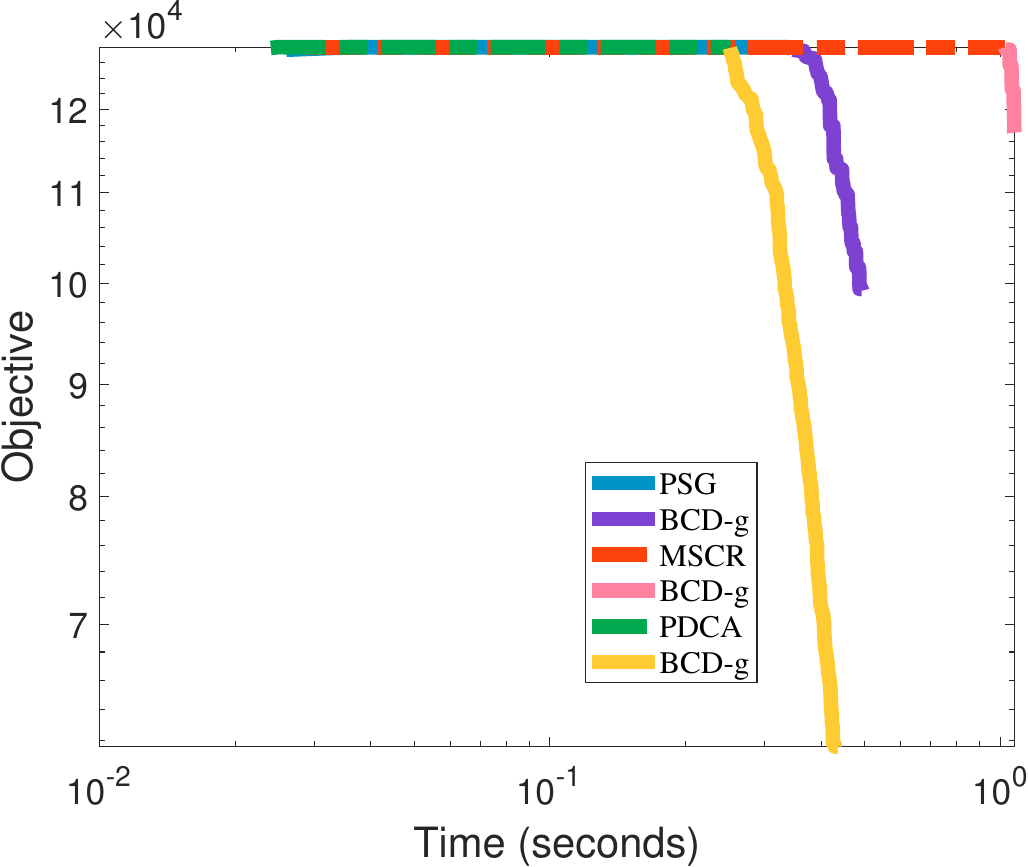}
			\caption{DCPB2 in randn-b}
		\end{subfigure}
		\begin{subfigure}{0.2\textwidth}
			\includegraphics[width=\textwidth]{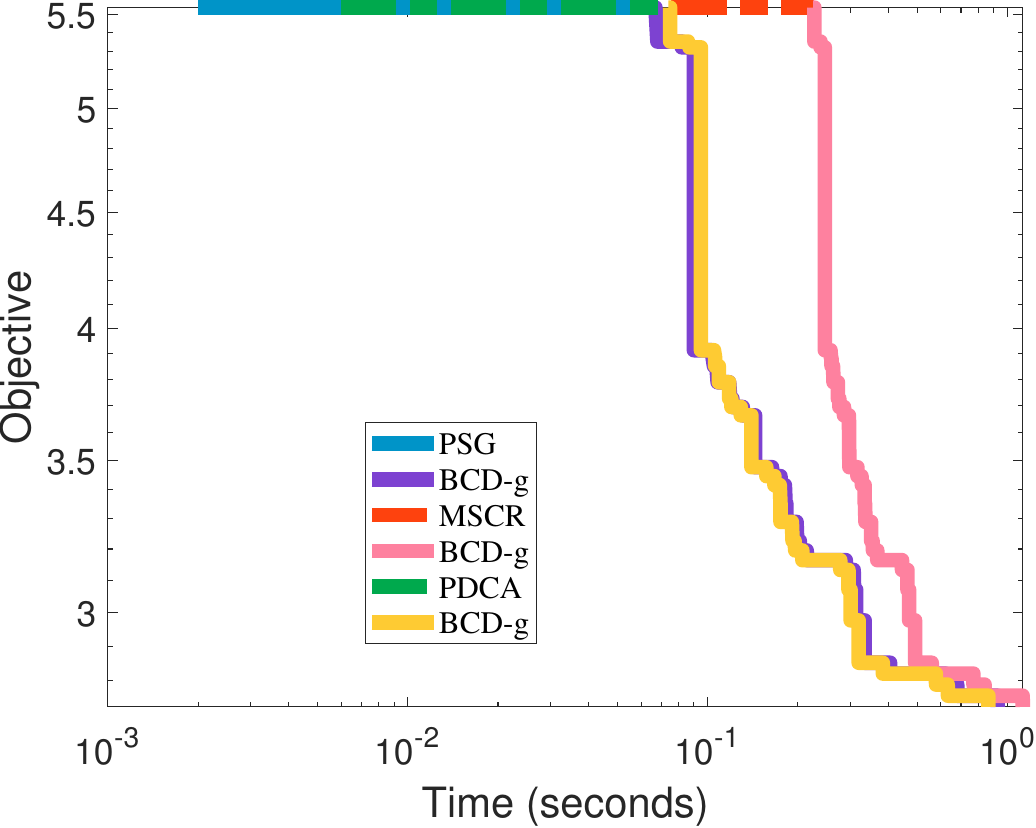}
			\caption{DCPB2 in TDT2-a}
		\end{subfigure}
		\begin{subfigure}{0.2\textwidth}
			\includegraphics[width=\textwidth]{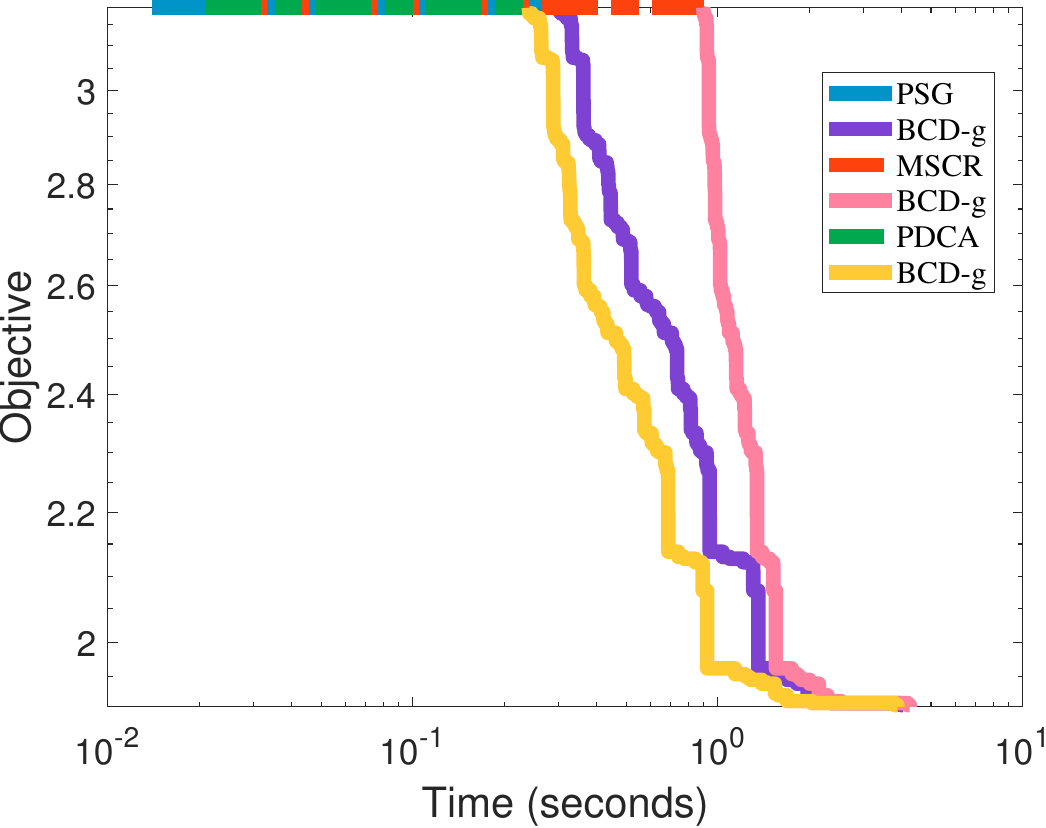}
			\caption{DCPB2 in TDT2-b}
		\end{subfigure}
		\begin{subfigure}{0.2\textwidth}
			\includegraphics[width=\textwidth]{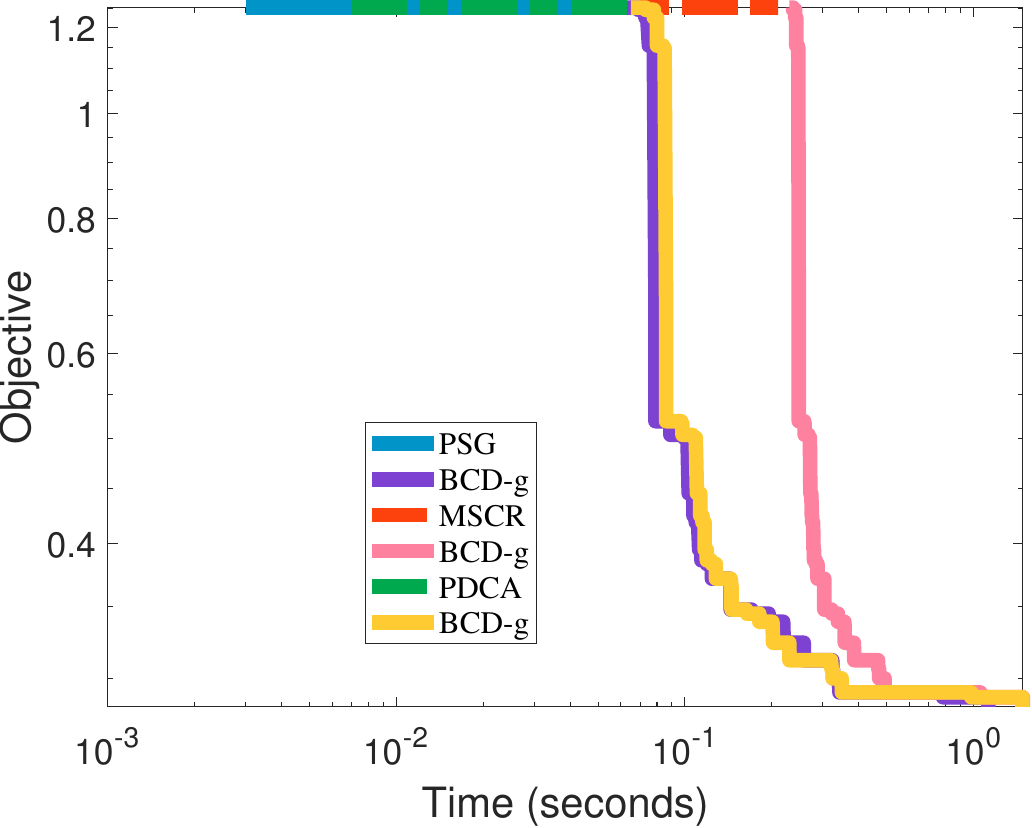}
			\caption{DCPB2 in 20News-a}
		\end{subfigure}
		\begin{subfigure}{0.2\textwidth}
			\includegraphics[width=\textwidth]{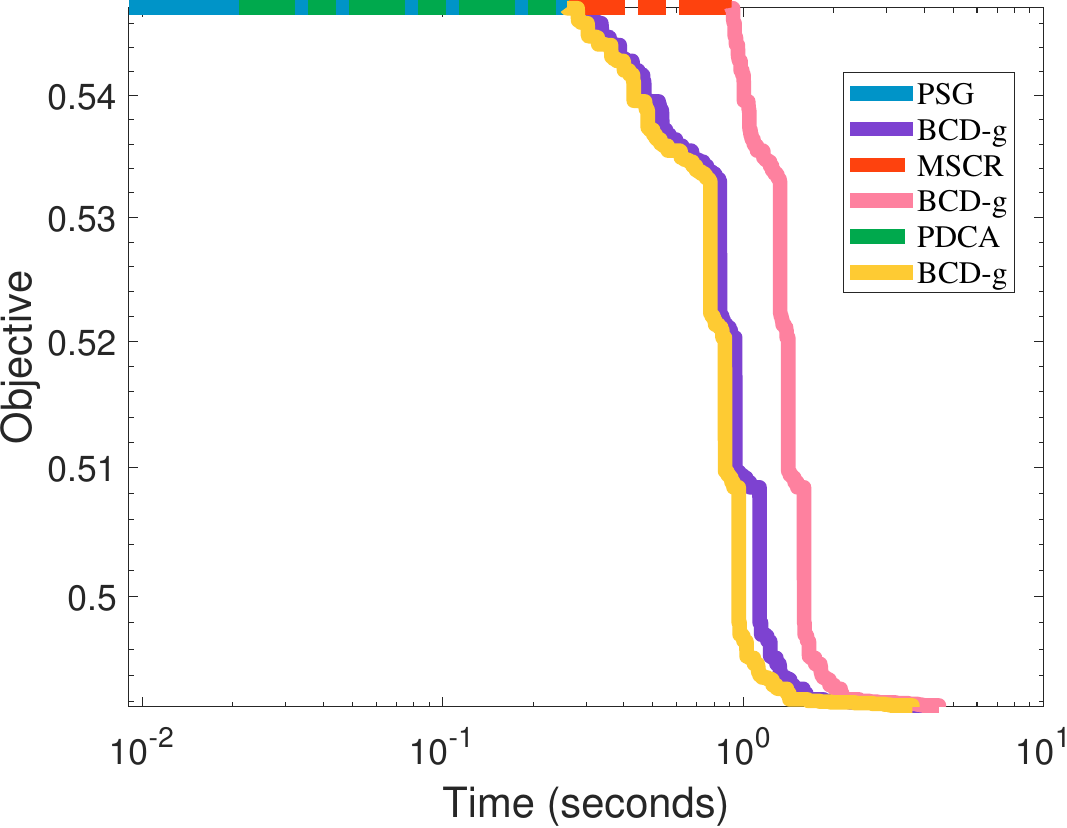}
			\caption{DCPB2 in 20News-b}
		\end{subfigure}
		\begin{subfigure}{0.2\textwidth}
			\includegraphics[width=\textwidth]{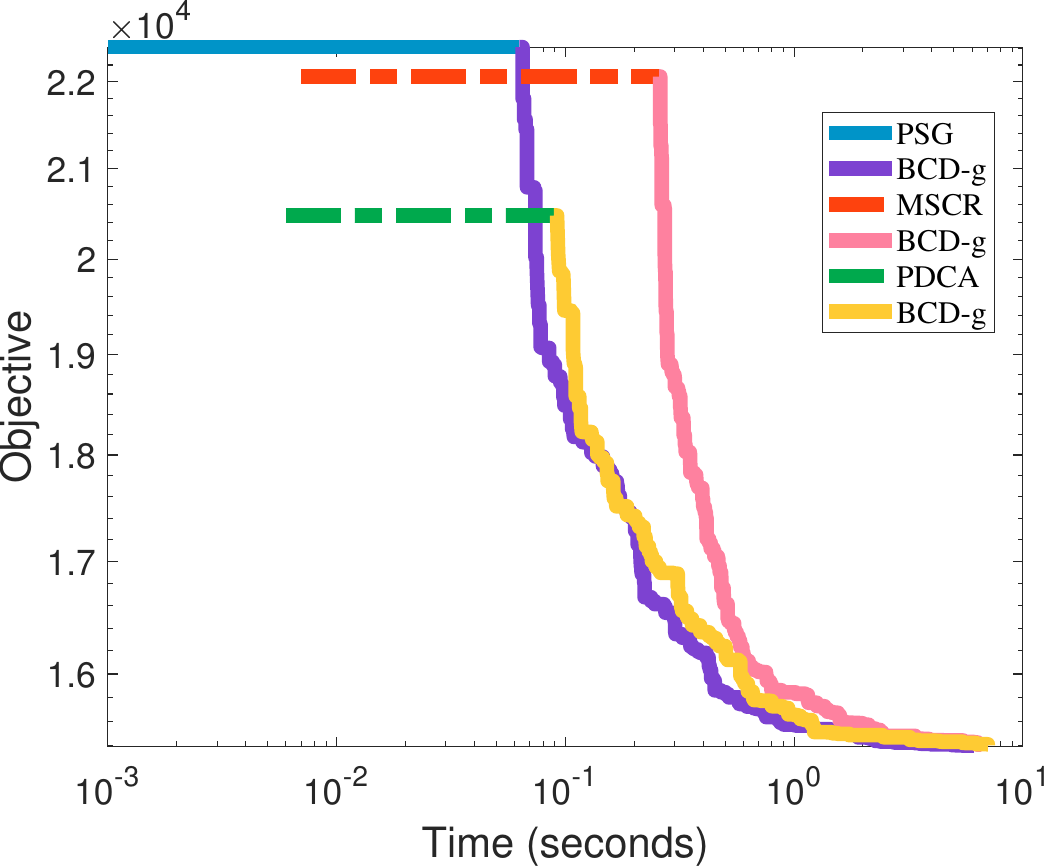}
			\caption{DCPB2 in Cifar-a}
		\end{subfigure}
		\begin{subfigure}{0.2\textwidth}
			\includegraphics[width=\textwidth]{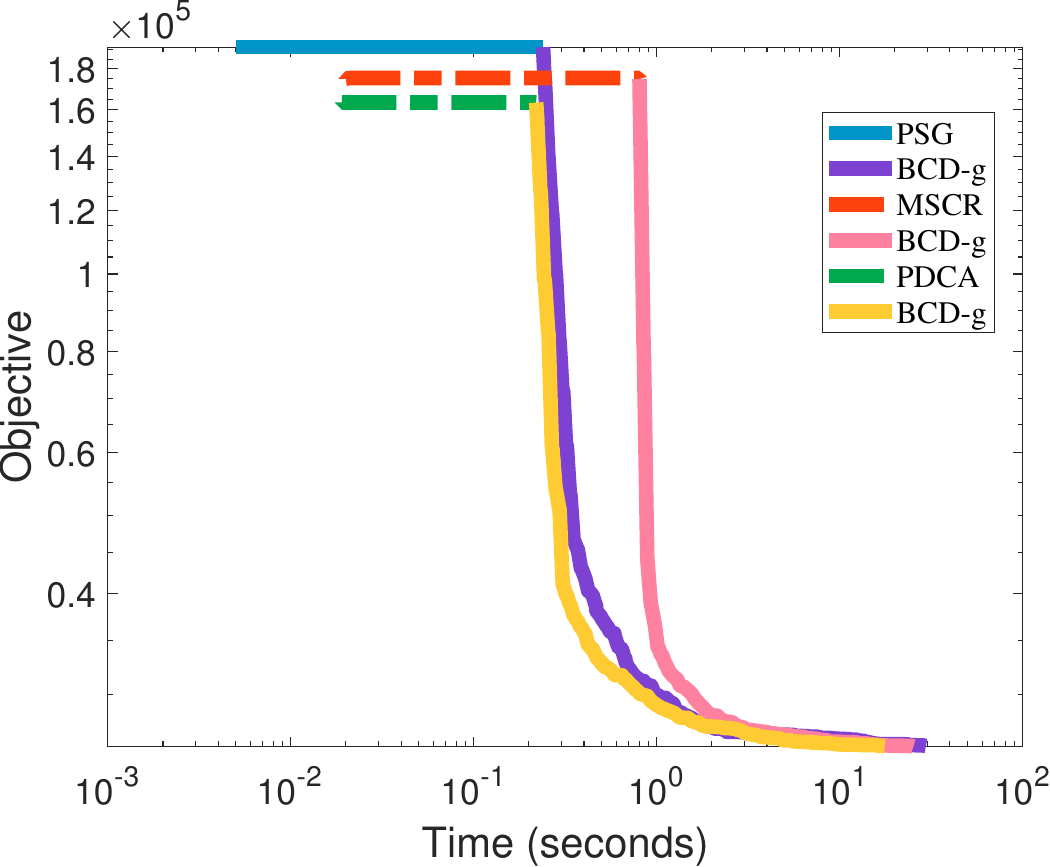}
			\caption{DCPB2 in Cifar-b}
		\end{subfigure}
		\begin{subfigure}{0.2\textwidth}
			\includegraphics[width=\textwidth]{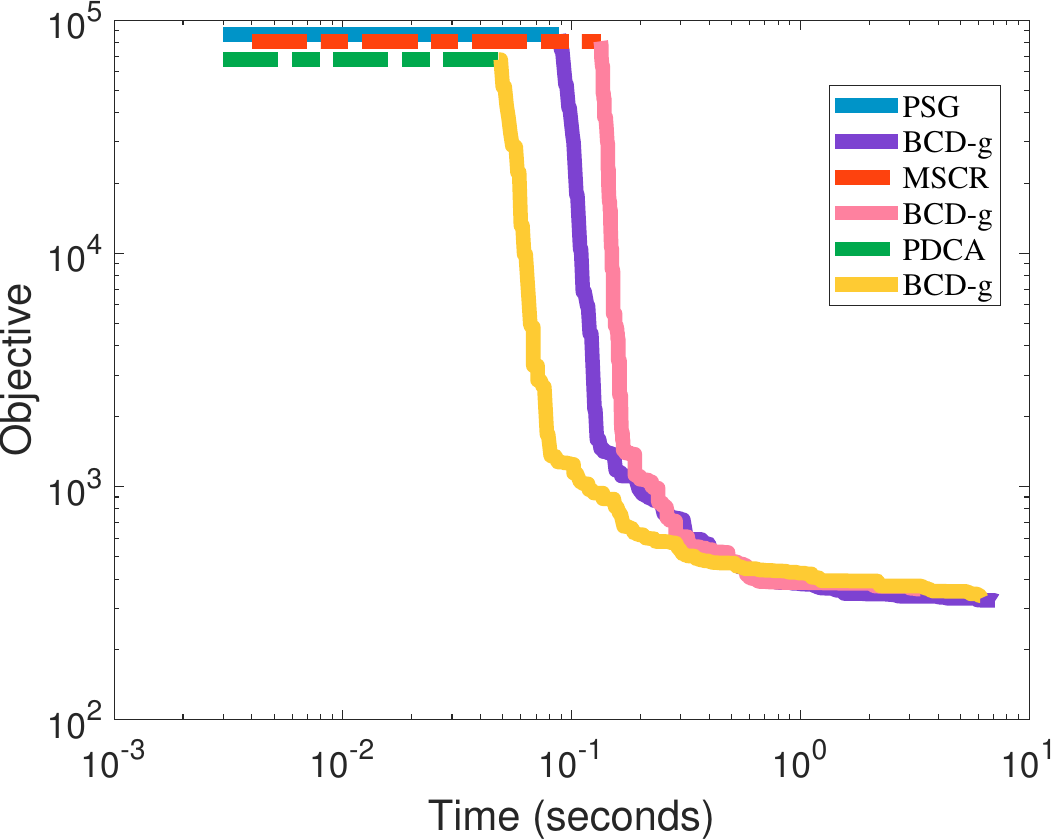}
			\caption{DCPB2 in MNIST-a}
		\end{subfigure}
		\begin{subfigure}{0.2\textwidth}
			\includegraphics[width=\textwidth]{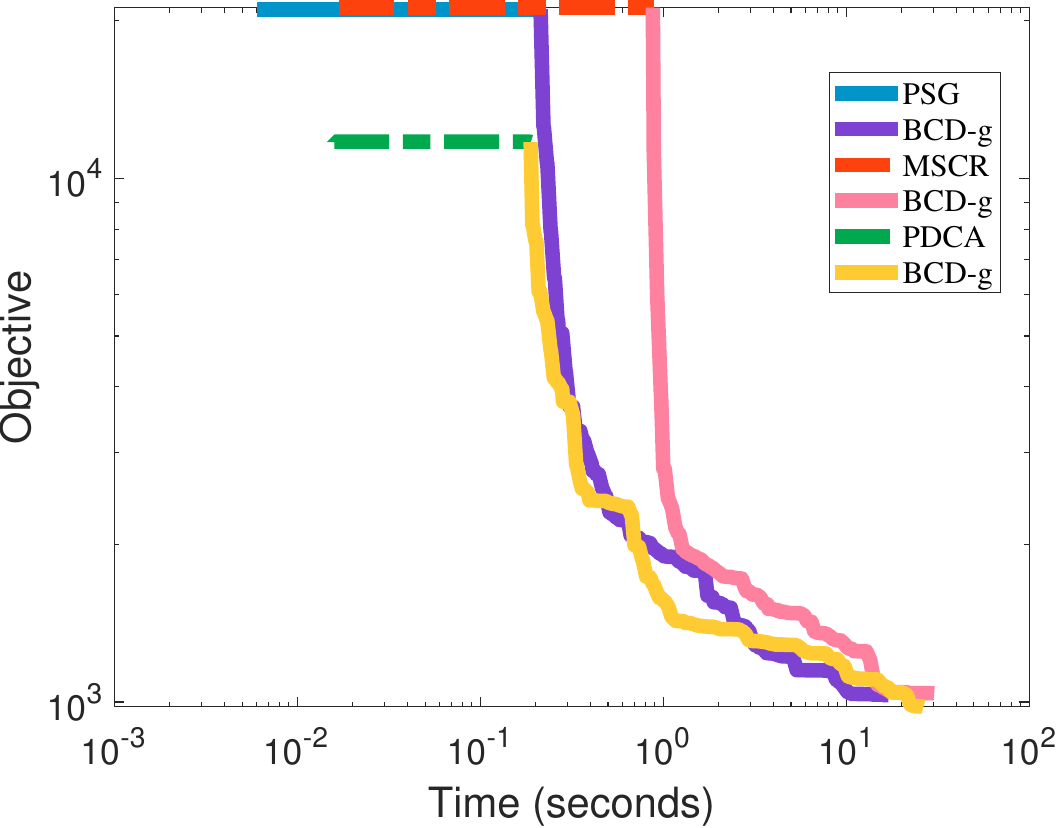}
			\caption{DCPB2 in MNIST-b}
		\end{subfigure}
		\caption{The convergence curves of the hybrid methods for two DC penalized binary optimization problems across 20 different datasets.}
		\label{fig:comparebin}
	\end{figure}

\end{document}